\numberwithin{equation}{section}
\setlist[enumerate,1]{label={\rm(\arabic*)}, ref={\rm\arabic*}} 
\theoremstyle{plain}
\newtheorem*{theorem*}{Theorem}
\newtheorem{guess}{Theorem}[section]
\newtheorem{thm}[guess]{Theorem}
\newtheorem{lem}[guess]{Lemma}
\newtheorem{claim}[guess]{Claim}
\newtheorem*{claim*}{Claim}
\newtheorem{prop}[guess]{Proposition}
\newtheorem{Cor}[guess]{Corollary}
\theoremstyle{definition}
\newtheorem{defi}[guess]{Definition}
\newtheorem{notat}[guess]{Notation}
\theoremstyle{remark}
\newtheorem{ex}[guess]{Example}
\newtheorem{rem}[guess]{Remark}
\newcommand{\brem}{\begin{rem}}
\newcommand{\erem}{\end{rem}}
\newcommand{\bth}{\begin{thm}}
\newcommand{\eeth}{\end{thm}}
\newcommand{\gfr}{\mathfrak{g}}
\newcommand{\cU}{\mathcal{U}}
\newcommand{\cO}{\mathcal{O}}
\newcommand{\cE}{\mathcal{E}}
\newcommand{\cR}{\mathcal{R}}
\newcommand{\cH}{\mathcal{H}}
\newcommand{\cF}{\mathcal{F}}
\newcommand{\cA}{\mathcal{A}}
\newcommand{\gG}{\mathfrak{G}}
\newcommand{\cT}{\mathcal{T}}
\newcommand{\cP}{\tt{P}}
\newcommand{\lra}{\longrightarrow}
\newcommand{\hra}{\hookrightarrow}
\newcommand{\ra}{\rightarrow}
\newcommand{\ol}{\overline}
\newcommand{\ms}{\mapsto}
\newcommand{\bvt}{\boldsymbol\Theta}
\newcommand{\bwt}{\boldsymbol\Omega}
\newcommand{\bT}{\boldsymbol\theta}
\newcommand{\bA}{\mathbb{A}}
\newcommand{\RR}{\mathbb{R}}
\newcommand{\ZZ}{\mathbb{Z}}
\newcommand{\GG}{\mathbb{G}}
\newcommand{\QQ}{\mathbb{Q}}
\DeclareMathOperator{\spec}{Spec}
\DeclareMathOperator{\Spec}{Spec}
\newcommand{\beqa}{\begin{eqnarray}}
\newcommand{\eeqa}{\end{eqnarray}}
\newcommand{\eT}{{\mathscr T}}
\newcommand{\qh}{\tiny\text{\cursive H}}
\DeclareMathOperator{\Ad}{Ad}
\DeclareMathOperator{\diag}{diag}
\DeclareMathOperator{\Lie}{Lie}
\DeclareMathOperator{\Res}{Res}
\DeclareMathOperator{\id}{id}
\DeclareMathOperator{\Aut}{Aut}
\DeclareMathOperator{\Hom}{Hom}
\DeclareMathOperator{\charr}{char}
\DeclareMathOperator{\Fract}{Fract}
\DeclareMathOperator{\rank}{rank}
\DeclareMathOperator{\SL}{SL}
\DeclareMathOperator{\Sym}{Sym}
\DeclareMathOperator{\codim}{codim}
\DeclareMathOperator{\Inf}{Inf}
\DeclareMathOperator{\Pic}{Pic}
\DeclareMathOperator{\GL}{GL}
\DeclareMathOperator{\Gal}{Gal}
\DeclareMathOperator{\Irr}{Irr}
\DeclareMathOperator{\ev}{ev}
\DeclareMathOperator{\Coker}{Coker}
\DeclareMathOperator{\Ker}{Ker}
\DeclareMathOperator{\Supp}{Supp}
\DeclareMathOperator{\LG}{LG}
\newcommand{\cm}{\text{\cursive m}}
\newcommand{\ext}{\mathrm{ext}}
\newcommand{\ad}{\mathrm{ad}}
\newcommand{\red}{\mathrm{red}}
\newcommand{\aff}{\mathrm{aff}}
\newcommand{\longhookrightarrow}{\xhookrightarrow{\makebox[10pt]{}}}
\renewcommand{\Im}{\operatorname{Im}}
\renewcommand{\exp}{\operatorname{exp}}
\newcommand{\supth}[1]{\ensuremath{#1^{\mathrm{th}}}}
\newcommand{\suprd}[1]{\ensuremath{#1^{\mathrm{rd}}}}
\title{On Bruhat--Tits theory over a higher-dimensional base}
\author{Vikraman Balaji}
\address{Chennai Mathematical Institute, Plot number H1, Sipcot IT Park, Siruseri, Chennai, 603103 India}
\email{balaji@cmi.ac.in}
\author{Yashonidhi Pandey}
\address{Indian Institute of Science Education and Research, Mohali Knowledge city, Sector 81, SAS Nagar, Manauli PO 140306, India}
\email{ypandey@iisermohali.ac.in, yashonidhipandey@yahoo.co.uk, pandeyyashonidhi@gmail.com}
\begin{document}

%%%%%%%%%%%%%%%%%%%%%%%%%%%%%%%
% Title page
%%%%%%%%%%%%%%%%%%%%%%%%%%%%%%%

\removeabove{20pt}
\removebetween{20pt}
\removebelow{20pt}

\maketitle

\begin{prelims}

\DisplayAbstractInEnglish

\bigskip

\DisplayKeyWords

\medskip

\DisplayMSCclass

\end{prelims}

%%%%%%%%%%%%%%%%%%%%%
% Table of Contents
%%%%%%%%%%%%%%%%%%%%%

\newpage

\setcounter{tocdepth}{2}

\tableofcontents

%%%%%%%%%%%%%%%%%%%%%
% Content begins here
%%%%%%%%%%%%%%%%%%%%%

\section{Introduction}
Let $G$ be an almost simple, simply connected  affine Chevalley group scheme. Let $\cO$ be a complete discrete valuation ring (DVR) with a perfect residue field $k$. Let  $\mathcal O_{n}:= k\llbracket z_{1}, \ldots, z_{n}\rrbracket$ and let $K_{n} := \Fract~\cO_{n}$; more generally, we consider ${\cO} \llbracket x_{1},\ldots,x_{n} \rrbracket$ when the residue characteristic $p$ satisfies some tameness assumptions; see Section~\ref{tameness}. The object of this paper is to define and study an optimal class of subgroups ${\cP} \subset G(K_{n})$ which we call  {\it $\tt n$-bounded subgroups}. These are natural generalizations of the classical bounded groups and parahoric groups due to F.~Bruhat and J.~Tits,
 who dealt with the case  $n = 1$ (\textit{cf.} \cite{bruhattits, bruhattits1}). 

We show that these {\tt n}-bounded groups are {\it schematic} in the sense of Bruhat and Tits; \textit{i.e.}
there exist smooth, finite-type group schemes over $\spec(\cO_{n})$
 with connected fibres, such that the group of sections gives back the {\tt n}-bounded groups. We will call these group schemes {\tt nBT}-group schemes; see Definition~\ref{nparahoric}. The {\tt nBT}-group schemes we construct are always {\em quasi-affine}, with a large, and perhaps most significant, class being {\em affine}; see Theorem~\ref{multipargrpsch}. This provides natural analogues of the Bruhat--Tits group schemes over $\cO_{n}$ with good specialization properties.  

In \cite[Section~3.9.4]{bruhattits} one finds general remarks on how one could possibly generalize \cite[Theorem 3.8.1]{bruhattits} to a base of dimension $2$.  Somewhat later in the mid-nineties, there was  an approach to Bruhat--Tits buildings over higher-dimensional local fields made by A.~Parshin \cite{parshin} with possible motivation from questions in arithmetic. Our approach has its origins in geometry stemming from the study of degenerations of the stack $\text{Bun}_{G}$ of $G$-bundles over smooth projective curves; see \cite{balaproc}. Unlike Parshin, our aim is not towards a generalization of the notion of a Bruhat--Tits building. Rather, we follow the general spirit of the Bruhat--Tits approach, \textit{cf.} \cite{bruhattits}, namely, to define subgroups of $G(K_{n})$ characterizable  by group schemes over $\spec(\cO_{n})$. 

Recall that a {\sl parabolic vector bundle} in the sense of C.\,S.~Seshadri is a vector bundle with additional data defined on pairs $(X,D)$, where $X$ is a {\sl smooth quasi-projective scheme} and  $D$ a {\sl simple normal crossing divisor}  (see the appendix).
Hitherto, these  have been {\it objects} of study primarily occurring as points in certain moduli spaces.  In the present paper, the notion of a parabolic bundle helps us get a technical tool to address the issue of extending affine group schemes across subsets of codimension bigger than $1$. These connections first came up  in the paper \cite{base}, where under the assumptions of characteristic zero and for group schemes which are generically split (see Definition~\ref{schematicsubgroups}), it was shown, \textit{cf.} \cite[Theorem 5.2.7]{base}, that {\em any {\sf parahoric} Bruhat--Tits group scheme over a complete discrete valuation ring $\cO$ can be realized through ``invariant direct images'', i.e. obtained by taking Galois invariants of Weil restriction of scalars  of a reductive group scheme, on a ramified cover of $\cO$ \textup{(}see Section~\ref{grstuff}\,\textup{)}. It is easily checked \textup{(}see Section~\ref{mixedstuff}\,\textup{)} that this can be made to work under some mild tameness assumptions on residue field characteristics for arbitrary complete DVRs \textup{(}see Section~\ref{tameness}\;\textup{)}. This result and its underlying philosophy form the cornerstone of some of the basic results of this paper}.

Our first step is to define {\tt n-parahoric} subgroups of $G(K_{n})$. Let $T$ be a maximal torus contained in a Borel $B$ of $G$ over $k$. Let $X(T)\,=\,\Hom(T, \GG_{m})$ be the group of characters of $T$ and  $Y(T)\,=\, \Hom(\GG_m, T)$ be the group of all one--parameter subgroups of $T$. 
Let  $\gfr$ denote the Lie algebra. We denote by $\Phi^{+},\Phi^{-} \subset \Phi$  the sets of positive and negative roots with respect to $B$. Let $S= \{\alpha_{1}, \ldots, \alpha_{\ell}\}$ denote the set of simple roots of $G$, where $\ell$ is the rank of $G$.  Let $\alpha^{\vee}$ denote the  coroot  corresponding to $\alpha \in S$. We also choose an {\sl \'epinglage}  or a {\sl pinning} of $G$; \textit{i.e.} in addition to $S$, we also choose for every $r \in S$, a $k$-isomorphism $u_{r}$ from the additive group $\GG_{a}$ to the unipotent group $U_{r}$ of $G$ associated to the root $r$. Let $\{u_{r}\}_{r \in \Phi}$ be their canonical extensions satisfying the Chevalley relations.

Let  $\mathbb S := S \cup \{ \alpha_0 \}$ denote the set of affine simple roots. Let $\cA_{T}$ denote the affine apartment corresponding to $T$. It can be identified with the  affine space ${\mathbb E} :=Y(T) \otimes_{\ZZ} \RR$ together with its origin $0$.

For a point $\theta \in \cA_{T}$ and a root $r \in \Phi$, let $r(\theta) := (r,\theta)$. Set
\begin{equation} \label{mrtheta} 
m_{r}(\theta) := -\lfloor{r(\theta)}\rfloor.
\end{equation} 
Let $\cO$ be a complete discrete valuation ring with residue field $k$, with uniformizer $z$ and field of fractions $K$. Classically (see \cite{bruhattits1}), in terms of generators a parahoric subgroup ${\cP}_{\theta}$ of $G(K)$ is defined  as
\begin{equation}\label{parahoricwithgen}
{\cP}_{\theta}:= \left\langle T(\cO), U_r\left( z^{m_r(\theta)} \cO\right), r \in \Phi \right\rangle.
\end{equation}
More generally, for   an {\em enclosed} bounded subset ${\Omega}$ in the affine apartment $\cA_{T}$ (\textit{cf.} \cite[Section~2.4.6]{bruhattits1} or \cite[Section 3]{courtes}), set 
\begin{equation}\label{mromega}
  m_r(\Omega): = -\left\lfloor \inf_{\theta \in \Omega} r(\theta) \right\rfloor = \left\lceil \sup_{\theta \in \Omega} -r(\theta) \right\rceil. 
\end{equation}
Note that there is a notion of a {\em closure} of a bounded subset, and the function $m_r(\Omega)$ does not change by taking closures. So unless otherwise mentioned, in this article we always work with enclosed bounded subsets. A bounded subgroup ${\cP}_{\Omega}$ of $G(K)$ associated to a bounded subset $\Omega$ of $\cA_T$ is defined as follows:
\begin{equation}
{\cP}_{\Omega}:= \left\langle T(\cO),U_r\left(  z^{m_r(\Omega)} \cO \right), r \in \Phi \right\rangle.
\end{equation}

Following \cite{bruhattits1,bruhattits}, we define a larger class of bounded groups as follows.

\begin{defi}\label{schematicsubgroups}
A subgroup ${\tt Q} \subset G(K)$ will be called  {\sl schematic} if there exists a smooth $\cO$-group scheme $\gG$ with connected fibres and of finite type with $\gG_{K} \simeq G \times K$ such that ${\tt Q} = \gG(\cO)$.
\end{defi}

{\em Throughout this paper we work under the assumption that all of the {{\tt n}-Bruhat--Tits} group schemes we study are {\em connected} and the generic fibre $G_{K}$ is a product $G \times _{\spec(\mathbb{Z})} \spec(K)$; we call this the {\em ``generically split case''}}.
As we have mentioned above, parahoric subgroups are all {\sl schematic}. However, Bruhat and Tits study a larger class of schematic subgroups of $G(K)$ which are defined by {\em concave functions}  (in fact, they work with the class of quasi-concave functions).
 
Let $\tilde{\Phi}:=\Phi \cup \{0 \}$.
Recall, see \cite[Section~6.4.3, p.~133]{bruhattits1},
that a function $f\colon  \tilde{\Phi} \rightarrow \mathbb{R}$ is said to be {\em concave} if whenever $r_{i} \in \tilde{\Phi}$ are such that $\sum_i r_{i} \in \tilde{\Phi}$, then
\begin{equation}\label{concave}
f\left(\sum_{i} r_{i}\right) \leq \sum_{i} f(r_{i}).
\end{equation}

In this paper we  generalize the theory for all concave functions over higher-dimensional bases.  We hasten to add that Bruhat--Tits theory works for general connected reductive groups, while we make the rather simplifying assumptions on $G$ in the spirit of \cite[Section 3.2, p.~52]{bruhattits} that it is almost simple, simply connected and {\it generically split}. Even under these assumptions, the problem seems sufficiently complex.

A natural class of {\tt 2}-parahoric group schemes (see Definition~\ref{nparahoric}) came up naturally  in the work of the senior author while constructing flat degenerations of the stack of $G$-torsors on smooth projective curves, when we allow the curve to degenerate to stable curves (\textit{cf}.~\cite{balaproc}). A special {\tt 2}-parahoric group scheme comes up in the paper \cite{pz} (see Section~\ref{pappas} below and also \cite{lou}). We note that {\tt BT}-group schemes on discrete valuations rings arising from general concave functions have appeared in the papers of J.-K.~Yu \cite{supercusp,yu} and Reeder--Yu \cite{epipelagic}. Further, a very special class has found applications in the papers of Moy--Prasad (see \cite[Section~2.6]{moyprasad})  and Schneider--Stuhler \cite{schstu}.

Even in the most geometric contexts, \textit{i.e.} without the complications of rationality questions, the larger class of concave functions are needed to express \cite{balaproc} or the present paper (see Theorem~\ref{nsdescrip} and Remark~\ref{balajiproctype3}). Indeed, the description of the {\em closed fibres} of the {\tt n}-parahoric group schemes (see Definition~\ref{nparahoric}) over $\cO_{n}$,  at points of depth bigger than $1$, needs perforce the Bruhat--Tits theory of bounded groups associated to concave functions; see Theorem~\ref{thedescription}. The interesting feature is that the concave functions which appear in this situation {\em do not in general arise from bounded subsets of the apartment} but are from the most general class considered by Bruhat and Tits (see Examples~\ref{B2type3}, \ref{G2type3} and \ref{gratifying}, Equation~\eqref{verygratifying} and Remark~\ref{balajiproctype3}). 

 In view of \cite[Section 3.2, p.~52, second paragraph and Section 3.2.15]{bruhattits}, we believe that Bruhat and Tits may have already envisioned the theory over higher-dimensional bases (\textit{cf.}~\cite[Section~3.9.4]{bruhattits}) as well. We revisit this in Section~\ref{revisit}
 
\subsection{Assumptions on the residue field \texorpdfstring{$\boldsymbol{k}$}{k}} \label{tameness}  In this introduction we assume that $k$ is {\em algebraically closed}. We however note that if we assume only perfectness of $k$ together with a few assumptions on the existence of roots of unity (see Section~\ref{charassum}), the results of the present paper continue to hold. 

We will assume throughout that the characteristic $p$ of $k$ is coprime to the order of the centre of $G$ and the coefficients of the highest root.
\begin{itemize}
\item  This assumption suffices when the concave function is of type I, with no further restrictions on the type of the group $G$, except when $G$ is of type $A_{n}$, where  $p$ does not divide $n + 1$.
\item For groups of type $A_{n}, G_{2}, F_{4}, E_{6}$, this suffices for concave functions of type II as well.
\item Let $h_{G}$ be the Coxeter number of $G$.  Let $m(G)$ be the dimension of the minimal faithful representation of $G$. When the concave function is strictly of type III, we further need $p$ to be greater than $h_{G}$ as well as be coprime to $m(G)$. The second condition more precisely means the following: $p$ is   coprime to $7$ for $G_{2}$,  coprime to $13$ for $F_{4}$,  coprime to $3$ for $E_{6}$,  coprime to $14$ for $E_{7}$, coprime to $31$ for $E_{8}$ and finally coprime to $2n+1$  for other classical groups of rank $n$.
\item For the schematization of {\tt n-Moy--Prasad} groups, we refer to Section~\ref{tamp}.
\end{itemize}

\subsection{Higher dimensions and statement of main results}
 We begin with a few key definitions in terms of which the present paper is organized.

\subsubsection*{Types of Concave functions}

\begin{defi}\label{types} \leavevmode
\begin{enumerate}
\item For $j = 1, \ldots, n$, let points $\theta_{j}$ in the apartment  $\cA_{T}$ be given. Let ${\bf f}_{\bT} = \{f_{\theta_{j} } \}_{j = 1}^{n}$ be the set of concave functions associated to them by the function $r \mapsto m_r(\theta)$ (see \eqref{mrtheta}). These will be called {\tt n-concave} functions of {\tt type I}.
\item For $j = 1, \ldots, n$, let {\em bounded subsets} $\Omega_{j} \subset \cA_{T}$ be given.  Let ${\bf f}_{\bwt} = \{f_{\Omega_{j} } \}_{j = 1}^{n}$ be the set of concave functions associated to them by $r \mapsto m_r(\Omega)$ (see \eqref{mromega}). These will be called {\tt n-concave} functions of {\tt type II}. 
\item Let ${\bf f}:=(\ldots, f_{j},\ldots)\colon \Phi \rightarrow \mathbb{R}^n$ be an {\tt n-concave} function on $\Phi$ defined by concave functions $\{f_{j}\}$ (see \eqref{concave}). These will be called {\tt n-concave} functions of {\tt type III}.
\end{enumerate}
\end{defi}

As mentioned earlier, there are examples of concave functions of {\tt type III} which are not of  {\tt type II} (this is stated in \cite[Section~6.4.4]{bruhattits1}; however, see the claim in Example~\ref{B2type3} below for explicit examples) and similarly of {\tt type II} not of {\tt type I}.

\subsubsection*{Bounded groups in higher dimensions}

 We begin by stating our results in the equicharacteristic case.  Recall that $\mathcal O_{n}:= k\llbracket z_{1}, \ldots, z_{n}\rrbracket$, and let $K_{n} := \Fract(\cO_{n})$. For each set of $n$ rational points $\bT = (\theta_{1}, \ldots, \theta_{n}) \in \cA^{n}$, in terms of generators  we define an {\tt n}-parahoric subgroup of $G(K_{n} )$ as  
\begin{equation}\label{moregen0}
{\cP}_{\bT} := \left\langle T\left(\mathcal O_{n}\right), U_r\left( \prod_{1 \leq i \leq n} z_{i}^{m_r(\theta_{i})} \mathcal O_{n} \right), r \in \Phi \right\rangle.
\end{equation}

More generally, for  $\boldsymbol\Omega = (\Omega_{1}, \ldots, \Omega_{n})$, where the $\Omega_{i} \subset \mathbb E$ are {\em enclosed} bounded subsets (\textit{cf.} \cite[Section~2.4.6]{bruhattits1}), we can analogously define {\tt n}-bounded subgroups of $G(K_{n})$:
\begin{equation}\label{moregen}
{\cP}_{\boldsymbol\Omega} := \left\langle T\left(\cO_{n}\right), U_r\left( \prod_{1 \leq i \leq n} z_{i}^{m_r(\Omega_{i})} \cO_{n}\right), r \in \Phi \right\rangle.
\end{equation}
Even more generally, let $f_{i}\colon \Phi \rightarrow \mathbb{R}$ be concave functions, see~\eqref{concave}, and let ${\bf f}:=(\ldots, f_{i},\ldots)\colon \Phi \rightarrow \mathbb{R}^n$ be the {\tt n-concave} function defined by them. To the {\tt n-concave} function ${\bf f}$ we can associate an {\tt n}-bounded subgroup of $G(K_{n})$ as follows:
\begin{equation}\label{evenmoregen}
{\cP}_{\bf f} := \left\langle T\left(\cO_{n}\right), U_r\left( \prod_{1 \leq i \leq n} z_{i}^{f_{i}(r)} \cO_{n}\right), r \in \Phi \right\rangle.
\end{equation}
Similarly, one may define the {\tt n}-bounded Lie subalgebra $\Lie(\cP_{\bf f})$  of $\gfr(K_{n})$.

As in Definition~\ref{schematicsubgroups}, we can call a subgroup ${\tt Q} \subset G(K_{n})$ a {\sl schematic} subgroup if there exists a connected, smooth  $\cO_{n}$-group scheme $\gG$ of finite type with $\gG_{K_{n}} \simeq G \times_{\spec(\mathbb{Z})} K_{n}$ such that ${\tt Q} = \gG(\cO_{n})$.  

We remark that an important aspect of Bruhat--Tits group schemes over discrete valuation rings is the structure of a {\em big cell}   which extends the big cell of the generic fibre (\textit{cf.} \cite[Section~3.1.3]{bruhattits} and  Theorem~\ref{multipargrpsch}\eqref{mt5} below). This aspect also generalizes to the {\tt n}-Bruhat--Tits group schemes.  

We now state our main theorems in the equicharacteristic case. This corresponds to the layout of the paper. 

\begin{thm}\label{multipargrpsch}  Let $p$ satisfy the hypothesis of Section~\ref{tameness} \textup{(}or more generally those of Section~\ref{charassum}\,\textup{)}. 
For an {\tt $n$-concave} function ${\bf f}$ of  each type in Definition~\ref{types}, there exists a smooth group scheme $\gG_{\bf f}$ of finite type on $\mathbb A^{n}_{k}$ with {\em connected fibres}, whose restriction to $\spec(\cO_{n})$ comes with an isomorphism $h_{K_{n}}$ of the generic fibre $\gG_{K_{n}}$ with $G_{K_{n}}$ and such that the following hold:
\begin{enumerate}
\item The group scheme $\gG_{\bf f}$ is {\tt affine} when ${\bf f}$ is of\, {\tt type I}, and in general it is {\tt quasi-affine} in {\tt types II and III}.
\item \label{mt2} The {\tt n}-bounded subgroup $\cP_{\bf f}$ is {\sl schematic}. More precisely, we have an identification $\gG_{\bf f}(\cO_{n}) = \cP_{\bf f}$. 
\item  We have a natural isomorphism of\, $\Lie(\gG_{\bf f})(\cO_{n})$ with $\Lie(\cP_{\bf f})$.
\item \label{mt4} Let us denote the canonical images under $h_{K_{n}}$ of the maximal torus $T_{K_{n}}$ and the root groups $
\{U_{r, K_{n}}\}_{r \in \Phi}$ in the generic fibre $\gG_{K_{n}}$ by the same notation.

There exist closed subgroup schemes $\{\mathfrak U_{r, \bf f} \subset \gG_{\bf f}\}_{r \in \Phi}$ \textup{(}resp.\ $\cT_{\bf f} \subset \gG_{\bf f}$\textup{)} on $\spec(\cO_{n})$, with generic fibre isomorphic to the root groups $\{U_{r, K_{n}}\}_{r \in \Phi}$ \textup{(}resp.\ $T_{K_{n}}$\textup{)}. 
\item \label{mt5} There exists a {\em big cell}   $\mathfrak B_{\Phi}$ of\, $\gG_{\bf f}$ such that for any choice of total ordering on $\Phi^{+}$ (resp.\ $\Phi^{-}$), the morphism induced by  multiplication
\begin{equation}
\left(\prod_{r \in \Phi^{+}} \mathfrak U_{r, \bf f}\right) \times \cT_{\bf f} \times \left(\prod_{r \in \Phi^{-}} \mathfrak U_{r, \bf f}\right) \lra \gG_{\bf f}
\end{equation} 
is an open immersion with image $\mathfrak B_{\Phi}$. Moreover, $\mathfrak B_{\Phi}$ restricts to  the {\em big cell} $\mathfrak B_{K_{n}}$ of the generic fibre $\gG_{K_{n}}$ over $\spec(K_{n})$.
\item \label{mt6} More generally, on a base which is a smooth quasi-projective variety with a divisor with simple normal crossings, given a data of type I concave functions at the height $1$ primes associated to the components of the divisor, in Theorem~\ref{Artin-Weil-Kawamata}, it is shown that there exist a smooth affine group scheme with connected fibres on $X$ together with a big cell structure which interpolates this datum on the divisors.

\item \label{mt7} Let ${\bf f}$ be a special $n$-concave function of type III such that each term $f_{i}$ is a sum of concave functions of {\tt type~I}. Then the associated group scheme $\mathfrak{G}_{\bf f}$ is in fact {\tt affine}; see Corollary~\ref{moreaffineness}.
\end{enumerate}
The pair $(\gG_{\bf f},h_{K_{n}})$ is uniquely determined up to
a unique  isomorphism, by  properties \eqref{mt4} and \eqref{mt5}.  
\end{thm}

\begin{defi}\label{nparahoric} The group scheme $\gG_{\bf f}$ is called the {\tt nBT}-group scheme associated to an {\tt n}-concave function ${\bf f}$.  An {\tt n}-parahoric group scheme is an {\tt nBT}-group scheme associated to an  {\tt n}-concave function of {\tt type I}. \end{defi}

\begin{thm}\label{thedescription}
The group scheme $\gG_{\bf f}$ associated to the {\tt n-concave} function ${\bf f}$ has the following structure at points of depth bigger than $1$:
\begin{enumerate}[label={\rm(\alph*)}, ref={\rm\alph*}] %%\label{gratifying1}
\item\label{grat1-a} Let ${\tt I} \subset \{1, \ldots, n\}$ be a non-empty subset. For a general point $\xi \in \cap_{i \in {\tt I}} H_{i}$, where the $H_{i}$ are the coordinate  hyperplanes in $\spec(\cO_{n})$, the fibre 
$\gG_{{\bf f},\xi}$ is isomorphic to the closed fibre of the Bruhat--Tits group scheme $\gG_{{\tt f}_{\tt I}}$ on $\spec(\cO)$ associated to the {\tt 1-concave} function ${\tt f}_{\tt I}\colon \Phi \to \mathbb R$ given by $r \mapsto \sum_{i \in {\tt I}} f_{i}$.
\item\label{grat1-b} More precisely, let $\bA_{\tt I} \subset \bA^{n}$ be the ``subdiagonal'' obtained by setting the coordinates $x_{i} = t$ for $i \in \tt I$. Let $\spec(A_{\tt I})$ be the local ring at the generic point of the divisor $t=0$ in $\bA_{\tt I}$. Then the restriction of $\gG_{\bf f}$ to  $\spec(A_{\tt I})$ is isomorphic to the Bruhat--Tits group scheme $\gG_{{\tt f}_{\tt I}}$ associated to the concave function  ${\tt f}_{\tt I}$.
\end{enumerate}
\end{thm}

Let $\mathcal{O}$ be a complete DVR in mixed characteristics.
Extending the methods for group schemes over $\mathbb A^{n}_{k}$ to the case $\mathbb{A}^{n}_{\cO}$, in Section~\ref{mixedstuff} we prove the following. 

\bth\label{multipargrpschmixed} For an {\tt $(n+1)$-concave} function ${\bf f}=(f_0,\ldots,f_n)$, where $f_0$ is prescribed at  the uniformizer of\, $\mathcal{O}$, the main theorems, Theorems~\ref{multipargrpsch}  and~\ref{thedescription},  hold over $\mathbf{A}_{\cO} = {\mathbb A}^{n}_{\cO}$ under the tameness assumption of Section~\ref{tameness} \textup{(}in fact under the assumptions in Section~\ref{charassum}\;\textup{)}. 
\eeth
 
\subsection{An outline of the strategy of proof}
We now outline briefly the strategy of the proof in this paper. The general strategy is an {\em induction on the order of complexity} of concave functions on root systems. The simplest one is a function $f_{\theta}$, see \eqref{mrtheta}, which is a concave function of type I; the next in the order is the concave function $f_{\Omega}$ (see  \eqref{mromega}),
% query: should this be 1.5 instead of 1.3? i.e. \eqref{concave}  
associated to a bounded subset $\Omega$ in the apartment, \textit{i.e.} of type II. The third in the order of complexity is the general concave function $f$ of type III which need not arise as an $f_{\Omega}$.

In other words,  we build the family of {\tt nBT}-group schemes $\{\gG_{\bf f}\}$ on $\spec(\cO_{n})$ associated to an {\tt n-concave} function ${\bf f}\colon\Phi \to {\mathbb R}^{n}$ in three tiers. This is carried out by  a bootstrapping process. The first step, in Section~\ref{schematization1}, is to build the {\tt n}-parahoric group scheme (see Definition~\ref{nparahoric}) associated to a collection of $n$  points in the affine apartment, \textit{i.e.} an {\tt n-concave} function $ {\bf f} = \{f_{\theta_{j}}\}$ of {\tt type I}. This case falls into a general method of proof where the ideas arising from the notion of parabolic vector bundles play a key role. It has its origins in the paper \cite{base}, and the end result is somewhat surprising in that, unlike what Bruhat--Tits expect, namely only a {\em quasi-affine} group scheme (see \cite[Section~3.9.4]{bruhattits} and Section~\ref{revisit} below), one in fact gets an {\em affine} {\tt n}-parahoric group scheme on $\spec(\cO_{n})$ along with its  cell structure. The pair of the {\tt n}-parahoric group scheme and its cell structure become the base for the subsequent constructions. We then naturally move on to the case when instead of points in the apartment, we are given a collection of $n$ bounded subsets in the affine apartment and so work with an $n$-tuple of {\em concave} functions arising from these bounded subsets. The construction (in Section~\ref{schematization2}) of the corresponding {\tt nBT}-group scheme  on $\spec(\cO_{n})$ builds on the construction of an {\tt n}-parahoric group scheme. We assume in Section~\ref{schematization2}  that $G$ is one of $A_{n}, G_{2}, F_{4}$ or $E_{6}$, where results from \cite{bruhattits3} and \cite{ganyu1, ganyu2} play the important role of reducing  schematic constructions over DVRs for enclosed bounded subsets to their extremal vertices. The proof here is somewhat involved, especially in extending  the cell structure and the schematic group law across subsets of codimension $2$. 
The end result is that, much as expected, we get a {\em quasi-affine} {\tt nBT}-group scheme associated to the {\tt n-concave} functions of {\tt type II}. Finally, in Section~\ref{schematization3}  we turn our attention to the case of  $G$ with no constraints and also a general {\tt n-concave} function. This case is then reduced to the previous case, \textit{i.e.} of {\tt n-concave} functions of {\tt type~II}. This reduction follows from the {\em not so obvious} observation, see Proposition~\ref{btremarque}, that an {\em optimal concave function} on groups of type $A_{n}$ always arises as one coming from a bounded subset of the apartment. This remark together with the power of the main results of \cite{bruhattits} help us to realize the {\tt nBT}-group scheme  associated to an $n$-tuple of concave functions; these group schemes are {\em a fortiori} quasi-affine as expected.  A surprise, and a possible opening, is offered by a special class of concave functions of type III, which occur as sums of concave functions of type I. Here, our methods yield the {\em affineness} of the group scheme; see Corollary~\ref{moreaffineness}. This leads us to a natural question, namely ``Characterise $n$-tuples of optimal concave functions for which the associated {\tt nBT}-group schemes are  {\em affine}.''

Recall that in the equicharacteristic case when the residue field characteristic satisfies mild tameness assumptions, by \cite[Theorem 5.2.7]{base}, all parahoric subgroup schemes are recovered by the invariant direct image functor (\textit{i.e.} obtained by taking Galois invariants of Weil restriction of scalars). Following \cite{base} closely, in Section~\ref{schematizationmp} we show that Moy--Prasad group schemes over DVRs which generalize parahorics can be obtained by the invariant direct image together with the simplest types of dilatations. Over higher dimensions, this generalizes suitably  using dilations results of \cite{mayeux}. We use results of Bruhat--Tits \cite{bruhattits} to show in Section~\ref{mixedstuff} that the generalization of \cite[Theorem 5.2.7]{base} to mixed characteristics reduces to a verification on the big cell which was done in Section~\ref{schematizationmp}, following \cite{base} closely.

\subsection{Other related results and proof strategies} In \cite{land}  Bruhat--Tits group schemes are constructed by using extension of {\em birational group laws} and the theorem of Artin--Weil; see \cite[Expos\'e XVIII]{sga3}. The possibility of such an approach is briefly mentioned in \cite[Section 3.1.7]{bruhattits}. In the very recent paper \cite{lou}, smooth and separated group schemes over arbitrary Noetherian bases has been constructed; see \cite[Theorem 3.2.5]{lou}. This last result assumes that the {\it schematic root datum} extends to the base ring $A$. However, in general proving the key properties such as {\em quasi-affineness} of the group schemes does not seem immediate by these approaches. In the present paper, for $k\llbracket z_{1}, \ldots, z_{n}\rrbracket$ or $\mathcal{O}\llbracket z_{1}, \ldots, z_{n}\rrbracket$, where $\mathcal{O}$ is a discrete valuation ring, in the course of our proofs for each of the three types of concave functions, we actually show that the schematic root datum extends. 

In the context of the present paper on higher-dimensional bases, the first construction goes back to \cite[Section~3.9.4]{bruhattits}, where for $A=k[x,y]$ or $A=A_1[[z]]$ for $A_1$ a Dedekind ring, a {\tt 2BT}-group scheme is constructed by generalising \cite[Theorem~3.8.1]{bruhattits}.  

In the special setting of a $2$-concave function ${\bf f} = (0,f)$ over $A=A_1[[z]]$, for $A_1$ a Dedekind ring, we refer to \cite{pz} and Section~\ref{pappas} below. The very recent paper \cite{lou} also gives a completely different  proof of the key {\em affineness result} of these group schemes over $A=A_1[[z]]$; further, it goes on to construct and study affine Grassmannians for these group schemes. 

\subsection{Other directions} In a slightly different direction, recall that Moy--Prasad groups  (see \cite[Section~2.6]{moyprasad}) can be realized as schematic subgroups arising from {\tt BT}-group schemes arising from a small variant of the concave functions in \cite{bruhattits1}. As a natural extension of our approach, in Section~\ref{schematizationmp} we define {\tt n}-Moy--Prasad groups and the corresponding schematization.  Their schematization over discrete valuation rings was derived as a consequence of a general approach by Yu in \cite{yu}. To adapt these groups to our theory, we need to realize the Moy--Prasad groups as {\em invariant direct images}, see Notation~\ref{weilresaspushforward}, of certain group schemes from ramified covers (see \cite{base}); see Section~\ref{schematizationmp}. This involves generalising the notion of a {\em unit group} to one corresponding to the variant of concave function mentioned above. As a consequence, we show that {\tt n}-Moy--Prasad group schemes are affine  just like the group scheme
$\gG_{\bf f}$ when ${\bf f}$ is of {\tt type I}.

As further applications of this approach, we construct higher {\tt BT} {\sl  group schemes}  on certain basic spaces. More precisely, we construct and describe
\begin{itemize}
\item an $\ell${\tt{BT}}-group scheme on the De Concini--Procesi wonderful compactification  ${\bf X}$ (see \cite{decp}) of the adjoint group $G_{\ad}$, where $\ell = \rank(G)$; 
\item an $(\ell+1)${\tt{BT}}-group scheme on the loop ``wonderful embedding'' ${\bf X}^{\aff}$ of the adjoint affine Kac-Moody group $G^{\aff}_{\ad}$, constructed by P.~Solis; see \cite{solis}~-- the novelty is that the base here is an {\em ind-scheme}; 
\item a family of {\tt 2BT}-group schemes on the minimal resolution of singularities of normal surface singularities in the context of \cite{balaproc}.
\end{itemize} 

\subsection*{Frequently occurring notation and conventions}

Throughout this paper we follow the classical notation and conventions as in the foundational papers \cite{bruhattits, bruhattits1} of Bruhat--Tits and as in  the  treatise  \emph{N\'eron Models} \cite{blr} by  S. Bosch, W. Lutkebohmert and M. Raynaud.

\renewcommand{\arraystretch}{1.1}%
\begin{tabularx}{\textwidth}{>{\hsize=.8\hsize}X>{\hsize=1.2\hsize}X}
  \toprule
  $\Phi$ & Root system relative to $T$  \\
  $\cA$ & The affine apartment of $T$ \\
  ${\bf f} = (f_{1}, \ldots, f_{n})$ & {\tt n-concave}-function on $\Phi \cup \{0\}$, see Definition~\ref{types} \\
  $\bwt = (\Omega_{1}, \ldots, \Omega_{n})$ & {\tt n-tuple} of bounded subsets of $\cA$ \\
  $\bvt=(\theta_{1},\ldots, \theta_{n})$ & {\tt n}-tuple of points in the affine apartment \\
  $\bf A$ & The affine space ${\mathbb A}^{n}_{k}$ up to Section~\ref{schematizationmp} or $\mathbb{A}^{n}_{\cO}$ as in Section~\ref{extension1} \\
  $\bf A_{0}$ & The complement of the coordinate hyperplanes in $\bf A$  \\
  $(X,D)$ & Smooth scheme with a simple normal crossing divisor \\
  $X', U$   & Open subschemes of $X$ whose codimension is at least $2$, also called big open subsets \\
  ${\cP}_{\bf f}$ (see \eqref{evenmoregen}) & {\tt n}-bounded groups associated to an {\tt n}-concave function $\bf f$, see Definition~\ref{types} \\
  ${\cP}_{\boldsymbol\Omega}$ (see \eqref{moregen}) & {\tt n}-bounded groups associated to an {\tt n}-bounded subset $\bwt$ \\ 
  ${\cP}_{\bf f}^{\diag}$ (see Definition~\ref{allunifequaldef} and \eqref{allunifequal}) & The specialization to the diagonal \\
  $\gG_{\bf f}$ (resp.\ $\gG_{\bwt}$, $\gG_{\bT}$)  & {\tt n}-Bruhat--Tits group scheme associated to an {\tt n-concave} function $\bf f$ (resp.\ {\tt n-bounded} subset $\bwt$, {\tt n}-points $\bT$) \\
  $\Gamma$ & Galois group of a Kawamata cover \\
  $p^{\Gamma}_{*}$ (see Notation~\ref{weilresaspushforward}) & The {\em invariant direct image}, \textit{i.e.} Weil restriction and $\Gamma$-invariants \\
  $\cR$ &  Lie algebra bundle \\
 \bottomrule
\end{tabularx}

\subsection*{Acknowledgments}This paper had its origins in \cite{arXiv-version},  which however had an inaccuracy in the description of the group scheme at points of codimension higher than $1$. This was pointed out to us by Jochen Heinloth. We thank him for this. The content of the former manuscript is now subsumed in Section~\ref{app1} of this paper. We also thank  Michel Brion and Patrick Polo for their numerous questions and comments on the paper. Finally, we thank the anonymous referee for their comments and questions, which have led to a great improvement of the manuscript.

\part{The bounded groups}

\section{Towards the  {\tt n}-parahoric Lie algebra bundle}
The first two subsections are in vigour throughout this article. The remaining ones recall a ``loop'' approach to parahoric groups for application in Sections~\ref{constructionR} and~\ref{app1}.

\subsection{Lie data of \texorpdfstring{$\boldsymbol{G/k}$}{G/k}}\label{liedata}  In this subsection we assume the group-theoretical notions of $G$ already introduced in the introduction and give only those that we need further in this article. Let $\mathbf{a}_0$ be the unique Weyl alcove of $G$ whose closure contains $0$ and which is contained in the dominant Weyl chamber corresponding to $B$.    Under the natural pairing between $Y(T) \otimes_{\ZZ} \QQ$ and $X(T) \otimes_\ZZ \QQ$, the integral basis elements dual to $S$ are called the fundamental co-weights $\{\omega^{\vee}_{\alpha}\mid \alpha \in S\}$. Let $c_{\alpha}$ be the coefficient of $\alpha$ in the highest root. The vertices of the Weyl alcove $\mathbf{a}_0$ are exactly $0$ and 
\beqa\label{alcovevertices}
\theta_{\alpha} := {{\omega^{\vee}_{\alpha}} \over {c_{\alpha}}},\quad \alpha \in S.
\eeqa
We will call the lattice generated by the $\theta_{\alpha}$ the Tits lattice.

Let $\SL(m(G))$ be the minimal faithful representation of $G$. We fix a maximal split torus $T_{\SL}$ of $\SL(m(G))$. Let $\cA_{T_{\SL}}$ denote its apartment. We fix an alcove $\mathbf{a}$ in $\cA_{T_{\SL}}$.

\begin{defi} \label{dtheta} For any $\theta \in Y(T) \otimes \mathbb Q$,                let $d_{\theta}$ be the least positive integer such that $d_{\theta}\cdot\theta$ lies in $Y(T)$. For $\theta \in Y(T_{\SL}) \otimes \mathbb{Q}$, let $d_{\theta}$ be defined similarly. \end{defi}

Thus, if $e_\alpha$ is the order of $\omega^\vee_\alpha$ in the quotient of the co-weight lattice by $Y(T)$, it follows that for $\theta_{\alpha}$, the number $d_{\alpha}:= d_{\theta_{\alpha}}$ is 
\begin{equation} \label{dalpha}
  d_{\alpha} = e_\alpha \cdot c_{\alpha}.
\end{equation}

An affine simple root $\alpha \in \mathbb S$ may be viewed as an affine functional on $\cA_T$. Any non-empty subset $\mathbb I \subset \mathbb S$ defines the facet $\Sigma_\mathbb I \subset \ol{\mathbf{a}_0}$ where exactly the $\alpha$ not lying in the subset $\mathbb I$ vanish. So the set $\mathbb S$ corresponds to the interior of the alcove, and the vertex $\theta_{\alpha}$ of the alcove corresponds to $\alpha \in \mathbb S$. Conversely, any facet $\Sigma \subset \ol{\mathbf{a}_0}$ defines a non-empty subset $\mathbb{I}_{\Sigma} \subset \mathbb S$. For $\emptyset \neq \mathbb I \subset \mathbb S$, the barycenter of $\Sigma_\mathbb I$ is given by  
 \begin{equation} \label{Ibarycenter}
 \theta_\mathbb I:= \frac{1}{|\mathbb I|} \sum_{\alpha \in \mathbb I} \theta_{\alpha}.
 \end{equation}
 
 \subsection{Assumptions on the residue field \texorpdfstring{$\boldsymbol{k}$}{k} and points in the apartments compatible with \texorpdfstring{$\boldsymbol{k}$}{k}} \label{charassum}
 The following assumptions will be in vigour throughout the article. We assume $k$ is perfect to be able to apply \cite{bruhattits}. Let $p:=\charr(k)$.
 
{\it Characteristic Assumptions.} These are needed for the existence of a Kawamata cover (see Section~\ref{kawa}). We will assume that  $p$ is coprime to $d_{\alpha}$; see Equation~\eqref{dalpha}. Notice that since $G$ is simply connected, this assumption is implied by Section~\ref{tameness}. Indeed, since $Y(T)$ equals the coroot lattice, $Z_G$ identifies with the quotient of the co-weight lattice by $Y(T)$. Further, whenever we work with a finite set of rational points $\{\theta_1,\ldots, \theta_n\}$ of $\mathcal{A}_T$ or $\mathcal{A}_{T_{\SL}}$, we assume that for $1 \leq i \leq n$, $p$ is coprime to the integers $d_{\theta_i}$; see Definition~\ref{dtheta}. Let $h_{G}$ denote the Coxeter number of $G$.  Let $m(G)$ denote the dimension of the minimal faithful representation of $G$. When the concave function is strictly of type III, we further need $p > h_{G}$ as well as $p$ to be coprime to $m(G)$. 

{\it Assumptions on the existence of primitive roots of unity.} These are needed to make Section~\ref{grstuff} work. The residue field $k$ is such that for every facet $\Sigma$ of $\mathbf{a}_0$  (see Section~\ref{liedata}), there exists a rational point $\theta \in \Sigma$ such that $d_{\theta}$ (see Definition~\ref{dtheta}) is coprime to $\charr(k)$ and $k$ contains the primitive $\supth{d_{\theta}}$ roots of unity.

\begin{itemize}
\item  For a general $G$, this suffices when the concave function is of type I.
\item For $A_{n}, G_{2}, F_{4}, E_{6}$, this suffices for type II as well.
\item When the concave function is strictly of type III, in addition to  the condition for $\mathbf{a}_0$, we need to assume the analogous condition for $\mathbf{a}$ (see Section~\ref{liedata}). 
\end{itemize}

Whenever we work with a finite set of rational points $\{\theta_1,\ldots, \theta_n\}$ of $\mathcal{A}_T$ or $\mathcal{A}_{T_{\SL}}$, we assume that for $1 \leq i \leq n$, $k$ contains the primitive $\supth{d_{\theta_i}}$ roots of unity.

We observe that any facet $\Sigma \subset \cA_T$ %of dimension
% query: what number is missing? 
contains a rational one-parameter subgroup $\theta$ of $T$ which is a convex combination of the vertices of $\Sigma$ in a way that for $d \in \mathbb{N}$ coprime to $p$, the multiple $d \theta$ becomes a one-parameter subgroup of $T$. We see this as follows. Since the alcove $\mathbf{a}_0$ is a fundamental domain for the action of the affine Weyl group, we may suppose that $\Sigma$ is a facet $\Sigma_{\mathbb{I}}$ of $\mathbf{a}_0$. Say $\mathbb{I}=\{\theta_{0},\ldots, \theta_{j} \}$ are the vertices of $\Sigma_{\mathbb{I}}$ and $\theta_{0}=\theta_{\alpha}$. Then since the $d_{\alpha}$ defined in \eqref{dalpha} are always strictly greater than $1$, it follows that we may take 
\begin{equation}
\theta= \frac{\theta_{0} + \sum_{i=1}^j \left(d_{\alpha}^i - d_{\alpha}^{i-1}\right) \theta_{i}}  {d_{\alpha}^j}.
\end{equation}

We remark that if $L$ denotes the LCM of the coefficients of the highest root and $\ell$ is the rank of $T$, then for type I, it suffices that $k$ has $\big( |Z_G| L \big)^{\ell}$ primitive roots of unity. For type III, $k$ must further have primitive $\supth{({m(G)^{m(G)}})}$ roots of unity.

\subsection{Loop groups and their parahoric subgroups}  \label{loopgpsec}
{\sl We have included this section, where we introduce  the language of loop groups to describe parahoric groups. We however add that this terminology is restricted to Sections~\ref{constructionR} and~\ref{app1} of this paper.}

  The loop group $\LG$ is the group functor which associates to  a $k$-algebra $R$ the group $G(R(\!(z)\!))$; this is representable by an ind-scheme over $k$. Similarly, the loop Lie algebra functor $L \gfr$ is given by $L \gfr(R)=\gfr(R(\!(z)\!))$. We can similarly define the positive loops (also called {\it jet groups} in the literature) 
$L^+(G)$ to be the subfunctor of $\LG$ defined by $L^+(G)(R) := G(R\llbracket z \rrbracket)$. The positive-loop construction extends more generally to any group scheme $\mathcal{G} \rightarrow \spec(A)$ and any vector bundle $\mathfrak{P} \rightarrow \spec(A)$ whose sheaf of sections carries a Lie bracket.

Let $L^{\ltimes}G= \GG_m \ltimes \LG$,  where the {\it rotational torus} $\GG_m$ acts on $\LG$ by acting on the uniformizer via the {\it loop  rotation action} as follows: $u \in \GG_m(R)$ acts on $\gamma(z) \in \LG(R)=G(R(\!(t)\!))$ by $u\gamma(z)u^{-1}=\gamma(uz)$. A maximal torus of $L^{\ltimes}G$ is $T^{\ltimes}=\GG_m \times T$.  An $\eta \in \Hom(\GG_m,T^{\ltimes}) \otimes_{\ZZ} \QQ$ over $R(s)$ can be viewed as a rational one-parameter subgroup ($1$-PS), \textit{i.e.} a $1$-PS $\GG_m \ra T^{\ltimes}$ over $R(w)$, where $w^n=s$ for some $n \geq 1$. In this case, for $\gamma(z) \in L^{\ltimes}G(R)$, we will view $\eta(s) \gamma(z) \eta(s)^{-1}$ as an element in $L^{\ltimes} G(R(w))$. Hence, by the condition  
\begin{equation}
  \lim_{s \ra 0} \eta(s) \gamma(z) \eta(s)^{-1} \text{ exists in } L^{\ltimes}G(R)
\end{equation} 
on $\gamma(z) \in L^{\ltimes}G(R)$, we mean that  there exists an $n \geq 1$ such that for $w^n=s$, we have
\begin{equation}
  \eta(s) \gamma(z) \eta(s)^{-1} \in L^{\ltimes}G(R \llbracket w \rrbracket ).
\end{equation}
This is the underlying principle in \cite{base} in a more global setting.
Let $\eta=(a,\theta)$ for $a$ a strictly positive rational number and $\theta$ a rational $1$-PS of $T$. Then, we have
\begin{equation} \label{loopconjugation}
\eta(s) \gamma(z) \eta(s)^{-1}=\theta(s) \gamma\left(s^{a}z\right) \theta(s)^{-1}.
\end{equation}
We note further that for any $0<d \in \mathbb{N}$, setting $\eta=(\frac{a}{d}, \frac{\theta}{d})$ we have
\begin{equation}
\eta(s) \gamma(z) \eta(s)^{-1}=\theta\left(s^{\frac{1}{d}}\right) \gamma\left(s^{\frac{a}{d}}z\right) \theta\left(s^{\frac{1}{d}}\right)^{-1}.
\end{equation}
So for $\eta=\frac{1}{d}(1,\theta)$, observe that the statement ``$\lim_{s \rightarrow 0} \eta(s) \gamma(z) \eta(s)^{-1}$ exists'' is a condition which is equivalent to  
\begin{equation}\label{observation}
  \theta(s) \gamma(s) \theta(s)^{-1} \in L^{\ltimes}G(R \llbracket w \rrbracket ).
\end{equation} 
In other words, the condition of the existence of limits is independent of $d$,  and we may further set $z=s$ in $\gamma(z)$. More generally, if $a>0$, the conditions for $(a,\theta)$ and $(1,\frac{\theta}{a})$ are equivalent. We may write this condition on $\gamma(z) \in L^{\ltimes}G(R)$ or $ \LG(R)$ as 
\begin{equation} \label{meaning}
 \text{``}\lim_{s \rightarrow 0} \theta(s) \gamma(s) \theta(s)^{-1}  \text{ exists in }  L^{\ltimes}G(R) \text{ or }  \LG(R) \text{ if }  \gamma(z) \in \LG(R)\text{''}.
\end{equation}
For $r \in \Phi$, let $u_r\colon \mathbb{G}_a \rightarrow G$ denote the root subgroup. If for some $b \in \mathbb{Z}$ and $t(z) \in R\llbracket z \rrbracket$, we take $\gamma(z):=u_r(z^{b} t(z)) \in Lu_r(R)$ and $\eta:=(1,\theta)$, then 
\begin{equation}
\eta(s) u_r(z^{b} t(z)) \eta(s)^{-1}=\theta(s) u_r((sz)^{b} t(sz)) \theta(s)^{-1}=u_r( s^{r(\theta)} (sz)^{b} t(sz)). 
\end{equation}
So for $\eta=(1,\theta)$, the condition that the limit exists is equivalent to 
\begin{equation} \label{floorcondition}
  r(\theta)+b \geq 0 \iff \lfloor r(\theta)+ b \rfloor = \lfloor r(\theta) \rfloor + b \geq 0 \iff b \geq -\lfloor r(\theta) \rfloor.
\end{equation}
We note the independence of the above implications of the number $d$ occurring in the equation $\eta:=\frac{1}{d}(1,\theta)$.

Let $\pi_{1}\colon T^{\ltimes} \ra \GG_m$ be the first projection. For any rational $1$-PS $\eta\colon \GG_m \ra T^{\ltimes}$, we say $\eta$ is positive if $\pi_{1} \circ \eta >0$, negative if $\pi_{1} \circ \eta<0$ and non-zero if $\pi_{1} \circ \eta$ is either positive or negative. In this paper, we will only need to work with $\eta$ which are positive.

Any non-zero $\eta=(a,\theta)$ defines the following positive-loop functors from the category of $k$-algebras to the category of groups and Lie algebras:
 \begin{eqnarray} \label{parlimstozero}
 {\cP}_{\eta}^{\ltimes}(R):= \left\{ \gamma \in L^{\ltimes}G(R) \;\Big|\;  \lim_{s \ra 0} \eta(s) \gamma(z) \eta(s)^{-1}  \text{ exists in }  L^{\ltimes}G(R) \right\}, \\
 \label{liealglimstozero}
 {\mathfrak{P}}_{\eta}^{\ltimes}(R):= \left\{ h \in L^{\ltimes} \gfr(R) \;\Big|\;  \lim_{s \ra 0} {Ad}(\eta(s))(h(z))  \text{ exists in }  L^{\ltimes} \gfr(R) \right\}, \\
 \label{liealglimstozero1}
 {\cP}_{\eta}(R):= \left\{ \gamma \in \LG(R) \;\Big|\;  \lim_{s \ra 0} \eta(s) \gamma(z) \eta(s)^{-1} \text{ exists in }  \LG(R) \right\}, \\
 \label{paraLiealg}
 {\mathfrak{P}}_{\eta}(R)=\left\{ h \in L \gfr(R) \;\Big|\;  \lim_{s \ra 0} Ad(\eta(s))(h(z)) \text{ exists in }  L \gfr(R)\right\},\\
 \label{liealgpara}
\text{Thus,}~~ \cP_{\eta}:=\cP_{\eta}^{\ltimes} \cap (1 \times \LG) \quad \text{and} \quad  {\mathfrak{P}}_{\eta}:={\mathfrak{P}}_{\eta}^{\ltimes} \cap  (0 \oplus L \gfr).
 \end{eqnarray}

A {\it parahoric} subgroup  of $L^{\ltimes}G$ (resp.\ $\LG$) is a subgroup that is conjugate to $\cP^{\ltimes}_{\eta}$ (resp.\ $\cP_{\eta}$) for some $\eta$. In this paper we will mostly be using only the case when $\eta=\frac{1}{d}(1,\theta)$. In this case, letting $L\mathfrak{g}(R)=\mathfrak{g}(R(\!(s)\!))$ as in \eqref{meaning}, we may reformulate \eqref{liealgpara} as 
\begin{equation} \label{meaning1} 
{\mathfrak{P}}_{\eta}(R)=\left\{h \in L\mathfrak{g}(R) \;\Big| \; \lim_{s \rightarrow 0} Ad(\theta(s))(h(s))  \text{ exists in }  L\mathfrak{g}(R) \right\}.
\end{equation}

Let $\mathfrak{t}$ denote the Cartan subalgebra associated to $T$, and let $\mathfrak g_r \subset \mathfrak g$ be the root subspace associated to $U_r$. Then in terms of generators, the Lie algebra functor associated to the group functor $\cP_{\theta}$ (see \eqref{parahoricwithgen})  is given by 
\begin{equation}\label{lieparwg}
\Lie({\cP}_{\theta})(R)=\left\langle \mathfrak{t}(R\llbracket z \rrbracket), \mathfrak{g}_r\left(z^{m_r(\theta)} R\llbracket z \rrbracket\right), r \in \Phi \right\rangle.
\end{equation}

{\sl By the conditions \eqref{parlimstozero} and \eqref{liealgpara}, using \eqref{floorcondition}, we may express $\cP_{\eta}$ in terms of generators  as in \eqref{parahoricwithgen} and its Lie algebra functor $\Lie(\cP_{\eta})$ as in \eqref{lieparwg}}.
Thus, for any $\eta$ of the type $(1,\theta)$,  we get the equality of Lie algebra functors 
\begin{equation} \label{parahorLiealgrel}
\Lie(\cP_{\eta})={\mathfrak{P}}_{\eta}.
\end{equation}
This can be seen by using (\ref{liealglimstozero}) and (\ref{liealgpara}) and then  replacing the conjugation in (\ref{loopconjugation})  by $\Ad(\eta(s))$.

For a rational $1$-PS $\theta$ of $T$ with $\eta=(1,\theta)$, we will have the notation 
\begin{equation} \label{paratheta}
{\mathfrak{P}}_{\theta}:={\mathfrak{P}}_{\eta}.
\end{equation}

\subsection{Pro-groups and pro-Lie algebras} \label{btgpsch}
Recall that to each facet $\Sigma_\mathbb I \subset \ol{\mathbf{a}_0}$, Bruhat--Tits theory associates  a parahoric group scheme $\gG_{\mathbb I}$ on $\spec({\cO})$, which is smooth and affine with connected fibres and whose generic fibre is $G \times _{\spec(k)} \spec(K)$.  Let $L^+(\gG_{\mathbb I})$ be the group functor
\begin{equation}\label{lplus}
L^+(\gG_{\mathbb I})(R) :=  \gG_{\mathbb I}(R\llbracket z \rrbracket).
\end{equation}
This is represented by a  pro-algebraic group over $k$ which is a subgroup
of the loop group $\LG$.  For $\eta=(1,\theta)$, where $\theta$ is any rational $1$-PS lying in $\Sigma_\mathbb I$, we therefore have the following identifications: 
\begin{equation}
  L^+(\gG_{\mathbb I})(k) = \cP_{\eta}  \quad \text{and} \quad \Lie(L^+(\gG_{\mathbb I})(k)) = {\mathfrak{P}}_{\eta}.
\end{equation}

\subsection{{\tt n}-parahoric loop groups in higher dimensions}\label{higherobservation} In this subsection we give a loop-theoretic reformulation of the definitions in  Section~\ref{loopgpsec} to the higher-dimensional base. As always, let $\cO_{n}: =k\llbracket z_{1},\ldots,z_{n} \rrbracket$ and let $R$ denote an arbitrary $k$-algebra. We denote the field of Laurent polynomials by $K_{n}=k(\!(z_{1},\ldots,z_{n})\!)= k\llbracket z_{1},\ldots,z_{n} \rrbracket[\ldots, z_i^{-1},\ldots]$. The {\tt n-loop} group $L_{n}G$ on a $k$-algebra $R$ is given by $L_{n}G(R) := G\big(R(\!(z_{1},\ldots,z_{n})\!)\big)$. Similarly, the {\tt n-loop Lie algebra} $L_{n} \gfr$ is given by $L_{n} \gfr(R): =\gfr\big(R(\!(z_{1},\ldots,z_{n})\!)\big)$.

Let $L^{\ltimes}_{n}G := \GG_{m}^{n} \ltimes L_{n}G$, where the {\it rotational torus} $\GG_m^{n}$ acts on $L_{n}G$ by acting on the uniformizer via the {\it loop  rotation action}. This goes as follows: an element $\mathbf{u} :=(\ldots,u_{i},\ldots) \in \GG_m^{n}(R)$ acts on $\gamma(z_{1},\ldots,z_{n}) \in L_{n}G(R) = G(R(\!(z_{1},\ldots,z_{n})\!))$ by 
\begin{equation} \label{looprotationhd}
  \mathbf{u}\gamma(z)\mathbf{u}^{-1}:= \gamma(\ldots,u_{i}z_{i},\ldots).
\end{equation}
% ldots

A maximal torus of $L_{n}^{\ltimes}G$ is $T^{\ltimes}=\GG_m^{n} \times T$.  An $\mathbf{\eta} \in \Hom(\GG_m,T^{\ltimes}) \otimes_{\ZZ} \QQ$ over $R(s)$ can be viewed as a rational $1$-PS. For example, if $\eta_{i}=\big((0,\ldots,a_{i},\ldots,0),\theta_{i}\big)$ for $(0,\ldots,a_{i},\ldots,0) \in \mathbb{Q}^{n}$ and $\theta_{i}$ a rational $1$-PS of $T$, then  as before, we have 
\begin{equation} \label{hdconjugation}
\eta_{i}(s) \gamma(z_{1},\ldots, z_{n}) \eta_{i}(s)^{-1} = \theta_{i}(s) \gamma(z_{1},\ldots,s^{a_i} z_{i},\ldots,z_{n}) \theta_{i}(s)^{-1}.
\end{equation}

For $\eta_{1},\ldots,\eta_{n}$ as before, we can make sense of the condition on $\gamma(z) \in L_{n}^{\ltimes}G(R)$
{\begin{center}
    {``$\lim_{s_i \ra 0} \prod \eta_{i}(s_i) \gamma(z_{1},\ldots,z_{n}) \prod \eta_{i}(s_i)^{-1}$ exists in $L_{n}^{\ltimes}G(R)$''.}
\end{center}} 

Similarly, if ${\bT}=(\theta_{1},\theta_{i},\ldots, \theta_{n}) \in (Y(T) \otimes \mathbb{Q})^{n}$ denotes an $n$-tuple of  rational $1$-PSs of $T$ and if $m_r(\theta)=-\lfloor r(\theta) \rfloor$ as in \eqref{mrtheta}, then in terms of generators, we have an {\tt n-parahoric} group ${\cP}_{\bT}(R)$ associated to $\bT$  as in  \eqref{moregen0}. 

Defining $\eta_{i}:=\big((0,\ldots,1,\ldots,0),\theta_{i}\big)$ as before, we can describe the $n$-parahoric groups as follows:
\begin{eqnarray} 
{\cP}_{\eta_{1},\ldots,\eta_{n}}(R):= \left\{ \gamma(z) \in L_{n}^{\ltimes}G(R) \;\Big|\; \lim_{s_i \ra 0} \prod \eta_{i}(s_i). \gamma(z_{1},\ldots,z_{n}). \prod \eta_{i}(s_i)^{-1} \text{ exists} \right\} \\ \label{gphd}
\text{and in fact}~~{\cP}_{\bT}(R) = {\cP}_{\eta_{1},\ldots,\eta_{n}}(R) \cap (1 \times L_{n}G)(R).
\end{eqnarray}

Thus facets correspond to $n$-tuple of usual facets.
Further, let $\lambda \in \mathcal{A}_T$. Then 
\begin{equation}
\lambda(z_{i}) {\cP}_{\bT} \lambda(z_{i})^{-1}= {\cP}_{(\theta_{1},\ldots,\theta_{i}+\lambda, \ldots, \theta_{n})}.
\end{equation}
So conjugacy classes of parahorics  correspond to $n$-tuples of non-empty subsets of $\mathbb{S}$.

We can similarly describe the {\tt n}-parahoric Lie algebras. In particular,
\begin{eqnarray} 
 {\mathfrak{P}}^{\ltimes}_{\eta_i}(R)=\left\{ h \in L_{n}\gfr(R) \;\Big|\;  \lim_{s_i \ra 0} \Ad\left(\prod \eta_i(s_i)\right)(h(z))  \text{ exists in }  L_{n}^{\ltimes} \gfr(R)\right\},
 \\ \label{Liealghd}
\text{Thus,}~~ {\mathfrak{P}}_{\bT}(R):={\mathfrak{P}}_{\eta}^{\ltimes}(R) \cap  (0 \oplus L_{n}\gfr)(R). 
\end{eqnarray}

\subsection{Standard parahoric subgroups} \label{sbtgpsch}
The {\em standard parahoric subgroups} of $G(K)$ are parahoric subgroups of the distinguished hyperspecial parahoric subgroup  $G(\cO)$. These are realized as inverse images under the evaluation map $\ev\colon G(\cO) \to G(k)$ of standard parabolic subgroups of $G$. In particular, the standard {\em Iwahori subgroup} ${\mathfrak I}$ is the group ${\mathfrak I} = \ev^{-1}(B)$.

\section{Examples and Remarks}
The first example is illustrative of the fact that the operations of imposing regularity conditions and setting variables to be equal do not commute.
\begin{ex} Let $G = \SL_2$, $\cO=\cO_{2}$ and $K:=K_{2}$. Let ${\bvt}=(\theta_{1},\theta_{2}) \in \mathbb{Q}^2$. 
Then 
\begin{eqnarray}
\begin{pmatrix} s_{1}^{\theta_{1}}  s_{2}^{\theta_{2}} & 0 \\ 0 & s_{1}^{-\theta_{1}} s_{2}^{- \theta_{2}} \end{pmatrix}  \begin{pmatrix} X_{11}(s_{1}z_1,s_{2}z_2) & X_{12}(s_{1}z_1,s_{2}z_2) \\ X_{21}(s_{1}z_1,s_{2}z_2) & X_{22}(s_{1}z_1,s_{2}z_2) \end{pmatrix} \begin{pmatrix} s_{2}^{-\theta_{2}} s_{1}^{-\theta_{1}} & 0 \\ 0 & s_{2}^{\theta_{2}} s_{1}^{\theta_{1}} \end{pmatrix}  \\ \label{midconju}
\hphantom{\Bigg(}= \begin{pmatrix} X_{11}(s_{1}z_1,s_{2}z_2) & X_{12}(s_{1}z_1,s_{2}z_2) ~s_{1}^{2\theta_{1}} s_{2}^{2\theta_{2}} \\ X_{21}(s_{1}z_1,s_{2}z_2) ~s_{1}^{-2\theta_{1}} s_{2}^{-2\theta_{2}} &  X_{22}(s_{1}z_1,s_{2}z_2) \end{pmatrix} \subset SL_2(K(s_{1},s_{2})). 
\end{eqnarray}
If we put the regularity condition ``$\lim_{s_i \rightarrow 0} \text{exists}$'', we get
\begin{equation}
{\cP}_{\bvt} = \begin{pmatrix} 
\cO & \cO z_1^{- \lfloor 2 \theta_{1} \rfloor } z_2^{- \lfloor 2 \theta_{2} \rfloor } \\
\cO z_1^{-\lfloor -2\theta_{1} \rfloor } z_2^{- \lfloor -2 \theta_{2} \rfloor } & \cO
\end{pmatrix}.
\end{equation}
Now setting $t:=z_1=z_2$, $B:= k \llbracket t \rrbracket$ and ${\bvt} \in (0,\frac{1}{2})^2$, we get 
$$
{\cP}_{\bvt}^{\diag} = \begin{pmatrix}
 B & B \\
t^2 B & B
\end{pmatrix}.
$$
On the other hand, setting $t:=z_1=z_2$ and $s:=s_{1}=s_{2}$ in \eqref{midconju}, we get
\begin{equation}
\begin{pmatrix}
X_{11}(st) & X_{12}(st) ~s^{2 \theta_{1} + 2 \theta_{2}} \\
X_{21}(st) ~s^{-2 \theta_{1} - 2 \theta_{2}} & X_{22}(st)
\end{pmatrix}.
\end{equation}
Now putting the regularity condition ``$\,\lim_{s \rightarrow 0} \text{exists}$'', we get $\begin{pmatrix}
B & B t^{- \lfloor 2 \theta_{1}+2 \theta_{2} \rfloor} \\
B t^{- \lfloor -2 \theta_{1} - 2 \theta_{2} \rfloor} & B
\end{pmatrix}$. Notice for ${\bvt} \in (0,\frac{1}{2})^2$ that when $0< \theta_{1}+  \theta_{2} < \frac{1}{2}$, we get 
$\begin{pmatrix}
B & B \\
t B & B
\end{pmatrix}$, while when $\frac{1}{2} \leq  \theta_{1} +  \theta_{2} < 1$, we get
$\begin{pmatrix}
B & t^{-1} B \\
t^2 B & B
\end{pmatrix}$.
In either case, for ${\bvt} \in (0,\frac{1}{2})^2$, we see that the operations of imposing regularity conditions and setting variables to be equal do not commute.
\end{ex}

\begin{ex} This example shows that the multiplication law in higher-dimensional parahorics is more involved than in the case of discrete valuation rings.

  Let $G:= \SL_{3}$, $\cO:=\cO_{2}$ and $K:=K_{2}$. In this example we study ${\bvt}:=(\frac{\omega^{\vee}_{\alpha_1}}{3},\frac{\omega^{\vee}_{\alpha_2}}{3})$. The co-weights are given by the points $(\frac{2}{3},\frac{-1}{3},\frac{-1}{3})$, $(\frac{1}{3},\frac{1}{3},\frac{-2}{3}) \in \mathcal{A}_T \subset \mathbb{R}^3$;  \textit{i.e.} $ \omega^{\vee}_{\alpha_1}(t)=\diag(t^{\frac{2}{3}}, t^{\frac{-1}{3}}, t^{\frac{-1}{3}})$ and $ \omega^{\vee}_{\alpha_2}(t)=\diag(t^{\frac{1}{3}}, t^{\frac{1}{3}}, t^{\frac{-2}{3}})$. We have
\begin{equation}
  {\bvt}(s_{1},s_{2})= \diag\left(s_{1}^{\frac{2}{9}}s_{2}^{\frac{1}{9}}, s_{1}^{\frac{-1}{9}}s_{2}^{\frac{1}{9}}, s_{1}^{\frac{-1}{9}}s_{2}^{\frac{2}{9}}\right).
\end{equation}
Conjugating the $3 \times 3$ matrix $\begin{pmatrix}
E_{ij}(s_{1}z_1,s_{2}z_2)
\end{pmatrix}$ by ${\bvt}(s_{1},s_{2})$, we get 
\begin{equation}
\begin{pmatrix}
E_{11}(s_{1}z_1,s_{2}z_2) & E_{12}(s_{1}z_1,s_{2}z_2)s_{1}^{\frac{1}{3}} & E_{13}(s_{1}z_1,s_{2}z_2) s_{1}^{\frac{1}{3}}s_{2}^{\frac{1}{3}} \\
E_{21}(s_{1}z_1,s_{2}z_2) s_{1}^{\frac{-1}{3}} & E_{22}(s_{1}z_1,s_{2}z_2) & E_{23}(s_{1}z_1,s_{2}z_2)s_{2}^{\frac{1}{3}} \\
E_{31}(s_{1}z_1,s_{2}z_2) s_{1}^{\frac{-1}{3}}s_{2}^{\frac{-1}{3}}& E_{32}(s_{1}z_1,s_{2}z_2) s_{2}^{\frac{-1}{3}}& E_{33}(s_{1}z_1,s_{2}z_2)
\end{pmatrix}.
\end{equation}
Imposing regularity conditions forces $E_{12}$, $E_{13}$ and $E_{23}$ to be regular and $E_{21}(z_1,z_2)$ (resp.\ $E_{23}(z_1,z_2)$) to have a zero of order at least $1$ with respect to $z_1$ (resp.\ $z_2$) and $E_{31}(z_1,z_2)$ to have a zero of order at least $1$ with respect to both $z_1$ and $z_2$. So we get 
\begin{equation}
{\cP}_{\bvt} = \left\{ \begin{pmatrix}
F_{11}(z_1,z_2) & F_{12}(z_1,z_2) & F_{13}(z_1,z_2) \\
z_1 F_{21}(z_1,z_2) & F_{22}(z_1,z_2) & F_{23}(z_1,z_2) \\
z_1 z_2 F_{31}(z_1,z_2) & z_2 F_{32}(z_1,z_2) & F_{33}(z_1,z_2) 
\end{pmatrix} \;\Bigg|\; F_{ij}(z_1,z_2) \in \cO_{2} \right\}.
\end{equation}
We notice that 
\begin{eqnarray} 
z_1z_2 F_{31}=z_1z_2(g_{31}h_{11}+g_{32}h_{21}+g_{33}h_{31}) \quad \text{but} \\
z_1 F_{21}=z_1(g_{21}h_{11}+g_{22}h_{21}+g_{23}h_{31}z_2).
\end{eqnarray} In other words, the multiplication law for the entry $(3,1)$ is like that of $SL_3(\cO_{2})$, but that of $(2,1)$ has an extra $z_2$. Similarly, the law for the entries $(2,2)$ and $(1,1)$ are unlike the $1$-dimensional case. 
\end{ex}

\subsection{Some remarks on specializations of {\tt n}-parahoric groups and examples}\label{allunifequal0}
The purpose of this subsection is to show that specialization of {\tt n}-parahoric groups leads to a strictly larger scope than that of parahoric groups.

\begin{ex}  This example shows the simplest case of relating higher-dimensional parahorics to lower-dimensional ones. We place ourselves in the context of the previous example. If we set $s_{2},z_2:=1$ or $s_{1},z_1:=1$, we get the two standard maximal parabolics. If we set $s_{1},z_1,s_{2},z_2:=t$, all variables as $t$, then conjugation is by $\diag(t^{\frac{1}{3}}, 1, t^{\frac{-1}{3}})$. This gives the standard Iwahori; see Section~\ref{sbtgpsch}.
\end{ex}

\begin{prop} \label{mrthetaisconcave} Let $\theta \in \mathcal{A}_T$ and let $m_r(\theta) := -\lfloor r(\theta) \rfloor$ as in \eqref{mrtheta}. Then the function ${\cm}_{\theta}\colon\tilde{\Phi} \rightarrow \mathbb{R}$ defined by $\cm_{\theta}(r) := m_r(\theta)$  is concave.
\end{prop}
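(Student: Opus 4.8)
The plan is to prove concavity of $\text{\cursive m}_{_\theta}$ directly from the definition \eqref{concave} by reducing the additivity statement for the floor-composed linear functional $r \mapsto -\lfloor r(\theta) \rfloor$ to the elementary superadditivity of the floor function established in the preceding proposition \eqref{convex}. The key observation is that $r \mapsto r(\theta) = (r,\theta)$ is genuinely \emph{linear} in $r$, so the only source of failure of exact additivity is the floor operation, and that failure goes in precisely the direction we need.

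First I would fix roots $r_{_1}, \ldots, r_{_k} \in \tilde{\Phi}$ with $\sum_i r_{_i} \in \tilde{\Phi}$, and set $a_{_i} := r_{_i}(\theta) = (r_{_i},\theta)$. By linearity of the pairing, $\left(\sum_i r_{_i}\right)(\theta) = \sum_i a_{_i}$. Then I would compute
\begin{equation*}
\text{\cursive m}_{_\theta}\Big(\sum_i r_{_i}\Big) = -\Big\lfloor \sum_i a_{_i} \Big\rfloor \leq -\sum_i \lfloor a_{_i} \rfloor = \sum_i \big(-\lfloor a_{_i} \rfloor\big) = \sum_i \text{\cursive m}_{_\theta}(r_{_i}),
\end{equation*}
where the middle inequality is exactly the superadditivity of the floor from \eqref{convex}, with the inequality reversing when negated. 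This is the whole argument: the concavity inequality \eqref{concave} is simply the superadditivity bound \eqref{convex} with a sign flip.

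The only point requiring mild care is the value at the zero root, i.e.\ the extension to $\tilde{\Phi} = \Phi \cup \{0\}$: one should check that $\text{\cursive m}_{_\theta}(0) = -\lfloor 0(\theta) \rfloor = -\lfloor 0 \rfloor = 0$ is consistent, so that terms involving the zero root contribute nothing and do not disturb the inequality. I do not anticipate a genuine obstacle here; the proposition is essentially immediate once \eqref{convex} is in hand, and indeed the author has clearly positioned \eqref{convex} as the lemma feeding this result. The entire content is that linearity of $(r,\theta)$ in $r$ transports the floor superadditivity directly into concavity of $m_{_r}(\theta)$.
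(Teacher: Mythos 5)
Your proposal is correct and is essentially identical to the paper's own proof: both set $a_{_i} := r_{_i}(\theta)$, use linearity of the pairing in $r$, and apply the floor superadditivity \eqref{convex} with a sign flip. The extra remark about the zero root is a harmless additional check that the paper leaves implicit.
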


\begin{proof} Let $r_{i} \in \tilde{\Phi}$ be such that $\sum_{i} r_{i} \in \tilde{\Phi}$ as well.  We need to check that 
\begin{equation}
\cm_{\theta}\left({\sum_{i} r_{i}}\right) = m_{\sum_{i} r_{i}(\theta)} \leq \sum_i m_{r_{i}}(\theta).
\end{equation}
This follows by setting $a_i:=r_{i}(\theta)$ and observing $
\lfloor a_1 +\cdots+a_n \rfloor \geq \lfloor a_1 \rfloor + \lfloor a_2 \rfloor + \cdots + \lfloor a_n \rfloor$.
\end{proof}

\begin{defi}\label{allunifequaldef} For an {\tt n-concave} function ${\bf f} := (f_{1}, \ldots, f_{n})$, by the specialization of ${\cP}_{\bf f} \subset G(k(\!(z_{1},\ldots,z_{n})\!))$ to the diagonal 
\begin{equation}\label{allunifequal}
{\cP}_{\bf f}^{\diag} \subset G(k(\!(t)\!)), 
\end{equation}
we mean the {\em bounded subgroup} obtained by setting $z_1=z_2=\cdots=z_n:=t$.
\end{defi} 
In particular, we have the specializations ${\cP}_{\bwt}^{\diag}$ (of ${\cP}_{\bwt}$) and ${\cP}_{\bvt}^{\diag}$ (of ${\cP}_{\bvt}$) associated to $\bwt = (\Omega_{1}, \ldots, \Omega_{n})$ and ${\bvt}:=(\theta_1,\ldots,\theta_n)$, respectively.

Since sums of concave functions are concave,  in terms of the concave function $\Phi \rightarrow \mathbb{Z}$ given by $r \ms \sum_i m_r(\theta_i)$, we get 
\begin{equation}\label{allunifequal1}
{\cP}_{\bvt}^{\diag} = \left\langle T(k \llbracket t \rrbracket), U_r\left(t^{\sum_i m_r(\theta_i)} k \llbracket t \rrbracket\right), r \in \Phi \right\rangle.
\end{equation}
We set
\begin{equation} \label{allunifequalLie}
  \mathfrak{P}_{\bvt}^{\diag} := \Lie\left({\cP}_{\bvt}^{\diag}\right).
\end{equation}

\brem We now make some remarks  comparing the {\em bounded subgroup} ${\cP}_{\bvt}^{\diag}$ with the {\em parahoric subgroup} ${\cP}_{\sum_i \theta_i}$, both subgroups of $G(K)$. Recall that $m_r(\theta)=-\lfloor r(\theta) \rfloor$. So let us compare $m_r(\sum_i \theta_i)$ with $\sum_i m_r(\theta_i)$.  For simplicity, we assume that all $\theta_i$ lie in the dominant Weyl chamber so that for all positive roots $r$, we have $r(\theta_i)>0$.

Consider the case when for all $r \in R^+$, we have $m_r(\sum_i \theta_i)= \sum_i m_r(\theta_i)$. This occurs for instance when all $\theta_i$ lie in the alcove $\mathbf{a}_0$ and their sum lies in $\mathbf{a}_0 \setminus \text{far wall}$ or when the $\theta_{i}$ lie in the lattice defined by the $\theta_{\alpha_i}$; see \eqref{alcovevertices}. For alcove vertices $\theta_{\alpha_i}$, an example of such a ${\bvt} = (\theta_{1}, \ldots, \theta_{\ell})$ is given by assigning 
\begin{equation} \label{dialatedTitsvectors}
\theta_i:= \frac{\theta_{\alpha_i}}{(\ell+1)}.
\end{equation}
In this case, for any $r \in 
\Phi^+$, although $r(\theta_i)>0$, we  nonetheless have $m_r(\sum_i \theta_i) = 0$ and $m_r(\theta_i)= 0$ and, further, $m_{-r}(\theta_i)=1$ and $m_{-r}(\sum_i \theta_i)=1$. Then the orders of poles that the group entries of ${\cP}_{\bvt}^{\diag}$ and ${\cP}_{\sum_i \theta_i}$ can allow at $r$ are the same. However, the order of zeros of elements of ${\cP}_{\bvt}^{\diag}$ at $-r$ is the number $S_r$ of distinct simple roots $\alpha \in S$ occurring with strictly positive coefficient in the expression of $r$. Thus,
\begin{equation}
{\cP}_{\bvt}^{\diag} = \left\langle T(k \llbracket t \rrbracket ), U_r( R\llbracket t \rrbracket), U_{-r}\left(t^{S_r} k \llbracket t \rrbracket\right), r \in \Phi^+\right\rangle.
\end{equation} 
Further, the bounded subgroup ${\cP}_{\sum_i \theta_i}$ is not a $G(K)$-conjugate of a subgroup of ${\cP}_{\bvt}^{\diag}$ in this case unless $|S|=1$.

Now consider the case wherein, for $r \in R^+$, we have $m_r(\sum_i \theta_i)< \sum_i m_r(\theta_i)$. This occurs for instance  when $G = \SL_2$, $n=2$ and $r(\theta_1)=r(\theta_2)=a_1=a_2=\frac{1}{2}$.  In this case, $m_r(\theta_i)=0$ but $m_r(\sum_i \theta_i)=-1$. Then the orders of poles which the elements of ${\cP}_{\sum_i \theta_i}$ can allow are strictly greater than those of the elements of ${\cP}_{\bvt}^{\diag}$ at~$r$.  On the other hand,  $m_{-r}(\theta_i)=1$ and $m_{-r}(\sum_i \theta_i)=1$ as well. So the order of zeros of ${\cP}_{\sum_i \theta_i}$ at $-r$ is at least~$1$ but that of ${\cP}_{\bvt}^{\diag}$ at $-r$ is at least~$2$. So reasoning on the orders of the poles and zeros, it follows that ${\cP}_{\sum_i \theta_i}$ is not a $G(K)$-conjugate of a subgroup of ${\cP}_{\bvt}^{\diag}$.\erem

\subsection{Constructing concave functions of {\tt type III} from {\tt type I}}\label{explicitexamples}
 The purpose of this subsection is to illustrate the phenomenon  
 that one may begin with a group scheme datum of {\tt type I} at the generic points of the divisors, but the {\tt type} of the {\tt BT}-group scheme at points of depth bigger than $1$ could  strictly be even of {\tt type III}. The computations should be seen in the setting of Theorem~\ref{thedescription}.

To illustrate the issue, we work with  \eqref{dialatedTitsvectors},  which are the simplest non-trivial cases; these also highlight the role of the root system. 
We make explicit constructions  for $A_{2}$, $B_{2}$ and $G_{2}$ cases with $\bvt$ given by \eqref{dialatedTitsvectors}. The contrast between the $B_{2}$ and $G_2$ cases highlights the  critical role that the root system of the group plays in the analysis.

Recall that for a simple root $\alpha$, we denote by $\theta_{\alpha}$, see  \eqref{alcovevertices}, the  vertices of the alcove.

\begin{ex}[$A_{2}$] The set of positive roots is $\Phi^{+} = \{\alpha_{1}, \alpha_{2}, 
\alpha_{1}+ \alpha_{2}\}$. We work with $\theta_{1}  := \theta_{\alpha_1}/3= \omega^{\vee}_{1}/3$ and $\theta_{2} := \theta_{\alpha_2}/3= \omega^{\vee}_{2}/3$. Recall that $m_r(\theta) := -\lfloor r(\theta) \rfloor$. We have 
\begin{equation}
  r(\theta_{1}) =
  \begin{cases}
1/3,0,1/3 & \quad\text{if }r \in \Phi^{+}, \\
-1/3,0,-1/3 & \quad\text{if }r \in \Phi^{-},  
  \end{cases}
\end{equation}

\begin{equation}
  m_r(\theta_{1}) =
  \begin{cases}
0,0,0 & \quad\text{if } r \in \Phi^{+}, \\
1,0,1 & \quad\text{if } r \in \Phi^{-},
  \end{cases}
\end{equation}
and similarly for $\theta_{2}$. We get 
\begin{equation}
  r(\theta_{1} + \theta_{2}) =
\begin{cases}
1/3,1/3,2/3 & \quad\text{if }r \in \Phi^{+}, \\
-1/3,-1/3,-2/3 & \quad\text{if }r \in \Phi^{-},   
\end{cases}
\end{equation}
\begin{equation}
  m_r(\theta_{1} + \theta_{2}) =
  \begin{cases}
0,0,0 & \quad\text{if }r \in \Phi^{+}, \\
1,1,1 & \quad\text{if }r \in \Phi^{-}.
\end{cases} \end{equation}
The function $\cm\colon r \mapsto \sum m_r(\theta_{j})$ is given by 
\begin{equation}
  \cm(r) =
\begin{cases}
0,0,0 & \quad\text{if } r \in \Phi^{+}, \\
1,1,2 & \quad\text{if } r \in \Phi^{-}; 
\end{cases}
\end{equation}
it computes ${\cP}_{\bvt}^{\diag}$.
Let $\Omega \subset \mathcal A_{T}$ be the bounded subset $\Omega = \{b,b'\}$, where $b = \theta_{1} + \theta_{2}$ and $b' = 2\cdot\theta_{1} + 2\cdot\theta_{2}$. Recall that $m_r(\Omega) = -\lfloor \Inf_{\theta \in \Omega} r(\theta) \rfloor$. For the subset $\Omega$, define the concave function $f_{\Omega}(r) := m_r(\Omega)$; see \eqref{mromega}. We have 
\begin{equation}
  r(b) =
\begin{cases}
1/3,1/3,2/3 & \quad\text{if }r \in \Phi^{+}, \\
-1/3,-1/3,-2/3 & \quad\text{if } r \in \Phi^{-}, 
\end{cases} \end{equation} 
\begin{equation}
  r(b') =
  \begin{cases}
2/3,2/3,4/3 & \quad\text{if } r \in \Phi^{+}, \\
~-2/3,-2/3,-4/3 & \quad\text{if }r \in \Phi^{-}, 
\end{cases} \end{equation} 
\begin{equation}
  m_r(\Omega) =
  \begin{cases}
 0,0,0 & \quad\text{if } r \in \Phi^{+}, \\
 1,1,2 & \quad\text{if } r \in \Phi^{-}.
 \end{cases}\end{equation}
Hence, $\cm(r) = f_{\Omega}(r) $ for every $r \in \Phi$; \textit{i.e.} it is of {\tt type II}. In particular, the group scheme $\mathfrak G_{\cm}$ associated to the concave function on $\spec(A)$ is realized by a bounded subset of the affine apartment.
\end{ex}

Before we discuss the next set of examples, we prove a lemma regarding the concave function $f_{\Omega}$ defined above. Observe that
\beqa
f_{\Omega}(r) = a \iff a-1 < -\inf_{\theta \in \Omega} r(\theta) \leq a \iff 1-a > \inf_{\theta \in \Omega} r(\theta) \geq -a. 
\eeqa
The last statement can be interpreted as saying that 
\begin{enumerate}
\item \label{new1} for all $\theta \in \Omega$, we have $r(\theta) \geq -a$; 
\item \label{new2} there exists a $\theta' \in \Omega$ such that $1-a > r(\theta')$. 
\end{enumerate}   

\begin{lem}\label{new3}
Let $\Omega$ be a non-empty bounded region in the affine apartment. Suppose that there is a root $r$ such that $f_{\Omega}(-r) = - f_{\Omega}(r)$. Set $n := f_{\Omega}(r)$. Let $H_r(n)$ be the hyperplane in the affine apartment defined by $H_r(n) := \{\theta \mid r(\theta) = -n\}$. Then, 
\beqa
\Omega \subset H_r(n).
\eeqa
\end{lem}

\begin{proof} This is immediate from property \eqref{new1} above.
\end{proof}

\begin{ex}[$B_{2}$, first example] The set of positive roots is $\Phi^{+} = \{\beta, \alpha, 
\beta+ \alpha, \beta+ 2\cdot\alpha\}$. We work with $\theta_{1}:=\theta_{\beta}/3  = \omega^{\vee}_{1}/3$ and $ \theta_{2}:=\theta_{\alpha}/3 = \omega^{\vee}_{2}/6$. We have
\begin{equation}
  r(\theta_{1}) =
  \begin{cases}
1/3,0,1/3, 1/3 & \quad\text{if }r \in \Phi^{+}, \\
-1/3,0,-1/3, -1/3& \quad\text{if } r \in \Phi^{-}, 
\end{cases} \end{equation}

\begin{equation}
  m_r(\theta_{1}) =
  \begin{cases}
0,0,0,0 & \quad\text{if } r \in \Phi^{+}, \\
1,0,1,1 & \quad\text{if } r \in \Phi^{-}, 
\end{cases} \end{equation}
\begin{equation}
  r(\theta_{2}) =
\begin{cases}
0,1/6,1/6, 1/3 & \quad\text{if } r \in \Phi^{+}, \\
~0,-1/6,-1/6, -1/3 & \quad\text{if } r \in \Phi^{-}, 
\end{cases} \end{equation}

\begin{equation}
  m_r(\theta_{2}) =
\begin{cases}
0,0,0,0 & \quad\text{if } r \in \Phi^{+}, \\
0,1,1,1 & \quad\text{if } r \in \Phi^{-}.
\end{cases} \end{equation}
We get
\begin{equation}
  r(\theta_{1} + \theta_{2}) =
\begin{cases}
1/3,1/3,2/3,2/3 & \quad\text{if } r \in \Phi^{+}, \\
-1/3,-1/3,-2/3,-2/3 & \quad\text{if } r \in \Phi^{-}, 
\end{cases} \end{equation}
\begin{equation}
  m_r(\theta_{1} + \theta_{2}) =
\begin{cases}
0,0,0,0 & \quad\text{if } r \in \Phi^{+}, \\
~1,1,1,1 & \quad\text{if } r \in \Phi^{-}.
\end{cases} \end{equation}
The function $\cm\colon r \mapsto \sum m_r(\theta_{j})$ is given by 
\begin{equation}
  \cm(r) =
\begin{cases}
0,0,0,0 & \quad\text{if } r \in \Phi^{+}, \\
1,1,2,2 & \quad\text{if } r \in \Phi^{-}; 
\end{cases} \end{equation}
it computes ${\cP}_{\bvt}^{\diag}$.
Let $\Omega \subset \mathcal A_{T}$ be the bounded subset $\Omega = \{b,b'\}$, where $b = \theta_{1} + \theta_{2}$ and $b' = (2-e)\cdot\theta_{1} + (2
+e')\cdot\theta_{2}$ with $0 < e,e' < 1$. Recall that $m_r(\Omega) = -\lfloor \text{Inf}_{\theta \in \Omega} r(\theta) \rfloor$. For the subset $\Omega$, define the concave function $f_{\Omega}(r) := m_r(\Omega)$. We have 
\begin{equation}
  r(b) =
\begin{cases}
1/3,1/6,1/2, 2/3 & \quad\text{if } r \in \Phi^{+}, \\
~-1/3,-1/3,-1/2,-2/3 & \quad\text{if } r \in \Phi^{-}, 
\end{cases} \end{equation} 
\begin{equation}
  r(b') =
\begin{cases}
2/3-e/3,1/3+e'/6,1+(e'-2e)/6,4/3+ (e'-e)/3 & \quad\text{if } r \in \Phi^{+}, \\
-2/3 + e/3,-1/3 -e'/6,-1- (2e-e')/6, -4/3- (e-e')/3 & \quad\text{if } r \in \Phi^{-}, 
\end{cases} \end{equation} 
\begin{equation}
  m_r(\Omega) =
\begin{cases}
0,0,0,0 & \quad\text{if } r \in \Phi^{+}, \\
1,1,2,2 & \quad\text{if }  r \in \Phi^{-}.
\end{cases} \end{equation}
Hence, $\cm(r) = f_{\Omega}(r) $ for every $r \in \Phi$; \textit{i.e.} it is again of {\tt type II}. 
\end{ex}

\begin{ex}[$B_{2}$, second example] \label{B2type3} The second example here has special significance for Section~\ref{mckaybtetc}. This time we work with $\theta_{1}:=\theta_{\alpha}/2  = \omega^{\vee}_{2}/4$ and $ \theta_{2}:=\theta_{\alpha} = \omega^{\vee}_{2}/2$. Then we get
\begin{equation}
  m_r(\theta_{1}) =
  \begin{cases}
0,0,0,0 & \quad\text{if } r \in \Phi^{+}, \\
0,1,1,1 & \quad\text{if } r \in \Phi^{-}
\end{cases} \end{equation}
and
\begin{equation}
  m_r(\theta_{2}) =
\begin{cases}
0,0,0,-1 & \quad\text{if } r \in \Phi^{+}, \\
0,1,1,1 & \quad\text{if } r \in \Phi^{-}.
\end{cases} \end{equation}
The function $\cm\colon r \mapsto \sum m_r(\theta_{j})$ is given by 
\begin{equation}\label{newB2}
  \cm(r) =
\begin{cases}
0,0,0,-1 & \quad\text{if } r \in \Phi^{+}, \\
0,2,2,2 & \quad\text{if } r \in \Phi^{-}.
\end{cases} \end{equation}

\begin{claim*}
  %  {\sl Claim:}\label{strictinccone1}
  The concave function $\cm$ in \eqref{newB2} is not of\, {\tt type II}; \textit{i.e.} there does not exist any bounded region $\Omega \subset \mathcal{A}$ such that $\cm=f_{\Omega}$.
\end{claim*}
        
  If the  claim is not true, then there is an $\Omega$ such that $f_{\Omega}(\beta) = -f_{\Omega}(-\beta) = 0$ and hence $\Omega \subset H(0)$. Again, since $f_{\Omega}(\alpha) = 0, f_{\Omega}(-\alpha) = 2$, see \eqref{newB2}, it follows that for all $\theta \in \Omega$, by \eqref{new1} before Lemma~\ref{new3}, we get the condition
\beqa
0 \leq \alpha(\theta) \leq 2 \quad\forall \theta \in \Omega.
\eeqa
Similarly, $f_{\Omega}(\alpha + \beta) = 0, f_{\Omega}(-(\alpha + \beta)) = 2$, see \eqref{newB2},  forces 
\beqa
0 \leq (\alpha + \beta)(\theta) \leq 2\quad \forall \theta \in \Omega, 
\eeqa
and $f_{\Omega}(2\cdot\alpha + \beta) = -1, f_{\Omega}(-(2\cdot\alpha + \beta)) = 2$ forces
\beqa\label{verynew1}
 (2\cdot\alpha + \beta)(\theta) \leq  2 \quad\forall \theta \in \Omega.
\eeqa
On the other hand, since $f_{\Omega}(-\alpha) = 2$, property~\eqref{new2} says that there exists a $\theta'\in \Omega$ such that
\beqa\label{verynew2}
1 < \alpha(\theta').
\eeqa
Notice that since $\beta(\theta') = 0$, the two inequalities \eqref{verynew1} and \eqref{verynew2} are not compatible. 
\end{ex}

\begin{ex}[$G_{2}$, first example] \label{gratifying} The set of positive roots is $\Phi^{+} = \{\alpha, \beta, 
\alpha+ \beta, 2\cdot\alpha+ \beta, 3\cdot\alpha+ \beta, 3\cdot\alpha+ 2\cdot\beta \}$. We work with $\theta_{1}:= \theta_{\alpha}/3  = \omega^{\vee}_{1}/9$ and $ \theta_{2}:=\theta_{\beta}/3 = \omega^{\vee}_{2}/6$.  We have
\begin{equation}
  r(\theta_{1}) =
\begin{cases}
1/9,0,1/9,2/9, 1/3, 1/3 & \quad\text{if } r \in \Phi^{+}, \\
-1/9,0,-1/9, -2/9, -1/3, -1/3 & \quad\text{if } r \in \Phi^{-}, 
\end{cases}
\end{equation}
\begin{equation}
  r(\theta_{2}) =
\begin{cases}
  0,1/6,1/6,1/6, 1/6, 1/3 & \quad\text{if } r \in \Phi^{+}, \\
0,-1/6,-1/6, -1/6, -1/6, -1/3 & \quad \text{if } r \in \Phi^{-}, 
\end{cases} \end{equation}
\begin{equation}
  m_r(\theta_{1}) =
\begin{cases}
0,0,0,0,0,0 &\quad\text{if } r \in \Phi^{+}, \\
1,0,1,1,1,1 &\quad{\text if }r \in \Phi^{-}.
\end{cases} \end{equation}
\begin{equation}
  m_r(\theta_{2}) =
\begin{cases}
0,0,0,0,0,0&\quad\text{if } r \in \Phi^{+}, \\
0,1,1,1,1,1&\quad{\text if }r \in \Phi^{-}, 
\end{cases} \end{equation}
We get 
\begin{equation}
  r(\theta_{1} + \theta_{2}) =
\begin{cases}
1/9,1/6,5/18,7/18, 1/2, 2/3&\quad\text{if } r \in \Phi^{+}, \\
-1/9,-1/6,-5/18,-7/18, -1/2,-2/3&\quad{\text if }r \in \Phi^{-}, 
\end{cases} \end{equation}
\begin{equation}
  m_r(\theta_{1} + \theta_{2}) =
\begin{cases}
0,0,0,0,0,0&\quad\text{if } r \in \Phi^{+}, \\
1,1,1,1,1,1&\quad{\text if }r \in \Phi^{-}.
\end{cases} \end{equation}
The function $\cm\colon r \mapsto \sum m_r(\theta_{j})$ is given by 
\begin{equation} \label{mG2}
  \cm(r) =
\begin{cases}
0,0,0,0,0,0&\quad\text{if } r \in \Phi^{+}, \\
1,1,2,2,2,2&\quad{\text if }r \in \Phi^{-}; 
\end{cases} \end{equation}
it  computes ${\cP}_{\bvt}^{\diag}$.

\begin{claim*}
%  {\sl Claim:}\label{strictinccone}
In contrast, the concave function $\cm$ in \eqref{mG2} is not of {\tt type II}; \textit{i.e.} there does not exist any bounded region $\Omega \subset \mathcal{A}$ such that $\cm=f_{\Omega}$. This is explicit here, but see also \cite[Section~6.4.4]{bruhattits1}.
  \end{claim*}

To prove this claim, let us suppose to the contrary that there exists such a bounded subset $\Omega \subset \mathcal A_{T}$. Let $\theta \in \Omega$ be given by
$$
\theta := \left(x \omega_{1}^{\vee}, y \omega_{2}^{\vee}\right).
$$
Then  we have
\begin{equation} \label{verygratifying}
  r(\theta) =
\begin{cases}
x,y,x+y,2x+y,3x+y,3x+2y&\quad\text{if } r \in \Phi^{+}, \\
-x,-y,-x-y,-2x-y,-3x-y,-3x-2y&\quad{\text if }r \in \Phi^{-}.
\end{cases} \end{equation} 

For $ r \in \Phi^+$, we have $-\lfloor \inf_{\theta \in \Omega} (r,\theta) \rfloor =0$, so  $\inf_{\theta \in \Omega} (r,\theta) \in [0,1)$. Thus for $r=\alpha$ and $\beta$, for all points $\theta$ in $\Omega$, we have
  $$
  x, y \in [0,1).
  $$

Now let us assume that $\inf_{\theta \in \Omega} (r,\theta)$ is realized at $(x_1,y_1)$ for $r=-(\alpha+\beta)$ and at $(x_2,y_2)$ for $r=-(3 \alpha+2 \beta)$. In particular, $x_1,x_2,y_1,y_2 \in [0,1)$. 

We have $-\lfloor - (x_1+y_1) \rfloor=2$. So $-2 \leq -x_1-y_1 <-1$. Thus 
\begin{eqnarray} \label{1}
1< x_1+y_1 \leq 2,\\
\label{2} x_1 >0.
\end{eqnarray}

By the condition for $-3 \alpha-2\beta$, we have
$-3 x_2-2y_2 \leq -3 x_1 -2 y_1$. Thus
\begin{equation}
\label{310} 3 x_1+2y_1 \leq 3 x_2 + 2 y_2.
\end{equation}

By $-\lfloor - 3 x_2 - 2 y_2 \rfloor=2$, we have $-1 > -3 x_2 - 2y_2 \leq -2$. Thus 
\begin{equation} \label{4}
1 < 3 x_2+2y_2 \leq 2.
\end{equation}

We get the following contradiction: 
$$
2 \stackrel{\eqref{1}}{<} 2 x_1+2y_1 \stackrel{\eqref{2}}{<} 3 x_1+2y_1 \stackrel{\eqref{310}}{\leq} 3 x_2+2y_2 \stackrel{\eqref{4}}{\leq} 2.
$$
This completes the proof of the claim.
\end{ex}

\begin{ex}[$G_{2}$, second example] \label{G2type3} The second example here has special significance for Section~\ref{mckaybtetc}. We now work  with $\theta_{1}:=\theta_{\alpha}  = \omega^{\vee}_{\alpha}/3$ and $ \theta_{2}:= 2.\theta_{\alpha} = 2.\omega^{\vee}_{\alpha}/3$. Then we get
\begin{equation}
  m_r(\theta_{1}) =
\begin{cases}
0,0,0,0,-1,-1&\quad\text{if } r \in \Phi^{+}, \\
1,0,1,1,1,1&\quad{\text if }r \in \Phi^{-}, 
\end{cases} \end{equation}
\begin{equation}
  m_r(\theta_{2}) =
\begin{cases}
0,0,0,-1,-2,-2&\quad\text{if } r \in \Phi^{+}, \\
1,0,1,2,2,2&\quad{\text if }r \in \Phi^{-}.
\end{cases} \end{equation}
The function $\cm\colon r \mapsto \sum m_r(\theta_{j})$ is given by 
\begin{equation}\label{newG2}
  \cm(r) =
\begin{cases}
0,0,0,-1,-3,-3&\quad\text{if } r \in \Phi^{+}, \\
2,0,2,3,3,3&\quad{\text if }r \in \Phi^{-}. 
\end{cases} \end{equation}

\begin{claim*}
  %{\sl Claim:}  \label{strictinccone2}
The concave function $\cm$ in \eqref{newG2} is not of {\tt type II}.
\end{claim*}

        If the claim is not true, then there is an $\Omega$ such that $f_{\Omega}(\beta) = -f_{\Omega}(-\beta) = 0$, and hence by Lemma~\ref{new3}, we have $\Omega \subset H_{\beta}(0)$. Thus
\beqa\label{verynew3}
\beta(\theta) = 0, \quad\forall \theta \in \Omega.
\eeqa
Similarly, $f_{\Omega}(3\alpha + 2\beta) = 3, f_{\Omega}(-(3\alpha + 2\beta)) = -3$ \eqref{newG2} forces 
\beqa\label{verynew4}
(3\alpha + 2\beta)(\theta) =3 \quad \forall \theta \in \Omega, 
\eeqa
and \eqref{verynew3} and \eqref{verynew4} give us 
\beqa\label{veryverynew}
\alpha(\theta) = 1 \quad \forall \theta \in \Omega.
\eeqa 

Now since $f_{\Omega}(-\alpha) = 2$,  by \eqref{new2} before Lemma~\ref{new3}, we know that there exists  a $\theta' \in \Omega$ such that $\alpha(\theta') > 1$, which is incompatible with \eqref{veryverynew}. 
\end{ex}

\begin{ex} (The case of a constant concave function). Let $a \geq 0$ and $f(r) = a$ for all $r \in \Phi$. In this case, when $G$ is of type $A_{n}$, $B_{n}$ or $C_{n}$, there exist regions $\Omega$ such that $f_{\Omega}=f$.  For $G_{2}$ when $a \geq 3$, there does not exist such a region.
\end{ex}

\subsection{Various cones of concave functions} We consider the question of how to construct general concave functions from points in the apartment.  It turns out that this can always be done in the case $A_{n}$ (see Proposition~\ref{btremarque} and the remarks before it). 
Let $\Phi$ be an irreducible root system.  Let
\begin{equation}
\Phi^{\Sigma}:= \left\{ (\alpha,\beta) \in \Phi^2 \mid \alpha+\beta \in \Phi \right\}.
\end{equation}
For $(\alpha,\beta) \in \Phi^{\Sigma}$, we get functionals $
\psi_{(\alpha,\beta)}$ and $\psi_{\alpha}$ defined on $\{ f\colon \Phi \rightarrow \mathbb{R} \} \rightarrow \mathbb{R}$ by $ f \mapsto f(\alpha)+f(\beta)-f(\alpha+\beta)$ and $ f \mapsto f(\alpha)+f(-\alpha)$, respectively.

Let $\mathcal{C}$ denote the space of concave functions $\{f\colon\Phi \rightarrow \mathbb{R} \}$. It can be expressed as
\begin{equation}
\mathcal{C}=\cap \left\{ \psi_{(\alpha,\beta)} \geq 0 \right\} \cap \left\{ \psi_{\alpha} \geq 0 \right\}.
\end{equation}
Being an intersection of half-spaces, it is a polyhedral cone,   further since the intersection of the open half-spaces is non-empty, we see that $\mathcal{C}$ is  of dimension $|\Phi|$.

Let $\mathcal{C}_{\Omega}$ denote the cone of concave functions defined by bounded regions $\Omega \subset \mathcal{A}_T$.

\begin{prop} Let $\Phi$ be an irreducible root system. The dimension of\, $\mathcal{C}_{\Omega}$ equals $|\Phi|$.
\end{prop}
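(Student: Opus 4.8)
The plan is to use the inclusion $\mathcal{C}_{_\Omega} \subseteq \mathcal{C} \subseteq \mathbb{R}^{\Phi}$ together with the already-established equality $\dim \mathcal{C} = |\Phi|$: since $\mathcal{C}_{_\Omega}$ is a subcone of the full-dimensional cone $\mathcal{C}$, it suffices to prove that $\mathcal{C}_{_\Omega}$ is itself full-dimensional, i.e. that it contains a nonempty open subset of $\mathbb{R}^{\Phi}$. The starting observation is that $f_{_\Omega}(r) = -\inf_{\theta \in \Omega}(r,\theta) = \sup_{\theta \in \Omega}(-(r,\theta))$ is nothing but the support function of $-\Omega$ evaluated at the root $r$; thus I want to choose a single well-chosen bounded region whose support values at the $|\Phi|$ roots can be perturbed independently.

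For a tuple of positive reals $(b_{_r})_{_{r \in \Phi}}$, consider the polytope
\[
\Omega_{_b} := \{ x \in \mathbb{E} \mid (r,x) \leq b_{_r} \ \text{ for all } r \in \Phi \}.
\]
Because $\Phi = -\Phi$ spans $\mathbb{E}$ (irreducibility), the roots positively span $\mathbb{E}$, so $\Omega_{_b}$ is a bounded polytope with $0$ in its interior. The key elementary fact is that whenever the inequality indexed by $r$ is non-redundant, the corresponding support value equals $b_{_r}$ (after the harmless reindexing $r \mapsto -r$ coming from the sign in $f_{_\Omega}$). Hence, if I can find one base tuple $b^{^0}$ for which all $|\Phi|$ inequalities cut out genuine facets, then near $b^{^0}$ the assignment $b \mapsto (f_{_{\Omega_{_b}}}(r))_{_{r \in \Phi}}$ is, up to reindexing, the identity map, so its image fills a neighborhood of the corresponding point and $\dim \mathcal{C}_{_\Omega} = |\Phi|$ follows at once. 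Non-redundancy of a constraint is an open condition on $b$, so it is enough to exhibit a single such $b^{^0}$.

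I would produce $b^{^0}$ by setting $b^{^0}_{_r} := |r|$. Then $\Omega_{_{b^{^0}}}$ is exactly the polar dual of $\mathrm{conv}\{ r/|r| \mid r \in \Phi\}$, and under polar duality the genuine facets of $\Omega_{_{b^{^0}}}$ correspond to the vertices of this convex hull. Since the points $r/|r|$ are distinct points on the unit sphere, each uniquely maximizes the linear functional in its own direction and is therefore a vertex; thus every root yields a genuine facet, as required. I expect this last step to be the main obstacle: for non-simply-laced $\Phi$ the short roots are not vertices of $\mathrm{conv}(\Phi)$ (e.g. in $B_{_2}$ the short roots sit at edge midpoints), so one cannot simply use $\Phi$ itself and must renormalize by root length to place the relevant points in convex position. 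Once this convex-position/polar-duality dictionary is in place the rest is formal, and the same argument is insensitive to the distinction between types II and III, consistent with $\mathcal{C}_{_\Omega}$ being a proper but full-dimensional subcone of $\mathcal{C}$.
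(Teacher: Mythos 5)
Your argument is correct. It proves exactly what is needed: on the open set of right-hand-side vectors $b$ for which every constraint of $\Omega_{_b}$ is non-redundant, the map $b \mapsto (f_{_{\Omega_{_b}}}(r))_{_{r\in\Phi}}$ is the coordinate permutation $b_{_r}\mapsto b_{_{-r}}$, so its image is open in $\mathbb{R}^{\Phi}$ and $\mathcal{C}_{_\Omega}$ is full-dimensional; your verification that $b^{^0}_{_r}=|r|$ works (every point of a finite subset of the unit sphere is an extreme point of its convex hull, hence every constraint of the polar dual is a facet) is sound, and the length renormalization correctly disposes of the non-simply-laced worry.

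The paper's proof has the same geometric core but a dual and more elementary packaging: instead of a polytope cut out by half-spaces, it takes $\Omega$ to be the \emph{finite point set} $\{p_{_r}\}_{_{r\in\Phi}}$ of unit normals to the root hyperplanes, observes directly that $\inf_{_{\theta\in\Omega}} r(\theta)$ is attained at the antipodal point $p_{_{-r}}$, and then perturbs each $p_{_r}$ radially, noting the infimum is still attained at the perturbed $p'_{_{-r}}$; this yields $|\Phi|$ independent parameters without invoking support functions, polar duality, or facet non-redundancy. Your version buys a cleaner conceptual statement (the map on right-hand sides is literally a linear isomorphism near $b^{^0}$, so openness of the image is immediate), at the cost of importing the vertex--facet dictionary; the paper's version avoids all convex-geometry machinery but leaves the "still attained at $p'_{_{-r}}$ after perturbation" step slightly more implicit. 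Both use irreducibility only to ensure the roots determine distinct normal directions and span $\mathbb{E}$, and both tacitly work with $f_{_\Omega}(r)=-\inf_{_{\theta\in\Omega}}r(\theta)$ without the floor, as the paper justifies at the start of its proof.
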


\begin{proof}  Recall that a bounded subset $\Omega \subset \mathcal{A}_T$  defines the concave function
\begin{equation}
f_{\Omega}\colon\Phi \longrightarrow \mathbb{R}, \quad r \longmapsto - \left\lfloor \inf_{\theta \in \Omega} r(\theta) \right\rfloor.
\end{equation}

For $r \in \Phi$, let $H_r$ be the hyperplane in $\mathcal{A}_T$ where $r$ vanishes. Consider the unit sphere $S$ in $\mathcal{A}_T$. We fix an isomorphism of $T$ with a product of $\mathbb{G}_{m}$. This gives a basis of $Y(T)$ and $X(T)$. Now we may identify $\mathcal{A}_T$ with its dual space so that for a root $r \in \Phi$ and $\theta \in \mathcal{A}_T$, the evaluation $r(\theta)$ equals the inner product of $\theta$ with the image of $r$. Thus, after this identification, every root $r \in \Phi$ defines a canonical unit vector normal to $H_r$ meeting $S$ in $p_r$. Since $\Phi$ is irreducible,  $H_r=H_s$ implies $r={\pm} s$. Further, $- p_r=p_{-r}$.
So $r \mapsto p_r$ defines a bijection. To prove the proposition, we now consider the bounded subset 
\begin{equation}
\Omega:=\left\{p_r \mid r \in \Phi \right\}.
\end{equation}
So $|\Omega|=|\Phi|$. For any $r \in \Phi$, $\inf_{\theta \in \Omega} r(\theta)$ is attained at $p_{-r}$. We now let points vary in radial directions determined by $\{p_r\}$. In small open neighbourhoods of $p_r$, if $\Omega':=\{p_r' \}$ denotes a neighbouring set of points, we again see that
$\inf_{\theta \in \Omega'} r(\theta)$ is attained at $p'_{-r}$. This completes the proof.
\end{proof}

\subsection{Recovering the bounded group \texorpdfstring{$\boldsymbol{\cP_{\Omega}}$}{P\textunderscore Omega }over a DVR in terms of {\tt n}-parahoric groups}
Let $\Omega \subset \mathcal{A}$ be a bounded subset of the apartment. Without loss of generality, we may suppose that it is compact.

We make a small modification in the notation just for this lemma. Let $A_{1} = L[t]$  be a polynomial ring over a field $L$ that is not necessarily algebraically closed. Let $K_{1} = \Fract(A_{1})$. Similarly, let $A_{N} = L[z_{1}, \ldots, z_{N}]$ and $K_{N} = \Fract(A_{N})$. 
Let
\beqa\label{recovering}
{\cP}_{\Omega} := \left\langle T(L[t]),\, U_{r}\left(t^{m_r(\Omega)}L[t]\right),  r \in \Phi \right\rangle 
\eeqa
and for $\bvt=(\theta_{1},\ldots, \theta_{N})$,
\beqa
{\cP}_{\bvt} := \left\langle T(A_{N}]), U_{r}\left(\prod_{1 \leq j \leq N} z_{j}^{- \lfloor (r, \theta_{j}) \rfloor} A_{N}\right), \; r \in \Phi\right\rangle.
\eeqa 
The next lemma shows the natural relationship between {\tt n-bounded} groups associated to bounded subsets $\Omega$ in the affine apartment and {\tt N-parahoric} groups. We give it here for its general interest. 

\begin{lem} \label{regionalasn}
Let $ N = |\Phi|$ and let the inclusion $G(K_{1}) \subset G(K_{N})$ be obtained  by mapping the uniformizer $t$ to $\prod_{r \in \Phi} z_r$. Given a bounded domain $\Omega$, there exist an $N$-tuple $\bvt=(\ldots,\theta_r,\ldots)_{r \in \Phi}$ of points such that taking intersection in $G(K_{N})$, we have the equality
\begin{equation}\label{multiregion}
{\cP_{\Omega}} = G(K_1) \cap {\cP}_{\bvt}.
\end{equation}
\end{lem}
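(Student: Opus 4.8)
The plan is to build the tuple $\bvt$ out of the points of $\Omega$ that are extremal for the various root functionals, to reduce the claimed identity to two inclusions, and to treat the difficult inclusion by comparing, inside $G(K_{_1})$, the coordinatewise valuation conditions cut out by the separate variables $z_{_r}$.

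\emph{Construction of $\bvt$ and the key numerics.} Since $\Omega$ is compact, for each root $r \in \Phi$ the linear functional $\theta \mapsto r(\theta)$ attains its infimum on $\Omega$; I would choose a minimiser $\theta_{_r} \in \Omega$ and set $\bvt := (\ldots, \theta_{_r}, \ldots)_{r \in \Phi}$. The single arithmetic fact driving everything is that for every $s \in \Phi$,
\begin{equation*}
\max_{r \in \Phi}\, m_{_s}(\theta_{_r}) = m_{_s}(\Omega).
\end{equation*}
Indeed, taking $r=s$ gives $m_{_s}(\theta_{_s}) = -\lfloor \inf_{\theta \in \Omega} s(\theta)\rfloor = m_{_s}(\Omega)$, while for any other $r$ the containment $\theta_{_r}\in\Omega$ forces $s(\theta_{_r}) \geq \inf_{\theta \in \Omega} s(\theta)$, hence $m_{_s}(\theta_{_r}) \leq m_{_s}(\Omega)$.

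\emph{The easy inclusion.} First I would check that the generators of $\cP_{_\Omega}(L)$ land in $G(K_{_1}) \cap \cP_{_{\bvt}}(L)$. Under $t \mapsto \prod_{r} z_{_r}$ one has $T(L[t]) \hookrightarrow T(A_{_N})$, and a generator $U_{_s}(t^{m_{_s}(\Omega)} g(t))$ maps to $U_{_s}\big((\prod_r z_{_r})^{m_{_s}(\Omega)} g(\prod_r z_{_r})\big)$; since $m_{_s}(\Omega) \geq m_{_s}(\theta_{_r})$ for every $r$, the coefficient lies in $\prod_r z_{_r}^{m_{_s}(\theta_{_r})} A_{_N}$, so this is a generator of $\cP_{_{\bvt}}(L)$. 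Hence $\cP_{_\Omega}(L) \subseteq G(K_{_1}) \cap \cP_{_{\bvt}}(L)$.

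\emph{The hard inclusion.} For each $r$ let $v_{_{z_{_r}}}$ be the $z_{_r}$-adic valuation on $K_{_N}$ and $Q_{_r} \subset G(K_{_N})$ the Bruhat--Tits parahoric attached to $\theta_{_r}$ for $v_{_{z_{_r}}}$. Every generator of $\cP_{_{\bvt}}(L)$ has its $U_{_s}$-coefficient of $z_{_r}$-valuation at least $m_{_s}(\theta_{_r})$, and $T(A_{_N}) \subset Q_{_r}$; as $Q_{_r}$ is a subgroup this yields $\cP_{_{\bvt}}(L) \subseteq Q_{_r}$ for each $r$, hence $\cP_{_{\bvt}}(L) \subseteq \bigcap_r Q_{_r}$. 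This step is purely formal and, crucially, needs no cell decomposition. Intersecting with $G(K_{_1})$ and using that $t \mapsto \prod_r z_{_r}$ makes $v_{_{z_{_r}}}$ restrict to the single $t$-adic valuation $v_{_t}$ for \emph{every} $r$, I would conclude that $G(K_{_1}) \cap \cP_{_{\bvt}}(L)$ consists of elements of $G(K_{_1})$ that are $v_{_t}$-bounded with respect to every $\theta_{_r}$ simultaneously; by Bruhat--Tits this intersection of point-parahorics is the bounded group $\cP_{_{\{\theta_{_r}\}}}(L)$, whose concave function $s \mapsto \max_r m_{_s}(\theta_{_r})$ equals $f_{_\Omega}$ by the numerical identity. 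Thus $G(K_{_1}) \cap \cP_{_{\bvt}}(L) \subseteq \cP_{_\Omega}(L)$, and combined with the easy inclusion this gives equality.

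\emph{Where the difficulty lies.} The construction and the containment $\cP_{_{\bvt}}(L)\subseteq\bigcap_r Q_{_r}$ are formal. The hard part will be the final step, namely upgrading the coordinatewise valuation bounds (membership in each parahoric $Q_{_r}\cap G(K_{_1})$) back to genuine membership in the bounded group $\cP_{_\Omega}(L)$ — equivalently, identifying an intersection of point-parahorics with the bounded group of the corresponding finite subset. This is exactly the Bruhat--Tits product decomposition for concave functions, and it is the place where elements lying outside the big cell must be controlled. I expect this to be routine when $\Omega$ has nonempty interior, since then $f_{_\Omega}(s)+f_{_\Omega}(-s) = \lceil \sup_\Omega s\rceil - \lfloor \inf_\Omega s\rfloor \geq 1$ for all $s$, so the bounded group is a single gross cell and the decomposition is elementary; for a general bounded $\Omega$ it requires the full Bruhat--Tits machinery.
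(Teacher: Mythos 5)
Your proposal is correct and follows essentially the same route as the paper's proof: the same choice of each $\theta_{_r}$ as a minimiser of $r(\cdot)$ on $\Omega$, the same key identity $\max_{r \in \Phi} m_{_s}(\theta_{_r}) = m_{_s}(\Omega)$, and the same substitution $t \mapsto \prod_{r \in \Phi} z_{_r}$ to match the orders of zeros and poles of the root-group coordinates. The only divergence is in the reverse inclusion, where you make explicit the reduction to the Bruhat--Tits fact that an intersection of point-parahorics for a single valuation is the bounded group of the corresponding finite set, a step the paper compresses into the single remark that ``the group law is inherited from $G(K_{_1})$''.
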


\begin{proof}
For every root $r \in \Phi$, we may choose a $\theta_r \in \Omega$ such that we have 
\begin{equation} \label{extremalcondition}
(r,\theta_r) \leq (r,\theta) \quad \forall \theta \in \Omega.
\end{equation}
Set $L[\Phi] :=L[z_r, r \in \Phi]$ and consider the $\tt N$-parahoric given by 
$$
\bvt=(\ldots,\theta_{j},\ldots)_{j \in \Phi}.
$$ 
Thus by definition, we have
\begin{equation}
{\cP}_{\bvt} = \left\langle T(L[\Phi]),\, U_r\left(\prod_{j \in \Phi} z_{j}^{- \lfloor (r, \theta_{j}) \rfloor}L[\Phi]\right), r \in \Phi \right\rangle.
\end{equation}

For $r \in \Phi$, consider the group $U_r\big(\prod_{j \in \Phi} z_{j}^{- \lfloor (r, \theta_{j}) \rfloor}L[\Phi]\big)$. By (\ref{extremalcondition}), we see that 
\begin{equation}
(r,\theta_r) \leq (r,\theta_{j}) \quad \forall j \in \Phi.
\end{equation} 
Thus $-\lfloor (r,\theta_r) \rfloor \geq - \lfloor (r,\theta_{j}) \rfloor$ for all $j \in \Phi$. In other words, we have
\begin{equation} \label{romega}
-\lfloor (r,\theta_r) \rfloor = -\left\lfloor \inf_{\theta \in \Omega} (r,\theta) \right\rfloor = m_r(\Omega).
\end{equation}

Now $K_{1} = L(t)$ and $A_{1} = L[t]$, so letting $t=\prod_{r \in \Phi} z_r$, for every root $r \in \Phi$, we get the desired order of zeros and poles. Further, $T(L[\Phi]) \cap T(K_{1})=T(A_{1})$. Since the group law is inherited from $G(K_{1})$,  the intersection is the parahoric subgroup ${\cP_{\Omega}}$.
\end{proof}

For $1 \leq i \leq n$, let $f_i\colon \Phi \rightarrow \mathbb{R}$ be concave functions. Let ${\bf F}:=(\ldots, f_i,\ldots)\colon \Phi \rightarrow \mathbb{R}^n$. In \eqref{evenmoregen}, we have  defined the $n$-concave subgroup ${\cP}_{\bf f}$  of $G(K_{n})$ in terms of generators.

\begin{lem}
Let $G(K_1) \subset G(K_{n})$ by mapping the uniformizer $t$ to $\prod_{1 \leq i \leq n} z_i$. Taking intersection in $G(K_{n})$, we have the equality
\begin{equation}\label{sup}
{\cP_{sup_{i} f_i}} = G(K_1) \cap {\cP}_{\bf f},
\end{equation}
\end{lem}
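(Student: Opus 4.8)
The plan is to imitate the proof of Lemma \ref{regionalasn}: I would reduce the asserted equality of subgroups of $G(K_{_n})$ to a root-by-root computation of $t$-adic orders under the embedding $t\mapsto\prod_{1\le i\le n} z_{_i}$, and then reassemble the generators using the product decomposition of the bounded group. Throughout, $\cP_{_{\bf F}}(k)$ is generated by $T(\cO_{_n})$ and the root groups $U_{_r}(\prod_i z_{_i}^{f_{_i}(r)}\cO_{_n})$ as in \eqref{evenmoregen}, while $\cP_{_{\sup_{_i} f_{_i}}}(k)$ is the bounded group over $K_{_1}=k(\!(t)\!)$ attached to the concave function $\sup_{_i} f_{_i}$.

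First I would record the elementary ingredients. Under the substitution $t\mapsto\prod_i z_{_i}$ a monomial $c\,t^{m}$ maps to $c\prod_i z_{_i}^{m}$, and this lies in $\prod_i z_{_i}^{f_{_i}(r)}\cO_{_n}$ exactly when $m\ge f_{_i}(r)$ for every $i$, i.e. when $m\ge\max_{_i} f_{_i}(r)=(\sup_{_i} f_{_i})(r)$. Hence for $a\in K_{_1}$ the element $U_{_r}(a)$ lands in $U_{_r}(\prod_i z_{_i}^{f_{_i}(r)}\cO_{_n})$ if and only if its $t$-order is at least $(\sup_{_i} f_{_i})(r)$, so that
\[
U_{_r}(K_{_1})\cap U_{_r}\big(\textstyle\prod_i z_{_i}^{f_{_i}(r)}\cO_{_n}\big)=U_{_r}\big(t^{(\sup_{_i} f_{_i})(r)}\,k\llbracket t\rrbracket\big),
\]
and likewise $T(K_{_1})\cap T(\cO_{_n})=T(k\llbracket t\rrbracket)$. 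I would also note that $\sup_{_i} f_{_i}$ is again concave in the sense of \eqref{concave}, since the pointwise maximum of subadditive functions is subadditive, so that $\cP_{_{\sup_{_i} f_{_i}}}(k)$ is a bona fide bounded group.

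Granting these, the inclusion $\cP_{_{\sup_{_i} f_{_i}}}(k)\subseteq G(K_{_1})\cap\cP_{_{\bf F}}(k)$ is immediate, because each generator of the left-hand group maps into the corresponding generator of $\cP_{_{\bf F}}(k)$ (here one uses $(\sup_{_i} f_{_i})(r)\ge f_{_i}(r)$ for all $i$, together with $T(k\llbracket t\rrbracket)\subseteq T(\cO_{_n})$). For the reverse inclusion I would take $g\in G(K_{_1})\cap\cP_{_{\bf F}}(k)$ and invoke the directly-spanned product (gross cell) decomposition of $\cP_{_{\bf F}}(k)$ from {\sc Theorem} \ref{multipargrpsch}(5) — the higher-dimensional analogue of the Bruhat--Tits factorization of a bounded group into $\prod_{r\in\Phi^{^{+}}}U_{_r}\cdot T\cdot\prod_{r\in\Phi^{^{-}}}U_{_r}$ — to write $g$ uniquely as a product of root components and a torus component. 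Uniqueness of this factorization, read inside $G(K_{_1})\subset G(K_{_n})$, forces every factor of $g$ to lie already in $G(K_{_1})$; the root-by-root identities above then place each factor in the corresponding generator of $\cP_{_{\sup_{_i} f_{_i}}}(k)$, whence $g\in\cP_{_{\sup_{_i} f_{_i}}}(k)$.

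The step I expect to be the crux is exactly this passage from ``the generating subgroups intersect correctly'' to ``the generated groups intersect correctly.'' The inclusion $\langle S_{_\lambda}\rangle\cap H\supseteq\langle S_{_\lambda}\cap H\rangle$ is trivial, but the reverse fails for arbitrary generating families and genuinely requires the unique product decomposition of the bounded group (equivalently, its big-cell/gross-cell structure). This is what makes precise the terse appeal to ``the group law is inherited from $G(K_{_1})$'' in the proof of Lemma \ref{regionalasn}, and it is the one place where the structural input of the paper is actually used.
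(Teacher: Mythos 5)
Your outline is the paper's own: the paper's entire proof of this lemma is the remark that the argument is ``almost identical to that of Lemma \ref{regionalasn}'', and your root-by-root computation of $t$-adic orders under $t\mapsto\prod_{_i} z_{_i}$, the identity $U_{_r}(K_{_1})\cap U_{_r}\big(\prod_{_i} z_{_i}^{f_{_i}(r)}\cO_{_n}\big)=U_{_r}\big(t^{(\sup_{_i} f_{_i})(r)}k\llbracket t\rrbracket\big)$, the equality $T(K_{_1})\cap T(\cO_{_n})=T(k\llbracket t\rrbracket)$, and the subadditivity of $\sup_{_i}f_{_i}$ are all correct and constitute exactly the computational content of that earlier proof. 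You have also correctly isolated the one step both proofs must confront: passing from ``the generating subgroups intersect correctly'' to ``the generated groups intersect correctly''.

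However, the device you propose for the hard inclusion does not work as stated. Theorem \ref{multipargrpsch}(5) asserts only that the multiplication map onto the gross cell $\mathfrak B_{_{\Phi^{^{+}}}}$ is an \emph{open immersion}; it does not assert $\mathfrak B_{_{\Phi^{^{+}}}}(\cO_{_n})=\gG_{_{\bf F}}(\cO_{_n})$. An arbitrary $g\in G(K_{_1})\cap{\cP}_{_{\bf F}}(k)$ need not lie in the gross cell --- already for $G=\text{SL}_{_2}$ and ${\bf F}=0$ a Weyl group representative in $G(k)$ is such an element --- so there is no factorization of $g$ to which your uniqueness argument applies; what your argument actually establishes is the weaker inclusion $G(K_{_1})\cap\mathfrak B_{_{\Phi^{^{+}}}}(\cO_{_n})\subseteq{\cP}_{_{\sup_{_i} f_{_i}}}(k)$. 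Closing the gap requires a product decomposition of all of ${\cP}_{_{\bf F}}(k)$ as a set, i.e.\ the analogue of \cite[6.4.9]{bruhattits1} ($P_{_f}=U_{_f}^{^{-}}U_{_f}^{^{+}}N_{_f}$), and even granting that, one must still argue that the factors of a $K_{_1}$-rational element can be chosen $K_{_1}$-rational, which does not follow from surjectivity of the product map alone (the standard trick of writing $g=b_{_1}b_{_2}^{^{-1}}$ with $b_{_i}$ in the gross cell also fails here, since the $b_{_i}$ need not lie in $G(K_{_1})$). To be fair, the paper's own proof of Lemma \ref{regionalasn} disposes of this point with the single phrase ``the group law is inherited from $G(K_{_1})$'', so you are no less rigorous than the source; but the specific mechanism you substitute for that phrase does not yet close the argument.
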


\begin{proof} The proof is almost identical to that of Lemma~\ref{regionalasn}.
\end{proof}

Let $\mathcal{C}^{\ext}_{\Omega}$ denote the cone defined by \begin{enumerate*}\item taking sums of concave functions of the form $\sum_{i=1}^m f_{\Omega_i}$, where we let $m \geq 1$ and the bounded regions $\Omega_i \subset \mathcal{A}_T$ vary, \item changing the uniformizers by the substitution $t=z^n$ or $t^n=z$ and \item  taking supremums.\end{enumerate*}

We have the obvious inclusions
\begin{equation}
\mathcal{C}_{\Omega} \subset \mathcal{C}^{\ext}_{\Omega} \subset \mathcal{C}. 
\end{equation}
\begin{ex} The first inclusion is strict for the concave function \eqref{mG2} by the claim in Example~\ref{B2type3}.
\end{ex}

For irreducible reduced root systems, the notions of quasi-concave and concave functions coincide. Let $f$ be any concave function defined more generally on $\Phi$. Let $E$ be the set of points in $\mathcal{A}_T$ fixed by $G_f$. For a root $\alpha \in \Phi$, let  
\begin{equation} \label{setalpha}
  S(\alpha):= \left\{\left(\alpha_i,\lambda_i\right)_{\{1 \leq i \leq r\}} \in \cup_{r \geq 1} \left(\Phi \times \mathbb{Q}^*_+\right)^r \;\Big|\; \sum_{1 \leq i \leq r} \lambda_i \alpha_i=\alpha \right\}
\end{equation}
denote all possible ways of writing $\alpha$ in the rational positive cone of some subset of roots in $\Phi$. Let $f'$ be the function on $\Phi$ defined by  
\begin{equation} \label{fdash}
  f'(\alpha):= \inf_{ S(\alpha) } \;
  \left\lceil \sum_{1 \leq i \leq r} \lambda_i f(\alpha_i) \right\rceil.
\end{equation}
By  \cite[Proposition 3.3]{courtes}, we have $f'=f_E$ for some subset $E$ of the apartment.

\begin{ex} For the case $G = \SL_2$, the three cones coincide because any concave function is of the form $f_{\Omega}$ (see Proposition~\ref{btremarque}). Indeed, by \cite[Proposition 3.3]{courtes}, for $G = \SL_2$, we have $S( \alpha)=\{(\alpha,1)\}$ and $S(-\alpha)=\{ (-\alpha,1) \}$. Thus for $G = \SL_2$, the subset $E$ determines $f$ in return.
\end{ex}

\begin{prop} We now return to the case $G = G_{2}$. For the concave function $\cm$ in \eqref{mG2},  $\cm'$ equals $\cm$ except at $-\alpha_{1}-\alpha_{2}$, where it is $1$. The region is the dilation of the alcove $\mathbf{a}_0$ by a factor of $2$, \textit{i.e.} enclosed by the origin, $2 \theta_{\alpha_1}$ and 
$2 \theta_{\alpha_2}$.
\end{prop}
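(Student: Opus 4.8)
The plan is to compute $\text{\cursive m}'$ directly from its definition \eqref{fdash} and to exhibit the region $E$ at the same time. Write a point of the apartment as $\theta = x\,\omega^{^\vee}_{_1} + y\,\omega^{^\vee}_{_2}$, so that the pairings $r(\theta)$ are those listed in \eqref{verygratifying}, and introduce the polytope
\[
E := \{\, \theta \in \mathcal{A}_{_T} : r(\theta) \ge -\text{\cursive m}(r)\ \text{ for all } r \in \Phi \,\}.
\]
Since $\text{\cursive m}$ is integer-valued, the inequality $r(\theta)\ge -\text{\cursive m}(r)$ is equivalent to $m_{_r}(\theta)\le \text{\cursive m}(r)$, so $E$ is exactly the enclosed region (the points $\theta$ whose parahoric is dominated by $\text{\cursive m}$), which is the fixed set occurring in the statement. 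Spelling out the twelve inequalities: the six positive roots force $x,y\ge 0$ (the dominant chamber), and the six negative roots give $x\le 1$, $y\le 1$, $x+y\le 2$, $2x+y\le 2$, $3x+y\le 2$ and $3x+2y\le 2$. For $x,y\ge 0$ the last one (the highest root) implies all the others, so $E$ is the triangle with vertices $0$, $(2/3,0)=2\theta_{_{\alpha_1}}$ and $(0,1)=2\theta_{_{\alpha_2}}$, i.e. the dilation of $\mathbf{a}_0$ by the factor $2$. Evaluating $f_{_E}(r)=-\lfloor \inf_{\theta\in E} r(\theta)\rfloor$ at these three vertices (where any linear functional attains its infimum) then shows $f_{_E}=\text{\cursive m}$ on every root except $-\alpha_{_1}-\alpha_{_2}$, where $f_{_E}=1$.

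It remains to prove $\text{\cursive m}'=f_{_E}$. For the lower bound I would use each $\theta\in E$ as a dual certificate: since $-r(\theta)\le \text{\cursive m}(r)$ for all $r\in\Phi$, any decomposition $\alpha=\sum_i \lambda_i \alpha_i$ in $S(\alpha)$ satisfies $\sum_i\lambda_i\text{\cursive m}(\alpha_i)\ge \sum_i \lambda_i\bigl(-\alpha_i(\theta)\bigr)=-\alpha(\theta)$, whence $\lceil \sum_i \lambda_i \text{\cursive m}(\alpha_i)\rceil \ge \lceil -\alpha(\theta)\rceil$ and therefore $\text{\cursive m}'(\alpha)\ge \lceil -\alpha(\theta)\rceil$. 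Taking $\theta$ to be the vertex of $E$ maximising $-\alpha(\theta)$ gives $\text{\cursive m}'(\alpha)\ge f_{_E}(\alpha)$. Concretely, the vertex $\theta=(2/3,0)$ already certifies $\text{\cursive m}'\ge 2$ for the three roots $-(2\alpha_{_1}+\alpha_{_2})$, $-(3\alpha_{_1}+\alpha_{_2})$, $-(3\alpha_{_1}+2\alpha_{_2})$ (it yields the pairings $2x+y=4/3$, $3x+y=2$, $3x+2y=2$, all with ceiling $2$), while one of the two nonzero vertices certifies $\text{\cursive m}'\ge 1$ for $-\alpha_{_1}$, $-\alpha_{_2}$ and $-(\alpha_{_1}+\alpha_{_2})$.

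For the upper bound, on every root other than $-\alpha_{_1}-\alpha_{_2}$ one has $f_{_E}=\text{\cursive m}$, and the trivial decomposition gives $\text{\cursive m}'\le\text{\cursive m}=f_{_E}$ (in particular $\text{\cursive m}'=0$ on $\Phi^+$ and $\text{\cursive m}'=\text{\cursive m}$ on the three negative roots above). At the exceptional root I would exhibit the nontrivial decomposition
\[
-\alpha_{_1}-\alpha_{_2}=\tfrac12\,\alpha_{_1}+\tfrac12\bigl(-(3\alpha_{_1}+2\alpha_{_2})\bigr),
\]
for which $\tfrac12\text{\cursive m}(\alpha_{_1})+\tfrac12\text{\cursive m}\bigl(-(3\alpha_{_1}+2\alpha_{_2})\bigr)=\tfrac12\cdot 0+\tfrac12\cdot 2=1$, so $\text{\cursive m}'(-\alpha_{_1}-\alpha_{_2})\le 1$. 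Combined with the lower bound this gives $\text{\cursive m}'=f_{_E}$ everywhere, with value $1$ at $-\alpha_{_1}-\alpha_{_2}$, and identifies the region as the doubled alcove, which is the concrete incarnation of $f'=f_{_E}$ from \cite[Prop.\ 3.3]{courtes}.

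The main obstacle is the lower bound for the negative roots keeping the value $2$: a priori one must rule out every rational positive decomposition at once, and the delicate case is $-(2\alpha_{_1}+\alpha_{_2})$, where the optimal pairing $2x+y=4/3$ over $E$ is not an integer and only the ceiling pushes it up to $2$. Packaging the bound through points of $E$ is precisely what makes this tractable, since a single feasible $\theta$ dominates all decompositions simultaneously; the only genuine content is then the correct identification of the binding constraints cutting out $E$, after which the three evaluations and the one explicit decomposition close the argument.
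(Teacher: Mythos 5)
Your proof is correct, but it takes a genuinely different route from the paper's. The paper argues \emph{indirectly}: it invokes the earlier claim (\ref{strictinccone}) that $\text{\cursive m}$ is not of the form $f_{_\Omega}$ to conclude $\text{\cursive m}' < \text{\cursive m}$, relies on \cite[Prop 3.3]{courtes} for the fact that $\text{\cursive m}'$ \emph{is} of the form $f_{_E}$, checks that the doubled alcove yields the stated function, and then squeezes: since $f_{_{2\mathbf{a}_0}}$ and $\text{\cursive m}$ differ by $1$ at a single root, no concave function sits strictly between them, so $\text{\cursive m}'$ must be $f_{_{2\mathbf{a}_0}}$. You instead compute $\text{\cursive m}'$ directly from \eqref{fdash}: you identify the region intrinsically as the feasibility polytope $\{\theta : r(\theta) \ge -\text{\cursive m}(r)\}$, reduce its description to the single binding constraint $3x+2y\le 2$ on the dominant chamber, and then prove $\text{\cursive m}'=f_{_E}$ by exhibiting points of $E$ as dual certificates for the lower bound and the decomposition $-\alpha_{_1}-\alpha_{_2}=\tfrac12\alpha_{_1}+\tfrac12\bigl(-(3\alpha_{_1}+2\alpha_{_2})\bigr)$ for the one nontrivial upper bound. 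Your argument is longer but self-contained (it does not need the prior Claim or Courtes' theorem as logical inputs), it verifies the value at every root rather than deducing it by elimination, and it makes visible \emph{why} the drop occurs only at $-\alpha_{_1}-\alpha_{_2}$; the paper's argument buys brevity at the cost of leaning on those external facts. All your computations check out (the vertices $(2/3,0)=2\theta_{_{\alpha_1}}$ and $(0,1)=2\theta_{_{\alpha_2}}$, the evaluations of $f_{_E}$, and the certificate $2x+y=4/3$ whose ceiling is $2$).
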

\begin{proof} We give an indirect proof. Since $\cm$ does not come from a region, we have the inequality $\cm' < \cm$. Now we can check that the concave function coming from the dilation of $\mathbf{a}_0$ by a factor of $2$ is as stated above. Since there cannot be any other concave function between them, this must be $\cm'$.
\end{proof}

\begin{rem} \label{coneinclusions} It would be interesting to understand the inclusion $\mathcal{C}^{\ext}_{\Omega} \subset \mathcal{C}$ better. Given bounded regions $\{ \Omega_{1}, \ldots, \Omega_{n} \}$, consider the concave function $f$ taking $r$ to 
$\sum m_r(\Omega_{i})$. It would be interesting to understand the region corresponding to $f'$ as a function of $\{ \Omega_{i} \}$.
\end{rem}

Recall from \eqref{evenmoregen} that  if $f\colon\Phi \to {\mathbb R}$ is a concave function, then we can associate to it the bounded subgroup ${\cP}_{f} := \langle T(\cO), U_r (z^{f(r)} \cO), r \in \Phi \rangle$ of $G(K)$. Note that $\gG_{f}(\cO) = {\cP}_{f}$. 

Following \cite[Section~4.5.2]{bruhattits} and \cite[Section~2, p.~540]{courtes}) we say that a concave function $f$ is {\em optimal} if for any concave function $f'\colon\Phi \to {\mathbb R}$ such that $f' > f$, we have a strict inclusion ${\cP}_{ f'} \subsetneq {\cP}_{ f}$. However, in the context of the present paper, it is easy to see that if $f$ is a concave function, then so is $\lceil f \rceil$, which is in fact optimal. Moreover, the bounded groups $\gG_{f}(\cO)$ coincides with $\gG_{\lceil f \rceil}(\cO)$. So we may as well work with optimal concave functions.

In \cite[Remarques 3.9.3]{bruhattits3} it is remarked that on root systems of type $A_{n}$, any optimal concave function is  of the form $f_{\Omega}$ for an {\em enclosed} bounded subset $\Omega$ of the apartment (see \cite[Section~2.4.6]{bruhattits1} for the definition of ``enclosed'' subsets). For the sake of completeness, we give below a proof of this statement.

\begin{prop}\label{btremarque} Let $G$ be of type $A_{n}$. Any optimal concave function $f$ is of {\tt type II}, \textit{i.e.} of the form $f_{\Omega}$ for a {\em enclosed} bounded subset $\Omega$ of the apartment. Thus we have $\mathcal{C}_{\Omega} = \mathcal{C}$; see Remark~\ref{coneinclusions}.
\end{prop}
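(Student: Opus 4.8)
The plan is to show that for $G$ of type $A_n$ one has the equality $f = f'$, where $f'$ is the function defined in \eqref{fdash}; since \cite[Prop 3.3]{courtes} gives $f' = f_E$ for a close bounded subset $E \subset \mathcal{A}_T$, this exhibits $f$ itself as $f_\Omega$ with $\Omega = E$, whence $\mathcal{C}_\Omega = \mathcal{C}$. First I would reduce to the case that $f$ is integer-valued: since $\lceil f\rceil$ is concave with $\lceil f\rceil \geq f$ and $\gG_{\lceil f\rceil}(\cO) = \gG_f(\cO)$, i.e. $\cP_{\lceil f\rceil}(k) = \cP_f(k)$, optimality of $f$ forbids the strict inequality $\lceil f\rceil > f$, forcing $\lceil f\rceil = f$. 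With $f$ integer-valued, the trivial decomposition $\alpha = 1\cdot\alpha \in S(\alpha)$ shows $f'(\alpha) \leq \lceil f(\alpha)\rceil = f(\alpha)$, so $f' \leq f$ automatically; the entire content is therefore the reverse inequality $f \leq f'$.

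The inequality $f \leq f'$ unwinds to the statement that $f$ is subadditive over the full rational positive cone: for every decomposition $\alpha = \sum_{i} \lambda_i \alpha_i$ with $\alpha_i \in \Phi$ and $\lambda_i \in \mathbb{Q}^*_+$ (i.e. every element of $S(\alpha)$) one has $f(\alpha) \leq \sum_i \lambda_i f(\alpha_i)$, for then $f(\alpha) \leq \lceil \sum_i\lambda_i f(\alpha_i)\rceil$ and taking the infimum over $S(\alpha)$ gives $f(\alpha) \leq f'(\alpha)$. This rational subadditivity is exactly the property that fails for general root systems: the computation for $G_2$ in \eqref{mG2} and \ref{strictinccone}, via the decomposition $-\alpha_1-\alpha_2 = \tfrac13(-3\alpha_1-2\alpha_2)+\tfrac13(-\alpha_2)$, produces a value strictly below $f(-\alpha_1-\alpha_2)$. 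So this is the one place where the type $A_n$ hypothesis must enter, and it is the main obstacle.

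Here I would exploit the special combinatorics of $A_n$. Realize $\Phi = \{e_i - e_j : i \neq j\}$ inside $\sum_k x_k = 0$, and view a decomposition $e_i - e_j = \sum_k \lambda_k(e_{a_k}-e_{b_k})$ as a flow of value one from the vertex $i$ to the vertex $j$ on the complete directed graph on $\{1,\dots,n+1\}$, the edge $(a,b)$ carrying weight $\lambda$ at cost $\lambda f(e_a - e_b)$. By the flow-decomposition theorem this flow is a nonnegative combination of directed paths from $i$ to $j$ and directed cycles, with total path-weight equal to one. Each path $i = c_0 \to c_1 \to \cdots \to c_m = j$ yields an integral relation $\alpha = \sum_{t}(e_{c_{t-1}}-e_{c_t})$ in $\tilde\Phi$, so concavity \eqref{concave} gives $f(\alpha) \leq \sum_t f(e_{c_{t-1}}-e_{c_t})$, i.e. the path cost is at least $f(\alpha)$; each cycle has roots summing to $0 \in \tilde\Phi$, so the defining conditions $\psi_{(\alpha,\beta)} \geq 0$ and $\psi_\alpha \geq 0$ of $\mathcal{C}$ give, by a straightforward induction collapsing two consecutive cycle edges at a time, that its cost is nonnegative. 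Summing, the cost $\sum_k \lambda_k f(e_{a_k}-e_{b_k})$ of the original decomposition is at least $f(\alpha)$, which is precisely the required rational subadditivity.

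Combining the two inequalities yields $f = f'$, and then $f = f' = f_E = f_\Omega$ with $\Omega = E$ a close bounded subset by \cite[Prop 3.3]{courtes}; boundedness of $\Omega$ is automatic since $f$ is finite-valued on $\Phi$. This shows $f$ is of {\tt type II} and that $\mathcal{C}_\Omega = \mathcal{C}$.
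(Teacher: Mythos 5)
Your proposal is correct, and it takes a genuinely different route from the paper. The paper's proof stays entirely inside Court\`es' framework: it identifies ``of the form $f_{_\Omega}$'' with strong concavity via \cite[Prop 3.3]{courtes}, then invokes \cite[Lemma 5.1]{courtes}, which reduces strong concavity to a numerical inequality indexed only by the \emph{restricted} set $S'(\alpha)\subset S(\alpha)$ (linearly independent $\alpha_i$, pairwise $(\alpha_i,\alpha_j)\geq 0$ with $\alpha_i+\alpha_j$ not a root, all $\lambda_i<1$); the type-$A_n$ input is then the purely combinatorial observation that $S'(\alpha)=\emptyset$, so the condition holds vacuously. You instead bypass Lemma 5.1 and the strong/pseudo-concavity formalism altogether and prove the subadditivity $f(\alpha)\leq\sum_i\lambda_i f(\alpha_i)$ over \emph{all} of $S(\alpha)$, by reading a rational decomposition of $e_i-e_j$ as a unit flow on the complete digraph and applying flow decomposition: paths are handled by concavity \eqref{concave} (in type $A$ the partial sums along a simple path are again roots, so even the pairwise form of concavity suffices), cycles by collapsing consecutive edges using $\psi_{(\alpha,\beta)}\geq 0$ and $\psi_{\alpha}\geq 0$. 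This gives $f=f'$ and you then only need the paper's already-quoted fact $f'=f_{_E}$ from \cite[Prop 3.3]{courtes}. Your reduction to integrality via optimality and the remark that boundedness of $E$ is forced by finiteness of $f$ on the spanning set $\Phi$ are both sound and consistent with the paper. The trade-off: the paper's argument is shorter granted Court\`es' technical lemma, while yours is more self-contained and makes visible exactly where type $A_n$ enters, namely that the $A_n$ root system is the edge set of a complete digraph so every positive rational relation decomposes into paths and cycles, each of which is controlled by plain concavity.
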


\begin{proof} For the convenience of the reader, we recast the notions of \cite{courtes}, which were developed for connected reductive groups over local fields, into the setting of this paper. Recall that the residue fields of our local rings are perfect (see Section~\ref{charassum}).
  For a root $\alpha \in \Phi$, the $c(\alpha)$ and $v(\alpha)$ may be assumed to be $0$ and $1$, respectively, and further an optimal concave function takes only integral values.  By  \cite[Proposition 2.3]{courtes}, concavity is equivalent to pseudo-concavity since our group is split over $K$. A function is of the form $f_{\Omega}$ if and only if it is strongly concave according to \cite[Proposition 3.3 ]{courtes}. 

For a root $\alpha \in \Phi$, let $S'(\alpha) \subset S(\alpha)$ (see \eqref{setalpha}) be the restricted set such that 
\begin{itemize}
\item the $\alpha_i$ are linearly independent;
\item for every $i \neq j$, we have $(\alpha_i,\alpha_j) \geq 0$; we can assume that $\alpha_i+\alpha_j$ is not a root;
\item for every $i$, $\lambda_i<1$.
\end{itemize}

In \cite[Lemma 5.1]{courtes} F.~Court\`es gives an equivalent reformulation of strong concavity in terms of a technical numerical inequality which should be checked for each element in $S'(\alpha)$, where $\alpha$ varies in $ \Phi$. We claim that when $G=A_n$,  this numerical condition is vacuously true because $S'(\alpha)= \emptyset$. We see this as follows.

Let $\alpha=\epsilon_l - \epsilon_m$ be a root of $A_n$. Since $\alpha=\sum_{i=1}^r \lambda_i \alpha_i=\epsilon_l -\epsilon_m$ and $0 < \lambda_i <1$, we see that $r \geq 2$ and there exist at least two indices $1 \leq i_1 < i_2 \leq r$ such that $\alpha_{i_1}=\epsilon_l - \epsilon_{j_1}$ and $\alpha_{i_2}=\epsilon_l - \epsilon_{j_2}$. We suppose firstly that $j_1$ is different from $m$. In this case, there must exist another index $1 \leq i_3 \leq r$ such that $\alpha_{i_3}=\epsilon_{j_1}-\epsilon_{j_2}$. But then either $\alpha_{i_1}+\alpha_{i_3}=\epsilon_l - \epsilon_{j_2}$ is a root of $A_n$, or $\alpha_{i_1}=-\alpha_{i_3}$. Both these possibilities violate the conditions for $\{(\lambda_{i_1}, \alpha_{i_1}),(\lambda_{i_2}, \alpha_{i_2}) \}$ to be a subset of an element in $S'(\alpha)$. Now suppose that $j_1$ equals $m$. Then $\alpha_{i_1}$ equals $\alpha$. So we have $\sum_{i=1}^r \lambda_i \alpha_i - \lambda_{i_1} \alpha_{i_1}= (1-\lambda_{i_1}) \alpha=(1-\lambda_{i_1})(\epsilon_l - \epsilon_m) \neq 0$. 
Repeating the above argument, we find that $\alpha_{i_2}$ equals $\alpha$ as well. But then $\alpha_{i_1}$ and $\alpha_{i_2}$ are the same. This violates the conditions for $\{(\lambda_{i_1}, \alpha_{i_1}),(\lambda_{i_2}, \alpha_{i_2}) \}$ to be a subset of an element in $S'(\alpha)$.
\end{proof}

\begin{rem} Let $f\colon\tilde{\Phi} \rightarrow \mathbb{Z}$ be a concave function. We have $f(r) =f(0 + r) \leq f(0) + f(r)$. Thus, $0 \leq f(0)$. Let us fix a value $f(0)$ of $0 \in \tilde{\Phi}$. For each simple root $\alpha \in S$, by the condition $f(0)=f(\alpha+ -\alpha) \leq f(\alpha)+f(-\alpha)$, we have
  $$(f(\alpha),f(-\alpha)) \in \left\{(x,y) \in \mathbb{Z}^2\mid x+y \geq f(0) \right\}.$$
  On the other hand, the condition $f(r_{1})=f(r_{1}+r_2-r_2) \leq f(r_{1}+r_2)+f(-r_2)$ implies
  $$f(r_{1})-f(r_2) \leq f(r_{1}+r_2). \leq f(r_{1})+f(r_2).$$
  Thus, for an integral concave function $f$, we have infinitely many choices for values on $ S^{\pm} \cup \{0\}$ but only finitely many choices for non-simple roots.
\end{rem}

\begin{part}
{Schematization}
\end{part}

\section{The {\tt n}-parahoric Lie algebra bundle \texorpdfstring{$\boldsymbol{\mathcal{R}}$}{R} on an affine space}  \label{constructionR}
The aim of this section is to present an {\em ab initio} construction of the {\tt n-parahoric} Lie algebra bundle on the affine $n$-space together with a full description of the fibres at points of higher depth. This played the initially important role for us to guess the definition of the {\tt n}-parahoric group. The reader who is interested only in the group scheme constructions may skip this section.

For $1 \leq i \leq n$, let $H_i \subset {\bf A} := \mathbb{A}^{n}_k$ denote the coordinate hyperplanes.
Let ${\bf A_{0}} \subset {\bf A}$ be the open subscheme which is the complement of the union of hyperplanes.  Let $\bvt := (\theta_{1}, \ldots, \theta_{n}) \in  \cA^{n}$ be a point of the $n$-apartment satisfying the conditions of Section~\ref{charassum}.
The aim of this section is to construct an {\em {\tt n}-parahoric Lie algebra} bundle $\cR$ on  ${\bf A}:= \mathbb{A}^{n}_k$ such that its restrictions to the generic points of the hyperplane $H_{i}$ are the parahoric Lie algebra bundles associated to the weight $\theta_{i}$ and to describe it on the whole of ${\bf A}$. 

\subsection{Elementary remarks on reflexive sheaves} 
We begin with three elementary  lemmas which should be well known.

\begin{lem}\label{langton}
  Let $E$ be a locally free sheaf on an irreducible smooth scheme $X$. Let $\xi \in X$ be the generic point, and let $W \subset E_{\xi}$ be an $\mathcal O_{\xi}$-submodule. Then there exists a unique coherent subsheaf $F \subset E$ such that $F_{\xi} = W$ and $Q := \Coker(F \hra E)$ is torsion-free. Moreover, $F$ is a reflexive sheaf.
\end{lem}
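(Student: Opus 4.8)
The plan is to realise $F$ as a kernel, read off its behaviour at the generic point, and then deduce uniqueness and reflexivity from standard facts about torsion-free sheaves on integral schemes. First I would set up the construction. Since $X$ is irreducible and smooth it is integral, and $\cO_{_\xi} = \cO_{_{X,\xi}}$ is the function field $K := K(X)$; thus $E_{_\xi} = E \otimes_{_{\cO_{_X}}} K$ is a finite-dimensional $K$-vector space and $W \subset E_{_\xi}$ is simply a $K$-subspace. Writing $j : \spec K \to X$ for the inclusion of the generic point, I would form the quasi-coherent sheaf $j_{_*}(E_{_\xi}/W)$: its sections over every nonempty open are $E_{_\xi}/W$, and its stalks are torsion-free because any nonzero element of $\cO_{_{X,x}}$ is invertible in $K$. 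Define $F := \ker\!\big(E \to j_{_*}(E_{_\xi}/W)\big)$, where the map is the composite $E \to j_{_*}E_{_\xi} \to j_{_*}(E_{_\xi}/W)$. Taking stalks at $\xi$ gives $F_{_\xi} = \ker(E_{_\xi} \to E_{_\xi}/W) = W$. As a quasi-coherent subsheaf of the coherent sheaf $E$ on a Noetherian scheme, $F$ is coherent; and $Q = E/F$ is coherent (a quotient of $E$) and torsion-free (it is a subsheaf of the torsion-free $j_{_*}(E_{_\xi}/W)$). This settles existence.

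For uniqueness, suppose $F' \subset E$ is coherent with $F'_{_\xi} = W$ and $E/F'$ torsion-free. I would first show $F' \subseteq F$. The composite $\phi : F' \hra E \to j_{_*}(E_{_\xi}/W)$ has zero stalk at $\xi$, since $F'_{_\xi} = W$ is exactly the kernel there; but $j_{_*}(E_{_\xi}/W)$ is a constant sheaf on the irreducible space $X$, so a local section is determined by its germ at $\xi$, forcing $\phi = 0$ and hence $F' \subseteq \ker = F$. Then $F/F'$ is a subsheaf of the torsion-free sheaf $E/F'$ whose stalk at $\xi$ is $W/W = 0$; a coherent sheaf with vanishing generic stalk on an integral scheme is torsion, and a torsion subsheaf of a torsion-free sheaf is zero, so $F' = F$.

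Finally, reflexivity follows from the short exact sequence $0 \to F \to E \to Q \to 0$ with $E$ locally free and $Q$ coherent torsion-free. I would invoke the standard criterion that on an integral Noetherian scheme the kernel of a morphism from a locally free sheaf to a torsion-free sheaf is reflexive; concretely, since $X$ is smooth it is Cohen--Macaulay, so $\mathrm{depth}\,E_{_x} = \dim \cO_{_{X,x}}$, while torsion-freeness of $Q$ gives $\mathrm{depth}\,Q_{_x} \geq 1$ away from $\xi$. The depth inequality for a short exact sequence then yields $\mathrm{depth}\,F_{_x} \geq \min(2, \dim \cO_{_{X,x}})$ at every point, i.e. $F$ is torsion-free and satisfies Serre's condition $S_{_2}$; on the normal scheme $X$ this is equivalent to $F$ being reflexive.

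I expect the only genuinely delicate point to be the reflexivity step, which is the one place where smoothness (regularity) of $X$ is used rather than mere integrality; existence and uniqueness are formal consequences of working at the generic point. A minor care-point is that the auxiliary sheaf $j_{_*}(E_{_\xi}/W)$ is only quasi-coherent, so I must pass to the coherent image $Q$ before quoting results stated for coherent sheaves.
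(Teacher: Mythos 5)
Your construction of $F$ as $\ker\big(E \to j_{_*}(E_{_\xi}/W)\big)$ is exactly the sheaf the paper defines by $U \mapsto E(U)\cap W$, and your uniqueness argument (any competitor $F'$ lands inside $F$, and $F/F'$ is a torsion subsheaf of the torsion-free $E/F'$ with vanishing generic stalk) is the paper's maximality argument in slightly expanded form. The only divergence is the reflexivity step: the paper embeds the torsion sheaf $F^{\vee\vee}/F$ into the torsion-free quotient $E/F$ and concludes it vanishes, whereas you invoke the depth/$S_{_2}$ criterion on the regular scheme $X$ --- both are standard proofs of the fact that the kernel of a map from a locally free sheaf to a torsion-free sheaf is reflexive (Hartshorne, \emph{Stable reflexive sheaves}, Prop.~1.1), so your argument is correct and essentially the same as the paper's.
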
 

\begin{proof} Define a sheaf $\tilde{F}$ by the following condition: its sections on an affine open $U \subset X$ are given by $\tilde{F}(U) := E(U) \cap W$.  Then it is easily seen that $\tilde{F}$ defines a coherent subsheaf $F \subset E$ and that it is the maximal coherent subsheaf of $E$ whose fibre over $\xi$ is $W$. To check that $Q$ is torsion-free, let $T$ be the torsion submodule of $Q$. Let $K := \Ker(E \to Q/T)$. Then since $\xi \notin \Supp(T)$, $K_{\xi} = W$, and hence, by the maximality of $F$, we have $K = F$. Since $E$ is locally free,  we have $F^{\vee \vee}/F \hra E^{\vee \vee}/F=E/F$. But since $F^{\vee \vee}/F$ is only torsion and $E/F=Q$ is torsion-free, it follows that $F$ is automatically a reflexive sheaf.\end{proof}

\begin{lem}\label{langton1}
  Let $X$ be as above and $i\colon U \hookrightarrow X$ an open subset such that $X \setminus U$ has codimension at least~$2$ in $X$. Let $F_{U}$ be a reflexive sheaf on $U$. Then $i_{*}(F_{U})$ is a reflexive sheaf on $X$ which extends $F_{U}$.
\end{lem}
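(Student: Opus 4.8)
The plan is to realise $i_{_*}(F_{_U})$ concretely as a subsheaf of a trivial bundle on $X$, deduce coherence from Noetherianity, and then identify it with its own double dual by a ``big open'' comparison. Throughout write $Z := X \setminus U$, so that $\mathrm{codim}_{_X} Z \geq 2$, and set $G := i_{_*}(F_{_U})$. Since $X$ is smooth it is in particular normal, integral and satisfies Serre's condition $S_{_2}$, and these are the only features of $X$ I will use.

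First I would produce an embedding into a free sheaf. As $F_{_U}$ is reflexive on the smooth (hence normal, integral) scheme $U$, a surjection $\cO_{_U}^{^{m}} \twoheadrightarrow F_{_U}^{^{\vee}}$ from a free sheaf dualises to an injection $F_{_U} = F_{_U}^{^{\vee\vee}} \hra \cO_{_U}^{^{m}}$. Applying $i_{_*}$, which is left exact and preserves quasi-coherence ($i$ being a quasi-compact open immersion into a locally Noetherian scheme), together with the Hartogs identity $i_{_*}\cO_{_U} = \cO_{_X}$ (valid because $X$ is normal and $\mathrm{codim}_{_X} Z \geq 2$), yields an injection $G \hra \cO_{_X}^{^{m}}$. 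Hence $G$ is a quasi-coherent subsheaf of a coherent sheaf over a locally Noetherian scheme, so it is coherent, and it is torsion-free. That $G$ extends $F_{_U}$, i.e. $G|_{_U} \simeq F_{_U}$, is the standard identity $i^{^{*}}i_{_*} = \mathrm{id}$ for an open immersion.

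It then remains to show that $G$ is reflexive. Consider the canonical morphism $\varphi : G \to G^{^{\vee\vee}}$. Forming duals commutes with restriction to the open $U$, so over $U$ the map $\varphi$ becomes the canonical morphism $F_{_U} \to F_{_U}^{^{\vee\vee}}$, which is an isomorphism by reflexivity of $F_{_U}$; thus $\varphi|_{_U}$ is an isomorphism. On the other hand $G^{^{\vee\vee}}$ is reflexive, being the double dual of a coherent sheaf over a scheme satisfying $S_{_2}$, and every reflexive sheaf $\cH$ on $X$ satisfies $\cH \simeq i_{_*}(\cH|_{_U})$ for the big open $U$ (Hartshorne, \emph{Stable reflexive sheaves}, Prop.~1.6). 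The adjunction unit $\mathrm{id} \to i_{_*}i^{^{*}}$ is an isomorphism on $G$ (since $G = i_{_*}F_{_U}$ by construction) and on $G^{^{\vee\vee}}$ (by the preceding sentence), so its naturality square identifies $\varphi$ with $i_{_*}(\varphi|_{_U})$; as $\varphi|_{_U}$ is an isomorphism, $\varphi$ is an isomorphism, and $G$ is reflexive.

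The only genuinely delicate point is the coherence of $i_{_*}(F_{_U})$: pushforward along an open immersion does not preserve coherence in general, and it is precisely the embedding into $\cO_{_X}^{^{m}}$ combined with the codimension-$\geq 2$ Hartogs identity that saves the argument. The remaining reflexivity assertions are the well-known facts on reflexive sheaves over normal (here smooth) schemes, of exactly the kind already invoked in the proof of Lemma \ref{langton}, so they may simply be cited.
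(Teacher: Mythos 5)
Your overall strategy is sound and genuinely different from the paper's, but the very first step contains a real gap. You assert that there is a surjection $\cO_{_U}^{^{m}} \twoheadrightarrow F_{_U}^{^{\vee}}$ from a free sheaf, equivalently (after dualising) an embedding $F_{_U} \hra \cO_{_U}^{^{m}}$. This is not automatic: it requires $F_{_U}^{^{\vee}}$ to be generated by finitely many global sections, which fails as soon as $U$ is not quasi-affine. For instance, with $X = \PP^{^2}$, $U = \PP^{^2} \setminus \{pt\}$ and $F_{_U} = \cO(1)|_{_U}$, one has $F_{_U}^{^{\vee}} = \cO(-1)|_{_U}$, which has no nonzero global sections (by the very Hartogs principle you invoke), so no map from a free sheaf onto it is surjective and no embedding of $F_{_U}$ into a trivial bundle exists. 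This is not a hypothetical worry: the paper applies the lemma on projective varieties such as the wonderful compactification. The gap is, however, easily repaired by localising. Coherence and reflexivity of $i_{_*}(F_{_U})$ are local on $X$, and $i_{_*}$ commutes with restriction to open subsets of $X$; so you may replace $X$ by an affine open $V$ and $U$ by $U \cap V$, whose complement in $V$ still has codimension $\geq 2$. Then $U \cap V$ is quasi-affine, $\cO_{_{U \cap V}}$ is ample, every coherent sheaf on it is generated by finitely many global sections, and your argument runs verbatim.

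With that patch the remainder of your proof is correct: the embedding gives coherence of $G = i_{_*}(F_{_U})$ as a quasi-coherent submodule of a coherent module, and the identification of $\varphi: G \to G^{^{\vee\vee}}$ with $i_{_*}(\varphi|_{_U})$ via the adjunction unit is a clean way to see reflexivity. The paper proceeds in the opposite order: it first extends $F_{_U}$ to a coherent sheaf $F_{_1}$ on all of $X$ by the general extension theorem of [EGA, Cor.\ 9.4.8], sets $F := F_{_1}^{^{\vee\vee}}$, observes that $F$ is reflexive with $F|_{_U} \simeq F_{_U}$, and then concludes $i_{_*}(F_{_U}) = F$ from [Hartshorne, Prop.\ 1.6]. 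Both arguments ultimately rest on the same two standard facts (double duals of coherent sheaves on normal integral schemes are reflexive, and reflexive sheaves are determined by their restriction to a big open subset); the paper's version buys freedom from any global generation hypothesis and hence needs no localisation, while yours, once localised, is slightly more self-contained in that it exhibits $i_{_*}(F_{_U})$ directly as a subsheaf of a free module.
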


\begin{proof} By \cite[Corollaire 9.4.8]{ega}, there exists a coherent $\mathcal O_{X}$-module $F_1$ such that $F_1|_{U} \simeq F_{U}$. Set $F := F_1^{**}$ to be the double dual. Then  $F$ is reflexive, and also since $F_{U}$ is reflexive, $F|_{U} \simeq F_{U}$. Hence, we have  $i_{*}(F_{U}) = F$; see \cite[Proposition 1.6]{hartshorne}. \end{proof} 

\begin{lem}\label{langton2}
  Let $X$ be integral and factorial. Then any reflexive sheaf of rank $1$ is locally free.
\end{lem}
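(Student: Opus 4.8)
The plan is to reduce the statement to a purely local commutative-algebra fact and then settle that fact by an elementary greatest-common-divisor argument, exploiting unique factorization directly. Since both reflexivity and local freeness are local properties, it suffices to prove that for each point $x \in X$ the stalk $M_{_x}$ is free of rank one over $A := \cO_{_{X,x}}$. Because $X$ is factorial, $A$ is a local UFD; and because the formation of duals (hence reflexivity) commutes with localization on a locally Noetherian scheme, $M_{_x}$ is a finitely generated reflexive $A$-module of rank one. Thus the heart of the matter is the claim that \emph{a finitely generated reflexive module of rank one over a UFD is free.}

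First I would record that a reflexive module over a domain is torsion-free, so the natural map $M_{_x} \to M_{_x} \otimes_{_A} K$, where $K := \text{Fract}(A)$, is injective; identifying $M_{_x} \otimes_{_A} K \cong K$ via the rank-one hypothesis realises $M_{_x}$ as a finitely generated $A$-submodule of $K$, i.e. a fractional ideal $I \subseteq K$. Clearing a common denominator multiplies $I$ by a nonzero element of $A$, an isomorphism of $A$-modules that preserves reflexivity, so I may assume $I \subseteq A$ is an honest nonzero ideal, still reflexive.

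The key step is then to show that a reflexive ideal in a UFD is principal. Writing $I = (x_{_1}, \ldots, x_{_n})$ with $x_{_i} \in A$, I would set $g := \gcd(x_{_1}, \ldots, x_{_n})$ and factor $x_{_i} = g\, y_{_i}$, so that $I = g\cdot J$ with $J = (y_{_1}, \ldots, y_{_n})$ and $\gcd(y_{_1}, \ldots, y_{_n}) = 1$; multiplication by $g$ gives an isomorphism $J \cong I$, so $J$ is again reflexive. Using the standard identification $\text{Hom}_{_A}(J, A) \cong (A : J) = \{x \in K : xJ \subseteq A\}$ valid for a nonzero ideal in a domain, I would compute $(A:J) = A$: if $x = a/b$ with $\gcd(a,b) = 1$ lies in $(A:J)$, then $b \mid a\, y_{_i}$ for every $i$, hence $b \mid y_{_i}$ for every $i$ since $\gcd(a,b)=1$, which forces $b$ to be a unit because $\gcd(y_{_i}) = 1$. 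Consequently $J^{*} \cong (A:J) = A$ and $J^{**} \cong (A:(A:J)) = A$, and under these identifications the canonical map $J \to J^{**}$ is the inclusion $J \hookrightarrow A$; reflexivity of $J$ then forces $J = A$, whence $I = (g)$ is principal and $M_{_x} \cong I \cong A$ is free of rank one.

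The one delicate point, and the step I would treat most carefully, is the reduction itself: checking that the double-dual construction genuinely commutes with passage to stalks so that $M_{_x}$ is reflexive over the UFD $A$, together with the bookkeeping that the fractional-ideal identifications carry the canonical evaluation map $J \to J^{**}$ to the honest inclusion. Once this compatibility is in place, the $\gcd$ computation is entirely elementary and requires no appeal to divisor class groups or to the structure theory of divisorial ideals.
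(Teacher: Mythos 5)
Your proof is correct, but it takes a genuinely different route from the paper's. The paper (following Hartshorne, Proposition 1.9) argues globally: by normality the reflexive sheaf $\cF$ is locally free on a big open subset $U$ with $\mathrm{codim}(X\setminus U)\ge 2$, factoriality gives the bijectivity of $\mathrm{Pic}(X)\to\mathrm{Pic}(U)$, so $\cF_{_U}$ extends to a line bundle $\mathcal L$ on $X$, and reflexivity of both sides identifies $\mathcal L$ with $\cF$. You instead localize at each point and prove the underlying commutative-algebra statement directly: a finitely generated reflexive rank-one module over a local UFD is free, via the fractional-ideal model, the identification $\mathrm{Hom}_{_A}(J,A)\cong (A:J)$, and the gcd computation showing $(A:J)=A$ when the generators of $J$ are coprime, so that reflexivity forces $J=A$. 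All the steps check out: reflexivity does commute with passage to stalks for coherent sheaves, reflexive modules over a domain are torsion-free (being submodules of free modules after dualizing a finite presentation), and the coprimality argument $b\mid a y_{_i}$, $\gcd(a,b)=1\Rightarrow b\mid y_{_i}$ is valid in a UFD. Your argument is more elementary and self-contained — it needs neither the locus of local freeness of a reflexive sheaf nor the restriction map on Picard groups — and it makes visible exactly where factoriality enters (triviality of the local divisor class group). The paper's argument, by contrast, is essentially a one-line reduction to cited facts and generalizes more readily to statements about extending line bundles across codimension two, which is how the lemma is actually used elsewhere in the paper (e.g.\ in the proof of Theorem \ref{gpschLiestab}). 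Either proof is acceptable here.
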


\begin{proof} (\textit{cf.} \cite[Proposition 1.9]{hartshorne}) Let $\cF$ be a reflexive sheaf of rank $1$. Then since $X$ is normal, there is an {\em open} $U \subset X$ such that $\codim_{X}(U) \geq 2$ and $\cF_{U}$ is locally free. Since $X$ is locally factorial, we have $\Pic(X) \to \text{Pic}(U)$ is bijective. Hence, there is a locally free sheaf $\mathcal L$ of rank $1$ on $X$ such that $\mathcal L_{U} = \cF_{U}$. It is clear that $\mathcal L \simeq \cF$ on $X$. \end{proof}

\subsection{The construction of \texorpdfstring{$\boldsymbol{\cR}$}{R}}\label{constructionofr}

\begin{thm}\label{gpschLiestab}
Associated to $\bvt$,  there exists a canonical Lie algebra bundle $\mathcal{R}$ on ${\bf A}$ which extends the trivial bundle with fibre $ \mathfrak g$ on $ {\bf A_{0}} \subset {\bf A}$; furthermore, in a formal neighbourhood $V := \spec(\mathcal O_{n})$ of the origin in the affine $n$-space ${\bf A}$,  we have the identification of functors from the category of $k$-algebras to $k$-Lie algebras 
\begin{equation}\label{loopfunct}
 L_{n}^+( \mathcal R\mid_{V{}})=L_{n}^+( \mathfrak{P}_{\bvt}).
\end{equation}
\end{thm}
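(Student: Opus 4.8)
The plan is to realise $\cR$ by the parabolic-bundle procedure of modifying the trivial bundle $\gfr\otimes\cO_{_{\bf A}}$ along the branches of the normal-crossing divisor $z_{_1}\cdots z_{_n}=0$ one at a time, and then to reconcile the resulting sheaf on the formal neighbourhood $V$ with the loop-limit definition \eqref{Liealghd} of $\mathfrak{P}_{_{\bvt}}$. Concretely, one starts from $\gfr\otimes\cO_{_{{\bf A}_{_0}}}$ carrying its constant bracket and the root-space decomposition $\gfr=\mathfrak{t}\oplus\bigoplus_{_{r\in\Phi}}\mathfrak{g}_{_r}$. At the generic point $\xi_{_i}$ of each coordinate hyperplane $H_{_i}$ the local ring is a discrete valuation ring with uniformiser $z_{_i}$, and the one-dimensional Bruhat--Tits theory attached to $\theta_{_i}$ produces the lattice $W_{_i}=\mathfrak{t}(\cO_{_{\xi_{_i}}})\oplus\bigoplus_{_r}\mathfrak{g}_{_r}\cdot z_{_i}^{^{m_{_r}(\theta_{_i})}}\cO_{_{\xi_{_i}}}\subset\gfr\otimes K_{_n}$. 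Modifying the trivial bundle by $W_{_i}$ across each $H_{_i}$ gives a locally free sheaf on the big open $U={\bf A}\setminus\bigcup_{_{i<j}}(H_{_i}\cap H_{_j})$, where the branches are disjoint and smooth; the reflexive-sheaf mechanics of Lemmas \ref{langton}--\ref{langton2} extend it canonically across the codimension $\geq 2$ complement, each rank-one summand being a line bundle by factoriality of $\bf A$ (Lemma \ref{langton2}). Since the procedure respects the root-space decomposition, the result is the explicit bundle
\[
\cR\;=\;\big(\mathfrak{t}\otimes\cO_{_{\bf A}}\big)\;\oplus\;\bigoplus_{_{r\in\Phi}}\mathfrak{g}_{_r}\otimes\cO_{_{\bf A}}\Big(-\sum_{_i}m_{_r}(\theta_{_i})\,H_{_i}\Big),
\]
which is manifestly locally free and restricts at $\xi_{_i}$ to the parahoric Lie algebra bundle of $\theta_{_i}$; in particular it extends the trivial bundle over $\bf A_{_0}$.

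Next one installs the Lie bracket. The constant bracket on $\bf A_{_0}$ sends $\mathfrak{g}_{_r}\otimes\mathfrak{g}_{_s}$ into $\mathfrak{g}_{_{r+s}}$, and under the twists above this extends to a pairing of subsheaves of $\gfr\otimes K_{_n}$ exactly when $m_{_r}(\theta_{_i})+m_{_s}(\theta_{_i})\geq m_{_{r+s}}(\theta_{_i})$ for every $i$ --- that is, exactly the concavity of $r\mapsto m_{_r}(\theta_{_i})$ established in Proposition \ref{mrthetaisconcave}. Thus the bracket is well defined on $U$, and as $\cR$ is locally free the corresponding section of $\mathcal{H}om(\cR\otimes\cR,\cR)$ extends uniquely across the codimension $\geq 2$ locus, with antisymmetry and the Jacobi identity persisting by density of $U$. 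Hence $\cR$ is a Lie algebra bundle, giving the first assertions of the theorem.

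For the identity \eqref{loopfunct} one passes to $V=\spec~\cO_{_n}$, where every twisting line bundle is free with canonical generator $\prod_{_i}z_{_i}^{^{m_{_r}(\theta_{_i})}}$; thus $\cR|_{_V}$ is the free $\cO_{_n}$-module $M=\mathfrak{t}\otimes\cO_{_n}\oplus\bigoplus_{_r}\mathfrak{g}_{_r}\cdot\prod_{_i}z_{_i}^{^{m_{_r}(\theta_{_i})}}\cO_{_n}$ inside $\gfr\otimes K_{_n}$, and base-changing the coefficient field gives $L_{_n}^+(\cR|_{_V})(R)=M\otimes_{_{\cO_{_n}}}R\llbracket z_{_1},\ldots,z_{_n}\rrbracket$, i.e.
\[
L_{_n}^+(\cR|_{_V})(R)\;=\;\mathfrak{t}\otimes R\llbracket z_{_1},\ldots,z_{_n}\rrbracket\;\oplus\;\bigoplus_{_r}\mathfrak{g}_{_r}\cdot\prod_{_i}z_{_i}^{^{m_{_r}(\theta_{_i})}}R\llbracket z_{_1},\ldots,z_{_n}\rrbracket.
\]
On the other side one unwinds \eqref{Liealghd}: for a root element $X_{_r}(z)\,\mathfrak{g}_{_r}$ the conjugation formula \eqref{hdconjugation} rewrites $\prod_{_i}\eta_{_i}(s_{_i})$-conjugation as the substitution $u_{_r}\big(\prod_{_i}s_{_i}^{^{r(\theta_{_i})}}X_{_r}(s_{_1}z_{_1},\ldots,s_{_n}z_{_n})\big)$, and writing $X_{_r}=\sum_{_\alpha}c_{_\alpha}z^{^\alpha}$ the limit as all $s_{_i}\to 0$ exists if and only if $r(\theta_{_i})+\alpha_{_i}\geq 0$ for each appearing monomial and each $i$, i.e. $\alpha_{_i}\geq m_{_r}(\theta_{_i})$ by \eqref{floorcondition}. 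Matching the Cartan and root pieces term by term gives the desired equality of functors.

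I expect the delicate point to be the passage to codimension $\geq 2$. One must verify that the reflexive extension genuinely returns the direct sum of the line bundles $\cO_{_{\bf A}}(-\sum_{_i}m_{_r}(\theta_{_i})H_{_i})$, so that no unexpected enlargement of sections occurs at the deep points where several $H_{_i}$ meet, and that the Lie bracket survives the extension; this is precisely the behaviour at points of depth $>1$ controlled by Theorem \ref{thedescription}, and is why the module must be checked against the loop-limit side rather than guessed. The feature that rescues the matching is that in $K_{_n}=\cO_{_n}[(z_{_1}\cdots z_{_n})^{^{-1}}]$ every element is supported in a shifted orthant, so that imposing the limit condition simultaneously in $s_{_1},\ldots,s_{_n}$ controls each Laurent monomial separately; the coordinate-wise bounds $\alpha_{_i}\geq m_{_r}(\theta_{_i})$ then do force divisibility by the single monomial $\prod_{_i}z_{_i}^{^{m_{_r}(\theta_{_i})}}$, which is what identifies the loop side with the sheaf side.
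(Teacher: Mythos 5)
Your proposal is correct, but it takes a genuinely different route from the paper. The paper does not write $\cR$ down as an explicit direct sum of twisted line bundles; instead it builds the toric ramified cover $p:{\bf Y^{^0}}\to{\bf A_{_0}}$ with $y_{_i}^{^d}=x_{_i}$, twists the trivial bundle by $\text{Ad}\circ\mathfrak a$ where $\mathfrak a$ encodes the $\theta_{_i}$'s, and defines $\cR(B_{_0}^{^+})=j^{^{-1}}\big(j(\gfr(B_{_0}))\cap\gfr(B^{^+})\big)$; local freeness is then deduced from reflexivity (Lemma \ref{langton}) together with the $T_{_{\text{ad}}}$-weight decomposition \eqref{wtspacedecomp} and Lemma \ref{langton2}. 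Unwinding the paper's intersection on each root space recovers exactly your module $\gfr_{_r}\cdot\prod_{_i}z_{_i}^{^{m_{_r}(\theta_{_i})}}\cO_{_n}$, so the two constructions produce the same sheaf; your version makes local freeness manifest from the outset (a line bundle on $\mathbb A^{^n}$ attached to a divisor supported on coordinate hyperplanes is free, so the reflexive-extension worry you flag in your last paragraph actually dissolves once the explicit formula is taken as the definition), and your verification of the bracket via the concavity inequality $m_{_r}(\theta_{_i})+m_{_s}(\theta_{_i})\geq m_{_{r+s}}(\theta_{_i})$ of Proposition \ref{mrthetaisconcave} is a clean substitute for the paper's observation that the bracket is inherited from $\gfr(B)$. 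What the paper's covering-space construction buys, and what your direct formula does not, is the template for the rest of \S\ref{schematization1}: the cover ${\bf Y^{^0}}\to{\bf A_{_0}}$ is precisely the local model of the Kawamata covers (cf.\ Remark \ref{torickawamata} and \eqref{frombalses}) through which the group scheme itself, not just its Lie algebra, is later produced by invariant direct image. Your computation of the loop-side condition, reducing the simultaneous limits $s_{_i}\to 0$ to the coordinate-wise bounds $\alpha_{_i}\geq m_{_r}(\theta_{_i})$ via \eqref{floorcondition} and the orthant-support of elements of $R(\!(z_{_1},\ldots,z_{_n})\!)$, is exactly the content of the paper's step \eqref{conditions1} and is sound.
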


\begin{proof} 
Let $b_{i \alpha} \in \mathbb{Q}$ be defined by the relations
\begin{equation}
\theta_{i}:= \sum_{\alpha \in S} b_{i \alpha} \omega_{\alpha}^{\vee} \quad \text{for }  i \leq n.
\end{equation}
Let $d \in \mathbb{N}$ be such that $d b_{i \alpha} \in \mathbb{Z}$ for all $1 \leq i \leq n$ and $\alpha \in S$.

Consider an inclusion of $k$-algebras $B_{0} \subset B$, where
\begin{equation} B:= k\left[y_{i}^{\pm 1}\right]_{1 \leq i \leq n}, \quad 
B_{0}:= k\left[x_{i}^{\pm 1}\right]_{1 \leq i \leq n}
\end{equation}
and  $\{y_{i}^{d}=x_{i} \}_{1 \leq i \leq n}$. Let $
{\bf Y^{0}}:=\spec(B)$ and ${\bf A_{0}} =\spec(B_{0})$, and let $p\colon {\bf Y^{0}} \to {\bf A_{0}}$ be the natural morphism. We define the ``roots'' map
\begin{eqnarray}
\mathfrak r\colon{\bf Y^{0}} \lra {\bf A_{0}} \quad\text{as} \\
{\mathfrak r}^{\#}\left(\left(x_{i}\right)\right):=\left(y_{i}\right).
\end{eqnarray}
Note that as a map between tori, $\mathfrak r$ is an isomorphism. Let $T_{\ad}=\spec(k[z_{\alpha}^{\pm 1}])$. We define the action map 
\begin{eqnarray}
\mathfrak a\colon{\bf Y^{0}} \lra T_{\ad}  \quad\text{as} \\
{\mathfrak a}^{\#}\left(\left(z_{\alpha} \right)\right):=\left( \prod_{1 \leq i \leq n} y_{i}^{d b_{i \alpha}} \right).
\end{eqnarray}

We consider the map
\begin{eqnarray}\label{thebbAd}
\Ad \circ \mathfrak a\colon {\bf Y^{0}} \times \mathfrak{g} \lra {\bf Y^{0}} \times \mathfrak{g} \\
({\bf t},x) \longmapsto \left({\bf t}, \Ad\big(\mathfrak a({\bf t})\big)(x)\right).
\end{eqnarray}

We define the embedding of modules
\begin{equation}
  j\colon \mathfrak{g}(B_{0}) \longhookrightarrow\mathfrak{g}(B) \xrightarrow{\Gamma(\Ad \circ \mathfrak a)} \mathfrak{g}(B), 
\end{equation}
where the second map is the one induced by \eqref{thebbAd} on sections.
The $T$-weight space decomposition of $\mathfrak{g}$ induces a $T$-weight space decomposition on $\mathfrak{g}(B_0)$ by viewing the latter as sections of the  trivial bundle on $\bf A_{0}$ with fibres $\mathfrak{g}$.

Let $B^+  = k[y_{i}]_{1 \leq i \leq n}$ and $B_{0}^+:=k[x_{i}]_{1 \leq i \leq n}$.  Taking intersection as Lie submodules of $\mathfrak{g}(B)$, we define the following $B_{0}^+$-submodule of $\mathfrak{g}(B_{0})$:
\begin{equation}\label{sheafofR}
\mathcal{R}\left(B_{0}^+\right):= j^{-1}\left(j\left(\mathfrak{g}(B_{0})\right) \cap \mathfrak{g}\left(B^+\right)\right).
\end{equation}
In words, it is the set of elements in $\mathfrak{g}(B_{0})$ which lie in $\mathfrak{g}(B^+)$ under $j$.
Taking $E$ as the bundle defined by the free $B_{0}^+$-module $\mathfrak{g}(B^+)$ and the submodule $W:= j(\mathfrak{g}(B_0))$ in Lemma~\ref{langton}, we observe that $\mathcal R$ is a reflexive sheaf   on the affine embedding ${\bf A_{0}} \hookrightarrow {\bf A} := \spec(B_{0}^+) (\simeq {\mathbb A}^{n})$. Further, this is a sheaf of Lie algebras  with its Lie bracket induced from $\mathfrak{g}(B)$.

We now check that $\mathcal{R}$ is locally free on ${\bf A}$. Observe  that the intersection \eqref{sheafofR} also respects $T_{\ad}$-weight space decomposition on the sections. More precisely, for any root $r \in \Phi$, since by definition we have $\mathcal{R}(B_{0}^+)_r=j^{-1}\big(j(\mathfrak{g}_r(B_{0})) \cap \mathfrak{g}_r(B^+)\big)$,  we get the following equalities:
\begin{equation} \label{wtspacedecomp}
j^{-1}\left(j\left(\mathfrak{g}\left(B_{0}\right)\right) \cap \mathfrak{g}_r(B^+)\right) = j^{-1}\left(j\left(\mathfrak{g}_r\left(B_{0}\right)\right) \cap \mathfrak{g}(B^+)\right) = \mathcal{R}\left(B_{0}^+\right)_r.
\end{equation} 

Since $\mathcal{R}$ is reflexive, there is an open subset $U \subset {\bf A}$ with complement  of codimension at least $2$ such that the restriction $\mathcal{R}':= \mathcal{R}|_{U}$ is locally free. 
Clearly, $U$ contains ${\bf A_{0}}$ and the generic points $\zeta_{\alpha}$ of the divisors $H_{\alpha}$. This gives a  decomposition on $\mathcal{R}'$ obtained by restriction from $\mathcal{R}$.   The locally free sheaf $\mathcal{R}'$ is a direct sum of the trivial bundle $\Lie(T_{\ad}) \times U$ (coming from the $0$-weight space) and the invertible sheaves coming from the root decomposition. Now since invertible sheaves extend as invertible sheaves across codimension at least~$2$, see Lemma~\ref{langton2}, the reflexivity of $\mathcal{R}$ implies that this direct sum decomposition of $\mathcal{R}'$ extends to  ${\bf A}$. Hence  $\mathcal{R}$ is locally free.

We introduce some convenient notation to check the main assertion of the theorem.
Recall that $\lambda_i\colon \mathbb{G}_m \rightarrow {\bf A_{0}}$ is the $\supth{i}$ axis of ${\bf A_{0}}$. For example, $\lambda_i$  may be expressed in $n$ coordinates as follows:
\begin{equation}
\lambda_i({\tt x}) = (1,\ldots, {\tt x}, \ldots 1)
\end{equation}
with ${\tt x}$ at the coordinate corresponding to $1 \leq i \leq n$. The map $\mathfrak r\colon {\bf Y^{0}} \to {\bf A_{0}}$ is abstractly an isomorphism of tori, and we can therefore consider the $1$-PS $\boldsymbol{\lambda_i}\colon\mathbb G_{m} \to {\bf Y^{0}}$ defined by $\boldsymbol{\lambda_i} := \mathfrak r^{-1} \circ \lambda_i$.  We observe the following:
\begin{eqnarray}
p \circ \boldsymbol{\lambda_i} = \lambda_i, \\
\label{3} \mathfrak a \circ (\boldsymbol{\lambda_1}, \ldots, \boldsymbol{\lambda_n})= (\theta_1,\ldots,\theta_n) =\bvt.
\end{eqnarray}

Recall that $V = \spec({\cO}_{n})$ denotes a formal neighbourhood of the origin. Let us check the isomorphism $L_{n}^+({\mathcal{R}}|_{V}) \simeq L_{n}^+(\mathfrak P_{\bvt})$ by first evaluating at $k$-valued points. Since $d\cdot {\bvt}$ is an $n$-tuple of $1$-PSs of ${\bf A_{0}}$, we see that
 $\bvt(\ldots,{\tt x}_{i},\ldots)=\bvt(\ldots,{\tt y}^d_{i},\ldots)=(d \bvt)(\ldots,{\tt y}_{i},\ldots)$ has become integral in $\{{\tt y}_i \}$. 

 By \eqref{sheafofR}, a section $\text{\cursive s}$ of $\mathcal R$ over $V$  is first of all given by a local section $\text{\cursive s}_{K}$ over the generic point $\spec(K_{n})$ of $\spec(\cO_{n})$. More precisely, this is firstly an element in  $j(\gfr(K))$. Hence  this element may be written as
 $$
 \text{\cursive s}_{K} =\Ad \left( \mathfrak{a} \prod_{1 \leq i \leq n} \boldsymbol{\lambda_i} ({\tt x}_{i}) \right)(x_{K}) = \Ad\left(\prod_{1 \leq i \leq n} \theta_i ({\tt x}_{i})\right)(x_{K}).$$

Therefore, the membership of $\text{\cursive s}$ in $\mathfrak{g}(B^+)$, see \eqref{sheafofR}, gets interpreted as follows:
\begin{equation}\label{conditions1}
  \Ad\left(\prod_{1 \leq i \leq n} \theta_i({\tt x}_{i}) \right) (x_{K}) \in L \mathfrak{g} (B^+).
\end{equation} 

But this is exactly the Lie algebra version  of the conditions ``$\lim_{s \rightarrow 0} \text{exists}$''  for $\eta=(1,\theta)$ in the observation \eqref{observation} and Section~\ref{higherobservation}, for each individual ${\tt x}_{i}$.
In our situation we have assumed $R=k$, and so by \eqref{3}, the above is equivalent to the membership
\begin{equation}  \text{\cursive s} \in   \mathfrak{P}_{\bvt_{}}.
\end{equation}
This proves the assertion for $k$-valued points.
The above proof goes through for all $k$-algebras because the  underlying module structure  is already defined on $k$, and so is Lie bracket. This completes the proof of the theorem.
 \end{proof}
 
\begin{Cor} \label{restrictiontocurves}
Let the notation be as in Theorem~\ref{gpschLiestab}. Then $\lambda := \sum_{i \leq n} \lambda_i$ is a  $1$-PS of ${\bf A_{0}}$ defining the {\it diagonal curve} $D$  whose limit point is the origin. Let $\mathcal{D}$ be a formal neighbourhood of the origin in $D$.
We get the  identification \textup{(}see \eqref{allunifequalLie} for the notation\textup{)}
\begin{equation}\label{loopfunct1}
 L^+\left( \mathcal R\mid_{\mathcal{D}}\right)=L^+\left( \mathfrak{P}_{\bvt}^{\diag}\right) 
\end{equation}
of the specialized objects. 

Let $\lambda=\sum k_{i} \lambda_{i}$, where $k_{i} \geq 0$ but not all are zero. Let $\mathcal{D}$ be the formal neighbourhood of the origin associated to the curve defined by $\lambda$. Let $f\colon \Phi \rightarrow \mathbb{Z}$ be the concave function defined by the assignment $r \ms \sum k_{i} m_r(\theta_i)$. Then for any $k$-algebra $R$, we have
\begin{equation}
L^+( \mathcal R\mid_{\mathcal{D}})(R)=\left\langle \mathfrak{t}(R \llbracket t \rrbracket), \mathfrak{u}_r\left(t^{f(r)} R \llbracket t \rrbracket\right), r \in \Phi \right\rangle.
\end{equation}
\end{Cor}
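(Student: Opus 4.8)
The plan is to reduce the statement to an \emph{explicit} description of $\cR$ as a direct sum of line bundles, and then to transport this description through the restriction to the curve and through the positive-loop functor; all the analytic content has already been packaged into Theorem \ref{gpschLiestab}.

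First I would make the root-space components of $\cR$ explicit. By \eqref{wtspacedecomp} the defining intersection \eqref{sheafofR} respects the weight decomposition, so $\cR = (\mathfrak{t}\otimes B_{_0}^+)\oplus\bigoplus_{r\in\Phi}\cR_{_r}$ with each $\cR_{_r}$ a rank-one $B_{_0}^+$-module. On the line $\gfr_{_r}$ the embedding $j$ is multiplication by the character of $T_{_{ad}}$ pulled back through $\mathfrak a$, which equals $\prod_{i} x_{_i}^{r(\theta_{_i})}$; this is exactly the computation $\sum_{\alpha} b_{_{i\alpha}}\langle r,\omega^{\vee}_{\alpha}\rangle = r(\theta_{_i})$ underlying \eqref{3}. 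Hence a Laurent section $\xi_{_r}\,g(x)$ of $\gfr_{_r}$ on ${\bf A_{_0}}$ lies in $\cR$ iff $\prod_i x_{_i}^{r(\theta_{_i})}g(x)$ becomes a polynomial in the $y_{_i}$ after $x_{_i}=y_{_i}^d$; since $d\,r(\theta_{_i})\in\ZZ$ this forces every $x_{_i}$-exponent to be $\geq -r(\theta_{_i})$, i.e. $\geq m_{_r}(\theta_{_i})$ \eqref{mrtheta}. Thus $\cR_{_r}$ is the free rank-one module generated by $\big(\prod_i x_{_i}^{m_{_r}(\theta_{_i})}\big)\xi_{_r}$, i.e. the invertible sheaf $\cO\big(-\sum_i m_{_r}(\theta_{_i})H_{_i}\big)$.

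With this in hand, restricting to $\mathcal{D}$ amounts to pulling back along $B_{_0}^+\to k\llbracket t\rrbracket$, $x_{_i}\mapsto t^{k_{_i}}$, the map cut out by $\lambda=\sum_i k_{_i}\lambda_{_i}$ (read from \eqref{3} one curve at a time). This is legitimate because the generic point of the curve lies in ${\bf A_{_0}}$ and $\cR$ is locally free along the whole curve by Theorem \ref{gpschLiestab}, so no re-imposition of regularity is involved. On the $r$-summand the generator maps to $t^{\sum_i k_{_i}m_{_r}(\theta_{_i})}\xi_{_r}=t^{f(r)}\xi_{_r}$ and the torus summand restricts to $\mathfrak{t}\otimes k\llbracket t\rrbracket$. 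Since restriction and $L^+$ both respect the root-space direct sum, for any $k$-algebra $R$ one obtains
\[
L^+(\cR\mid_{_{\mathcal{D}}})(R)=\mathfrak{t}(R\llbracket t\rrbracket)\oplus\bigoplus_{r\in\Phi}\xi_{_r}\,t^{f(r)}R\llbracket t\rrbracket=\langle \mathfrak{t}(R\llbracket t\rrbracket),\,\mathfrak{u}_{_r}(t^{f(r)}R\llbracket t\rrbracket),\,r\in\Phi\rangle,
\]
which is the asserted formula; concavity of $f$ follows from Proposition \ref{mrthetaisconcave} and $k_{_i}\geq 0$. For the first statement one takes all $k_{_i}=1$, so $f(r)=\sum_i m_{_r}(\theta_{_i})$, and comparison with \eqref{allunifequal1} and the definition \eqref{allunifequalLie} gives $L^+(\cR\mid_{_{\mathcal{D}}})=L^+(\mathfrak{P}_{_{{\bvt}}}^{^{\text{diag}}})$.

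The one point that must be handled with care --- and which is really the entire content of the corollary --- is that the exponent produced is $\sum_i k_{_i}m_{_r}(\theta_{_i})$ and \emph{not} $m_{_r}(\sum_i k_{_i}\theta_{_i})=-\lfloor\sum_i k_{_i}r(\theta_{_i})\rfloor$; by \eqref{convex} these differ in general. The former appears precisely because we restrict the \emph{already globalized} bundle $\cR$, whose construction imposes integrality in each $y_{_i}$ separately, rather than first setting the variables equal and then imposing a single regularity condition. I expect this bookkeeping --- preventing the per-variable floors from collapsing into one floor --- to be the only genuine subtlety; the remaining steps are the routine transport of the explicit line-bundle decomposition through restriction and through $L^+$.
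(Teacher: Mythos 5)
Your proposal is correct and follows essentially the same route as the paper: the paper's proof simply observes that restricting $\cR$ to $\mathcal{D}$ and taking sections amounts to substituting $z_{_i}=t^{k_{_i}}$ in the sections of $\cR$, which is exactly your explicit line-bundle computation (generator $\prod_i x_{_i}^{m_{_r}(\theta_{_i})}\xi_{_r}$ on each root line, restricting to $t^{\sum_i k_{_i}m_{_r}(\theta_{_i})}\xi_{_r}$) spelled out in more detail. Your closing remark that the per-variable floors do not collapse into a single floor correctly identifies the point of the corollary and is consistent with the paper's surrounding discussion of ${\cP}_{_{\bvt}}^{^{\text{diag}}}$ versus ${\cP}_{_{\sum_i\theta_i}}$.
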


\begin{proof} Note that restriction to $\mathcal{D}$ followed by taking global sections of $\mathcal{R}$ sets all variables $z_i$ to $t$ in the sections of $\mathcal{R}$ on $U$. Now the first assertion follows by the definition \eqref{allunifequalLie}. The restriction of $\mathcal{R}$ to $\mathcal{D}$ in the second case followed by taking global sections corresponds to  setting $z_i:=t^{k_i}$ in the sections of $\mathcal{R}$ on $U$.
\end{proof}

\begin{rem} \label{torickawamata}  In the setting of Theorem~\ref{gpschLiestab}, we may view $p\colon\spec(B^{+}) \to \spec(B_{0}^{+})$ as a ramified covering space of affine toric varieties induced by an inclusion  of lattices. The Galois group for this covering is the dual of the quotient of lattices.  The computation in~Theorem~\ref{gpschLiestab} can be seen as a higher-dimensional analogue of \cite{base}.
\end{rem}

\section{Schematization of {\tt n}-parahoric groups} \label{schematization1}
In this section and the next couple of sections, we will freely use the theorems from  \cite{sga3}, the paper of Bruhat--Tits \cite{bruhattits} and M.~Raynaud's book \cite{raynaud}. We begin by recalling a few essential facts about Weil restrictions of scalars and related things.

\subsection{Weil restrictions  and Lie algebras}
Let $X$ be an arbitrary $k$-scheme.
For an affine (or possibly ind-affine) group scheme $\cH \rightarrow X$, we denote by $\Lie(\cH)$ the sheaf of Lie algebras on $X$ whose sections on $U \rightarrow X$ are given by
\begin{equation} \label{Lieaffinegpsch}
  \Lie (\cH)(U)= \ker(\cH(U \times k[\epsilon]) \longrightarrow \cH(U)).
\end{equation}

\begin{lem}\label{Weilrestriction}
Let $p\colon \tilde{X} \ra X$ be a finite flat map of Noetherian schemes. Let $\Res_{\tilde{X}/X}$ denote the ``Weil restriction of scalars'' functor. Let $\cH  \ra \tilde{X}$ be an affine group scheme. Then we have a natural isomorphism
\begin{equation}
\Lie\left(\Res_{\tilde{X}/X} \cH\right) \simeq \Res_{\tilde{X}/X} \Lie(\cH).
\end{equation}  
When $p$ is also Galois with Galois group $\Gamma$,  under tameness assumptions \textup{(}see~Section~\ref{tameness}\;\textup{)}, we have a natural isomorphism
\begin{equation}
\Lie\left((\Res_{\tilde{X}/X} \cH)^{\Gamma}\right) \simeq \left(\Res_{\tilde{X}/X} \Lie(\cH)\right)^{\Gamma}.
\end{equation}
\end{lem}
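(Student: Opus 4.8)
The plan is to verify both isomorphisms at the level of functors of points, exploiting that $\text{Res}_{\tilde{X}/X}$ is right adjoint to the base-change functor $-\times_X \tilde{X}$ and therefore commutes with the kernels and equalizers that compute Lie algebras and $\Gamma$-invariants. Recall that for $U \to X$ one has $(\text{Res}_{\tilde{X}/X}\cH)(U) = \cH(U \times_X \tilde{X})$, and that $\text{Lie}(\cH)(U) = \ker(\cH(U \times_k k[\epsilon]) \to \cH(U))$ by \eqref{Lieaffinegpsch}.

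For the first isomorphism I would compute directly. The only geometric input is the base-change identity
\[
(U \times_k k[\epsilon]) \times_X \tilde{X} \;\cong\; (U \times_X \tilde{X}) \times_k k[\epsilon],
\]
which holds because fiber products commute: a $T$-point of either side is a triple consisting of a $T$-point of $U$, a $T$-point of $\tilde{X}$ agreeing over $X$, and a $T$-point of $\spec(k[\epsilon])$. Feeding this into the definitions yields
\[
\text{Lie}(\text{Res}_{\tilde{X}/X}\cH)(U) = \ker\big(\cH((U\times_X\tilde{X})\times_k k[\epsilon]) \to \cH(U \times_X \tilde{X})\big) = \text{Lie}(\cH)(U\times_X\tilde{X}) = (\text{Res}_{\tilde{X}/X}\text{Lie}(\cH))(U),
\]
where in the last step one uses that, $p$ being finite (hence affine), the Weil restriction of the vectorial group scheme attached to the coherent sheaf $\text{Lie}(\cH)$ is the one attached to $p_*\,\text{Lie}(\cH)$. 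These identifications are natural in $U$ and compatible with the additive and bracket structures, both inherited functorially from $\cH$, so they assemble into an isomorphism of sheaves of Lie algebras on $X$.

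For the second isomorphism I would use that $(-)^\Gamma$ is again a finite limit, namely the equalizer of the identity and the action maps $\{\gamma : G \to G\}_{\gamma \in \Gamma}$ on $G := \text{Res}_{\tilde{X}/X}\cH$. Since the reduction map $G(U \times_k k[\epsilon]) \to G(U)$ is $\Gamma$-equivariant, a direct diagram chase shows that at the level of functors
\[
\text{Lie}(G^\Gamma)(U) = \{\, x \in G(U\times_k k[\epsilon])^\Gamma : x \mapsto e \text{ in } G(U) \,\} = (\text{Lie}(G))^\Gamma(U),
\]
so the two operations commute as functors. The role of the characteristic-zero hypothesis is to promote this functorial equality to the geometric statement in the lemma: when $\operatorname{char} k = 0$ the finite group $\Gamma$ is linearly reductive, so the averaging (Reynolds) operator $\tfrac{1}{|\Gamma|}\sum_{\gamma\in\Gamma}\gamma$ makes $(-)^\Gamma$ an exact functor that preserves local freeness, whence $(\text{Res}_{\tilde{X}/X}\text{Lie}(\cH))^\Gamma$ is again a vector bundle, while simultaneously the fixed-point scheme $(\text{Res}_{\tilde{X}/X}\cH)^\Gamma$ is smooth, so that its tangent space along the identity section is correctly computed by the kernel functor above. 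Combining this with the first isomorphism gives the claim.

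I expect the main obstacle to be exactly this characteristic-zero point. The equalizer computation gives an equality of functors in any characteristic, but to interpret $\text{Lie}$ of the fixed-point group scheme as an honest vector bundle and to know that $\Gamma$-invariants of the locally free sheaf $p_*\text{Lie}(\cH)$ remain locally free, one genuinely needs $|\Gamma|$ invertible: in positive characteristic the fixed-point subscheme may fail to be smooth and invariants need not be a direct summand, so ``$\text{Lie}$ of the invariants'' and ``invariants of the $\text{Lie}$'' can diverge. I would therefore take care to invoke smoothness of fixed points of finite group actions in characteristic zero (via linear reductivity and averaging) as the key ingredient, rather than treating the second part as a purely formal corollary of the first.
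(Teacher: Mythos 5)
Your proof is correct. The paper gives no argument of its own for this lemma — it simply cites Conrad--Gabber--Prasad (p.~533) for the first isomorphism and Edixhoven (3.1) for the second — and your dual-numbers computation for the first part, together with the equalizer/diagram-chase for the $\Gamma$-invariants and the observation that characteristic zero is needed only to guarantee smoothness of the fixed-point scheme and local freeness of the invariant subsheaf (via the averaging operator), is exactly the standard argument underlying those references.
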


\begin{proof} See \cite[Section~A.7, p.~533]{cgp} and \cite[Proposition~3.1]{edix}, respectively.
\end{proof}

\begin{notat}\label{weilresaspushforward}
  Following \cite{blr},  we will denote the Weil restriction of scalars  as $p_{*}$. The functor which composes with taking group invariants will be denoted by $p^{\Gamma}_{*}$. This is often called the ``invariant direct image'' functor following \cite{base}. The conclusion of Lemma~\ref{Weilrestriction} can be expressed as 
\begin{equation}
\Lie\left(p^{\Gamma}_{*}(\cH)\right) \simeq p^{\Gamma}_{*}(\Lie(\cH)). 
\end{equation}
\end{notat}

\subsection{Some constructions from SGA} \label{onWfromsga} We recall briefly some facts about vector group schemes from \cite{sga3}. For a coherent $S$-module $\mathcal F$, we recall, see \cite[Expos\'e I, Section~4.6.3 and Proposition~4.6.5]{sga3}
(see also \cite[Section~1.4.1]{bruhattits}), the functor from the category of $S$-schemes to Abelian groups given by 
\beqa\label{thewconstruct}
W\colon S' \longmapsto \Gamma(S', \cF \otimes_{\cO_{S}} \cO_{S'}).
\eeqa 
When $\cF$ is locally free, this  is represented by a smooth group scheme $W(\cF)$ defined by the affine scheme $\Sym(\cF^{\vee})$. Its Lie algebra is canonically isomorphic to $\cF$. In fact, there is a canonical isomorphism
\beqa
W(\cF) \simeq W(\Lie(W(\cF))).
\eeqa

Following \cite{sga3}, we will call $W(\cF)$ a {\em vector group scheme}, noting that 
\beqa
\cF \longmapsto W(\cF)
\eeqa
gives a functor from the category of locally free $\cO_{S}$-modules of finite rank to the category of vector $S$-groups. Observe that over a field, a vector group is simply $\mathbb G_{a}^{n}$ for some $n$. We now state a few important properties satisfied by the functor $W$:
\begin{enumerate}[label=(\alph*), ref=\alph*]
\item \label{summandproperty} If $\cF'$ is a locally free $S$-submodule of $\cF$, then $W(\cF')$ is a closed subgroup scheme of $W(\cF)$ if and only if $\cF'$ is a direct summand of $\cF$ (see \cite[Section~2.1, p.~148]{demgab}  and \cite[Section~1.4.1]{bruhattits}). 

\item \label{linebundleW} In particular, if $S = \spec(\Lambda)$ with fractional field $\tt F$, a projective $\cO_{S}$-module $\mathcal L$ of rank $1$ corresponds to an $S$-group scheme $W(\mathcal L)$ whose generic fibre is isomorphic to the additive group $\mathbb G_{a,\tt F}$.
\item It is easy to check that Weil restriction and taking invariants commute with the functor $W$.
\end{enumerate}

\subsection{Restriction of \texorpdfstring{$\boldsymbol{\mathcal{R}}$}{R} to formal neighbourhoods of generic points of coordinate hyperplanes~\texorpdfstring{$\boldsymbol{H_{i}}$}{H\textunderscore i}} \label{restriction} 
Let $C_{i}$  be the  coordinate curve  defined by the vanishing of $\lambda_{j}$ (see Section~\ref{constructionofr}) for $j \neq i$. Let $U_{i}$ be the formal neighbourhood of the origin in $C_{i}$. For $1 \leq i \leq n$, let $\theta_{i}$ be the point in the apartment corresponding to the hyperplane $H_{i}$. 

For each $\theta_{i}$, let $d_{i} \in \mathbb{N}$ be such that $d \theta_{i} \in Y(T)$. We first suppose that $\theta_{i}$ lies in the fundamental domain of $Y(T)$ in $\cA_T$. Recall (see Section~\ref{grstuff}) that by \cite[Proposition 5.1.2]{base}, there exists a ramified cover $q\colon U'_{i} \to  U_{i}$ of ramification index $d_{i}$ and Galois group $\Gamma$,  together with a $\Gamma$-equivariant $G$-torsor $E_{i}$ such that the adjoint group  scheme $\cH_{i}=E_{i}(G)$ has simply connected fibres isomorphic to $G$ and we have the {\it canonical} identification of $U$-group schemes 
\beqa\label{frombalses1}
q_{*}^{\Gamma}(\cH_{i}) \simeq  \gG_{\theta_i}.
\eeqa

 For the general case, move the points $\theta_{{\tt i}}$ into the fundamental domain for the action of $Y(T)$ on $\cA_T$ by elements $h_{i} \in Y(T)$.  These $h_{i} \in Y(T)$ are uniquely determined. By an abuse of notation, we may view them as $h_{i} \in T(K)$ as well.  We now consider the modified embedding of $G$ in the generic fibre of $\gG_{h_{i}.\theta_{i}}$ given by conjugation by $h_{i}$. Then the {\em schematic closure} via this embedding in $\gG_{h_{i}.\theta_{i}}$ gives the group scheme $\gG_{\theta_{i}}$. Now \eqref{frombalses1} generalises without any restriction on $\theta_{i}$. Henceforth in this paper, we fix the identification of $U$-group schemes thus obtained:
 \beqa\label{frombalses}
\phi(h_{i})\colon q_{*}^{\Gamma}(\cH_{i}) \simeq  \gG_{\theta_i}.
\eeqa

We therefore get the following useful corollary to Theorem~\ref{gpschLiestab}.

\begin{Cor}\label{gpschLiestab3}
As sheaves of Lie algebras, we have an isomorphism 
\begin{equation} \label{Liestrendow} Lie \big(\phi(h_{i})\big)\colon q^{\Gamma}_{*}(E_{i}(\mathfrak g)) \simeq \mathcal{R}|_{U_{i}}.
\end{equation}
 \end{Cor}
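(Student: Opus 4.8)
The plan is to reduce the $n$-dimensional construction to the one-dimensional setting of \cite{base} by restricting $\mathcal{R}$ to the coordinate curve, and then to transport the resulting parahoric Lie algebra bundle across the Weil-restriction identity \eqref{frombalses}. First I would observe that the passage to $U_{_i}$ is exactly the special case $\lambda = \lambda_{_i}$ of Corollary \ref{restrictiontocurves}: restricting along $C_{_i}$ amounts to the substitution $z_{_j} := t^{k_{_j}}$ with $k_{_i} = 1$ and $k_{_j} = 0$ for $j \neq i$, i.e. $z_{_i} = t$ and $z_{_j} = 1$ for $j \neq i$. Under this substitution the action map $\mathfrak a$ of Theorem \ref{gpschLiestab} collapses to conjugation by $\theta_{_i}$ alone, and the concave function $r \mapsto \sum_{_j} k_{_j} m_{_r}(\theta_{_j})$ becomes $r \mapsto m_{_r}(\theta_{_i})$. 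Corollary \ref{restrictiontocurves} then gives, for every $k$-algebra $R$,
\[ L^+(\mathcal{R}|_{_{U_{_i}}})(R) = \langle \mathfrak{t}(R\llbracket t \rrbracket), \mathfrak{g}_{_r}(t^{m_{_r}(\theta_{_i})} R\llbracket t \rrbracket), r \in \Phi \rangle, \]
which is precisely $\text{Lie}(\cP_{_{\theta_{_i}}})(R)$ by \eqref{lieparwg}.

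Next I would identify this with the Lie algebra of the Bruhat-Tits group scheme. By \S\ref{btgpsch} and \eqref{parahorLiealgrel} one has $\text{Lie}(\cP_{_{\theta_{_i}}}) = \mathfrak{P}_{_{\theta_{_i}}} = \text{Lie}(L^+(\gG_{_{\theta_i}})) = L^+(\text{Lie}(\gG_{_{\theta_i}}))$. Since $U_{_i}$ is the spectrum of a local ring, a coherent (Lie algebra) sheaf on it is recovered from its module of sections, that is, from $L^+(-)(k)$; hence the equality of positive-loop functors upgrades to an isomorphism of sheaves of Lie algebras $\mathcal{R}|_{_{U_{_i}}} \simeq \text{Lie}(\gG_{_{\theta_i}})$. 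The Lie bracket is respected at every stage because all the modules in sight are embedded in the common ambient loop algebra $L\mathfrak{g}$.

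Finally I would feed in \eqref{frombalses}, $\gG_{_{\theta_i}} \simeq q^{{\Gamma}}_{_*}(\cH_{_i})$, and take Lie algebras. Lemma \ref{Weilrestriction} (this is where $\text{char}\,k = 0$ is used) yields $\text{Lie}(q^{{\Gamma}}_{_*}\cH_{_i}) \simeq q^{{\Gamma}}_{_*}(\text{Lie}\,\cH_{_i})$, while $\text{Lie}(\cH_{_i}) = \text{Lie}(E_{_i}(G)) = E_{_i}(\mathfrak{g})$ is the adjoint bundle of the torsor. Composing the isomorphisms delivers $\mathcal{R}|_{_{U_{_i}}} \simeq q^{{\Gamma}}_{_*}(E_{_i}(\mathfrak{g}))$, as claimed.

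The only genuine work lies in the first two steps, and the main obstacle is to be sure that the reflexive-sheaf construction of $\mathcal{R}$ on ${\bf A}$ commutes with restriction to the formal curve $U_{_i}$ — that no sections are created or destroyed under specialization — and that the identification is one of Lie algebra sheaves rather than merely of vector bundles. Both points are controlled by the weight-space decomposition \eqref{wtspacedecomp}: on the root line indexed by $r$ the defining condition for $\mathcal{R}$ reads $r(\theta_{_i}) + m \geq 0$ once $y_{_j} = 1$ for $j \neq i$, giving $\mathcal{R}_{_r}|_{_{U_{_i}}} = t^{m_{_r}(\theta_{_i})}\mathcal{O}_{_{U_{_i}}}$, which matches the corresponding root line of the parahoric bundle on the nose and makes bracket-compatibility automatic.
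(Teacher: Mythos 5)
Your proposal is correct and follows essentially the same route as the paper: the paper's proof simply cites Theorem \ref{gpschLiestab} (restriction to $U_{_i}$ yields the parahoric Lie algebra of $\theta_{_i}$) together with \eqref{frombalses} and Lemma \ref{Weilrestriction}, which is exactly the chain you spell out. Your extra care about the weight-space decomposition and the bracket compatibility is a more detailed rendering of what the paper leaves implicit, not a different argument.
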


\begin{proof} This is an immediate consequence of  Theorem~\ref{gpschLiestab}  and Lemma~\ref{Weilrestriction}. \end{proof}

\subsection{Group schemes on smooth varieties with normal crossing divisors}

We now prove a result  on constructing group schemes on varieties with normal crossing divisors  under some general hypotheses. This plays a role in this paper as a useful substitute for  the Artin--Weil theorem on the existence of group scheme structures stemming from birational group laws. Let $\bvt := (\theta_{1}, \ldots, \theta_{n})$ be an $n$-tuple of rational points in the affine apartment $\mathcal{A}_T$ satisfying the conditions of Section~\ref{charassum}. Let $k$ be a {\em perfect field} with characteristic assumptions as in Section~\ref{charassum}.

\begin{thm} \label{Artin-Weil-Kawamata}
Let $X$ be a smooth quasi-projective scheme over $k$, and let  $D = \sum_{i=1}^{n} D_i$
be a reduced normal crossing divisor. Let $\{\zeta_{i}\}_{i = 1}^{n}$ be the set of generic points of the components $\{D_{i}\}_{i = 1}^{n}$, and let $A_{i} = \mathcal O_{X, \zeta_{i}}$, $X_{i} := \spec(\hat{A}_{i})$ and  $X_{o} := X - D$. Let $\gG_{\theta_{i}}$ be the {\tt parahoric} group scheme on $X_{i}$ for each $i$  associated to the point $\theta_{i}$ of the affine apartment. Let $\mathcal R$ denote a Lie algebra bundle on $X$ which is such that $\mathcal R|_{X_{o}} \simeq \gfr \times X_{o}$ and $\mathcal R|_{X_{i}} \simeq \Lie(\gG_{\theta_{i}})$ for each $i$. Then there is a smooth, affine group scheme $\gG$ on $X$ with connected fibres, unique up to isomorphism, such that 
\begin{enumerate}
\item $\gG|_{X_{o}} \simeq G \times X_{o}$,
\item $\gG|_{X_{i}} \simeq \gG_{\theta_{i}}$ for each $i$,
\item $\Lie(\gG) \simeq \mathcal R$.
\end{enumerate}
\end{thm}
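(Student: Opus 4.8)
The plan is to build $\gG$ in two stages: first over a big open subset of $X$ on which $D$ is smooth, using the invariant-direct-image construction of \cite{base} together with Lemma \ref{Weilrestriction}, and then to extend the resulting group scheme across the crossing locus by means of the reflexive-sheaf machinery of Lemmas \ref{langton}--\ref{langton2}.

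\emph{Stage 1 (construction on the big open set).} Let $\Sigma = \bigcup_{i<j}(D_i\cap D_j)$ be the locus where $D$ fails to be smooth; since $X$ is smooth and $D$ is a reduced normal crossing divisor, $\text{codim}_X\Sigma\ge 2$. Put $U=X\setminus\Sigma$ with inclusion $j\colon U\hookrightarrow X$, and note that $X_o\subset U$ and every generic point $\zeta_i\in U$. On $U$ the divisor $D\cap U$ is smooth, so I would choose a Kawamata cover $p\colon Y\to U$, Galois with group $\Gamma$, ramified along $D\cap U$ with ramification index $d_i$ along $D_i$ (where $d_i\theta_i\in Y(T)$), together with a $\Gamma$-equivariant $G$-torsor $E$ on $Y$ whose local type along $D_i$ is $\theta_i$ and which is trivial over $p^{-1}(X_o)$; the existence of such $E$ over the smooth divisor $D\cap U$ is exactly the transversal, codimension-one situation treated in \cite[Proposition 5.1.2]{base}. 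The adjoint group scheme $\cH=E(G)$ is then smooth affine over $Y$ with connected fibres isomorphic to $G$, and I set $\gG_U:=p^{\Gamma}_{*}(\cH)$ as in \eqref{frombalses}. Lemma \ref{Weilrestriction} and Corollary \ref{gpschLiestab3} give $\text{Lie}(\gG_U)\simeq p^{\Gamma}_{*}(E(\gfr))\simeq \cR|_U$, the local identification \eqref{frombalses} yields $\gG_U|_{X_i}\simeq\gG_{\theta_i}$, and triviality of $E$ over $X_o$ gives $\gG_U|_{X_o}\simeq G\times X_o$. Since Weil restriction along a finite flat map and $\Gamma$-invariants in characteristic zero both preserve affineness, $\gG_U$ is a smooth affine $U$-group scheme with connected fibres.

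\emph{Stage 2 (extension across $\Sigma$).} To extend $\gG_U$ to $X$, I would fix a faithful representation $G\hookrightarrow\text{GL}(V)$ and realize $\gG_U$ as a closed $U$-subgroup scheme of $\text{GL}(V)\times U$, so that its coordinate sheaf $\mathcal A_U$ is a quotient Hopf algebra of $\mathcal O_U[\text{GL}(V)]$ and is an increasing union of coherent subsheaves. Defining $\mathcal A:=j_*\mathcal A_U$ and letting $\gG$ be the relative affine $X$-scheme with coordinate sheaf $\mathcal A$, Lemma \ref{langton1} (applied to the reflexive hull of each coherent piece) shows that $j_*$ is the unique reflexive extension, and the co-multiplication, co-unit and antipode of $\mathcal A_U$ extend uniquely to $\mathcal A$ because $j_*$ is left exact and, $\Sigma$ having codimension $\ge 2$ in the normal scheme $X$, restriction from $X$ to $U$ is an isomorphism on reflexive sections. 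Likewise $\text{Lie}(\gG)=j_*\text{Lie}(\gG_U)=j_*(\cR|_U)=\cR$, the last equality by reflexivity of $\cR$; this proves (3) and shows that $\text{Lie}(\gG)$ is locally free of rank $\dim G$ everywhere on $X$.

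\emph{Smoothness, connected fibres and uniqueness.} Properties (1) and (2) hold on $U$ by Stage 1 and persist on $X$, since they only concern the restrictions to $X_o$ and the $X_i$, all contained in $U$. I expect the technical heart of the argument to be showing that the reflexive extension $\gG$ stays a \emph{smooth} affine group scheme with \emph{connected} fibres over the deep strata $\Sigma$, where a priori the fibre dimension could jump. Here the hypothesis that $\cR$ is a genuine Lie-algebra \emph{bundle} is decisive: as $\text{Lie}(\gG)=\cR$ is locally free of rank $\dim G$ and $\gG$ is flat of finite type (flatness from torsion-freeness of $\mathcal A$ over the normal $X$), the tangent space at the identity has constant dimension $\dim G$, forcing each identity component $\gG^{0}_x$ to be smooth of dimension $\dim G$; connectedness $\gG_x=\gG^{0}_x$ over points of $\Sigma$ then follows by a specialization argument from the connected fibres over $U$. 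Affineness on all of $X$ is inherited from the closed embedding into $\text{GL}(V)\times X$ obtained by extending the reflexive defining ideal. Finally, for uniqueness, any $\gG'$ satisfying (1)--(3) restricts over each trait $X_i$ to a smooth connected affine group scheme with generic fibre $G$ and Lie algebra $\text{Lie}(\gG_{\theta_i})$, hence coincides with $\gG_{\theta_i}$ by Bruhat--Tits uniqueness over a discrete valuation ring \cite{bruhattits}; thus $\gG'|_U\simeq\gG_U$, and since both $\gG'$ and $\gG$ are affine with reflexive coordinate sheaves, Lemma \ref{langton1} identifies each with $j_*\mathcal A_U$, giving $\gG'\simeq\gG$.
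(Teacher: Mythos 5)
Your Stage 1 matches the paper's first step in substance (the paper glues $G\times X_{_o}$ with the local parahoric group schemes over a big open subset $X'$ by descent rather than via a Kawamata cover of $U$, but the output is the same smooth affine group scheme away from a codimension-two locus). The genuine gap is in Stage 2. Extending $\gG_U$ across $\Sigma$ by taking the relative $\spec$ of $j_*\mathcal A_U$ is precisely the schematic-closure/direct-image procedure that Bruhat and Tits showed to break down over a two-dimensional base: in \cite[3.2.15]{bruhattits} (revisited in \S\ref{revisit} of this paper) the analogous extension of $\text{SL}_2$ over $k[\xi,\eta]$ is not flat, and its fibre over the origin acquires an extra four-dimensional component. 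Your justification ``flatness from torsion-freeness of $\mathcal A$ over the normal $X$'' is false once $\dim X\geq 2$ (the ideal $(x,y)\subset k[x,y]$ is torsion-free but not flat), and without flatness the fibre dimension over $\Sigma$ can jump, so the constancy of the rank of $\cR$ does not force smoothness or connectedness of the fibres there. Nor is it automatic that $j_*$ of the Hopf-algebra structure produces a comultiplication $\mathcal A\to\mathcal A\otimes_{\mathcal O_X}\mathcal A$: the natural map $j_*\mathcal A_U\otimes j_*\mathcal A_U\to j_*(\mathcal A_U\otimes\mathcal A_U)$ points the wrong way. Finally, $\normalfont\text{Lie}(\gG)\simeq j_*\normalfont\text{Lie}(\gG_U)$ is asserted rather than proved.

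The paper circumvents all of this by performing the codimension-two extension at the level of the Lie algebra bundle on a global Kawamata cover $p\colon Z\to X$: the equivariant bundle $V'$ on $Z'=p^{-1}(X')$ decomposes, via the uniform Cartan decomposition determined by $T$, into line bundles, each of which extends across codimension two by reflexivity \eqref{langton2}; the bracket and the Killing form then extend by a Hartogs argument, the transition functions of the extended bundle land in $\text{Aut}(\mathfrak g)=\text{Aut}(G)$, and the group scheme is rebuilt as $\cH=\cE\times^{\text{Aut}(G)}G$ from the resulting equivariant $\text{Aut}(G)$-torsor $\cE$. Affineness, smoothness and connectedness of $\gG=p_{_*}^{\Gamma}(\cH)$ then come for free from \'etale descent and the properties of invariant direct images, instead of having to be extracted from a pushforward of the coordinate ring. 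To repair your argument you would need a mechanism of this kind that controls the fibres over the deep strata; the direct image of $\mathcal A_U$ alone does not provide it.
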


\begin{proof}
 Let $L_{i} :=\Fract(A_{i}) $ (resp.\ $\hat{L}_{i} := \Fract(\hat{A}_{i})$)  for each $i$.  By \eqref{frombalses}, for each $i$, there exists  a $\Gamma_{i}$-equivariant $G$-torsor $E_{i}$ on a ramified cover $q_{i}\colon \tilde{X}_{i} \to  X_{i}$ such that the adjoint group  scheme $\cH_{i}=E_{i}(G)$ has simply connected fibres isomorphic to $G$ and we have the identification of $A_{i}$-group schemes 
\beqa\label{invdi}
\phi(h_{i})\colon q_{i,*}^{\Gamma}(\cH_{i}) \simeq  \gG_{\theta_{i}}.
\eeqa
Furthermore, by Corollary~\ref{gpschLiestab3}, we also get the identifications 
\beqa\label{invdi1}
\Lie \big(\phi(h_{i}) \big)\colon  q_{i,*}^{\Gamma}(E_{i}(\mathfrak g)) \simeq  \Lie(\gG_{\theta_{i}}).
\eeqa
Let $\gG_{o} := G \times X_{o}$.
Let $X' \subset X$ be an open subset containing $X_{o}$ whose complement in $X$ has codimension at least~$2$.  We restrict the Lie algebra bundle $\mathcal R = \mathcal{R}_{X}$ further to $X'$. By assumption,  over the open subset ${ X_{o}}\subset X'$, we have $\mathcal R \simeq {X_{o}} \times \mathfrak{g}$. Observe that $\Aut(\gfr)=\Aut(G)$ since $G$ is simply connected. Further, since the local parahoric group schemes $\gG_{\theta_{i}}$ are assumed to be {\em generically split}, it follows that for each $i$, the transition functions of $\mathcal R$ give an isomorphism
$$\tau_{i}\colon\gG_{o} \times _{X_{o}} \hat{L}_{i} \simeq \gG_{\theta_{i}} \times _{ X_{i}} \hat{L}_{i}.
$$
We can now apply \cite[Section~6.2, Proposition D.4(b)]{blr} to each $\tau_{i}\colon \gG_{o} \times _{ X_{o}} L_{i} \times_{L_{i}}  \hat{L}_{i} \simeq \gG_{\theta_{i}} \times _{ X_{i}} \hat{L}_{i}$ to obtain by ``descent'' an affine, finite-type group scheme $\gG'_{\theta_{i}}$ over $\spec(A_{i})$ for each i, such that $\gG'_{\theta_{i}} \times_{A_{i}} \hat{A}_{i} \simeq \gG_{\theta_{i}}$ and $\gG'_{\theta_{i}} \times_{A_{i}} L_{i} \simeq \gG_{o} \times _{ X_{o}} L_{i}$. {\em By an abuse of notation, we will denote the group scheme $\gG'_{\theta_{i}}$ simply by $\gG_{\theta_{i}}$}.

Since the group schemes $\gG_{\theta_{i}}$ on $ \spec(A_{i})$ are of finite type, they can be extended (by clearing denominators) to open subschemes $X_{f_{i}}$ of $X$. By a further shrinking of this neighbourhood $X_{f_{i}}$ of $\zeta_{i}$, one can glue it to $\gG_{o}$ along the intersection. Since the group schemes are smooth and affine, 
we obtain the connected, smooth, affine group scheme
\begin{equation}\label{onx'} 
\gG' \lra X'.
\end{equation}

Transporting structures by Corollary~\ref{gpschLiestab3} (see also the appendix), the Lie algebra bundle $\mathcal R$ gets canonical parabolic structures at the generic points $\zeta_{i}$ of the components $\{D_{i}\}_{i}$.
By Definition~\ref{sestype}, for a (generic) parabolic vector bundle  with prescribed rational weights such as $\mathcal R^{{}}$, we firstly get a global Kawamata cover (see Section~\ref{kawa}) $p\colon Z \ra { X}$ ramified over $D$ with ramification prescribed by the weights $\{d_{i}\}$, see \eqref{dalpha}, with Galois group $\Gamma$ which ``realizes the local ramified covers $q_{i}$ at the points $\zeta_{i}$''; \textit{i.e.} the isotropy subgroup of $\Gamma$ at $\zeta_{i}$ is $\Gamma_{i}$.

We {\em claim} that we in fact get  an $\Gamma$-equivariant vector bundle $V$ on $Z$ such that 
\begin{equation}\label{gunja}
p_{*}^{\Gamma} (V) \simeq \mathcal R.
\end{equation}

Let $Z':=  p^{-1}(X')$. As the first step, following the reasoning in the appendix, it follows by the dimension $1$ strategy that there exists a $\Gamma$-equivariant vector bundle $V'$ on $Z'$ such that 
$p_{*}^{\Gamma} (V') \simeq \mathcal R\mid_{X'}$.

Furthermore, in formal neighbourhoods over the generic points $\zeta_{i}$ of the components $D_{i}$, we may identify $V'$ with $E_{i}(\mathfrak g)$ and $\Gamma$ with $\Gamma_{i}$. Also, over $Z_{o}:= p^{-1}(X_{o})$ we can identify $V'$ with $\mathfrak g \times Z_{o}$.  The transition functions of $\cR|_{X'}$ can be lifted to give transition functions for  $V'$, and we get a canonical $\Gamma$-equivariant Lie algebra bundle structure on $V'$ over $Z'$ (with fibre type $\mathfrak g$) such that 
$p_{*}^{\Gamma} (V') \simeq \mathcal R\mid_{X'}$
as Lie algebra bundles. 

Since the weights $\theta_{i}$ are all chosen to lie in the same affine apartment $\cA_T$, the local representations for the stabilizers $\Gamma_{i} \subset \Gamma$ in $G$ take values in the maximal torus $T$. This determines a uniform Cartan decomposition on the fibres of the Lie algebra bundle $V'$ on $Z'$ (compare with the proof of Theorem~\ref{gpschLiestab}, where one has similar arguments to check that $\mathcal{R}$ is locally free on ${\bf A}$). This gives a decomposition of the underlying vector bundle $V'$ as a direct sum of line bundles.

Now since invertible sheaves extend as invertible sheaves across codimension at least~$2$, see Lemma~\ref{langton2}, and since $\codim(Z \setminus Z') \geq 2$, the reflexivity of the double dual $V$ of $V'$ implies that this direct sum decomposition of  $V'$ extends term by term to  $Z$. Hence  the double dual $V$ is locally free. The equivariant structure on $V'$ canonically gives an equivariant structure on $V$, and it is easy to check that \eqref{gunja} holds. This completes the  proof of the claim.

The structure of Lie algebra bundle on $V'$ is such that the Killing form is non-degenerate everywhere on~$Z'$ by virtue of our assumptions; see Section~\ref{charassum} (\textit{cf.} \cite[Section~6.4, p.~199]{jantzen}). By a Hartogs-type argument (see for example \cite[Theorem 5.10.5]{ega4} or \cite[Theorem 3.8]{hartshornelocal}), the Lie bracket $[,]$ on $V'$ extends to a Lie bracket on the locally free sheaf $V$ with the Killing form being non-degenerate on the whole of $Z$. {\sl In other words, $V$ is now a locally free sheaf of Lie algebras on the whole of $Z$ with semisimple fibres; these fibres are isomorphic to the Lie algebra $\mathfrak g$ by the rigidity of semisimple Lie algebras}. We  denote this bundle of Lie algebras by $V_{\mathfrak g}$.

By  construction,  $V'$ has the structure of an equivariant Lie algebra bundle. The $[,]$ on $V_{\mathfrak g}$ is a section of $V_{\mathfrak g} \otimes V^{*}_{\mathfrak g} \otimes V^{*}_{\mathfrak g}$. For each $g \in \Gamma$, the sections $g\cdot [,]$ and $[,]$ agree on $Z'$. Hence they agree everywhere on $Z$; \textit{i.e.} the underlying equivariant structure on  $V_{\mathfrak g}$ gives $V_{\mathfrak g}$ the structure of an {\em equivariant Lie algebra bundle}. The transition functions of this equivariant Lie algebra bundle lie in $\Aut(\mathfrak g) = \Aut(G)$. Hence we obtain an equivariant $\Aut(G)$-torsor $\cE$ on $Z$. Note that $\cE$ is only \'etale locally trivial. Also observe that the associated Lie algebra bundle $\cE(\mathfrak g) := \cE \times^{\Aut(G)} \mathfrak g$ is isomorphic to  $V_{\mathfrak g}$ as a Lie algebra bundle.

Let $\cH := \cE \times ^{\Aut(G)} G$ denote the associated fibre space for the canonical action of $\Aut(G)$ on $G$. Since the group $\Aut(G)$ acts on $G$ as group automorphisms, it preserves the group structure, and hence $\cH$ is a group scheme. Further, since $\cE$ is \'etale locally trivial, by \'etale descent, it follows that $\cH$ is an {\em affine} smooth group scheme on $Z$. Since $\cE$ is an equivariant $\Aut(G)$-torsor, the group scheme $\cH$ is furthermore an equivariant group scheme on $Z$ for the action of the Galois group $\Gamma$, and so we define
\beqa\label{justnow}
{\mathfrak G}_{ X}:= p_{*}^{\Gamma}(\cH).
\eeqa 
By the properties of Weil restriction of scalars and taking invariants, it follows that ${\mathfrak G}_{X}$ is a {\em smooth, affine group scheme}.
Also, ${\mathfrak G}_{ X}|_{X'}=\gG'$.

Note that we have the identification $\Lie(\cH) \simeq V_{\mathfrak g}$ as Lie algebra bundles on $Z$. Since by Lemma~\ref{Weilrestriction}, the  ``invariant direct image'' functor commutes with taking Lie algebras, we moreover get isomorphisms of locally free sheaves of Lie algebras
\begin{equation}\label{liestr's}
\Lie({\mathfrak G}_{ X}) \simeq \Lie\left(p_{*}^{\Gamma}(\cH)\right) \simeq p_{*}^{\Gamma} \left(\Lie(\cH)\right) \simeq p_{*}^{\Gamma}\left(V_{\mathfrak g}\right) \simeq  \mathcal R^{{}}.
\end{equation}
This proves the third claim in the theorem.
\end{proof}

\subsection{The ``big cell''~structure on \texorpdfstring{$\gG$}{G}}\label{BigCell} Recall that the choice of the set of positive roots $\Phi^{+}$ determines a {\em big cell} of $G$ over $k$. 

By \eqref{onx'}, the group scheme $ \gG_{X´}$ is obtained by gluing the parahoric group schemes $\gG_{\theta_{i}}$ with $G \times X_{o}$ using the identity map $\id_{G}$.  Therefore, the parahoric group schemes $\gG_{\theta_{i}}$ come with a canonical {\sl big cell} which glues with the standard big cell of the split group scheme $G \times X_{o}$. This gives a big cell $\mathfrak B_{X´} = \mathfrak U_{X´}^{+} \times \cT_{X´} \times \mathfrak U_{X´}^{-}   \subset \gG_{X´}$. This also gives the decomposition
\beqa\label{bigcell 2}
\Lie\left(\gG_{X´}\right) = \Lie\left(\mathfrak U_{X´}^{+}\right) \oplus \Lie\left(\cT_{X´}\right) \oplus \Lie\left(\mathfrak U_{X´}^{-}\right).
\eeqa

In the next proposition, we show the existence of an open subscheme $\mathfrak B_{{\Phi}}$ of $\gG$  obtained in Theorem~\ref{Artin-Weil-Kawamata}. This open subscheme is analogous to the  {\sl big cell in a split reductive group scheme}; see \cite[Expos\'e XXII, Section~4.1]{sga3}. Following \cite{bruhattits}, we will call this open subscheme a {\em ``big cell''} of $\gG$. The {\sl big cell} in particular provides a neighbourhood of the identity section of the group scheme $\gG$.

\begin{prop}\label{bigcell1} We work in the setting of Theorem~\ref{Artin-Weil-Kawamata} and choose the transition function for the gluing to be  $\id_{G} \in \Aut(G)$.  Let $\Phi^{+}$ be a system of positive roots in $\Phi$. Let  $\mathfrak B_{\mathcal H}$ denote the big cell of\, $\mathcal H$. Let
  $$\mathfrak B_{\Phi} := p_{*}^{\Gamma}\left(\mathfrak B_{\mathcal H}\right)$$
  denote the invariant direct image of  $\mathfrak B_{\mathcal H}$. 

Then $\mathfrak B_{\Phi}$ restricts to the standard {\sl big cell} of $G$ over the open subset $X_{o}$. In fact,  there exist unipotent subgroup schemes $\mathfrak U^{\pm}$ and a toral subgroup scheme $\mathcal T$ of\, $\gG$ such that the morphism 
\begin{equation}
\mathfrak U^{-} \times \mathcal T \times \mathfrak U^{+} \lra \gG
\end{equation}
to $\gG$
induced by multiplication 
is an open immersion with image $\mathfrak B_{\Phi}$.  \end{prop}

\begin{proof} The notation is as in the proof of Theorem~\ref{Artin-Weil-Kawamata}. Since the Galois action of $\Gamma$ is via the fixed maximal torus $T$, we observe that the Cartan decomposition of the Lie algebra bundle is preserved under the functor that takes $\Gamma$-invariants. Notice that the functor $W$ of Section~\ref{onWfromsga} commutes with Weil restriction and taking invariants. Hence the root group schemes and the toral subgroup scheme of the reductive group scheme $\mathcal H$ can be pushed down by $p_{*}^{\Gamma}$ to give the corresponding root group schemes and toral group scheme below. The group structure on $\mathcal H$ induces one on the product of the root groups for the positive and negative roots (with a fixed and prescribed order) to give closed unipotent subgroup schemes on $\mathcal H$. These again carry the action of the Galois group $\Gamma$.

Hence these unipotent subgroup schemes  get pushed down by $p_{*}^{\Gamma}$ to give closed subgroup schemes $\mathfrak U^{\pm}$ of~$\gG$. Further, Weil restriction and taking invariants preserve the product structure, and this shows that the {\em big cell} structure on the reductive group scheme $\mathcal H$ goes down by Weil restriction and invariants to give the required {\em big cell} structure on $\gG$. \end{proof}

\subsection{Bruhat--Tits group scheme associated to concave functions}\label{btforconcave}
Let $\tilde{\Phi}:=\Phi \cup \{0 \}$.
Recall that a function $f\colon \tilde{\Phi} \rightarrow \mathbb{R}$ is a concave function such that $f(0) = 0$; see \eqref{concave}. 
We recall a few salient features of the main results from \cite[Section~4.6.2]{bruhattits} and \cite[Theorem~0.1]{yu} in our special setting, where $G_{K}$ is a generically split, simple, simply connected group over $K$, \textit{i.e.} $G_{K} \simeq G \times \spec(K)$.  

{\em Bruhat--Tits \cite[Section~4.6.2]{bruhattits}}: Let $\mathcal P_{f}$ be the subgroup of $G(K)$ generated by $U_{r,f(r)}$ and the subgroup $T(A)$. 

There exists a canonical affine smooth group scheme $\mathfrak G_{f}$ on $\spec({\cO})$ with generic fibre $G_{K}$ such that
\begin{enumerate}
\item $\mathfrak G_{f}(A) = \mathcal P_{f}$,
\item the multiplication morphism 
\begin{equation}\label{bigcellinconvexcase}
\left(\prod_{r \in \Phi^{-}} {\mathfrak U}_{r,f(r)} \right) \times \cT_{f,\cO} \times \left(\prod_{r \in \Phi^{+}} {\mathfrak U}_{r,f(r)} \right) \lra \mathfrak G_{f}
\end{equation}
\end{enumerate}
is an open immersion. The group scheme $\cT_{f,\cO}$ is the schematic closure of $T_{K}$ in $\mathfrak G_{f}$, and since $T_{K}$ is a split torus, $\cT_{f,\cO}$ is a connected multiplicative group scheme; see \cite[Section~10.1, Example 5]{blr}.  By computing the tangent space at the identity section which lies in the image of the immersion, \textit{i.e.}, in the {\em big cell} $\mathfrak B_{f}$ of~$\mathfrak G_{f}$, we see that 
\begin{equation}\label{cartanforconvex}
\Lie\left(\mathfrak G_{f}\right) = \bigoplus_{r \in \Phi} \Lie\left({\mathfrak U}_{r,f(r)}\right) \oplus \Lie\left(\cT_{f,\cO}\right).
\end{equation}

\subsection{On the Lie algebra bundle \texorpdfstring{$\boldsymbol{\cR}$}{R} in Theorem~\ref{Artin-Weil-Kawamata}}
We now prove a small but significant variant of Theorem~\ref{Artin-Weil-Kawamata}. In the hypothesis in Theorem~\ref{Artin-Weil-Kawamata}, we have assumed the existence of an {\tt n-parahoric} Lie algebra bundle $\cR$ on $X$. We will give sufficient general conditions for the existence of a Lie algebra bundle alone. 

\begin{prop}\label{onliestr}  Let the notation be as in Theorem~\ref{Artin-Weil-Kawamata}. Let ${\bf f} = \{f_{j}\}_{j = 1}^{n}$ be a collection of $n$-concave functions. Let $\hat{A}_{i}$ be the completion of $A_{i}$ at the maximal ideal. Suppose that we are given an assignment of\, {\tt BT}-group schemes $\gG_{f_{j}}$ on $X_{j} := \spec(\hat{A}_{j})$ for each $j$. Then there exists a Lie algebra bundle $\cR$ on $(X,D)$ such that $\mathcal R|_{X_{o}} \simeq \gfr \times X_{o}$, the Cartan decomposition on $\mathfrak{g}$ extends to $\mathcal{R}$ and $\mathcal R|_{X_{j}} \simeq \Lie(\gG_{f_{j}})$ for each $j$.

\end{prop}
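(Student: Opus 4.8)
The plan is to exhibit $\cR$ explicitly as a direct sum of line bundles indexed by the root spaces, which is the concrete incarnation, in the generically split setting, of the codimension-two extension argument used in the proof of Theorem~\ref{Artin-Weil-Kawamata}. Since $\gG_{_{f_{_j}}}$ and $\gG_{_{\lceil f_{_j}\rceil}}$ represent the same group scheme, I first replace each $f_{_j}$ by its optimal, integer-valued representative $\lceil f_{_j}\rceil$, keeping the notation $f_{_j}$. The essential input is the Cartan decomposition \eqref{cartanforconvex}: because $G_{_K}$ is generically split, every $\gG_{_{f_{_j}}}$ is built with respect to the \emph{same} split torus $T_{_K}\simeq T\times K$, and the root spaces $\gfr_{_r}$ are one-dimensional, so
\[
\text{Lie}(\gG_{_{f_{_j}}})\;=\;\mathfrak{t}(\hat{A}_{_j})\,\oplus\,\bigoplus_{r\in\Phi} z_{_j}^{\,f_{_j}(r)}\,\gfr_{_r}(\hat{A}_{_j}),
\]
where $z_{_j}$ is a local equation of $D_{_j}$ at $\zeta_{_j}$.

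Next I would write down the candidate bundle. Since each $f_{_i}(r)\in\ZZ$, the twists are genuine invertible sheaves, and I set
\[
\cR\;:=\;\big(\mathfrak{t}\otimes_k\cO_{_X}\big)\,\oplus\,\bigoplus_{r\in\Phi}\Big(\gfr_{_r}\otimes_k\cO_{_X}\big(-\textstyle\sum_{i=1}^{n} f_{_i}(r)\,D_{_i}\big)\Big),
\]
a locally free sheaf on all of $X$. Writing $\cR_{_r}:=\gfr_{_r}\otimes_k\cO_{_X}(-\sum_i f_{_i}(r)D_{_i})$, two restriction checks are immediate: over $X_{_o}=X\setminus D$ each twist is trivial, so $\cR|_{_{X_{_o}}}\simeq\gfr\times X_{_o}$; and since $D$ is a normal crossing divisor, only the component $D_{_j}$ passes through the generic point $\zeta_{_j}$, whence localizing $\cR_{_r}$ at $A_{_j}$ and completing yields exactly $z_{_j}^{\,f_{_j}(r)}\gfr_{_r}(\hat{A}_{_j})$, so that $\cR|_{_{X_{_j}}}\simeq\text{Lie}(\gG_{_{f_{_j}}})$. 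This is precisely the outcome of the ``uniform Cartan decomposition into line bundles, then extend across codimension $\geq 2$'' step of Theorem~\ref{Artin-Weil-Kawamata}, made explicit via \eqref{langton2}.

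Finally I would install the Lie bracket, realizing $\cR$ as a sub-sheaf of Lie algebras of the constant sheaf $\gfr\otimes_k k(X)$. Over $X_{_o}$ it is the Chevalley bracket. For $r+s\in\tilde\Phi$ the induced map $\cR_{_r}\otimes\cR_{_s}\to\cR_{_{r+s}}$ (with the convention $\cR_{_0}:=\mathfrak{t}\otimes_k\cO_{_X}$) is a structure constant times the inclusion $\cO_{_X}(-\sum_i(f_{_i}(r)+f_{_i}(s))D_{_i})\hookrightarrow\cO_{_X}(-\sum_i f_{_i}(r+s)D_{_i})$, which is a regular map of invertible sheaves on all of $X$ precisely because each $f_{_i}$ is concave, i.e. $f_{_i}(r)+f_{_i}(s)\geq f_{_i}(r+s)$ \eqref{concave} (here $f_{_i}(0)=0$, which also controls the brackets $\cR_{_r}\otimes\cR_{_{-r}}\to\mathfrak{t}\otimes_k\cO_{_X}$). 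Thus $\cR$ is closed under the ambient bracket of $\gfr\otimes_k k(X)$, so it inherits anti-symmetry and the Jacobi identity and is a locally free sheaf of Lie algebras, i.e.\ a Lie algebra bundle, with the required restrictions. As an alternative to the concavity computation, the bracket and its Jacobi identity extend from $X_{_o}$ by the Hartogs-type argument of Theorem~\ref{Artin-Weil-Kawamata} (\cite[Theorem 5.10.5]{ega4}); note that, just as for the ${\tt n}$-parahoric bundle of Theorem~\ref{gpschLiestab}, the fibres of $\cR$ along $D$ need \emph{not} be semisimple, and only a locally free Lie bracket is asserted.

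The feature that distinguishes this from the {\tt type I} situation, and the one point to get right, is flagged in the remark preceding the proposition: for a general concave function there is no parabolic/torsor structure on $\text{Lie}(\gG_{_{f_{_j}}})$, so one cannot produce $\cR$ as an invariant direct image $p^{{\Gamma}}_{_*}$ of a torsor Lie algebra on a Kawamata cover. The substitute is the common-torus weight decomposition together with concavity: the former reduces the codimension-two extension to the trivial extension of line bundles \eqref{langton2}, and the latter is exactly what keeps the extended bracket integral. This is also why only the Lie algebra bundle, and not a global group scheme, can be asserted at this stage.
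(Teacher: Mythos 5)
Your proof is correct, but it takes a more explicit route than the paper. The paper never writes down $\cR$ in closed form: it first glues the given group schemes $\gG_{_{f_{_j}}}$ to $G\times X_{_0}$ via $\text{id}_{_G}$ over the big open subset $X'$ (after descending from $\hat{A}_{_j}$ to $A_{_j}$), sets $\cR':=\text{Lie}(\gG')$, uses the extended gross cell of \eqref{bigcell1} to split $\cR'$ into line bundles, extends each summand across $\text{codim}\geq 2$ via the reflexive closure \eqref{langton1}--\eqref{langton2}, and finally extends the bracket by Hartogs. You bypass the group scheme and the reflexive-closure step entirely by exhibiting the extension explicitly as $\big(\mathfrak t\otimes\cO_{_X}\big)\oplus\bigoplus_{r}\gfr_{_r}\otimes\cO_{_X}\big(-\sum_i\lceil f_{_i}\rceil(r)D_{_i}\big)$, which is legitimate because in the generically split case all the $\gG_{_{f_{_j}}}$ share the torus $T$ and \eqref{cartanforconvex} identifies $\text{Lie}(\gG_{_{f_{_j}}})$ with the expected lattice over $\hat{A}_{_j}$ (and $\gG_{_{f}}=\gG_{_{\lceil f\rceil}}$, with $\lceil f\rceil$ still concave). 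Your observation that the regularity of the bracket maps $\cR_{_r}\otimes\cR_{_s}\to\cR_{_{r+s}}$ on the explicit model is \emph{exactly} the concavity inequality $f_{_i}(r)+f_{_i}(s)\geq f_{_i}(r+s)$ is a nice sharpening of the paper's appeal to Hartogs, and your closing remark correctly isolates why no Kawamata-cover/parabolic mechanism is available here. What your version does not produce, and the paper's does, is the intermediate group scheme $\gG'$ on $X'$; that is not needed for the statement of the proposition, but it is reused later in the type II/III constructions, which is presumably why the paper routes the argument through it.
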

\begin{proof} The notation is as in Theorem~\ref{Artin-Weil-Kawamata} and its proof. But unlike in Theorem~\ref{Artin-Weil-Kawamata}, we do not begin with a Lie algebra bundle. So we are free to choose the initial gluing data according to our requirements. Since the {\tt BT}-group schemes for us are always connected, smooth and generically split, we can choose (as in Proposition~\ref{bigcell1}) $\id_{G}$ as the map for gluing the product group scheme $\gG_{o} = G \times X_{o}$ with the $\gG_{f_{i}}$.

Exactly as in the proof of Theorem~\ref{Artin-Weil-Kawamata}, by a descent argument, we get smooth, affine group schemes $\gG_{f_{i}}$, now on $\spec({A}_{i})$ for each $i$, and hence we get a smooth, affine group scheme $\gG'$ on $X'$.
We now define
\beqa
\cR':= \Lie\left(\gG'\right), 
\eeqa
which is a Lie algebra bundle on $X' \subset X$.

Since the gluing functions are $\id_{G}$, exactly as in the proof of Proposition~\ref{bigcell1}, we also have an extended {\sl big cell} structure on the group scheme $\gG'$ over the whole of $X'$. As in  \eqref{bigcell 2}, we see that the Cartan decomposition extends to give one for $\cR'$. In  particular, this gives a direct sum decomposition of  $\cR'$ in terms  of line bundles.

Now define $\cR$ to be the reflexive closure of $\cR'$; see Lemma~\ref{langton1}. Since we are on an irreducible smooth scheme $X$, by Lemma~\ref{langton2}, reflexive closures of invertible sheaves remain invertible sheaves. Thus $\cR$ is also a locally free sheaf on $X$. The Lie bracket on $\cR'$ now extends by a Hartogs-type argument, and we get a Lie algebra bundle structure on $\cR$ with the desired properties.
\end{proof}

\subsection{Construction of the {\tt n}-parahoric group scheme}\label{atthehyper}
The notation in this section is as in the previous one; namely, $\bf A \simeq \mathbb A^{n}$ and ${\bf A_{0}} \subset {\bf A}$ is the complement of the coordinate hyperplanes. Recall that $\cO_{n} = k\llbracket z_{1}, \ldots, z_{n}\rrbracket$ and $K_{n} = \Fract(\cO_{n})$.

Let $\{H_{i}\}$ be the hyperplanes  of ${\bf A}$.    For each $i$,  let $\zeta_{i}$ denote the generic point of the divisor $H_{i}$. Let  
\begin{equation} \label{Aalpha0}
A_{i}= \mathcal O_{{\bf A}, \zeta_{i}}
\end{equation} 
be the DVR obtained by localizing at the height $1$ primes given by  $\zeta_{i}$,  and let $X_{i} := \spec(A_{i})$. Base changing by the local morphism $X_{i} \to \bf A$, we have a Lie algebra bundle $\mathcal R|_{X_{i}}$ for each $i$. Moreover,  the Lie algebra bundle $\mathcal R$ on ${\bf A}$, see Theorem~\ref{gpschLiestab},  gives  canonical gluing data to glue $\mathcal R|_{X_{i}}$ with the trivial bundle $\mathfrak g \times \bf A_{0}$.

The following first case  is the key to proving the main result, Theorem~\ref{multipargrpsch}. 

 \begin{thm}\label{n-paragrpsch}
The main theorem, Theorem~\ref{multipargrpsch}, holds when the $n$-tuple of concave functions is of {\tt type I} \textup{(}see Definition~\ref{types}\,\textup{)}.
 \end{thm}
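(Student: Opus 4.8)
The plan is to realize the group scheme $\gG_{_{\bvt}}$ (for $\bvt=(\theta_{_1},\ldots,\theta_{_n})$ of {\tt type I}) as the output of the general construction in Theorem \ref{Artin-Weil-Kawamata}, applied to $X = {\bf A} = {\mathbb A}^{^n}$, the reduced normal crossing divisor $D = \sum_{_i} H_{_i}$ of coordinate hyperplanes, the open set $X_{_0} = {\bf A_{_0}}$, and the parahoric group schemes $\gG_{_{\theta_{_i}}}$ attached to $\theta_{_i}$ at the generic points $\zeta_{_i}$ of the $H_{_i}$. First I would invoke Theorem \ref{gpschLiestab} to produce the $n$-parahoric Lie algebra bundle $\cR$ on ${\bf A}$ associated to $\bvt$, together with its loop-functor description \eqref{loopfunct}. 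The hypotheses of Theorem \ref{Artin-Weil-Kawamata} are then checked directly: $\cR|_{_{X_{_0}}} \simeq \gfr \times X_{_0}$ by construction, while the identification $\cR|_{_{X_{_i}}} \simeq \text{Lie}(\gG_{_{\theta_{_i}}})$ over the dvr $A_{_i}$ follows from the explicit weight-space decomposition \eqref{wtspacedecomp}: localizing at $\zeta_{_i}$ makes each $z_{_j}$ with $j \neq i$ a unit, so the order of vanishing of the $r$-weight line is governed by $z_{_i}^{^{m_{_r}(\theta_{_i})}}$ alone, exactly as in Corollary \ref{gpschLiestab3}.

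With these inputs in hand, Theorem \ref{Artin-Weil-Kawamata} yields a smooth affine group scheme $\gG_{_{\bvt}}$ on ${\bf A}$ with connected fibres, unique up to isomorphism, with $\gG_{_{\bvt}}|_{_{X_{_0}}} \simeq G \times X_{_0}$, $\gG_{_{\bvt}}|_{_{X_{_i}}} \simeq \gG_{_{\theta_{_i}}}$ and $\text{Lie}(\gG_{_{\bvt}}) \simeq \cR$; restriction to $\spec~\cO_{_n}$ supplies the generic fibre identification $h_{_{K_{_n}}}$. This settles the affineness claim, part (1), and combining $\text{Lie}(\gG_{_{\bvt}}) \simeq \cR$ with \eqref{loopfunct} gives $\text{Lie}(L^{^{+}}(\gG_{_{\bvt}})) \simeq \text{Lie}(\cP_{_{\bvt}})$, which is part (3). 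Taking the gluing data to be $\text{id}_{_G}$ and applying Proposition \ref{bigcell1} produces the toral subgroup scheme $\cT$, the unipotent subgroup schemes $\mathfrak U^{^{\pm}}$ together with their root decomposition into $\{\mathfrak U_{_{r,\bvt}}\}$, and the gross cell $\mathfrak B_{_{\Phi^{^{+}}}}$ restricting to the big cell over $X_{_0}$, giving parts (4) and (5).

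The remaining and most delicate point is the schematicity assertion, part (2), namely $\gG_{_{\bvt}}(\cO_{_n}) = \cP_{_{\bvt}}(k)$. One inclusion is formal: reading the $\cO_{_n}$-points off the gross cell, the weight-space line bundles give $\mathfrak U_{_{r,\bvt}}(\cO_{_n}) = U_{_r}\big(\prod_{_i} z_{_i}^{^{m_{_r}(\theta_{_i})}}\cO_{_n}\big)$ and $\cT(\cO_{_n}) = T(\cO_{_n})$, so $\gG_{_{\bvt}}(\cO_{_n}) \supseteq \cP_{_{\bvt}}(k)$ by the generator description \eqref{moregen0}. For the reverse inclusion I would use the invariant-direct-image presentation $\gG_{_{\bvt}} = p^{{\Gamma}}_{_*}(\cH)$ of \eqref{justnow}: its $\cO_{_n}$-points are the $\Gamma$-invariant sections of $\cH$ over the Kawamata cover, which by the loop-group reformulation of \S\ref{higherobservation} (see \eqref{gphd}) are precisely those $\gamma$ for which $\lim_{_{s_{_i}\to 0}}\prod_{_i}\eta_{_i}(s_{_i})\,\gamma\,\prod_{_i}\eta_{_i}(s_{_i})^{^{-1}}$ exists, that is, $\cP_{_{\bvt}}(k)$.

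Finally, the independent characterization of $(\gG_{_{\bvt}}, h_{_{K_{_n}}})$ follows from the uniqueness clause of Theorem \ref{Artin-Weil-Kawamata} (the Lie algebra bundle with its generic trivialization determines the group scheme) and from the fact that the gross cell reconstructs the group via $\cT$ and the Baker--Campbell--Hausdorff exponential of $\mathfrak{Lie}(\mathfrak U^{^{\pm}})$ in characteristic zero. The hard part is the reverse inclusion in part (2): one must ensure that passing to $\Gamma$-invariant sections over the ramified cover introduces no sections beyond those visible in the gross cell, and this is exactly where the loop-theoretic limit description and the compatibility of the local covers $q_{_i}$ with the global cover $p$ do the essential work.
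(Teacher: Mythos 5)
Your construction is essentially the paper's: you build the Lie algebra bundle $\cR$ via \eqref{gpschLiestab} (the paper invokes \eqref{onliestr}, noting \eqref{gpschLiestab} works equally well in {\tt type I}), feed it into \eqref{Artin-Weil-Kawamata} to get the smooth affine group scheme, and get the gross cell, $\cT$ and the $\mathfrak U_{_{r}}$'s from \eqref{bigcell1} with gluing by $\text{id}_{_G}$. Where you diverge is on part (2) and on uniqueness. For $\gG_{_{\bvt}}(\cO_{_n}) = \cP_{_{\bvt}}(k)$ the paper argues via Weil's extension theorem (\cite[Theorem 1, p.~109]{blr}): since base and group scheme are smooth, sections over $\spec~\cO_{_n}$ are determined on a big open subset $U$, and sections over $U$ are the intersection in $G(K_{_n})$ of the height-one parahoric groups, which is identified with $\cP_{_{\bvt}}(k)$. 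You instead prove one inclusion by reading generators off the gross cell and the other by passing to $\Gamma$-invariant sections over the Kawamata cover and invoking the limit characterization \eqref{gphd}. This is a legitimate alternative, and arguably closer in spirit to \S\ref{higherobservation}, but be aware that it leans on two facts the paper states rather than proves in detail at the group level: the equality of the generator description with the limit description in \eqref{gphd}, and the group-level (not just Lie-algebra-level) analogue of \eqref{loopfunct} identifying invariant sections of $\cH$ over the cover with the limit-condition group — the paper only carries out the latter for $\text{Lie}$ in \eqref{gpschLiestab} and for groups in the Moy--Prasad section. The paper's route via Weil's theorem sidesteps the cover entirely at this step. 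Finally, for the characterization you appeal to the uniqueness clause of \eqref{Artin-Weil-Kawamata} and to BCH; the paper's actual mechanism is to get an isomorphism over a big open subset and then extend it across codimension $\geq 2$ using \cite[Cor IX, 1.5]{raynaud}, which also covers the characterization by the gross cell alone (without Lie-algebra data) and avoids any characteristic-zero dependence at that step. Your argument is correct, but spelling out the Raynaud extension step would make the uniqueness claim airtight.
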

 
\begin{proof} We begin by making constructions on ${\bf A}$. The existence of the Lie algebra bundle $\cR$ on ${\bf A}$ follows by applying Proposition~\ref{onliestr} to the {\tt n-concave} function of {\tt type I} defined by the point $\bvt=(\theta_{1},\ldots,\theta_{n})$. Since we are in the {\tt type I} case, we could have obtained the Lie algebra bundle directly from Theorem~\ref{gpschLiestab} as well. Again since we are in {\tt type I}, the existence of the smooth {\em affine} group scheme $\gG_{\bvt}$ with connected fibres is a consequence of Theorem~\ref{Artin-Weil-Kawamata}; its cell structure and properties \eqref{mt4} and \eqref{mt5} of Theorem~\ref{multipargrpsch} follow from  Proposition~\ref{bigcell1}. 

We now restrict the group scheme $\gG_{\bvt}$  to $\spec(\cO_{n}) \subset {\bf A}$ and continue to denote the restricted group scheme  by $\gG_{\bvt}$.  Since the base is smooth and the group scheme is also smooth, by a theorem of Weil, see \cite[Section~4.4, Theorem 1]{blr}, it follows that the global sections are determined on any open subset $U$ whose complement is of codimension at least~$2$. Let us call such an open subset {\em a big open subset}.  Sections over $U$ are obtained by intersecting the sections of the parahoric groups defined at the height $1$ primes in $G(K_{n})$. It follows immediately that the sections over the big open subset $U$ coincide with the {\tt n}-parahoric group $\cP_{\bvt}$, proving assertion~\eqref{mt2} of Theorem~\ref{multipargrpsch}.

We now discuss {\em the uniqueness of the group scheme}; we note that although we do this here in the setting of type I, the arguments are identical and will imply the same for the most general type III case, once the construction along with properties \eqref{mt4} and \eqref{mt5} of Theorem~\ref{multipargrpsch}  is carried out in Section~\ref{schematization3}.

If $\mathfrak H$ is a connected, affine smooth group scheme over $\cO_{n}$ which is generically split and such that it has closed toral and root subgroup schemes as in \eqref{mt4}  and a {\em big cell} as in \eqref{mt5} of Theorem~\ref{multipargrpsch}, then by \cite[Sections~1.2.13 and 1.2.14]{bruhattits}, we get the desired isomorphism of $\gG_{\bvt}$ and $\mathfrak H$, showing the uniqueness.

A subtler feature regarding the uniqueness can be extracted from these arguments. Once the smooth connected group scheme $\gG_{\bvt}$ is constructed abstractly, the big cell structure over the big open subset $U$ (which contains all points of height $1$) can be obtained  at each of the height $1$ primes as in \cite{bruhattits}. So the cell structure coincides with the {\em big cell} of the {\tt parahoric} group schemes $\gG_{\theta_{j}}$ for each $j$, and hence by the uniqueness of the group schemes over discrete valuation rings, see  \cite[Section~3.8.3]{bruhattits}, we get the identification of the group schemes $\gG_{\bvt}$ and $\mathfrak H$ over $U$. Now, we can appeal to the deeper results in \cite[Corollaire IX.1.5]{raynaud} to extend the  isomorphism of group schemes from $U$ to $\spec(\cO_{n})$.
\end{proof}

\brem \label{diagonalremark} \label{schematicnature} In continuation of Proposition~\ref{onliestr}, the description of the diagonal restriction of $\gG_{\bvt}$  can be obtained directly from the facts that it is affine by Theorem~\ref{n-paragrpsch} and the group $\gG_{\bvt}(\cO_{n})$ of $\cO_{n}$-valued points is the {\tt n-parahoric} group ${\cP}_{\bvt}$. Let the notation be as in Theorem~\ref{thedescription}. Let $\Delta:=\mathbb A_{\tt I}$ for $I=\{1,\ldots,n\}$ be the diagonal curve in the affine space and $\cO$ be the ring of functions on $\Delta$ at the origin. Hence the restriction $\gG_{\bvt}\mid_{\Delta}$  is such that the space of its sections $\gG_{\bvt}\mid_{\Delta}(\cO)$ is identified with $\gG_{\bvt}(\cO_{n})$, in which we set all variables as equal. Thus we get the subgroup ${\cP}_{\bvt}^{\diag} \subset G(k(\!(t)\!))$; see \eqref{allunifequal}.  By the \'etoff\'e property of {\tt BT}-group schemes, this forces that $\gG_{\bvt} \mid_{\Delta}$ is the {\tt BT}-group scheme given by the concave function $\cm_{\bvt}$, where $\cm_{\bvt}(r) := \sum_i m_r(\theta_i)$. We remark that the above discussion still holds if we work with subdiagonal curves $\mathbb A_{\tt I}$ as Theorem~\ref{thedescription}\eqref{grat1-b}, which are defined by any subcollection of the hyperplanes indexed by a non-empty ${\tt I} \subset \{1, \ldots, n\}$.
\erem

\begin{Cor} \label{moreaffineness}
Consider an $n$-tuple ${\bf f}$ of concave functions, each of which is a sum of  such functions of {\tt type I}. Then the group scheme $\mathfrak{G}_{\bf f}$  is {\tt affine}.
\end{Cor}

\begin{proof} Let ${\bf f}=(f_{1},\ldots,f_{n})$, where $f_{i}=\sum_{j} f_{\theta_{ij}}$. Set $\bvt=( \theta_{ij} )$. Then $\mathfrak{G}_{\bvt}$ is affine by Theorem~\ref{n-paragrpsch}. Note that this group scheme constructed out of $N$-parahoric group schemes is over an affine space of dimension $N$, for $N \gg 0$, and obtained from the data of the $\{\theta_{ij}\}$. 

Observe that  the group scheme $\mathfrak{G}_{\bf f}$ can be seen as a restriction to a subdiagonal in the larger affine space of $\mathfrak{G}_{\bvt}$. Now the assertion of affineness follows by  Remark~\ref{diagonalremark}. 
\end{proof}

\subsection{Revisiting two \guillemotleft contre-exemples \guillemotright~in Bruhat--Tits \cite{bruhattits}} \label{revisit}
In this subsection we take a closer look at a couple of counter-examples in Bruhat--Tits \cite[Section~3.2.15]{bruhattits}. The examples highlight how the procedure developed in Section~3 of \cite{bruhattits} of taking schematic closures in certain linear groups could give schemes which are neither {\em flat} nor {\em group schemes}. This phenomenon happens when the base scheme is $\mathbb A^{2}$. By discarding certain bad components in the fibre over $o \in \mathbb A^{2}$,  they extract a {\tt 2BT}-group scheme whose {\em affineness} is left in doubt. 

The base ring is $\cO = k[\xi, \eta]$, the group is $G=\SL_{2}$, the concave function $f$ is defined by $f(a) = \xi\cdot\cO$ and $f(-a) = \eta\cdot\cO$, and the representation is $\rho = \id \times \Ad$ on the module $K^{2} \times \mathfrak g$. The $\cO$-module $M$  is a free module generated by the basis of $K^{2}$ and the elements $\eta\cdot e_{-a}, h, \xi\cdot e_{a}$. We view them as $\begin{pmatrix} 0 & 0 \\ -\eta & 0 \end{pmatrix}, \begin{pmatrix} 1/2 & 0 \\ 0 & -1/2 \end{pmatrix}$ and $\begin{pmatrix} 0 & \xi \\ 0 & 0 \end{pmatrix}$. Thus we have  
\begin{equation}
\rho \begin{pmatrix}
x & y \\
z & t
\end{pmatrix} = \begin{pmatrix} \begin{pmatrix} 
x & y \\
z & t
\end{pmatrix},
\begin{pmatrix}
t^{2} & -\eta\cdot zt & \xi\eta^{-1}\cdot z^{2} \\
-2\eta\cdot yt & xt + yz & -2\xi\cdot xz \\
\eta\cdot \xi^{-1} y^{2}& -\xi\cdot xy& x^{2}
\end{pmatrix}
\end{pmatrix}
\end{equation}
The schematic closure of $\rho(G)$ in $\mathfrak{GL}(M)$ is $\mathfrak G$. The fibre over $o \in \mathbb A^{2}$ is
\beqa
\mathfrak G_{o} = \mathfrak G^{1}_{o} \cup \mathfrak G^{2}_{o}, 
\eeqa
where $\mathfrak G^{1}_{o}$ has dimension $4$ and $\mathfrak G^{2}_{o}$ dimension $3$. Hence $\mathfrak G$ is not flat over $\cO$, while $\mathfrak H := \mathfrak G \setminus \mathfrak G^{1}_{o}$ is a {\em flat group scheme}, albeit in the words of \cite[Section~3.2.15, p.~60]{bruhattits} \guillemotleft {\em mais probablement pas affine} \guillemotright, \textit{i.e.} {\em but probably not affine}.

The fibre of $\mathfrak H$ at $o$ is obtained from \cite[Section~3.2.14, Equation~(2)]{bruhattits}, which is obtained by a diagonal restriction.

We now reinterpret this example as the  simplest case of Theorem~\ref{n-paragrpsch}, for $G=\SL_{2}$. The surprising picture which emerges  from our approach is that we even get the {\em affineness} of their group scheme $\mathfrak H$ as a consequence.

In our setting of Section~\ref{explicitexamples}, we have   $\Phi^{+} = \{a\}$, and in terms of the alcove vertices \eqref{alcovevertices}, we have $\theta_{1}:= \theta_{a}/2  = \omega^{\vee}/2$ and $ 
\theta_{2}:= -\theta_{a}/2 = -\omega^{\vee}/2$. With $m_r(\theta) := -\lfloor r(\theta) \rfloor$, we get  
\begin{equation}
  a(\theta_{1}) =
\begin{cases}
1/2&\quad\text{if } a \in \Phi^{+}, \\
-1/2&\quad{\text if }a \in \Phi^{-},  
\end{cases} \end{equation}

\begin{equation}
  m_{a}(\theta_{1}) =
\begin{cases}
0&\quad\text{if } a \in \Phi^{+}, \\
1&\quad{\text if }a \in \Phi^{-}, 
\end{cases} \end{equation}
and similarly for $\theta_{2}$. We get 
\begin{equation}
  a(\theta_{2}) =
\begin{cases}
-1/2&\quad\text{if } a \in \Phi^{+}, \\
1/2&\quad{\text if }a \in \Phi^{-}, 
\end{cases} \end{equation}
\begin{equation}
  m_{a}(\theta_{2}) =
\begin{cases}
1&\quad\text{if } a \in \Phi^{+}, \\
0&\quad{\text if }a \in \Phi^{-}.
\end{cases} \end{equation}
By Theorem~\ref{n-paragrpsch}, if $\bT = (\theta_{1}, \theta_{2})$, the group scheme $\gG_{\bT}$ is precisely $\mathfrak H$ and also a {\em smooth, affine} group scheme over $\cO$. Furthermore, the diagonal restriction is the affine group scheme $\mathfrak G_{\cm_{\bT}}$ given by the concave function
$\cm_{\bT}\colon r \mapsto \sum m_r(\theta_{j})$ with 
\begin{equation}
\cm_{\bT}(a) = 1 \quad \forall  a \in \Phi  , \end{equation}
which computes ${\cP}_{\bT}^{\diag}$ and recovers  the concave function considered in \cite[Section~3.2.14]{bruhattits}.

\section{Schematization of {\tt n}-bounded groups \texorpdfstring{${\cP}_{\bwt}$}{P\textunderscore Omega}}\label{schematization2}
The notation in this section is as in the previous one; namely, $\bf A \simeq \mathbb A^{n}$ and ${\bf A_{0}} \subset {\bf A}$ is the complement of the coordinate hyperplanes. \emph{In this entire section, we work with $G$ being one of $A_{n}$, $G_{2}$, $F_{4}$ or $E_{6}$.}

We begin by remarking that in the foundational paper \cite{bruhattits1}, Bruhat and Tits study bounded groups associated to concave functions on the $\Phi \cup\{0\}$. In the sequel \cite{bruhattits}, the main result is to prove that these groups are {\sl schematic}.  

A class of concave functions, which play a basic role even in the general study, are the ones which come associated to bounded subsets $\Omega$ of the affine apartment. In this section, we extend the above theory of {\tt n}-parahoric groups (with their schematic properties) to the bounded groups associated to products of bounded regions $\boldsymbol\Omega := (\Omega_{1}, \ldots, \Omega_{n})$ in the apartment. 

In the following {\sl key} remark, we recall a few basic facts from \cite{bruhattits3}.

\brem\label{frombtclassiques}  Let $G=\SL(n)$ for some $n$. Let $\Omega$  be a non-empty {\em enclosed} bounded subset of the apartment $\mathcal{A}$. It is the convex hull of a finite subset $\{\theta_{\tt 1}, \ldots, \theta_{\tt s}\}$ of points in $\mathcal{A}$. {\sl In terms of the points $\theta_{j}$, the {\tt BT}-group scheme $\gG_{\Omega}$ on $\spec(\cO)$ is precisely the {\tt schematic closure} of the image of $G$ in $\prod_{j = 1}^{\tt s} \gG_{\theta_{j}}$ under the diagonal map}. This is a consequence of the discussion in \cite[Remarques~3.9]{bruhattits3}, together with \cite[Section~5.3]{bruhattits3}, where it is shown that the group scheme $\gG_{\Omega}$ over $\spec(\cO)$ can be realized as the {\em closed image} in $\prod_{j = 1}^{\tt s} \gG_{\theta_{j}}$ of the diagonal morphism  of $G$  in the generic fibre $G^{\tt s}$ of $\prod_{j = 1}^{\tt s} \gG_{\theta_{j}}$. 

We further remark that the stated result for $\SL(n)$ also works by the results \cite[Theorem 10.1]{ganyu1} for $G_{2}$ and \cite[Theorem 9.1]{ganyu2} for $F_{4}$ and $E_{6}$. 
 \erem
 
\brem\label{goodhull} An important aspect of the remark  made above which will be used in what follows is to view the group scheme $\prod_{j = 1}^{\tt s} \gG_{\theta_{j}}$ as an {\em invariant direct image} from a suitable ramified cover (see Section~\ref{grstuff}). Now the group scheme $\gG_{\theta_{j}}$ arises from an invariant direct image if $\theta_{j}$ belongs to the alcove $\mathbf{a}_0$ (see Section~\ref{liedata}) by \cite{base}. This generalizes to the case when $\theta_{j}$ belongs to the fundamental domain of $Y(T)$ by Remark~\ref{uptoyt} and Section~\ref{bstomixed}. For the general case, we may have to move the points $\{\theta_{\tt 1}, \ldots, \theta_{\tt s}\}$  in apartment $\mathcal{A}$ into the fundamental domain for the action of $Y(T)$. Let $h_{j} \in T(K)$ be elements such that the point $\{h_{1}\cdot\theta_{\tt 1}, \ldots, h_{\tt s}\cdot\theta_{\tt s} \}$ now lies in the $\tt s$-fold product of the fundamental domain of $Y(T)$. We now consider the modified diagonal embedding of $G$ in the generic fibre of $\prod_{j = 1}^{\tt s} \gG_{h_{j}\cdot\theta_{j}}$ which is given by a simultaneous conjugation by the $h_{j}$, \textit{i.e.} $g \mapsto (h_{1}\cdot g\cdot h_{1}^{-1}, \ldots, h_{\tt s}\cdot g\cdot h_{\tt s}^{-1})$. Then the {\em schematic closure} via this embedding in $\prod_{j = 1}^{\tt s} \gG_{h_{j}\cdot \theta_{j}}$ gives the group scheme $\gG_{\Omega}$ up to an isomorphism. Moreover, the new ambient group scheme $\prod_{j = 1}^{\tt s} \gG_{h_{j}\cdot \theta_{j}}$ comes as an invariant direct image from a single cover. \erem
 
\subsection{The schematic hull \texorpdfstring{$\mathbb S_{\bwt,\Lambda}$}{S\textunderscore \{Omega, Lambda\} }  of an {\tt n}-bounded set \texorpdfstring{$\bwt$}{Omega} and its big cell structure} \label{schematichullslambda}
{\sl For simplicity of  notation, let  $\Lambda$ stand for the {\em polynomial ring} $k[z_{1}, \ldots, z_{n}]$ and ${\tt F}$ stand for  $\Fract(\Lambda)$ throughout this proof}.

Let $\bwt = (\Omega_{1}, \ldots, \Omega_{n})$, where we may assume that the $\Omega_{i}$ are {\em enclosed} bounded subsets of the affine apartment $\mathcal A$. Let  $\Omega_{i}$ be the convex hull of points $\{\theta_{\tt i1}, \ldots, \theta_{\tt is}\}$ of $\cA_T$. Note that {\em a priori} the number {\tt s} of points also varies with $i$.
By Remarks~\ref{frombtclassiques} and~\ref{goodhull}, we may suppose
\begin{enumerate}
\item that these points lie in a single fundamental domain of $Y(T)$ in $\cA_T$ and
\item that over a DVR $\spec(\cO)$, we realize each $\gG_{\Omega_{i}}$ as the schematic closure of $G$ in $\prod_{j = 1}^{\tt s} \gG_{\theta_{\tt ij}}$.
\end{enumerate} By choosing $\tt s$ to be the maximal number of terms for varying $i = 1, \ldots, n$, and by repeating the $\theta_{\tt ij}$  when the number is less than $\tt s$, we may assume that all the group schemes $\gG_{\Omega_{i}}$ are schematic closures of  products of $\tt s$ parahoric group schemes on $\spec(\cO)$. 

Fix a $j \in \{\tt 1, \ldots, \tt s\}$. For each hyperplane $H_{i}$ ($i = 1, \ldots, n$),  we attach the point $\theta_{\tt ij}$. For this configuration of points in the apartment, we have by Theorem~\ref{n-paragrpsch} an {\tt n-parahoric} group scheme $\gG_{\theta_{\tt 1j}, \ldots, \theta_{\tt nj}}$ on $\bf A$. We set 
\beqa\label{mathfraks}
\mathbb S_{\bwt,\Lambda} := \prod_{1 \leq j \leq s} \gG_{\theta_{\tt 1j}, \ldots, \theta_{\tt nj}}
\eeqa
to be the {\tt s}-fold product of {\tt n-parahoric} group schemes on $\bf A$. Thus $\mathbb S_{\bwt,\Lambda}$ is a {\em smooth, affine group scheme} on $\spec(\Lambda) = \bf A$ with generic fibre being the split group scheme $G^{\tt s} \times \spec({\tt F})$. Since each {\tt n-parahoric} group scheme $\gG_{\theta_{\tt 1j}, \ldots, \theta_{\tt nj}}$ is realized as a Weil restriction of scalars from a finite flat cover of $\bf A$ (see Theorem~\ref{n-paragrpsch}), so is the group scheme $\mathbb S_{\bwt,\Lambda}$.

\begin{defi}\label{shell} We call the product group scheme $\mathbb S_{\bwt,\Lambda}$, see \eqref{mathfraks}, the {\tt schematic hull} of the {\tt n-bounded} set $\bwt$ and will denote it simply as $\mathbb S_{\Lambda}$. 
\end{defi}
This group scheme $\mathbb S_{\Lambda}$ will play a central role in all that follows. 

Recall that each of the {\tt n-parahoric} group schemes $\gG_{\theta_{\tt 1j}, \ldots, \theta_{\tt nj}}$ occurring in \eqref{mathfraks} is equipped with a  {\em big cell}  by Theorem~\ref{n-paragrpsch} (see Section~\ref{BigCell}).  Taking their product, we define the {\em big cell} of the group scheme $\mathbb S_{\Lambda}$. Let us write it as   the image of the product morphism 
\beqa\label{bigcellinambient}
\mathfrak B^{-} \times \cT \times \mathfrak B^{+} \lra \mathbb S_{\Lambda}.
\eeqa 
Let $\mathfrak{U}_{r,j}$ denote the root group of $\gG_{\theta_{\tt 1j}, \ldots, \theta_{\tt nj}}$ corresponding to $r \in \Phi$. For $r \in \Phi$, let us set 
\begin{equation} \label{br}
\mathfrak{B}_{r} := \prod_{1 \leq j \leq s} \mathfrak{U}_{r,j}.
\end{equation}
Then under the induced group law from $\mathbb S_{\Lambda}$,  each $\mathfrak{B}_{r}$   is Abelian.
 
\subsection{The diagonal embedding}
Let $U$ be an open subset of $\spec(\Lambda)$ which contains all primes of height at most~$1$. At the completion of the local rings at the height $1$ prime ideal associated to the generic point of each $H_{i}$  indexed by $i = 1, \ldots, n$, we assign the Bruhat--Tits group scheme $\gG_{\Omega_{i}}$. By definition, these group schemes are also generically split. 

We let $\gG_{\bwt, {\tt F}}$ denote the {\em image of  the diagonal embedding} of $G \times \spec({\tt F})$ in $G^{\tt s} \times \spec({\tt F})$. As in Proposition~\ref{onliestr} and in the proof of Theorem~\ref{Artin-Weil-Kawamata}, by gluing (using the map $\id_{G}$) with the $\gG_{\Omega_{j}}$, the group scheme $\gG_{\bwt, {\tt F}}$ extends to each of the generic points of the coordinate hyperplanes.  Let $\gG_{\bwt, U}$ denote this smooth, affine group scheme on $U$. We begin with the closed embedding $\gG_{\bwt,{\tt F}} \subset \mathbb S_{\tt F}$, where $\mathbb S_{\tt F} = \mathbb S_{\Lambda}|_{\spec(\tt F)}$ . Let
\beqa\label{mathfraky}
{\mathfrak Y}_{\Lambda} := \overline{\gG_{\bwt, {\tt F}}}
\eeqa 
denote the {\em schematic closure} of $\gG_{\bwt, {\tt F}}$ in  $\mathbb S_{\Lambda}$.

\begin{lem}\label{moreonschclos}
  The group scheme $\gG_{\bwt, U}$ is in fact the schematic closure $\mathfrak Y_{U}$ of\, $\gG_{\bwt, {\tt F}}$ in  ${\mathbb S}_{U}$.
\end{lem}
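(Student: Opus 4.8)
The plan is to identify $\gG_{_{\bwt, U}}$ with $\mathfrak Y_{_U}$ by comparing the two along the coordinate hyperplanes, the single organising principle being that \emph{the schematic closure commutes with flat base change}. Since $\gG_{_{\bwt, {\tt F}}}$ is the diagonal image of the integral group $G$, its schematic closure $\mathfrak Y_{_{\Lambda}} = \overline{\gG_{_{\bwt, {\tt F}}}}$ in $\mathfrak S_{_{\Lambda}}$ is an integral closed subgroup scheme with generic fibre $\gG_{_{\bwt, {\tt F}}}$, and restricting to $U$ gives $\mathfrak Y_{_U}$. The localisation $\Lambda \to A_{_i}$ at the generic point $\zeta_{_i}$ of $H_{_i}$, the completion $A_{_i} \to \hat A_{_i}$, the inclusion of the generic point, and the restriction to ${\bf A_{_0}}$ are all flat, so $\mathfrak Y$ can be computed at every point of height $\leq 1$ as the schematic closure of the appropriate generic diagonal in the corresponding base change of the hull $\mathfrak S_{_{\Lambda}}$.

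The crux is the computation of $\mathfrak Y$ at the generic points $\zeta_{_i}$. By the adaptedness of the {\tt n}-parahoric group schemes established in Theorem \ref{n-paragrpsch} (via the gluing construction of Theorem \ref{Artin-Weil-Kawamata}), the restriction of each factor $\gG_{_{\theta_{_{\tt 1j}}, \ldots, \theta_{_{\tt nj}}}}$ to the complete dvr $\hat A_{_i}$ is the parahoric group scheme $\gG_{_{\theta_{_{\tt ij}}}}$, so that $\mathfrak S_{_{\hat A_{_i}}} \simeq \prod_{_{j=1}}^{^{\tt s}} \gG_{_{\theta_{_{\tt ij}}}}$. Flat base change along $\Lambda \to \hat A_{_i}$ then yields
\begin{equation}
\mathfrak Y_{_{\Lambda}} \times_{_{\Lambda}} \hat A_{_i} \;\simeq\; \overline{\gG_{_{\bwt, \hat L_{_i}}}} \;\subset\; \prod_{_{j=1}}^{^{\tt s}} \gG_{_{\theta_{_{\tt ij}}}} \;=\; \mathfrak S_{_{\hat A_{_i}}},
\end{equation}
the schematic closure of the diagonal image of $G \times \spec~\hat L_{_i}$ in the product of the vertex parahorics. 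This is precisely the setting of Remark \ref{frombtclassiques}, which (using \cite[Theorem 10.1]{ganyu1} and \cite[Theorem 9.1]{ganyu2} for $G = G_{_2}, F_{_4}, E_{_8}$, and the $\text{SL}(n)$ case) identifies this closure with the Bruhat-Tits group scheme $\gG_{_{\Omega_{_i}}}$. Hence $\mathfrak Y_{_{\Lambda}} \times_{_{\Lambda}} \hat A_{_i} \simeq \gG_{_{\Omega_{_i}}}$ as closed subgroup schemes of $\mathfrak S_{_{\hat A_{_i}}}$.

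Over ${\bf A_{_0}}$ the hull $\mathfrak S_{_{\Lambda}}$ is the split group $G^{^{\tt s}} \times {\bf A_{_0}}$ and $\gG_{_{\bwt, {\tt F}}}$ spreads out to the split diagonal $G \times {\bf A_{_0}}$; since the schematic closure of a closed subgroup scheme already defined over the whole base is itself, $\mathfrak Y|_{_{{\bf A_{_0}}}}$ is this split diagonal. But the split diagonal over ${\bf A_{_0}}$ together with the $\gG_{_{\Omega_{_i}}}$ at the $\zeta_{_i}$, glued by $\text{id}_{_G}$, are exactly the data from which $\gG_{_{\bwt, U}}$ was built. Consequently $\mathfrak Y$ and $\gG_{_{\bwt, U}}$ are canonically isomorphic over the big open $U' := {\bf A_{_0}} \cup \bigcup_{_i} \spec~A_{_i}$, which contains every point of height $\leq 1$, compatibly with their inclusions into $\mathfrak S$. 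To pass from $U'$ to $U$ I would invoke that both sides are determined by their restriction to $U'$: the smooth affine group scheme $\gG_{_{\bwt, U}}$ by the \'etoff\'e property \cite[Theorem 1, p.~109]{blr} (and \cite[IX, Cor.~1.4]{raynaud}), while $\mathfrak Y_{_U}$, being the reduced integral schematic closure, is the closure of its restriction $\mathfrak Y_{_{U'}}$ to the dense open $U'$; since the two agree on $U'$ they agree on $U$.

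I expect the main obstacle to be the computation at the $\zeta_{_i}$ carried out in the second paragraph. Everything rests on Remark \ref{frombtclassiques}, namely the theorem that over a complete dvr the Bruhat-Tits group scheme of a bounded set is the schematic closure of the diagonal inside the product of its vertex parahorics; it is exactly here that the hypothesis $G \in \{\text{SL}(n), G_{_2}, F_{_4}, E_{_8}\}$ enters through \cite{ganyu1}, \cite{ganyu2}, and it is the commutation of schematic closure with the flat completion $A_{_i} \to \hat A_{_i}$ that lets one import this one-dimensional result into the higher-dimensional picture. The remaining steps — the split computation over ${\bf A_{_0}}$ and the codimension-$\geq 2$ extension — are routine by comparison.
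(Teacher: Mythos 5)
Your proposal is correct and follows essentially the same route as the paper: the paper's proof likewise reduces the identification of $\gG_{_{\bwt, U}}$ with $\mathfrak Y_{_U}$ to checking that the restriction of the schematic closure to the completed local rings at the height~$1$ primes is $\gG_{_{\Omega_{_i}}}$, which is exactly the content of Remark \ref{frombtclassiques}. You simply make explicit the flat-base-change and codimension-$\geq 2$ steps that the paper leaves implicit (note only that your $U' = {\bf A_{_0}} \cup \bigcup_{_i} \spec~A_{_i}$ is not literally an open subset, though the argument at points of height $\leq 1$ is unaffected).
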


\begin{proof} Since $U$ contains all primes of height at most~$1$, we see that   the schematic closure $\mathfrak Y_{U}$  of $\gG_{\bwt, {\tt F}}$ in  ${\mathbb S}_{U}$ is in fact a {\em flat} group scheme on $U$, and since $\mathbb S_{U}$ is affine, it is in fact {\em affine} as well.

Since $\gG_{\bwt, {\tt F}}$ is smooth and hence reduced, ${\mathfrak Y}_{\Lambda}$ is in fact the closure of $\gG_{\bwt, {\tt F}}$ in $\mathbb S_{\Lambda}$ with its reduced scheme structure. Since the base is of dimension at least~$2$, this schematic closure, however, need not be a group scheme (see \cite[Section~3.2.15]{bruhattits}). 

To prove the contention of the lemma, we need to verify that the restriction of $\mathfrak Y_{U}$ to the completion of the local rings of each height $1$ prime associated to $H_{i}$ is in fact isomorphic to the {\tt BT}-group scheme $\gG_{\Omega_{i}}$. This follows from  Remarks~\ref{frombtclassiques} and~\ref{goodhull}. 
\end{proof}

\subsection{{\tt n}-bounded-groups and schematization}
The schematization of the {\tt n}-bounded analogue is now carried out in two steps.  As in the construction of the {\tt n}-parahoric, we  construct the Lie algebra bundle $\mathcal{R}_{\bwt}$ first  (see Theorem~\ref{gpschLiestab}). This will be eventually turn out to be $\Lie(\gG_{\bwt})$.

\begin{prop}\label{gpschLiestabomega} Let  the notation be as in Theorem~\ref{gpschLiestab} and Proposition~\ref{onliestr}. The trivial bundle with fibre $ \mathfrak g$ on $ {\bf A_{0}}$ extends to ${\bf A}$, preserving the Cartan decomposition as a canonical Lie algebra bundle $\mathcal{R}_{\bwt}$. Further, for each $\zeta_{i}$, the restriction $\mathcal{R}_{\bwt}|_{X_{i}}$ is isomorphic to  $\mathcal{R}_{\Omega_{i}}$. \end{prop}

\begin{proof} This is special case of Proposition~\ref{onliestr}. \end{proof}
We will denote the Cartan decomposition of $\mathcal{R}$ as follows:
\begin{equation} \label{Rdecomp} \mathcal{R}_{\bwt} = \Lie(T) \otimes \mathcal{O}_{\bf A} \oplus_{r \in \Phi} \mathcal{R}_{r}.
\end{equation}

The group scheme construction on the higher-dimensional base has two obstructions. Firstly, schematic closures  need not be flat when the base is not a  Dedekind domain (see Section~\ref{revisit}), and secondly, there is no immediate analogue of Theorem~\ref{Artin-Weil-Kawamata}. This is because the bounded subsets $\Omega_{i}$ need not be singletons, and so  the local BT-group schemes at the height $1$ primes do not necessarily come from ramified covers. 

Towards this end, we begin with a useful lemma.

\begin{lem}[\textit{cf}.~\protect{\cite[Section 1.2, pp.~17--18]{bruhattits}}]\label{[1.2.7, p.~18]bruhattits}
  Let $K$ be an infinite field and $A \subset K$ be a subring such that $K$ is the quotient field of $A$. Let $f\colon X \to Y$ be a morphism of schemes over $A$, and let $i\colon Z \hookrightarrow Y$ be a closed subscheme. Suppose that $X$ is $A$-flat and that $f_{K}\colon X_{K} \to Y_{K}$ factorises through $Z_{K}$. Then $f$ factorises through $Z$.
\end{lem}

\begin{proof} By going to an affine cover of $Y$, we may assume that $X$ and $Y$ are $A$-affine with coordinate rings $\cO[X]$ and $\cO[Y]$. Since $X$ is $A$-flat, the map $j_{X}\colon\cO[X] \to  K \otimes _{\cO} \cO[X]$ is {\sl injective}. Let $a \in I(Z)$ be an element in the ideal defining $Z$ in $Y$. Then $j_{X} \circ f^{*}(a) = f_{K}^{*} \circ j_{Y}(a) = 0$ since $f_{K}$ has a factorization. By the injectivity of $j_{X}$, we get $f^{*}(a) = 0$. \end{proof}  

\subsection {Continuation of proof of Theorem~\ref{multipargrpsch}}

\begin{thm}\label{n-regiogrpsch}
  The main theorem, Theorem~\ref{multipargrpsch}, holds when the $n$-tuple of concave functions is of\, {\tt type II} \textup{(}see Definition~\ref{types}\,\textup{)} and when  $G$ is of type $A_{n}$, $G_{2}$, $F_{4}$ or $E_{6}$ \textup{(}see Remark~\ref{frombtclassiques}\,\textup{)}.
\end{thm}

\begin{proof}
We will break up the rather long and technical proof into two parts. \begin{enumerate*}[label=Part~(\Roman*), ref=\Roman*]\item\label{part1}  constructs the candidate for {\em big cell} in the type II case.  \item\label{part2} then constructs  the group scheme by using this cell structure. \end{enumerate*} In this sense, the key issue is the extension of the {\em schematic root datum}.

\medskip
\noindent {\em Part~\eqref{part1}  of the proof}.  We begin with the closed subgroup scheme $\gG_{\bwt, {\tt F}}$ of $\mathbb S_{\tt F}$ (obtained by the diagonal embedding of $G$ in $G^{s}$). 

The closed subgroup scheme $\gG_{\bwt, {\tt F}} \hookrightarrow \mathbb S_{\tt F}$  over $\spec({\tt F})$ is such that each factor of the product $ \mathfrak U_{\bwt, {\tt F}}^{-} \times T_{\bwt, {\tt F}} \times \mathfrak U_{\bwt, {\tt F}}^{+}   $ embeds factor by factor in the corresponding term in the cell of $\mathbb S_{{\tt F}}$. The full cell $ \mathfrak C_{\bwt, {\tt F}} := \Im\big( \mathfrak U_{\bwt, {\tt F}}^{-} \times T_{\bwt, {\tt F}} \times \mathfrak U_{\bwt, {\tt F}}^{+} \to \gG_{\bwt, {\tt F}} \big)$ therefore sits as a flat closed subscheme of the open subscheme 
\beqa\label{bigbigcell}
\Im\left(\mathfrak B^{-}_{\tt F} \times \cT_{\tt F} \times \mathfrak B^{+}_{\tt F} \lra \mathbb S_{\tt F}\right).
\eeqa

As before, let $U$ be an open subset of $\spec(\Lambda)$ which contains all primes of height at most~$1$. By Proposition~\ref{gpschLiestabomega}, we have a locally free Lie algebra bundle $\mathcal{R}_{\bwt}$ on $\spec(\Lambda) = {\bf A}$.  This gives an extension of $\Lie(\gG_{\bwt, {\tt F}})$ as a Lie subalgebra bundle $\mathcal{R}_{\bwt}$ of $\Lie(\mathbb S_{\Lambda})$. Under the Cartan decomposition, we get the following refinement of subbundles:
\begin{equation} \label{RinLieSlambda}
\mathcal{R}_{r,\bwt} \longhookrightarrow \Lie\left(\mathfrak B_{r}\right).
\end{equation}

In  the proof, as a preliminary step we will construct a {\em big cell} $\mathfrak C_{\bwt,\Lambda}$ as a subscheme of the group scheme $\mathbb S_{\Lambda}$. This is achieved in \eqref{mathfrakc} below. This scheme $\mathfrak C_{\bwt,\Lambda}$ will be such that when restricted to the open subset $U$ of $\spec(\Lambda)$, we have an open immersion 
\beqa
\mathfrak C_{U} \longhookrightarrow \gG_{\bwt, U}.
\eeqa
 
We recall that by \eqref{mathfraky} and the proof of Lemma~\ref{moreonschclos}, the scheme ${\mathfrak Y}_{\Lambda}$ 
is in fact the {\em schematic closure} of $\gG_{\bwt, U}$ in  $\mathbb S_{\Lambda}$.  
The main aim of this first part of the proof is not only to  show the existence of a  {\em big cell}  $\mathfrak C_{\bwt,\Lambda}$ in \eqref{mathfrakc}, but to show in fact that Proposition~\ref{2.2.10} may be applied to it because it satisfies the following key property. 

\begin{claim}\label{claim1}
There is an open immersion from the scheme $\mathfrak C_{\bwt,\Lambda}$ into ${\mathfrak Y}_{\Lambda}$.
\end{claim}

We begin with the {\em big cell} $\mathfrak C_{\bwt,{\tt F}} :=  \Im\big( \mathfrak U_{\bwt, {\tt F}}^{+} \times T_{\bwt, {\tt F}} \times \mathfrak U_{\bwt, {\tt F}}^{-}  \to \gG_{\bwt, {\tt F}} \big)$. This gives the decomposition 
$$\Lie\left(\gG_{\bwt, {\tt F}}\right) = \Lie\left(\mathfrak U_{\bwt, {\tt F}}^{+}\right) \oplus \Lie\left(T_{\bwt, {\tt F}}\right) \oplus \Lie\left(\mathfrak U_{\bwt, {\tt F}}^{-}\right) .$$ 
The goal now is to extend $\mathfrak C_{\bwt,{\tt F}}$ to a {\em big cell} $\mathfrak C_{\bwt,\Lambda}$, as an {open subscheme} of ${\mathfrak Y}_{\Lambda}$. We prove this in several steps.

\begin{enumerate}[wide,label={\emph{Step} \arabic*:}, ref=\arabic*,itemsep=.5em]
\item\label{step1}
%  \subsubsection*{\sc Step 1:
    \emph{Toral extension to $\Lambda$ and closed embedding in $\mathfrak Y_{\Lambda}$.} Since we work with the {\em generically split} case, the maximal torus $T_{\bwt, {\tt F}} \subset \gG_{\bwt, {\tt F}}$  is {\sl split}. We now work with a $U$ which contains all primes of height at most~$1$. The torus $T_{\bwt, {\tt F}}$ canonically extends to a smooth closed multiplicative subgroup scheme $\cT_{\bwt, U}$ of $\gG_{\bwt, U}$  over $U$. This can be explicitly constructed as in \cite[Section 4.4.18 and 4.6.2]{bruhattits} (or by using the identity component of the N\'eron-lft model from \cite[Section~10.1, Proposition 6]{blr}, as in  \cite[Proposition 3.2]{land}). 

By \cite[Proposition IX.2.4]{raynaud},  it follows that the inclusion $\cT_{\bwt, U} \hookrightarrow \gG_{\bwt, U} \hookrightarrow  \mathbb S_{U}$ extends uniquely to a smooth group scheme $\cT_{\bwt, \Lambda}$ together with a closed immersion $\cT_{\bwt, \Lambda} \hookrightarrow \mathbb S_{\Lambda}$  as a subtorus of the group scheme $\mathbb S_{\Lambda}$. Since $\cT_{\bwt, \Lambda}$ is {\em flat and hence torsion-free}, it follows that $\cT_{\bwt, \Lambda}$ is the schematic closure of  $\cT_{\bwt, U}$ in~$\mathbb S$.  Again, since $\mathfrak Y_{\Lambda}$ is the schematic closure of $\gG_{\bwt, U}$ in $\mathbb S$, we conclude that the embedding $\cT_{\bwt, \Lambda} \hookrightarrow \mathbb S_{\Lambda}$ factors via a closed embedding $\cT_{\bwt, \Lambda} \hookrightarrow \mathfrak Y_{\Lambda}$.

\item \label{step2}
%  \subsubsection*{\sc Step 2:
\emph{Unipotent and big cell extension to $U$.} Next, we need to extend the unipotent group schemes $\mathfrak U_{\bwt, {\tt F}}^{\pm}$, firstly as closed subschemes of $\gG_{\bwt, U}$. This is easy since these are precisely the schematic closures of the root groups $\mathfrak U_{r, {\tt F}}^{\pm}$, for $r \in \Phi$, and hence of $\mathfrak U_{\bwt, {\tt F}}^{\pm}$ in $\gG_{\bwt, U}$; we  obviously rely on Bruhat--Tits theory over discrete valuation rings. Call these extended group schemes $\mathfrak U_{\bwt, U}^{\pm}$ and observe that since we work over height $1$ primes, these are {\em flat} over $U$.

Thus we have the {\em big cell} 
\beqa
\mathfrak C_{U}:= \Im\left(\mathfrak U_{\bwt, U}^{-} \times \cT_{\bwt, U} \times \mathfrak U_{\bwt, {\tt U}}^{+}   \longhookrightarrow \gG_{\bwt, U}\right)
\eeqa
in $\gG_{\bwt, U}$ which has the property that at each of the height $1$ primes, it restricts to the big cell in the usual Bruhat--Tits group schemes $\gG_{\Omega_{j}}$.

\item\label{step3}
%  \subsubsection*{\sc Step 3:
 \emph{Unipotent extension to $\spec(\Lambda)$ as closed subschemes of\, $\mathfrak Y_{\Lambda}$.} We next ensure that the group schemes $\mathfrak U_{\bwt, U}^{\pm}$ extend as subschemes of the schematic closure ${\mathfrak Y}_{\Lambda}$ to ensure that a {\em big cell} $\mathfrak C_{\bwt,\Lambda}$ can be openly embedded in ${\mathfrak Y}_{\Lambda}$. 

We do this in three substeps. 

\begin{enumerate}[wide,label={\emph{Step} 3.\arabic*.}, ref=3.\arabic*,itemsep=.5em]
\item\label{step31}
%  {\sc Step 3.1}:
We first extend the unipotent group schemes abstractly from $U$ to the whole of $\spec(\Lambda)$. For each $r \in \Phi$, consider the {\it fractional invertible ideal} of $\Lambda$, \textit{i.e.} a finitely generated projective sub $\Lambda$-module of $\tt F$ given by 
\begin{equation}
f(r):=z_{1}^{m_r(\Omega_{1})} \cdots z_{n}^{m_r(\Omega_{n})} \Lambda \subset \tt F.
\end{equation}
By \cite[Section~3.2.6]{bruhattits}, this ideal defines smooth $\Lambda$-group schemes $\mathfrak U_{r, \bwt, \Lambda}$ extending
the root groups $U_{r, {\tt F}}$ as well as the root groups $\mathfrak U_{r, \bwt, U}$ over $U$. 

\item\label{step32}
 % {\sc Step 3.2:}
  {\sf We now prove  that the group schemes $\mathfrak U_{r, \bwt, \Lambda}$  are closed subgroup schemes of the Abelian subgroup schemes $\mathfrak{B}_r$ (see \eqref{br}) of $\mathbb S_{\Lambda}$  and therefore closed subschemes of ${\mathfrak Y}_{\Lambda}$. We do so by showing in fact that they are {\em schematic closures} of the root groups $\mathfrak U_{r, \bwt, U}$ in $\mathfrak{B}_r$. Our method is to reduce verifications by Section~\ref{onWfromsga} to an inclusion of Lie algebra bundles on $\Lambda$. This check further reduces to $U$.} 

\begin{enumerate}[wide,label={\emph{Step} 3.2.\arabic*.}, ref=3.2.\arabic*,itemsep=.5em]
\item
 % {\sc Step 3.2.1:}
  By Section~\ref{onWfromsga}, the following group schemes arise from the $W$-construction of their Lie algebra bundles:
\begin{equation} \mathfrak U_{r, \bwt, U} \simeq W\left(\Lie\left(\mathfrak U_{r, \bwt, U}\right)\right), \quad \quad \mathfrak B_r \simeq W\left(\Lie\left(\mathfrak B_r\right)\right).
\end{equation}

Note that the root groups $\mathfrak U_{r, \bwt, U}$ are closed subgroup schemes (over $U$) of the restrictions $\mathfrak B_{r,U}$ to $U$. Moreover, the $\mathfrak B_{r}$ are themselves closed Abelian subgroup schemes of $\mathbb S_{\Lambda}$ (see \eqref{br}). Thus, by \cite[Expos\'e I, Proposition 4.6.6]{sga3} (recalled in Section~\ref{onWfromsga}), we conclude that the corresponding rank $1$ Lie algebra bundle $\Lie(\mathfrak U_{r, \bwt, U})$ is a direct summand of the Lie algebra bundle $\Lie(\mathfrak B_{r}|_{U})$.

\item
 % {\sc Step 3.2.2:}
  We claim that   $\Lie(\mathfrak U_{r, \bwt, U})$ extends to $\spec(\Lambda)$ as the line bundle $\mathcal{R}_{r}$ of \eqref{Rdecomp}. Indeed, since each $\mathcal{R}_{r}$ restricts to $\Lie(\mathfrak U_{r, \bwt, U})$ on $U$, it follows that by taking their reflexive closures inside $\mathcal{R}_{\bwt}$  (see Proposition~\ref{gpschLiestabomega}), each $\Lie(\mathfrak U_{r, \bwt, U})$ extends to $\spec(\Lambda)$ as the line bundle $\mathcal{R}_r$. This extension $\mathcal{R}_{r}$ is a summand of $\Lie(\mathfrak{B}_{r})$ because its restriction to $U$ has this property.

Moreover, these are extensions as Lie algebra subbundles of $\mathcal{R}_{\bwt}$ because  by the Hartogs lemma, the Lie bracket extends across codimension bigger than $2$.  

\item
 % {\sc Step 3.2.3:}
  By Step~\ref{step31}, the root group schemes $\mathfrak U_{r, \bwt, U}$  already have the extensions as unipotent group schemes given by  $\mathfrak U_{r, \bwt, \Lambda}$. Again, since the Lie algebra bundles $\mathcal{R}_{r}$ and $\Lie(\mathfrak U_{r, \bwt, \Lambda})$  
agree on $U$,  we have an isomorphism of Lie algebra bundles
\beqa\label{liealgisom}  \mathcal{R}_{r} \simeq  \Lie\left(\mathfrak U_{r, \bwt, \Lambda}\right).
\eeqa
This therefore identifies $\Lie(\mathfrak U_{r, \bwt, \Lambda})$ as a Lie subalgebra bundle of $\mathcal{R}_{\bwt}$ as well as a {\em direct summand} of $\Lie(\mathfrak{B}_{r})$.

\item
  %{\sc Step 3.2.4:}
  We observe that $\mathfrak U_{r, \bwt, \Lambda}$ arises from the functor $W$ construction of Section~\ref{onWfromsga}. More precisely, we have scheme-theoretic isomorphisms of vector group schemes
 \beqa\label{liealgisomandmore}
\mathfrak U_{r, \bwt, \Lambda} = W\left(\Lie\left(\mathfrak U_{r, \bwt, \Lambda}\right)\right).
\eeqa
\end{enumerate}
 
Since the $\Lie(\mathfrak U_{r, \bwt, \Lambda})$ are direct summands of $\Lie(\mathfrak{B}_{r})$, by Section~\ref{onWfromsga}  it follows that the $\mathfrak U_{r, \bwt, \Lambda}$ are closed subgroup schemes of $\mathfrak{B}_{r}$ and therefore of $\mathbb{S}_{\Lambda}$. It follows in particular that the inclusions $\mathfrak U_{r, \bwt, \Lambda} \subset {\mathfrak Y}_{\Lambda}$ are {\em closed embeddings},  as claimed. 

\item\label{step33}
  %{\sc Step 3.3:}
  We have arrived at the setting of \cite[Section~2.2.2]{bruhattits}. For $r \in \Phi^{\pm}$, we denote the scheme-theoretic product of group schemes $\mathfrak U_{r, \bwt, \Lambda}$ as the $\Lambda$-scheme $\mathfrak U_{\bwt, {\Lambda}}^{\pm}$. Since each $\mathfrak U_{r, \bwt, \Lambda}$ is the schematic closure of $\mathfrak U_{r, \bwt, U}$ in $\mathbb{S}_{\Lambda}$, it follows that the $\mathfrak U_{\bwt, {\Lambda}}^{\pm}$ are the {\em schematic closures} of $\mathfrak U_{\bwt, U}^{\pm}$ in $\mathbb{S}_{\Lambda}$.  The $\mathfrak U_{\bwt, U}^{\pm}$ are smooth unipotent subgroup schemes of $\mathbb S_{\Lambda}$ over $U$, and since the $\mathfrak U_{\bwt, {\Lambda}}^{\pm}$ are flat, by \cite[Section~1.2.7]{bruhattits}, it follows that the $\mathfrak U_{\bwt, {\Lambda}}^{\pm}$ are closed subgroup schemes of $\mathbb S_{\Lambda}$. Hence we get the smooth unipotent group scheme structure on the $\mathfrak U_{\bwt, {\Lambda}}^{\pm}$ as well as realize them as closed subschemes of ${\mathfrak Y}_{\Lambda}$.

This completes the proofs of Steps~\ref{step33},~\ref{step32} and~\ref{step31} and, as a consequence, of Step~\ref{step3}.
\end{enumerate}

\item\label{step4}
%  \subsubsection*{\sc Step 4:
    \emph{Closed embedding of the big cell in $\mathfrak{Y}_{\Lambda}$.} We finally need to ensure that the open immersion $j_{U}\colon \mathfrak U_{\bwt, U}^{-} \times \cT_{\bwt, U} \times \mathfrak U_{\bwt, U}^{+}   \hookrightarrow \gG_{\bwt, U}$ extends as an open immersion $j_{\Lambda}\colon \mathfrak U_{\bwt, \Lambda}^{+} \times \cT_{\bwt, \Lambda} \times \mathfrak U_{\bwt, \Lambda}^{-}   \hookrightarrow {\mathfrak Y}_{\Lambda}$.

More precisely, we show that the schematic closure of the image of the multiplication map of $\mathfrak U_{\bwt, \Lambda}^{-} \times \cT_{\bwt, \Lambda} \times \mathfrak U_{\bwt, \Lambda}^{+}   $ in $\mathbb S_{\Lambda}$ is actually $\mathfrak Y_{\Lambda}$ and then derive the open immersion property as a consequence.

By using the discussion preceding \eqref{bigbigcell} and taking closures term by term, we see that  $\mathfrak U_{\bwt, \Lambda}^{-} \times \cT_{\bwt, \Lambda} \times \mathfrak U_{\bwt, \Lambda}^{+}   $ is a flat closed subscheme of $\mathfrak B^{-} \times \cT \times \mathfrak B^{+}$, which is itself an open subscheme of $\mathbb S_{\Lambda}$. In other words, the product defines an isomorphism of schemes of $\mathfrak U_{\bwt, \Lambda}^{-} \times \cT_{\bwt, \Lambda} \times \mathfrak U_{\bwt, \Lambda}^{+} $ onto a flat closed subscheme  
\beqa\label{mathfrakc}
\mathfrak C_{{\bwt,\Lambda}} \subset \Im\left(\mathfrak B^{-} \times \cT \times \mathfrak B^{+}\right)
\eeqa 
and, moreover, $\Im\big(\mathfrak B^{-} \times \cT \times \mathfrak B^{+}\big)$ is  an open subscheme of  $\mathbb S_{\Lambda}$.

Let $\bar{\mathfrak C}$ be the schematic closure of $\mathfrak C_{\bwt,\Lambda}$ in $\mathbb S_{\Lambda}$. We {\em claim} that $\bar{\mathfrak C} = {\mathfrak Y}_{\Lambda}$. By what we have checked above, it is clear that $\bar{\mathfrak C}$ is contained in $ {\mathfrak Y}_{\Lambda}$.

Conversely, the big cell $ \mathfrak C_{\bwt, {\tt F}}$ is an open dense subscheme of   $\gG_{\bwt, {\tt F}}$, hence $\bar{\mathfrak C} \supset \gG_{\bwt, {\tt F}}$, implying that $\bar{\mathfrak C} $ contains ${\mathfrak Y}_{\Lambda}$. Hence, by applying \cite[Section~1.2.6]{bruhattits}, we conclude that $\mathfrak C_{\bwt,\Lambda}$ is an {\em open dense subscheme} of  its schematic closure which has been identified with ${\mathfrak Y}_{\Lambda}$.
\end{enumerate}
This completes the proof of Claim~\ref{claim1} and hence of Part~\eqref{part1} of the argument.

\medskip
\noindent{\em Part \eqref{part2} of the proof}.
We now state and give the proof of   Bruhat and Tits \cite[Proposition 2.2.10]{bruhattits}, minimally tweaked and tailored for our purposes.  The result is referred to in a couple of places in their work (see \cite[Section~3.9.4]{bruhattits}), where they talk of the possibility of an extension of their theory to {\tt 2BT}-group schemes; but for us it is indispensable. In \cite[Proposition 2.2.10]{bruhattits}, the role of the group scheme $\mathfrak{GL}(M)$  is played by the group scheme $\mathbb S_{\bwt,\Lambda}$ (see Definition~\ref{shell}) in our situation. Hence we almost ``reproduce'' their  rather long and technical proof in its entirety, with suitable changes.
 
\begin{prop}[\textit{cf.} \protect{\cite[Proposition 2.2.10]{bruhattits}}] \label{2.2.10}
  Let $\mathfrak H$ be the image of\, $\mathfrak C_{\bwt,\Lambda} \times \mathfrak C_{\bwt,\Lambda}$ in $\mathbb S_{\Lambda}$ by the product morphism $\pi$ of\, $\mathbb S_{\Lambda}$. Then $\mathfrak H$ is open in 
${\mathfrak Y}_{\Lambda}$ and is a subgroup scheme of\, $\mathbb S_{\Lambda}$, with fibre $\gG_{\bwt, U}$ over $U$, flat,  of finite type, containing $\mathfrak U_{\bwt, \Lambda}^{-}$, $\cT_{\bwt, \Lambda}$ and $\mathfrak U_{\bwt, U}^{+}$ as closed subgroup schemes and containing $\mathfrak C_{\bwt,\Lambda}$ as an open subscheme \textup{(}{\em a big cell}\textup{)}. Furthermore, the group scheme $\mathfrak H$ is  a quasi-affine smooth group scheme over $\Lambda$.
\end{prop}

\begin{proof} For simplicity of notation, we will do away with all {\em multiple Greek letters}  in subscripts. 

As a preliminary step so as to follow the rest of the argument in \cite{bruhattits}, we need to check that $\mathfrak U^{-}_{\Lambda}\times\cT_{\Lambda}$ and $\cT_{\Lambda}\times\mathfrak U^{+}_{\Lambda}$ are group schemes.

For this we consider the morphism
\beqa
\mathfrak U^{-}_{\Lambda} \times \cT_{\Lambda} \lra \mathbb S_{\Lambda}
\eeqa
given by $(u,t) \mapsto t\cdot u\cdot t^{-1}$. By \cite[Section~4.4.19]{bruhattits}, this gives an action of $T_{U}$ on $\mathfrak U^{-}_{U}$, and hence over $U$, the image lands in $\mathfrak U^{-}_{U}$. Since $\mathfrak U^{-}_{\Lambda}$ is a closed subscheme of $\mathbb S_{\Lambda}$ and since $U$ contains all points of height at most~$1$, it follows that the morphism given by conjugation has its image in $\mathfrak U^{-}_{\Lambda}$. In other words, $\mathfrak U^{-}_{\Lambda}\times\cT_{\Lambda} = \cT_{\Lambda}\times\mathfrak U^{-}_{\Lambda}$ in $\mathbb S_{\Lambda}$; \textit{i.e.} it is a subgroup scheme; see \cite[Section~3.10, p.~166]{demgab}.
Similarly, $\cT_{\Lambda}\times\mathfrak U^{+}_{\Lambda}$ is also a group scheme.
\linebreak

As $\mathfrak C \times \mathfrak C$ is flat and $\pi(\mathfrak C_{U} \times \mathfrak C_{U}) \subset \gG_{U}$, by Lemma~\ref{[1.2.7, p.~18]bruhattits} (where  $\mathfrak Y$ plays the role of $Z$), we have the inclusion $\mathfrak H \subset 
\mathfrak Y$.

By what we have seen in part~\eqref{part1} above, $\mathfrak C$ is an open subset of $\mathfrak Y$. So the inverse image $\Gamma$ of $\mathfrak C$ by $\pi\colon\mathfrak C \times \mathfrak C \to \mathfrak Y$ is therefore an open subset of $\mathfrak C \times \mathfrak C$. Since $\mathfrak U^{-}_{\Lambda}\times \cT_{\Lambda} $ and $\cT_{\Lambda}\times \mathfrak U^{+}_{\Lambda}$ are groups, we get the inclusion
\beqa\label{one}
\Gamma \supset \left(\left(\mathfrak U^{-}_{\Lambda} \times \cT_{\Lambda} \times \{1\}\right) \times \left(\mathfrak U^{-}_{\Lambda} \times \cT_{\Lambda} \times \{1\}\right)\right) \cup \left(\left(\{1\} \times \cT_{\Lambda} \times \mathfrak U^{+}_{\Lambda}\right) \times \left(\{1\} \times \cT_{\Lambda} \times \mathfrak U^{+}_{\Lambda}\right)\right).
\eeqa

Let $p \in \spec(\Lambda)$ be a closed point, and let $k$ be the residue field of the local ring $\Lambda_{p}$. The fibre $\Gamma_{p}$ of $\Gamma$ is an open subset of $\mathfrak C_{p} \times \mathfrak C_{p}$, dense in $\mathfrak C_{p} \times \mathfrak C_{p}$: in fact, if $u,u' \in \mathfrak U^{-}_{p}$ and $z,z' \in \cT_{p}$, the set $V$ of $(v,v') \in \mathfrak U^{+}_{p} \times \mathfrak U^{+}_{p}$ such that $\big((u,z,v),(u',z',v')\big) \in \Gamma_{p}$ is an open subset of  $\mathfrak U^{+}_{p} \times \mathfrak U^{+}_{p}$ and is non-empty since $(1,1) \in V$ by \eqref{one}, therefore dense in $\mathfrak U^{+}_{p} \times \mathfrak U^{+}_{p}$ since $\mathfrak U^{+}_{p}$ is connected.

Let $H_{p}$ be the closure of $\mathfrak C_{p}$ in $\mathbb S_{p}$. As $\mathfrak C_{p}$ is open in $\mathfrak Y_{p}$, {\em a fortiori}, $\mathfrak C_{p}$ is open in $H_{p}$. Moreover, $\Gamma_{p}$ is dense in $H_{p} \times H_{p}$, and $\pi(H_{p} \times H_{p}) \subset H_{p}$ since by definition $\pi(\Gamma_{p}) \subset \mathfrak Y_{p}$. In other words, $H_{p}$ is a subgroup scheme of $\mathbb S_{p}$. As $\mathfrak C_{p}$ is open dense in $H_{p}$, we have $H_{p} = \mathfrak C_{p}\times \mathfrak C_{p}$, hence
\beqa\label{two}
H_{p} = \mathfrak H \cap \mathbb S_{p}.
\eeqa
We also have $H_{p} = \mathfrak C_{p}^{-1}\times\mathfrak C_{p} = \mathfrak U^{+}_{p} \cT_{p} \mathfrak U^{-}_{p}\times \mathfrak U^{-}_{p}\cT_{p} \mathfrak U^{+}_{p} = \mathfrak U^{+}_{p} \mathfrak C_{p}$,
and hence 
\beqa\label{three}
H_{p} = \bigcup_{a \in \mathfrak U^{+}_{p}(k)} a\times \mathfrak C_{p}.
\eeqa
We now show that $\mathfrak H$ {\em is open in $\mathfrak Y$ and is flat.} Firstly, $\mathfrak H$ is a {\em constructible} subset of $\mathfrak Y$. To show that it is open, (since $\Lambda$ is Noetherian), it suffices to show that if $x \in \mathfrak H$ and $y \in \mathfrak Y$ are such that $x \in \overline{\{y\}}$, then $y \in \mathfrak H$ (see \cite[Corollaire~3.4, p.~76]{demgab}).

Let $p$ be the projection of $x$ in $\spec(\Lambda)$. Since the smooth group scheme $\mathbb S_{\Lambda}$  is obtained by a Weil restriction of scalars (see the paragraph just before Definition~\ref{shell}), it behaves well under base change. Let $j\colon\spec(\Lambda_{p}) \to \spec(\Lambda)$ be the induced morphism. As the local ring $\Lambda_{p}$ is flat over $\Lambda$, the schematic closures of $\gG_{U}$ and $\mathfrak C \times_{\Lambda} \Lambda_{p}$ in $j^{*}(\mathbb S_{\Lambda})$ coincide with $j^{*}(\mathfrak Y)$   (see \cite[Section~2.5, Proposition 2]{blr}, \cite[Section~4.14, p.~56]{demgab}). Hence we may reduce to the case when $\Lambda$ is a local ring with maximal ideal $p$.

Let $B_{x}$ be the local ring at $x$ in $\mathbb S_{\Lambda}$, and let $\tilde{B}$ be the strict henselization of $B_{x}$. Let $x' \in \spec(\tilde{B})$ be the closed point, and let $\alpha\colon  \spec(\tilde{B}) \to \spec(B_{x})$ be the canonical morphism: we have $\alpha(x') = x$. For us the residue field at $x$ is algebraically closed and hence by \eqref{three}, there exists an $a \in \mathfrak U^{+}_{p}(k)$ such that $x \in a\cdot \mathfrak C_{p}$. Since $\mathfrak U^{+}_{\Lambda}$ is smooth, by Hensel's lemma, there exists a section $\hat{a} \in \mathfrak U^{+}_{\Lambda}(\tilde{B})$ of $\mathfrak U^{+}_{\Lambda}$ above $\spec(\tilde{B})$ such that $\hat{a}(x') = a$.

We now do a  base change by $\Lambda \to \tilde{B}$. Let $\mathfrak X$ (resp.\ $X$) be a $\Lambda$-scheme (resp.\ a $U$-scheme, where $U$ has been base changed to the local ring at $p$), set $\tilde{\mathfrak X} := \mathfrak X \times_{\spec(\Lambda)} \spec(\tilde{B})$ (resp.\ $\tilde{X} := X \times_{U} \tilde{U}$, with $\tilde{U} = U \times_{\spec(\Lambda)} \spec(\tilde{B})$). If $f\colon \mathfrak X \to \mathfrak Z$ is a morphism of $\Lambda$-schemes, set $\tilde{f} = f \times \id\colon \tilde{\mathfrak X} \to \tilde{\mathfrak Z}$. Finally, let $\beta\colon\tilde{\mathbb S} \to \mathbb S_{\Lambda}$ be the first projection. As $\tilde{B}$ is flat over $B_{x}$ and $B_{x}$ is flat over $\Lambda$ (as $\mathbb S_{\Lambda}$ is flat), $\tilde{B}$ is flat over $\Lambda$, and  we see as above that $\tilde{\mathfrak Y}$ is the schematic closure of both $\tilde{\mathfrak C}$ and $\tilde{\gG}_{U}$ in $\tilde{\mathbb S}$. On the other hand, the left-translation by the section $\hat{a}$ is an automorphism of the scheme $\tilde{\mathbb S}$, and the restriction $\hat{a}_{U}$ of $\hat{a}$ to $\tilde{U}$ belongs to $\mathfrak U^{+}_{p}(\tilde{U}) \subset \tilde{\gG}_{U}(\tilde{U})$; thus we have $\hat{a}\cdot\tilde{\gG}_{U} = \hat{a}_{U}\cdot\tilde{\gG}_{U} = \tilde{\gG}_{U}$ and $\hat{a}\cdot\tilde{\mathfrak Y} = \tilde{\mathfrak Y}$. As $\tilde{\mathfrak C}$ is an open in $\tilde{\mathfrak Y}$, it follows that $\hat{a}\cdot\tilde{\mathfrak C}$ is {\em open in} $\tilde{\mathfrak Y}$.

As  $\tilde{B}$ is faithfully flat over $B_{x}$, the morphism $\alpha$ is surjective. We have $x \in \overline{\{y\}}$, thus $y \in \spec(B_{x})$ and there exists a $y' \in \spec(\tilde{B})$ such that $\alpha(y') = y$. Moreover, $x' \in \overline{\{y'\}}$ since the closed point of $\spec(\tilde{B})$ belongs to the closure of any other point of $\spec(\Lambda)$. Set $x'' := (\alpha \times \id)(x') = (x,x')$ and $y'':= (\alpha \times \id)(y') = (y,y')$; then $x'' \in \hat{a}\cdot \tilde{\mathfrak C}$, $y'' \in \tilde{\mathfrak Y}$ and $x'' \in \overline{\{y''\}}$. As $\hat{a}\cdot\tilde{\mathfrak C}$ is open in $\tilde{\mathfrak Y}$, it follows that
\beqa
y'' \in \hat{a}\cdot\tilde{\mathfrak C} = \tilde{\pi} \circ \left(\hat{a} \times \id\right) \left(\spec\left(\tilde{B} \times_{\tilde{B}} \tilde{\mathfrak C}\right)\right) \subset \tilde{\pi}\left(\tilde{\mathfrak C} \times_{\tilde{B}} \tilde{\mathfrak C}\right).
\eeqa
We then deduce that
\beqa
y = \beta\left(y''\right) \in \beta \circ \tilde{\pi}\left(\tilde{\mathfrak C} \times_{\tilde{B}} \tilde{\mathfrak C}\right) = \pi \circ \left(\beta \times \beta\right)\left(\tilde{\mathfrak C} \times_{\tilde{B}} \tilde{\mathfrak C}\right) \subset \pi\left(\mathfrak C \times \mathfrak C\right) = \mathfrak H 
\eeqa 
and hence that $\mathfrak H$ is {\em open in} $\mathfrak Y$.

Moreover,  $\hat{a}\cdot\tilde{\mathfrak C} $ is flat over $\spec(\tilde{B})$. As it is a neighbourhood of $x$ in $\mathfrak H_{\tilde{B}}$, it follows that the local ring at $x$ in $\mathfrak H_{\tilde{B}}$ is flat over $\spec(\tilde{B})$, and by faithfully flat descent, the local ring of $x$ in $\mathfrak H$ is flat over $\Lambda$. Hence $\mathfrak H$  {\em is flat}.

It follows that $\mathfrak H$, being an open subscheme of $\mathfrak Y$, is a subscheme of $\mathbb S_{\Lambda}$ (which is no reason for it to be affine!). It is a subgroup scheme of $\mathbb S_{\Lambda}$. In fact, the restriction to $\mathfrak H$ of the product $\pi$ of $\mathbb S_{\Lambda}$ sends $\mathfrak H_{U} \times \mathfrak H_{U}$ into $\gG_{U}$, and hence, since $\mathfrak H$ is flat, it factorises by a morphism $\mathfrak H \times \mathfrak H \to \mathfrak Y$; see Lemma~\ref{[1.2.7, p.~18]bruhattits}. But set-theoretically, we have $\pi(\mathfrak H \times \mathfrak H) \subset \mathfrak H$ since the fibres of $\mathfrak H$ are subgroups; see \eqref{three}. Since $\mathfrak H$ is an open in $\mathfrak Y$, it implies that the morphism $\pi\colon \mathfrak H \times \mathfrak H \to \mathfrak Y$ factorises through $\mathfrak H$. A similar argument for the {\em inverse} morphism shows that $\mathfrak H$ is a subgroup scheme.

Now $\mathfrak H$ is an open subscheme of $\mathfrak Y$ which is closed in the {\tt schematic hull} $\mathbb S_{\Lambda}$. Further, by  \eqref{mathfraks}, $\mathbb S_{\Lambda}$ is affine and hence $\mathfrak H$ is {\em quasi-affine}. That it has the remaining stated properties is easy to check. \end{proof}

\brem The quasi-affineness can also be deduced as follows. Since the fibre of $\mathfrak H$ over $U$ is $\gG_{U}$, which is affine, and since $\mathfrak H$, being the image of $\mathfrak C \times \mathfrak C$, has connected fibres, by \cite[Corollary VII.2.2]{raynaud}, we conclude that $\mathfrak H$ is {\em quasi-affine}. \erem

\brem\label{2.2.10p} When the characteristic of the residue field is zero, the group scheme $\mathfrak H$ is {\em flat} and hence smooth. Over residue fields of positive characteristics, the smoothness of $\mathfrak H$ is ensured by the smoothness of the torus $\cT_{\bwt, \Lambda}$ since the unipotent groups $\mathfrak U_{\bwt, \Lambda}^{\pm}$ are smooth. By the argument in the paragraph following \eqref{bigcell 2}, the torus on the big open subset extends as a torus $\cT_{\bwt, \Lambda}$ and is therefore smooth  even in the general case. \erem

To complete the proof of Theorem~\ref{n-regiogrpsch},  we now set 
\beqa
\gG_{\bwt} := {\mathfrak H}.
\eeqa 
This group scheme clearly satisfies the properties in the statement of Theorem~\ref{multipargrpsch}. Indeed, in general it is quasi-affine, its sections can be computed by restriction to $U$, \eqref{mt4} and \eqref{mt5} follow from part~\eqref{part1}  of the proof, and  the proof of the characterization and uniqueness of the group scheme follows exactly as in Theorem~\ref{n-paragrpsch}.
\end{proof}

\subsection{Proof of Theorem~\ref{thedescription}}

\begin{proof} Since $\gG_{\bwt}$ is quasi-affine by Theorem~\ref{n-regiogrpsch}, as in Remark~\ref{diagonalremark} we can identify the restriction of $\gG_{\bwt}$ to a subdiagonal (see Theorem~\ref{thedescription}\eqref{grat1-b}) with the group scheme $\gG_{f_{(i_{1}, \ldots, i_{m})}}$, where $f_{i_{\ell}} := f_{\Omega_{i_{\ell}}}$. The description of the closed fibre follows from \cite[Proposition~4.6.5 and Corollaire~4.6.12]{bruhattits} applied to the concave function  $ f_{(i_{1}, \ldots, i_{m})} = \sum_{\ell = 1}^{m} f_{i_{\ell}} $.\end{proof}

\section{Schematization of {\tt n}-bounded groups \texorpdfstring{${\tt P}_{\bf f}$}{P\textunderscore f}}
\label{schematization3}
The notation in this section is as in the previous one; namely, $\bf A \simeq \mathbb A^{n}$ and ${\bf A_{0}} \subset {\bf A}$ is the complement of the coordinate hyperplanes. In this section we return to the general case of an almost simple group~$G$. We complete the picture by proving Theorem~\ref{multipargrpsch} for $n$-concave functions of {\tt type III}. More precisely, we construct the group scheme over $\bf A$, given the data of $n$-concave functions ${\bf f} = \{f_{j}\}$ on the root system $\Phi$ of $G$, along with an assignment of the {\tt BT}-group schemes $\{\gG_{f_{j}}\}_{j =1}^{n}$ at the generic points of each of the coordinate hyperplanes $H_{j} \subset \bf A$.

Although the strategy of the proof is broadly a reduction to the proof in the $\SL(n)_{\bwt}$ case, in the present case of $(G,{\bf f})$, there are two crucial differences. First, unlike in the case of the concave function $f_{\Omega}$, we do not have any way of assigning points in the affine apartment (see Example~\ref{gratifying} for an example) to get a product {\tt n}-parahoric group scheme ${\mathbb S}_{\Lambda}$ (see Section~\ref{schematichullslambda}) for $G$ which gave us an ambient group scheme to carry out the strategy. And second, as it will become clearer by the discussion below, the condition on the characteristic of the residue field needs to be weakened to $p > {\tt h}_{G}$.

As a first step, we recall some of the main results of \cite{bruhattits}. These will play the key role in this section. In the setting of  \cite[Section~3]{bruhattits}, $K$ is an infinite field containing a subring $A$ having $K$ as its field of fractions. The aim of that section (see \cite[Section~3.1.3]{bruhattits}) is to show that if $G$ has a {\em donn\'ee radicielle sch\'ematique} $\mathcal{D}=(\mathcal{Z}, (\mathcal{U}_{a})_{a \in \Phi})$ (see \cite[D\'efinition~3.1.1]{bruhattits}) or a {\em schematic root datum}, then there exists an $A$-group scheme $\gG$ extending $G \times \spec(K)$  with big cell given by $\mathcal{D}$. This is done by showing that for any faithful representation $G \hookrightarrow \GL(V)$, there is an $A$-lattice $M \subset V_{K}$ such that the {\tt BT}-group scheme $\gG$ is the schematic closure of $G_{K}$ in $\mathfrak{GL}(M)$. In \cite[Section~4.5.4]{bruhattits}, when $K$ is equipped with a valuation, it is shown that given any concave function $f$ on $\Phi$, one can construct a {\em schematic root datum} for $G$. We apply these results to our context as follows. Since $G$ is almost simple, we may assume that we have a factorization $G \hookrightarrow \SL(V)$. Hence, $\gG_{f}$ is the schematic closure of $G_{K}$ in the schematic closure of $\SL(V)_{K}$ in $\mathfrak{GL}(M)$. By choosing an $\cO$-basis for $M$, it is easy to see that we get a concave function $h$ on the root system $\Phi_{V}$ of $\SL(V)$ such that the schematic closure of $\SL(V)_{K}$ in $\mathfrak{GL}(M)$ is precisely the {\tt BT}-group scheme $\SL(V)_{h}$.

By Proposition~\ref{btremarque} and the remarks preceding it, we may assume that $h$ is an {\em optimal concave function} on $\Phi_{V}$, and hence it is of the form $h_{\Omega}$ for a bounded subset $\Omega$ in the affine apartment of $\SL(V)$.

Thus, we are reduced to the following situation: the {\tt BT}-group scheme $\gG_{f}$ is the schematic closure of $G_{K}$ in $\SL(V)_{\Omega}$. This can be carried out for each of the $\{f_{j}\}_{j =1}^{n}$, and we get bounded subsets $\bwt = (\Omega_{1}, \ldots, \Omega_{n})$ in the affine apartment of $\SL(V)$.

By Theorem~\ref{n-regiogrpsch}, taking $\bwt = (\Omega_{1}, \ldots, \Omega_{n})$ and the group $\SL(V)$, we obtain an {\tt nBT}-group scheme $\mathfrak{SL}_{\bwt}$ on~$\bf A$. Although only quasi-affine, this group scheme will play the role of the schematic hull $\mathbb{S}_{\Lambda}$ (see \eqref{mathfraks}) of Section~\ref{schematization2}. Further, on $U$ whose complement in $\bf A$ has codimension at least~$2$, we have an inclusion of group schemes 
\beqa
\gG_{\bf f, U} \subset \mathfrak{SL}_{\bwt, U}.
\eeqa

The proof strategy for proving  Theorem~\ref{n-regiogrpsch}  now almost applies. Let us explain the difference. In that proof, it was shown in Step~\ref{step32} that the root groups $\mathfrak U_{r, {\bf \Omega}, \Lambda}$ are the schematic closures of $\mathfrak U_{r, {\bf \Omega}, U}$ in $\mathbb S_{\Lambda}$ by the intermediary of {\em vector subgroup} $\mathfrak B_r$ of $\mathfrak S_{\Lambda}$. This vector group exists because of the specific {\em diagonal embedding} into the ambient group scheme $\mathbb S_{\Lambda}$. In the present setting, however, our chosen representation, namely $G \subset \SL(V)$, is arbitrary and cannot be expected to have the desirable property that for every root $r \in \Phi$, we have a vector subgroup scheme of $\SL(n)_{\bwt}$ whose restriction to $U$ contains $\mathfrak U_{r, {\bf f}, U}$. Consequently, to replace the arguments of Step~\ref{step32}, we recover the group law of unipotent group schemes by the Baker--Campbell--Hausdorff formula. We do this as follows. 
   If $\{c_{\alpha}\}$ denote the coefficients of the higher root, recall that the Coxeter number  ${\tt h}_{G}$  of $G$ is defined as $1 + \sum c_{\alpha}$. 
Let $\mathfrak{N}$ be a nilpotent Lie algebra bundle on $\mathbf{A}$.
By \cite[Section 2.1]{serre} (see also \cite[Section~2.2]{epiga} and other references there), {\em when the characteristic satisfies $p > {\tt h}_{G}$}, the Baker--Campbell--Hausdorff  group law equips the scheme $W(\mathfrak{N})$ with the structure of a {\em smooth, affine, unipotent group scheme with connected fibres} together with an isomorphism $\exp\colon{\mathfrak N} \to \Lie(W(\mathfrak{N}))$ of nilpotent Lie algebra bundles. Conversely, if $\mathfrak U$ is an affine, smooth, unipotent group scheme with connected fibres, we have a unique isomorphism of unipotent group schemes $\exp\colon W(\Lie(\mathfrak U)) \to \mathfrak U$.  Note that the Baker--Campbell--Hausdorff formula realizes the possible non-Abelian unipotent group scheme structures on~$ \mathfrak U$. In the absence of vector group schemes $\mathfrak B_{r}$, this is the key substitute. Let us now go over arguments that replace Step~\ref{step32}.

As in Step~\ref{step32}, we need to prove  that the extended root group schemes $\mathfrak U_{r, \bf f, \bf A}$  are closed subgroup schemes of $\mathfrak{SL}_{\bwt}$ on ${\bf A}$. We begin by noting that over the open subset $U \subset {\bf A}$, the root group schemes $\mathfrak U_{r, \bf f, U}$ are closed subgroup schemes of the restriction to $U$ of one of the big cells, say $\mathfrak B^{+}$ of $\mathfrak{SL}_{\bwt}$. By Theorem~\ref{n-regiogrpsch}, $\mathfrak B^{+}$ of $\mathfrak{SL}_{\bwt}$ is affine.  We set $\mathfrak N^{\pm} := \Lie(\mathfrak B^{\pm})$. This is a nilpotent Lie algebra bundle on $\bf A$. When  $p > {\tt h}_{G}$, we have the natural isomorphism
\beqa
\exp\colon W\left(\mathfrak N^{\pm}\right) \simeq \mathfrak B^{\pm}
\eeqa
of affine, smooth, connected unipotent group schemes.  Since $\mathfrak U_{r, \bf f, U} \subset \mathfrak B^{+}_{U}$ is a closed subgroup scheme, we may view this inclusion in terms of the ``W''-functor as follows: 
\beqa
W\left(\Lie\left(\mathfrak U_{r, \bf f, U}\right)\right) \subset W\left(\Lie\left(\mathfrak B^{+}_{U}\right)\right).
\eeqa
By this closed embedding of affine $U$-schemes,  we get the Lie algebra subbundle $\Lie(\mathfrak U_{r, \bf f, U}) \subset \Lie(\mathfrak B^{+}_{U})$ structure. This is in fact a {\em direct summand} of locally free sheaves on $U$. Now taking the {\em reflexive closure} $\overline{\Lie(\mathfrak U_{r, \bf f, U})}$ of the rank $1$ Lie algebra bundle $\Lie(\mathfrak U_{r, \bf f, U})$, we see that $\overline{\Lie(\mathfrak U_{r, \bf f, U})}$ is a locally free {\em direct summand} of $\mathfrak N^{+}$ over $\bf A$. 

As in Step~\ref{step32}, the reflexive closure $\overline{\Lie(\mathfrak U_{r, \bf f, U})}$ gets identified with the Lie algebra line bundle $\mathcal{R}_{r}$. It is therefore a {\em direct summand} of $\mathfrak N^{+}$ over ${\bf A}$. This implies that as affine schemes, we have a closed embedding
\beqa
W\left(\mathcal{R}_{r}\right) \subset W\left(\mathfrak N^{+}\right) \simeq \mathfrak B^{+}. 
\eeqa
Setting  $\mathfrak U_{r, \bf f, \bf A}:= W(\mathcal{R}_{r})$, it follows that the scheme underlying $\mathfrak U_{r, \bf f, \bf A}$ is a closed subscheme of $\mathfrak B^{+}$. By the Baker--Campbell--Hausdorff formula, this inclusion is in fact one of closed subgroup schemes over $\bf A$ and hence of $\mathfrak{SL}_{\bwt}$ on $\bf A$. This completes the proof of the key difference we mentioned above. In particular, at this stage the {\em big cell} $\mathfrak B_{\Phi}$ can also easily be seen to exist with the standard properties exactly as in the proof of Theorem~\ref{n-regiogrpsch}.

With these changes in place, the second part of the proof of  Theorem~\ref{n-regiogrpsch} may be applied. We get the required {\tt nBT}-group scheme $\gG_{\bf f, \bf A}$ associated to ${\bf f}$ on ${\bf A}$. It is open inside a closed subscheme, say $\mathcal Z$, of $\mathfrak{SL}_{\bwt}$. Recall that $\mathfrak{SL}_{\bwt}$ is open inside a closed subscheme, say $C$, of $\mathbb{S}_{\Lambda}$. Let $\overline{\mathcal Z}$ denote the closure of $\mathcal Z$ in $C$. The intersection of $\overline{\mathcal Z}$ with $\mathfrak{SL}_{\bwt}$ in $C$ is $\mathcal Z$. It follows that $\mathcal Z$ is open in $\overline{\mathcal Z}$. Therefore, $\gG_{\bf f, \bf A}$ is open inside $\overline{\mathcal Z}$. Now $\overline{\mathcal Z}$ is affine because $\mathbb{S}_{\Lambda}$ is affine. We conclude that $\gG_{\bf f, \bf A}$ is quasi-affine (this can also be checked using \cite[Corollary VII.2.2]{raynaud}). In conclusion, when $\charr(k)$ is at least $ h_{G}$ and satisfies the hypothesis of Section~\ref{charassum}, we have constructed  $\gG_{\bf f, \bf A}$ as a {\em smooth, quasi-affine} group scheme with all the properties stated in Theorem~\ref{multipargrpsch} for $n$-concave functions of {\tt type III}.

\section{Schematization of {\tt n}-Moy--Prasad groups} \label{schematizationmp}
In this section we discuss Moy--Prasad groups over complete discrete valuation rings in the equal-characteristic case. The aim is to recover these anew  in the spirit of \cite{base}. This then gives the generalization to the higher-dimensional bases. 

\subsection{Tameness Assumptions} \label{tamp} In this section the given data $(\bvt, {\bf e}') := \big((\theta_{1}, e'_{1}), \ldots, (\theta_{n}, e'_{n})\big)$ will consist of an $n$-tuple of rational one-parameter subgroups $\theta_{i}$ and positive rational numbers $e_{i}'$. We suppose that there exists a positive number $d$ coprime to the characteristic of the residue field such that $d \theta_{i}$ is a one-parameter subgroups and the $d e_{i}'$ are positive numbers.

\subsection{The unit group}
Let $\cO := k\llbracket z \rrbracket$ and $K$ be its quotient field. Let $B :=  k\llbracket \omega \rrbracket$ and $L$ be its quotient field, where $\omega^{d} = z$. We view $N := \spec(B)$ as a Galois cover over $D := \spec(\cO)$ with $\Gal(N/D) = {\sf\mu_{d}}$. Let $p\colon N \to D$ be the quotient morphism. Let $\rho\colon {\sf\mu_{d}} \ra G$ be a representation. Consider the twisted ${\sf\mu_{d}}$-action on $N \times G$ defined by 
\begin{equation} \label{twistedactionontrivbundle}
\gamma (u,g) =(\gamma u, \rho(\gamma) g) \quad \text{for }  u \in N, \gamma \in {\sf\mu_{d}} .
\end{equation}
This defines a $({\sf\mu_{d}},G)$-bundle on $N$. {\sl We will denote it by $E$}.

 Define the {\sl unit group} of $E$ as an automorphisms of the $({\sf\mu_{d}},G)$-torsor $E$:
\begin{equation} \label{ugp}
{\tt U}_{E} := \Aut_{({\sf\mu_{d}},G)}(E).
\end{equation}

Let $e \geq 0$ be a non-negative integer. Consider the thickened fibre $j_{e}\colon E|_{\Spec(B/{\omega}^{e}B)} \hookrightarrow E$. Define ${\tt U}^{(e)}_{E}$ to be
\begin{equation}\label{mpg}
{\tt U}^{(e)}_{E} := \ker\left(\Aut_{({\sf\mu_{d}},G)}(E) \lra \Aut_{({\sf\mu_{d}},G)}\left(j_{e}^{*}(E)\right)\right), 
\end{equation}
\textit{i.e.}~the subgroup of $({\sf\mu_{d}},G)$-automorphisms of $E$ over $N$ 
inducing identity on the restriction of  $E$ to $\Spec(B/{\omega}^{e}B)$. For $\underline{e=0}$, this is just the unit group ${\tt U}_{E}$ of $E$.

Consider the adjoint group scheme of $E$
\begin{equation} \label{adjointgroupscheme}
\gG_{E} \simeq G \times N, 
\end{equation}
which is the constant group scheme together with a $\Gamma$-action.

It is well known (\textit{cf.} \cite[Section~2.4, p.~234]{yu})
that for each $e \geq 0$, by a {\sl dilatation} of $\gG_{E}$, there exists a smooth group scheme $\gG_{E}^{(e)}$ such that 
\begin{equation} \label{diladjointgroupscheme} \gG_{E}^{(e)}(B) = \Gamma\left(\omega^{e},\gG_{E}\right), 
\end{equation}
where $\Gamma(\omega^{e},\gG_{E})$ is the {\sl $\supth{e}$ congruence subgroup} defined by
$$\Gamma(\omega^{e},\gG_{E}) = \ker\left(\gG_{E}(B) \lra \gG_{E}\left(B/{\omega}^{e}B\right)\right).$$

\subsection{The representation \texorpdfstring{$\boldsymbol{\rho\colon\Gamma \rightarrow T}$}{rho: Gamma to T} and rational one-parameter subgroups \texorpdfstring{$\boldsymbol{\theta}$}{theta} of \texorpdfstring{$\boldsymbol{T}$}{T}}
Following \cite[Lemma 2.2.8]{base}, make the identification
\begin{equation}\label{elemlemma}
  \Hom( {\sf\mu_{d}},T)= \frac{Y(T)}{d Y(T)} \simeq \frac{1}{d} Y(T) \mod(Y(T)).
\end{equation}

We will interpret this identification explicitly to suit our specific needs. Given a one-parameter subgroup
 \begin{equation}\label{deltaasrat}
\Delta\colon \GG_m    \lra T,
\end{equation}
by projecting it onto $\frac{Y(T)}{d Y(T)}$ and using  \eqref{elemlemma}, we obtain from this $1$-PS $\Delta$
a rational one-parameter subgroup $\theta_{\Delta} \in \frac{1}{d} Y(T) \mod(Y(T))$. By \eqref{elemlemma} again, this gives a homomorphism 
\begin{equation} \label{rhodelta}
\rho_{\Delta}\colon {\sf\mu_{d}} \lra T.
\end{equation}

In the reverse construction, given a homomorphism  $\rho \in \Hom({\sf\mu_{d}},T)$, we choose    a rational one-parameter subgroup $\theta_{\rho}$ determined by the class of $\rho$ (we will need to make a choice here). Then $\theta_{\rho}$ defines a one-parameter subgroup 
\begin{equation}\label{deltaandtheta}
\Delta = d \theta_{\rho}\colon \GG_m \lra T.
\end{equation} 

\brem\label{uptoyt} The point $\theta_{\rho}$ determined above by the choice of $\Delta$ firstly gives a point of the Weyl alcove $\mathbf{a}_0$ (see Section~\ref{liedata}). This point gives a precise {\em weight} as obtained by the process of invariant push-forward in \cite{base}. More precisely, the point to be noted is that this construction $\rho \mapsto \theta_{\rho}$ gives a point of the {\em fundamental domain} for the action of $Y(T)$ and not just the affine Weyl group. Further, the proofs in Section~\ref{isogpsch} and that of Theorem~\ref{mixexcharbs} show that \cite[Theorem 2.3.1 and Proposition 5.1.2]{base} hold more generally for $\theta_{\rho}$ in the fundamental domain of $Y(T)$ and not just the affine Weyl group. \erem 

\subsection{Brief background on Moy--Prasad groups}
Moy--Prasad groups have been defined in \cite[Section~2.6]{moyprasad}.
Using a Chevalley basis of $G$,  below we give an equivalent reformulation which is suitable for our setup.
 Let $\cA = \Hom(\GG_m,T) \otimes \RR$ denote the apartment corresponding to $T$. For a point $\theta \in \cA$, recall that the parahoric group $\cP_{\theta} \subset G(K)$ is defined as
\begin{equation} \label{parahoric}
{\cP}_{\theta} = \left\langle T(\cO),\, U_r\left( z^{- \lfloor r(\theta) \rfloor}\cO\right), r \in \Phi \right\rangle,  
\end{equation}
and for a non-negative integer $e' \geq 0$, the Moy--Prasad group  is defined as
\begin{equation} \label{moyprasad}
G_{\theta,e'} := \left\langle T\left(1+ z^{\lceil e' \rceil} \cO\right),\, U_r( z^{-\lfloor r(\theta)-e' \rfloor} \cO), r \in \Phi \right\rangle. 
\end{equation}

While considering parahoric or Moy--Prasad groups, it suffices to assume that $\theta \in \cA_T$ is a rational point and $e' \in \QQ_{\geq 0}$. This follows because for any $(r,n) \in \Phi \times \ZZ$, the functional $\cA_T \times \RR_{\geq 0} \ra \RR$ defined by $(x,e') \ms r(x) +n - e'$ is defined over rationals.

\subsection{Invariant direct image of dilatations of reductive group schemes} \label{isogpsch}
The proof of the following theorem follows  the proofs of  \cite[Theorem 2.3.1 and Proposition 5.1.2]{base} closely.

\begin{thm} \label{mpasunit}
Let $\gG_{\theta,e'}$ be the Bruhat--Tits group scheme satisfying $\gG_{\theta,e'}(\cO) = G_{\theta,e'}$ \textup{(}see~\eqref{moyprasad}\textup{)}. Then \textup{(}see~\eqref{diladjointgroupscheme}\textup{)} we have a natural isomorphism of group schemes 
\begin{equation}\label{moypasweilrest}
p_{*}^{\sf\mu_{d}}\left(\gG_{E}^{(e)}\right) \simeq \gG_{\theta_\rho,e'}. 
\end{equation}
\end{thm}

\begin{proof} Note that the $\cO$-valued points are enough to determine the group scheme since $\gG_{\theta,e'}$ is \'etoff\'e in the sense of \cite[D\'efinition~1.7.1]{bruhattits}.

We begin with a few observations. 
Now an element $\phi_0 \in {\tt U}_{E}$ corresponds to a ${\sf\mu_{d}}$-equivariant automorphism of the $({\sf\mu_{d}},G)$-bundle, \textit{i.e.} $\phi_0\colon N \times G \ra N \times G$ together with the equivariance. In other words, $\phi_0 \in {\tt U}^{(e)}_{E}$ corresponds to a regular morphism $\phi\colon N \ra G$ (\textit{i.e.} $\phi \in G(B)$), given by
\begin{equation}
\phi_0(u,g)=(u, \phi(u) g),
\end{equation}
which satisfies the following conditions: 
\begin{enumerate}
\item ${\sf\mu_{d}}$-equivariance, \textit{i.e.} 
  \begin{equation}\label{gammaphi}
    \phi(\gamma u)=\rho(\gamma) \phi(u) \rho(\gamma)^{-1}, \quad u \in N, \gamma \in {\sf\mu_{d}}; 
\end{equation}
\item when $\underline{e \geq 1}$, the map $\phi$ restricted to $\Spec(B/{\omega}^{e}B)$ is the identity, \textit{i.e.} 
\begin{equation}\label{2conds}
\phi \in \ker\left(G(B) \lra G(B/{\omega}^{e}B)\right).
\end{equation} 
\end{enumerate}
Thus, as in \cite[Equation~(2.3.1.5)]{base}, we can identify ${\tt U}_{E}$ with $\phi$ satisfying \eqref{gammaphi}; let  ${\tt U}^{(e)}_{E} \subset {\tt U}_{E}$ be the subgroup of ${\tt U}_{E}$ given by the above two conditions.

From here onward, we follow closely the proof of \cite[Theorem 2.3.1]{base}.  

For each $\phi \in {\tt U}_{E}$, define 
\begin{equation}\label{deltabusiness}
\psi(\omega) = \Delta^{-1}\cdot\phi\cdot\Delta.
\end{equation} 
Then $\psi(\gamma\cdot u) = \psi(u)$ for all $\gamma \in {\sf\mu_{d}}, u \in N$. It therefore descends to a rational function $\tilde{\psi}\colon\spec(K) \to G$ such that 
\begin{equation} \label{descends}
\tilde{\psi}(z) = \psi(\omega).
\end{equation}

Now for $\underline{e \geq 1}$, by \eqref{2conds}, we have $\phi(o) = 1$, which lies in $G^\circ$, where $G^{\circ} \subset G$ is the big cell in $G$. Since $N$ is a  formal neighbourhood of the origin $o$, it follows that $\phi(N) \subset G^{\circ}$. 

For $\underline{e = 0}$, in the following we will assume that we work with a $\phi$ such that $\phi(N) \subset G^\circ$. The set of such $\phi$ may be called the big cell of ${\tt U}_{E}$; we will denote it by the symbol $\mathfrak U$.

The condition $\phi(N) \subset G^\circ$ allows us to describe $\phi\colon N \ra G^\circ$  uniquely as a tuple of the following morphisms: 
$$
\left\{ \{\phi_r\colon N \lra U_r \}_{r \in \Phi},\; \phi_{t}\colon N \lra T \right\},$$
where $U_r \subset G$ is the root subgroup corresponding to $r \in \Phi$. Let us view $\phi_r$ and $\phi_{t}$ as elements of $G(B) \subset G(L)$.

We may write 
\begin{equation}
\phi_r(\omega)= \psi_r(\omega)\omega^{r(\Delta)} =  \tilde{\psi}_r(z) \omega^{r(\Delta)}.
\end{equation}
Since $\phi_r(\omega)$ has a zero of order at least $e$ in $\omega$,  
$$\frac{\phi_r(\omega)}{\omega^e}= \tilde{\psi}_r(z) \omega^{r(\Delta)-e},$$
and the right side defines a  regular function in $\omega$; \textit{i.e.} it extends to a regular function on $N$. So $\tilde{\psi_r}(z)$ can have a pole in $\omega$ of order at most $r(\Delta)-e$. Thus in $z$ it can have a pole of order at most
$$
\left[ \frac{r(\Delta) - e}{d}\right]; 
$$
\textit{i.e.} $[r(\frac{\Delta}{d}) - \frac{e}{d}] \stackrel{\eqref{deltaandtheta}}{=}[r(\theta_{\rho})- e']$, where $e' = \frac{e}{d}$. This forces the containment
\begin{equation}
\tilde{\psi_r}(z) \in U_r\left(z^{-\lfloor r(\theta_{\rho})-e' \rfloor} \cO\right),
\end{equation}
and hence $\tilde{\psi_r}$ belongs to $G_{\theta_{\rho},e'}$.
On the other hand, since $\phi_{t}(u) \in T$ for $u \in N$, it follows that $\phi_{t}(\gamma. u)= \rho(\gamma) \phi_{t}(u) \rho(\gamma)^{-1}= \phi_{t}(u)$. Hence $\phi_{t}\colon N \ra T$ is ${\sf\mu_{d}}$-invariant. Also, by \eqref{deltabusiness}, it follows that $\phi_{t} = \psi_{t}$. So it descends to a regular function on $D$, and this function is $\tilde{\psi}_{t}\colon D \ra T$.  Now $\phi_{t}(u) -1$ has a zero of order at least $e$ in $\omega$. So by the definition of $e'$, we see that $\tilde{\psi_{t}}-1$ has a zero in $z$ of order at least
$$
\left\lceil \frac{e}{d} \right\rceil= \left\lceil e' \right\rceil.
$$
Hence $\tilde{\psi}_{t}$ also belongs to $G_{\theta_{\rho},e'}$. 

Since $\psi$ and $\phi$ are related by a conjugation in $G(L)$ by $\Delta(\omega)$,  $\psi$ like $\phi$ can also be described by $\psi_r$ and $\psi_{t}$ uniquely. So the same holds for $\tilde{\psi}$. Hence since $\tilde{\psi}_r$ and $\tilde{\psi}_{t}$ belong to $G_{\theta_{\rho},e'}$, therefore so does $\tilde{\psi} \in G_{\theta_{\rho},e'}$.

For $\underline{e \geq 1}$,\footnote{For $\underline{e = 0}$, we get an isomorphism 
\begin{equation} \label{bigcelliso}
j\colon \mathfrak U \lra \mathfrak B
\end{equation}
of the big cell $\mathfrak U$ of ${\tt U}_{E}$ with that of $G_{\theta_{\rho}}$, which we will denote by $\mathfrak B$.} we get the following isomorphism of groups in the spirit of \cite[Theorem 2.3.1]{base}: 
\begin{equation}
{\tt U}^{(e)}_{E} \simeq  G_{\theta_{\rho},e'}.
\end{equation}

Combining these with \eqref{gammaphi} and \eqref{2conds}, we deduce that for $\underline{e \geq 1}$, 
\begin{equation*}\pushQED{\qed}
p_{*}^{\sf\mu_{d}}\left(\gG_{E}^{(e)}\right)(A) =
 \gG_{E}^{(e)}(B)^{\sf\mu_{d}} = {\tt U}^{(e)}_{E} \simeq G_{\theta_{\rho},e'} \simeq \gG_{\theta_\rho,e'}(A). \qedhere \popQED
	\end{equation*}
\renewcommand{\qed}{}   
\end{proof}

\begin{rem}   Note that the existence and defining properties for {\tt BT}-group schemes which realize Moy--Prasad groups were constructed in \cite{yu}. They arise by enlarging the definition of concave functions and constructing {\tt BT} group schemes associated to them. We observe that the above realization of these group schemes as {\sl invariant direct images} can be interpreted as giving a  different construction of these group schemes, namely as invariant direct images of congruence subgroup schemes from ramified covers. We have done this in the simple case field of characteristic zero, and this can be extended to ``good'' characteristics for  the group $G$. \end{rem}

\subsection{The {\tt n}-Moy--Prasad-group scheme}
The proof of Theorem~\ref{n-paragrpsch} does not quite apply as it is. Unlike the scenario of Theorem~\ref{n-paragrpsch}, on the Kawamata cover we no longer have a constant group scheme with fibre $G$ but have  {\em congruence subgroup schemes}. 

\begin{thm}
We have an {\tt n-Moy--Prasad}-group scheme $\gG_{(\bvt, \bf e')}$ which is {\em smooth and affine} on $\spec(\cO)$ which is given by the data of $(\bvt, {\bf e}') := \big((\theta_{1}, e'_{1}), \ldots, (\theta_{n}, e'_{n})\big)$. It satisfies the properties in Theorem~\ref{multipargrpsch}.
\end{thm}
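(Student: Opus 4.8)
The plan is to realise the {\tt n-Moy-Prasad} group scheme as an \emph{invariant direct image of a congruence subgroup scheme} on a global Kawamata cover, thereby reducing its construction to the type I case of Theorem \ref{multipargrpsch} (namely Theorem \ref{n-paragrpsch}) augmented by the dilatation procedure of Theorem \ref{mpasunit}. First I would fix, for each $i$, an integer $d_{_i}$ large enough that $d_{_i}\theta_{_i} \in Y(T)$ and $d_{_i}e'_{_i} \in \mathbb Z_{_{\geq 0}}$, and set $e_{_i} := d_{_i}e'_{_i}$. Exactly as in the proof of Theorem \ref{Artin-Weil-Kawamata}, the weights $\{d_{_i}\}$ determine a global Kawamata cover $p : Z \to \mathbf A$ with Galois group $\Gamma$, ramified over the coordinate hyperplanes $\{H_{_i}\}$, whose isotropy at the generic point $\zeta_{_i}$ of $H_{_i}$ is the cyclic group ${\sf \mu_{_{d_{_i}}}}$ realising the local ramified cover of the one-dimensional Moy-Prasad picture. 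Over $Z$ I would produce, as in the proof of Theorem \ref{Artin-Weil-Kawamata} (cf. \eqref{frombalses} and \eqref{justnow}), the $\Gamma$-equivariant $G$-torsor $E$, with associated smooth group scheme $\cH = E(G)$, whose invariant direct image recovers the parahoric $\gG_{_{\theta_{_i}}}$ at each $\zeta_{_i}$.

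The new ingredient, flagged in the remark preceding the statement, is that the constant group scheme $G \times Z$ must be replaced by a \emph{congruence subgroup scheme}. Let $\tilde H_{_i} := p^{-1}(H_{_i})_{_{\mathrm{red}}}$ be the reduced ramification divisor over $H_{_i}$; these form a normal crossing configuration on the smooth scheme $Z$. I would iteratively apply the dilatation construction (\cite[2.8, page 7]{yu}) along each $\tilde H_{_i}$ to order $e_{_i}$, obtaining a smooth $\Gamma$-equivariant group scheme $\cH^{^{(\mathbf e)}}$ on $Z$ whose restriction to a formal neighbourhood of $\zeta_{_i}$ is the congruence subgroup scheme $\gG_{_E}^{^{(e_{_i})}}$ of the one-dimensional theory. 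I would then define
\[
\gG_{_{(\bvt, \mathbf e')}} := p_{_*}^{^{\Gamma}}\big(\cH^{^{(\mathbf e)}}\big).
\]
Smoothness is preserved by dilatation in characteristic zero, and the invariant direct image of a smooth (resp. affine) scheme under the finite Galois cover $p$ is again smooth (resp. affine) by the linear reductivity of $\Gamma$ in characteristic zero, giving at once that $\gG_{_{(\bvt, \mathbf e')}}$ is \emph{smooth and affine} --- the same outcome as in the type I case.

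To verify the remaining assertions I would argue as in Theorem \ref{n-paragrpsch}. Working first over a big open $U \subset \mathbf A$ (complement of codimension $\geq 2$) and extending across codimension $\geq 2$ via reflexivity as in Theorem \ref{Artin-Weil-Kawamata}, the identification of $\cO_{_n}$-valued sections with the {\tt n-Moy-Prasad} group follows by intersecting the local groups at the height one primes, where Theorem \ref{mpasunit} gives $\gG_{_{(\bvt, \mathbf e')}}|_{_{\spec A_{_i}}} \simeq \gG_{_{\theta_{_i}, e'_{_i}}}$; this simultaneously yields the analogue of Theorem \ref{thedescription} at points of higher depth. The Lie algebra statement follows from Lemma \ref{Weilrestriction} and \ref{weilresaspushforward}, since $\text{Lie}\big(p_{_*}^{^{\Gamma}}(\cH^{^{(\mathbf e)}})\big) \simeq p_{_*}^{^{\Gamma}}\big(\text{Lie}(\cH^{^{(\mathbf e)}})\big)$ recovers the Moy-Prasad Lie algebra bundle $\cR^{^{(\mathbf e)}}$, and the gross cell and its characterisation are then obtained exactly as in Proposition \ref{bigcell1} and Proposition \ref{tauto}.

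The hard part will be the higher-dimensional dilatation step. One must check that the dilatations along the distinct components $\tilde H_{_i}$ of the normal crossing divisor commute, so that $\cH^{^{(\mathbf e)}}$ is well defined independently of the order of dilatation; that each dilatation preserves the $\Gamma$-equivariant structure and smoothness along the crossings; and that the resulting local fibres at $\zeta_{_i}$ match $\gG_{_E}^{^{(e_{_i})}}$ precisely. Once this local compatibility is in place, the affineness of the invariant direct image and the gross cell structure follow formally, so the \emph{dilatation-along-a-normal-crossing-divisor} bookkeeping is the genuine technical obstacle.
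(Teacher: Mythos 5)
Your proposal follows essentially the same route as the paper: a Kawamata cover realising the {\tt n}-parahoric group scheme as an invariant direct image of a group scheme $\mathcal H$ with fibres $G$, followed by iterated dilatation of $\mathcal H$ along the reduced ramification divisors and a second invariant direct image to produce the smooth affine {\tt n-Moy-Prasad} group scheme. The ``dilatation-along-a-normal-crossing-divisor'' bookkeeping you flag as the genuine remaining obstacle (order-independence, preservation of equivariance and smoothness, matching of local fibres) is precisely what the paper disposes of by invoking the main result of \cite{mayeux}.
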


\begin{proof} For the given data $(\bvt, {\bf e}') := \big((\theta_{1}, e'_{1}), \ldots, (\theta_{n}, e'_{n})\big)$, under the assumptions of Section~\ref{tamp}, setting $\bvt = (\theta_{1}, \ldots, \theta_{n})$, we have a Kawamata cover $q\colon Z \to {\bf A}$ which recovers the {\tt n}-parahoric group scheme $\gG_{\bvt}$ on ${\bf A}$ as the invariant push-forward $q^{\Gamma}_{*}(\mathcal H)$, where $\mathcal H$ is a group scheme on $Z$ with fibres $G$. Recall that $q$ is ramified with varying ramification indices $d_{i}$. In fact, by our tameness hypothesis, the ramification indices $d_i$ are such that $e_i:=d_i e_i'$ are positive integers which are coprime to the residue characteristic. Without loss of generality, we may suppose that the $e_i$ are {\em increasing with $i$}.

Set $e_0=0$.  
Now for $j$ going from $1$ to $n$, we use the main result in \cite{mayeux} to construct an iterated dilatation of $\mathcal H$ along each component $q^{*}(\cup_{i \geq j}D_{i})_{\red}$ of the reduced divisor in $Z$ thickened by $e_j-e_{j-1}$.  We get a new smooth, affine group scheme $\tilde{\mathcal H}$ on $Z$. By the universal property of dilatation, it follows that $q^{\Gamma}_{*}(\tilde{\mathcal H})$ is the required {\tt n-Moy--Prasad}-group scheme on ${\bf A}$. It is smooth and affine since $\tilde{\mathcal H}$ is so and because  the invariant direct image functor preserves these properties.
\end{proof}

\section{Schematization in the mixed-characteristic case}\label{mixedstuff}

Let $(\cO, \mathfrak m)$ be a complete discrete valuation ring with residue field $k$ of characteristic $p$ and uniformizer $z$. We want to extend all of the results of the previous sections to the mixed-characteristics case. This will be possible under the assumptions of Section~\ref{charassum}.

\subsection{Extension of \texorpdfstring{\cite[Theorem 2.3.1 and Proposition 5.1.2]{base}}{[BS15, Theorem 2.3.1 and Proposition 5.1.2]}} \label{bstomixed}
Let $\theta \in \cA_T$ be a rational point satisfying the assumptions of Section~\ref{charassum}. As in Section~\ref{charassum}, we assume that $d$ is coprime to $\charr(k)$ such that $d \theta \in Y(T)$. 
Consider the Eisenstein polynomial $
f(x)=x^d - z \in A[x]$.
Let $B:=\cO[x]/f(x)$. Thus $z$ is totally ramified in $B$, and the residue field $k_{B}$ has $d$ distinct roots of unity. Let $\varpi$ be a uniformizer in $B$ such that $\varpi^d=z$. Let $\zeta$ be a primitive $\supth{d}$ root of unity in $B$.

Let $K$ be its quotient field of $\cO$ and $L$  the  quotient field of $B$. We view $N := \spec(B)$ as a Galois cover over $D := \spec(\cO)$ with $\Gal(N/D) = {\sf\mu_{d}}$. Let $p\colon N \to D$ be the quotient morphism. Let $E$ be the trivial principal $G$-bundle $N \times G$ together with the twisted $\mu_d$-action as in (\ref{twistedactionontrivbundle}). This makes $E$ into a $(\mu_d,G)$-bundle on $N$.  Let $\gG_{E}$ be its adjoint group scheme (see~\eqref{adjointgroupscheme}). Let $\gG_{\theta_\rho}$ be the Bruhat--Tits group scheme over $\cO$ associated to it as in Theorem~\ref{mpasunit}. Analogously to \eqref{parahoricwithgen}, it will be convenient to write 
\begin{equation} \label{Pthetagen}
{\cP}_{\theta} = \left\langle T(\cO),\, U_r\left(z^{m_r(\theta)} \cO\right), r \in \Phi \right\rangle
\end{equation}
 in both the equal- and mixed-characteristic cases. This abuse of notation in the mixed-characteristic case should not cause any confusion because of the structure theorem for complete discrete valuation rings. 

Following the discussion in Section~\ref{schematizationmp} under the tameness assumptions, let us prove the required ge\-ne\-ra\-li\-zation  of \cite[Theorem 2.3.1]{base}, which is Theorem~\ref{mpasunit} for $e'=0$.

\begin{thm} \label{mixexcharbs}
We have an isomorphism of group schemes 
\begin{equation}\label{mixedcharweilrest}
p_{*}^{\sf\mu_{d}}\left(\gG_{E}^{}\right) \simeq \gG_{\theta_\rho}. 
\end{equation}
\end{thm}

\begin{proof} Let us state at the outset that by \cite[Section~1, p.~13]{bruhattits}, the references we will cite are valid for an infinite field $K$ containing a subring $A$ such that $K$ is its quotient field. 

The $\cO$-group scheme $\gG_{\theta_\rho}$ is connected and smooth because it is a parahoric group scheme. Weil restriction of scalars followed by taking invariants preserves connectedness. Further, Weil restrictions of smooth group schemes are smooth. Moreover, under the tameness assumptions, it is easily checked  that the fixed-point subscheme is also smooth. Thus, the $\cO$-group scheme $p_{*}^{\sf\mu_{d}}(\gG_{E}^{})$ is connected and smooth. As in~\eqref{gammaphi}, the  restriction of $p_{*}^{\sf\mu_{d}}(\gG_{E}^{})$ to $\spec(K)$ consists of $\phi\colon \spec(L) \rightarrow G$ satisfying the 
${\sf\mu_{d}}$-equivariance; \textit{i.e.} 
\begin{equation} \phi(\gamma u)=\rho(\gamma) \phi(u) \rho(\gamma)^{-1}, \quad u \in \spec(L), \gamma \in {\sf\mu_{d}}.
\end{equation}

Conjugate $\phi$ by $\Delta$ (see~\eqref{deltaandtheta}), as is also done in~\eqref{deltabusiness}; the conjugate $\Delta^{-1} \phi \Delta$ descends to a rational function $\tilde{\psi}\colon \spec(K) \rightarrow G$ as in~\eqref{descends}. This defines on $\spec(K)$ a morphism
\begin{equation}
j_K\colon p_{*}^{\sf\mu_{d}}\left(\gG_{E}^{}\right)|_K \lra  \gG_{\theta_\rho}|_K. 
\end{equation}
By evaluating on sections, it may be verified that $j_K$ is an isomorphism. 

Recall that in the proof of Theorem~\ref{mpasunit}, we defined the big cell $\mathfrak U$ of the unit group ${\tt U}_{E}$ (see~\eqref{ugp}) to be those $\phi\colon \spec(B) \rightarrow G$ whose image lies in the big cell $G^\circ$ of $G$. This defines the big cell of $p_{*}^{\sf\mu_{d}}(\gG_{E}^{})$. We will denote it as $\mathfrak U$ as well. Taking $\mathfrak U$ to be the open neighbourhood of the identity of $p_{*}^{\sf\mu_{d}}(\gG_{E}^{})$, we exhibited in~\eqref{bigcelliso} the isomorphism $j\colon \mathfrak U \rightarrow \mathfrak B$ onto the big cell $\mathfrak B$ of $ \gG_{\theta_\rho}$. 

Now by \cite[Section~1.2.13]{bruhattits}, $j_K$ extends uniquely to an $\cO$-morphism 
$$
j\colon p_{*}^{\sf\mu_{d}}\left(\gG_{E}^{}\right) \lra \gG_{\theta_\rho}
$$
of group schemes. Since $j$ restrict to an isomorphism from $\mathfrak U$ to $\mathfrak B$ between big cells, by \cite[Section~1.2.14]{bruhattits}, it follows that $j$ is an isomorphism of $p_{*}^{\sf\mu_{d}}(\gG_{E}^{})$ onto an open subgroup scheme of $\gG_{\theta_\rho}$. Thus $j$ is an isomorphism because $\gG_{\theta_\rho}$ is connected.
\end{proof}

Theorem~\ref{mixexcharbs} has two consequences. Firstly, \cite[Proposition 5.1.2]{base}, recalled in \eqref{frombalses}, generalizes following \cite[Sections~4 and 5]{base}.
Secondly,  Theorem~\ref{mpasunit} extends without any extra conditions imposed by $e'$. This is so because in the proof of Theorem~\ref{mpasunit}, we only need to have distinct roots of unity corresponding to the point $\theta$ in the apartment. 

\begin{rem} We observe that for the results of Sections~\ref{schematization1},~\ref{schematization2} and~\ref{schematization3}, the assumption of algebraic closedness of the residue field $k$ is not essential. The existence of a Kawamata cover (see Section~\ref{kawa}) is needed only for affine spaces, and this can be achieved by hand under the assumptions of Section~\ref{charassum}. We however always assume that $k$ is {\em perfect} so as to be able to apply \cite{bruhattits}. One needs to have extensions $k_{B}$ as above with the desired roots of unity for the above extensions of \cite[Theorem 2.3.1 and Proposition 5.1.2]{base}. This is possible under the assumptions of Section~\ref{charassum}.
\end{rem}

\subsection{The extension of Theorem~\ref{n-paragrpsch}}\label{extension1}
In this subsection, we  denote the uniformizer $z$ of $\cO$ by $x_{0}$. Let $\mathbf{A}_{\cO}:=\mathbb{A}^{n}_{\cO}=\spec(\cO[x_{1},\ldots,x_{n}])$.
For $0 \leq i \leq n$, let $H_i$ be the coordinate hyperplanes defined by the vanishing of $x_{i}$, and let $\zeta_{i}$ be their generic points. 

\begin{enumerate}[leftmargin=*]
\item \label{a} If we let $(X,D) = ({\mathbf{A}_{\cO}}, H)$, then the analogue of Proposition~\ref{onliestr} holds without any change for an arbitrary {\tt (n+1)-concave}-function ${\bf f}$. {\em This statement holds for any characteristic of the residue field}. In the following we denote the Lie algebra bundle by $\mathcal{R}$.
\item Observe that since $\cO$ is assumed to be complete, the formal power series ring ${\cO} \llbracket x_{1},\ldots,x_{n} \rrbracket$ is the completion of $\cO[x_{1},\ldots,x_{n}]$ at $\mathfrak m[x_{1},\ldots,x_{n}]$.  We begin with a natural extension of the notion of an {\tt (n+1)-bounded group} associated to an {\tt (n+1)-concave function} ${\bf f}$ on the root system $\Phi$ of $G$ by defining
\begin{equation}\label{evenmoregen1}
{\cP}_{\bf f} := \left\langle T\left({\cO} \llbracket x_{1},\ldots,x_{n} \rrbracket\right),\, U_r\left( \prod_{0 \leq i \leq n} x_{i}^{f_{i}(r)} {\cO} \llbracket x_{1},\ldots,x_{n} \rrbracket\right), r \in \Phi \right\rangle.
\end{equation}

\item Suppose we are given an ordered $(n+1)$-tuple $(\Sigma_{0},\ldots, \Sigma_{n})$ of facets of the apartment $\cA_T$. Then by Section~\ref{charassum}, we may choose a point $\bvt:=(\theta_{0},\ldots, \theta_{n})$  in $(\Sigma_{0},\ldots, \Sigma_{n})$ such that if $d_{i}$ is the minimal integer such that $d_{i} \theta_{i}$ lies in $Y(T)$, then each $ d_{i}$ is coprime to $\charr(k)$ and $k$ contains primitive $\supth{d_{i}}$ roots of unity. More generally, suppose we are given a point $\bvt:=(\theta_{0},\ldots, \theta_{n}) \in \cA_T^{n}$ satisfying the assumptions in Section~\ref{charassum}. Let $\tilde{\cO}$ be a complete discrete valuation ring containing $\mathcal{O}$ where $x_{0}$ ramifies to order a multiple of $d_{0}$ and that contains the primitive $\supth{d_{i}}$ roots of unity for $0 \leq i \leq n$.

\item Let $X' \subset \mathbf{A}_{\cO}$ denote the open subset of height $1$ prime ideals as in the paragraph after \eqref{invdi1}. Taking ${\bf f}$ to be the concave function determined by $\theta$ in item~\eqref{a}, let $\cR'$ denote the restriction of $\cR$ to $X'$.

\item We are now in the setting of Theorem~\ref{Artin-Weil-Kawamata} with the only difference that $\mathbf{A}_{\cO}$ replaces $\mathbb{A}^{n+1}_{k}$.

\item Let $\bf{B}_{\tilde{\cO}}:= \spec(\tilde{\cO}[y_{1},\ldots,y_{n}])$, where $y_{i}^{d_i}=x_{i}$. Let $q\colon \mathbf{B}_{\tilde{\cO}} \rightarrow \mathbf{A}_{\cO}$ be the natural projection map with Galois group $\Gamma$. This plays the role of an explicit Kawamata covering.
We set $\mathbf{A}_{0}:=\mathbf{A}_{\cO} \setminus H$,  $\mathbf{B}_{0}:=q^{-1}(\mathbf{A}_{0})$, $Z' :=q^{-1}(X')$ and $\tilde{X}_{i}:= q^{-1}(X_{i})$. Let $q_{i}\colon\tilde{X}_{i} \rightarrow X_{i}$ be the restriction of $q$. Let $\Gamma_{i}$ denote the Galois group of $q_{i}$. We are now in the setting of Section~\ref{restriction}. 

\item By applying Section~\ref{bstomixed} to the uniformizer $x_{0}$ and the generalization of \cite[Proposition 5.1.2]{base} for the rest of the height $1$ primes given by the variables $x_{j}$, we get a smooth, affine group scheme $\cH'$ on $Z'$ such that the Weil restrictions gives $q^{\Gamma}_{*}(\cH') = \gG'$ on $X'$.

\item Let $V' = \Lie(\cH')$ be the Lie algebra bundle on $Z'$ as in the discussion after \eqref{gunja}. Then, $V'$ is a Lie algebra bundle with fibre type $\mathfrak g$. Furthermore, the Cartan decomposition on $\cR'$ lifts to give an equivariant Cartan decomposition of the Lie algebra bundle $V'$ on $Z'$. Hence its reflexive closure $V$ also gets an equivariant structure and is also locally free. 

\item As in the proof of Theorem~\ref{Artin-Weil-Kawamata}, now that we have the Lie algebra bundle $V$ with fibre $\mathfrak g$, we get a group scheme $\cH$ on $\mathbf{B}_{\tilde{\cO}} $ and, by taking invariant direct images,
the group scheme $\gG := q^{\Gamma}_{*}(\cH)$ extending $\gG'$ to the whole of $\mathbf{A}_{\cO}$. This proves the extension of Theorem~\ref{n-paragrpsch}  in the setting of mixed characteristics with the tameness assumptions.
\item We also note that as in Proposition~\ref{bigcell1}, the big cell structure also descends to give one for $\gG$ on the whole of $\mathbf{A}_{\cO}$.
\item Finally, we note that the arguments in Theorem~\ref{n-paragrpsch} for the uniqueness of the group scheme carry over to the mixed-characteristic situation as well.
\end{enumerate}

\subsection{The main theorem for concave functions}
By Section~\ref{extension1}\eqref{a}, 
we see that there is a Lie algebra bundle on $(\mathbf{A}_{\cO},H)$ associated to an {\tt (n+1)-concave} function on $\Phi$. The results in Section~\ref{schematization2} now work under the tameness assumptions, see Section~\ref{tameness}, or more generally under the assumptions on $\charr(k)$ of Section~\ref{charassum}. We therefore get the extension of Theorem~\ref{multipargrpsch} to $(\mathbf{A}_{\cO},H)$ (see Remark~\ref{2.2.10p}). Note that this is for the groups $G = \SL(n), G_{2}, F_{4}, E_{6}$.  

For the rest of the groups, by the arguments in Section~\ref{schematization3}, we need a faithful representation $G \hra \GL(V)$ to start with, and so if we choose a faithful representation of minimal dimension $m(G)$, then for these groups, we need to choose $p$ to be coprime to $m(G)$ and bigger than the Coxeter number ${\tt h}_{G}$ of $G$ (see Section~\ref{schematization3}). The numbers $m(G)$ are known and can be obtained from the dimension formulae of the fundamental representations. They are as follows: $m(E_{6}) = 27$, $m(E_{7}) = 56$ and for $B_{n}$, $C_{n}$ and $D_{n}$, $m(G)$ is $2n+1$, $2n$ and $2n$, respectively. In summary, we have completed the proof of Theorem~\ref{multipargrpschmixed}.

\subsection{The example of a  {\tt 2BT}-group scheme in the work of Pappas and Zhu \texorpdfstring{\cite{pz}}{[PZ13]}}\label{pappas}
As we mentioned in the introduction, an example of a  {\tt 2BT}-group scheme comes up in \cite[Theorem~4.1]{pz}.  

There are broadly two parts to \cite[Theorem 4.1]{pz}, the first being the existence of the {\tt 2BT}-group scheme on $\mathbb A^{1}_{\cO}$ and second a proof that the resulting group scheme is {\em affine}. Although  \cite[Theorem 4.1]{pz} states the case of a parahoric group scheme along one axis, the proof considers the more general case of the $2$-concave function ${\bf f} = (0, f)$, where $f$ is a concave function on the root system $\Phi$. The proof of the affineness follows ideas from \cite{yu}. We are unable to comprehend it   when $f$ is given by a non-hyperspecial point of the apartment and $G$ is split.

We first briefly describe the setting in \cite[Section 4]{pz} so as to place their result in our context.  We will then indicate how this result can be derived as a special case of our results. We note however that the setting in \cite{pz} is for the general case of a quasi-split group $G$, while we consider only the split case. 

In the notation of~Section~\ref{extension1} above, the setting in \cite{pz} is for the case $n = 1$, \textit{i.e.} when $x_{1} = u$. The axes we consider correspond to setting $x_0$ and $x_1$ as zero (here by $x_0=0$ we mean going modulo the uniformizer), but Pappas--Zhu consider the axes given by setting $u$ as the uniformizer $\varpi$ and  by setting $\varpi=0$. The result \cite[Theorem 4.1]{pz} is stated for the case when the assignment along the $x_0$-axis is the split group (see \cite[Section~4.2.2(a)]{pz}),  while the assignment in the diagonal $(\varpi=u)$-direction is given by a parahoric group associated to a point $\theta$ of the affine apartment. Thus, in our setting this case corresponds to the $2$-concave function which is ${\bf f} = (0, f)$, where $f$ is a concave function $f_{\theta}$ on the root system $\Phi$  along the $u$-direction and the $0$-concave function gives the split group scheme along the $\varpi$-direction. Observe that this gives the concave function $f=0+f$ in the $(u=\varpi)$-direction.

The key point is that only {\em a single} concave function $f$ is involved. By  \cite[Section~4.5.4]{bruhattits}, one obtains a  schematic root datum coming from $f$. We now work in the setting of \cite[Section~3.9.4]{bruhattits}. Once a faithful representation $G \hookrightarrow \GL(V)$ is fixed, the schematic root datum arising out of the choice of $f$ entails the choice of a lattice $M \subset V_{K}$. The arguments in \cite[Section~3.9.4]{bruhattits} furnishes the group scheme $\mathcal G_{\theta_{0},f}$ (see  \cite[p.~175, second paragraph]{pz}). Thus, the {\tt 2BT}-group scheme over $\mathbb A^{1}_{\cO}$ which comes up in \cite{pz} is obtained as a direct consequence of \cite[Section~3.9.4]{bruhattits}. However, the method of construction of \cite[Section~3.9.4 and Proposition~2.2.10]{bruhattits} gives {\em a priori} only a {\em quasi-affine group scheme}. So the burden of \cite[Section 4]{pz} lies in proving the {\em affineness} of the group scheme $\mathcal G_{\theta_{0},f}$.  We emphasize that all this is for a single concave function.

We now compare this with the results of the present paper. Firstly, \cite[Theorem 4.1]{pz} as stated is for the $2$-concave function $(0,f_{\theta})$, and this is a special case of Theorem~\ref{n-paragrpsch} as described in Section~\ref{extension1}. Thus we derive directly both the {\em existence and affineness} of the required group scheme  and in the more general context of an $(n+1)$-concave function $\bf f$ over $\mathbb A^{n}_{\cO}$ which comes by prescribing $n+1$ points in the affine apartment. We note however that the methods in the present paper force certain mild tameness assumptions. 

\begin{part}{Some applications in the \texorpdfstring{$\boldsymbol{\charr(k)=0}$}{char(k)=0} equicharacteristic~case}
\end{part}

\section{{\tt BT}-group schemes on wonderful embeddings in \texorpdfstring{$\boldsymbol{\charr(k)=0}$}{char(k)=0}}\label{app1}

\subsection{The wonderful compactification \texorpdfstring{$\bf X$}{X}} \label{earlyloghomo}
Let ${\bf X} := \ol{G_{\ad}}$ be the wonderful compactification of $G_{\ad}$. 
Let $\{ D_{\alpha} \mid \alpha \in S \}$ denote the irreducible smooth {\em divisors} of ${\bf X}$.  Let $D:= \cup_{\alpha \in S} D_\alpha$. Then ${\bf X} \setminus G_{\ad} = D$.   The pair $({\bf X},D)$ is the primary example of a  $(G_{\ad} \times G_{\ad})$-homogenous pair; see  \cite{brion}.
Let $P_{I}$ be the standard parabolic subgroup defined by subsets $I \subset S$, the notation being such that the Levi subgroup $L_{I,\ad}$ containing $T_{\ad}$ has root system with basis $S \setminus I$. 
Recall that the $(G_{\ad} \times G_{\ad})$-orbits in ${\bf X}$ are indexed by subsets $I \subset S$ and have the following description:
$$Z_{I} = \left(G_{\ad} \times G_{\ad} 
\right) \times _{P_{I} \times P^-_{I}} L_{I,\ad}.$$
Then by \cite[Proposition A1]{brion1}, each $Z_{I}$ contains a unique base point $z_{I}$ such that $(B \times B^-)\cdot z_{I}$ is dense in $Z_{I}$ and there is a $1$-PS $\lambda$ of $T$ satisfying  $P_{I} = P(\lambda)$ and $\lim_{t \to 0} \lambda(t) = z_{I}$. The closures of these $\lambda$ define curves $C_{I} \subset {\bf X}$ which meet the strata $Z_{I}$ transversally at $z_{I}$. In particular, if $I = \{\alpha\}$ is a singleton, then the divisor $D_{\alpha}$ is the orbit closure $\bar{Z}_{I}$ and the $1$-PS can be taken to be the fundamental co-weight $\omega^{\vee}_\alpha$. The closure of the $1$-PS $\omega^{\vee}_\alpha\colon\mathbb G_{m} \to G_{\ad}$ defines the curve $C_{\alpha} \subset {\bf X}$ transversal to the divisor $D_{\alpha}$ at the point $z_{\alpha}$. For a non-empty $I \subset S$, the $\lambda$ defining 
$C_I$ can be taken to be $\sum_{\alpha \in I} \omega^{\vee}_{\alpha}$.

\subsection{The \texorpdfstring{$\ell{\tt BT}$}{lBT}-group scheme \texorpdfstring{${\mathfrak G}_{\bf X}^{\varpi}$}{G\textunderscore X\textasciicircum varpi} on $\bf X$} 
  
Before stating  the main result of this section,  we make a few remarks which might help the reader. Let ${\bf Y_{0}} \subset {\bf Y}$ the affine toric varieties associated to the Weyl chamber and the fan of Weyl chambers. The basic underlying principle in these constructions
is that the combinatorial data encoded in the triple consisting of the Weyl chamber, the fan of Weyl chambers and the Tits building is geometrically replicated by the inclusion ${\bf Y_{0}} \subset {\bf Y} \subset {\bf X}$. The  ``wonderful'' Bruhat--Tits group scheme which arises on ${\bf X}$ has its local Weyl-chamber model on the affine toric variety ${\bf Y}_{0}$. Indeed, in this case, the Kawamata cover is even explicit; see Remark~\ref{torickawamata}. In particular, the theorem below can be executed for ${\bf Y}_{0}$, but this will give the group scheme associated to the data coming from the Weyl chamber alone.
  
We now state a consequence of our main theorem in the context of the wonderful compactification of $G_{\ad}$ which is a primary example of a smooth variety with normal crossing divisors.

\begin{thm}\label{btoverx}
Let $\bvt_{} = (\theta_{\alpha} \mid \alpha \in S)$, where the $\theta_{\alpha} \in \mathcal{A}$ are arbitrary points in the apartment of\, $T$ satisfying conditions of Section~\ref{charassum}.  There exists an affine ``wonderful'' Bruhat--Tits group scheme ${\mathfrak G}_{\bf X}^{\varpi}$ on ${\bf X}$ satisfying the following classifying properties:
\begin{enumerate} 
\item At the height $1$ primes associated to the canonical boundary divisors associated to the maximal parabolic subgroups $P_{\alpha} \subset G$, $\alpha \in S$, the group scheme ${\mathfrak G}_{\bf X}^{\varpi}$ is isomorphic to the parahoric group scheme $\gG_{\theta_{\alpha}}$.

\item For $\emptyset \neq I \subset S$, the restriction of ${\mathfrak G}_{\bf X}^{\varpi}$ to  the formal neighbourhood $U_{I}$ of $z_{I}$ in the coordinate curves $C_{I}$, see Section~\ref{earlyloghomo}, is isomorphic to the Bruhat--Tits  group scheme  associated to the concave function ${\cm_{{\bvt}_{I}}}\colon \Phi \rightarrow \mathbb{Z}$ given by $r \ms \sum_{\alpha \in I} m_r(\theta_{\alpha})$.
\item Let $\Phi_{\theta}:=\{r \in \Phi | r(\theta) \in \mathbb{Z} \}$. 
The closed fibre of $\gG_{\cm_{{\bvt}_{I}}}$ modulo its unipotent radical has root system ge\-ne\-ra\-ted by the intersection of the $\Phi_{\theta_{\alpha}}$ for $\alpha \in I$.
\end{enumerate}  
\end{thm}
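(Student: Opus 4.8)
The plan is to prove Theorem \ref{btoverx} in three stages, treating each of its three parts in turn, with the bulk of the work already done by the main theorems of the paper applied to the specific geometry of the wonderful compactification.

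\textbf{Setting up the ambient geometry.} First I would recall from \S\ref{earlyloghomo} that $({\bf X}, D)$ with $D = \cup_{\alpha \in S} D_\alpha$ is a smooth variety with a simple normal crossing divisor, where the irreducible components $D_\alpha$ are indexed by the simple roots $\alpha \in S$, and $\ell = \text{rank}(G) = |S|$. The key point is that this places us precisely in the situation of Theorem \ref{Artin-Weil-Kawamata} (and its Lie-algebra precursor \eqref{onliestr}), with $n = \ell$ coordinate directions given by the boundary divisors. Given the data $\bvt = (\theta_\alpha \mid \alpha \in S)$, one point $\theta_\alpha$ of the affine apartment assigned to each boundary divisor $D_\alpha$, this is an $\ell$-concave function of {\tt type I} in the sense of \eqref{types}. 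I would then invoke Theorem \ref{n-paragrpsch} (together with the local covering description in \eqref{frombalses} and \eqref{torickawamata}) to produce the smooth affine group scheme ${\mathfrak G}_{_{\bf X}}^{^{\varpi}}$ on $\bf X$, whose restriction to the generic point of each $D_\alpha$ is the parahoric group scheme $\gG_{_{\theta_\alpha}}$. This immediately yields part (1).

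\textbf{The restriction to coordinate curves.} For part (2), I would use the transversality of the curves $C_I$ to the strata described in \S\ref{earlyloghomo}: the $1$-PS defining $C_I$ can be taken to be $\sum_{\alpha \in I} \omega^\vee_\alpha$, so that $C_I$ plays exactly the role of the sub-diagonal curve $\bA_{_{\tt I}}$ of Theorem \ref{thedescription}(b), where the relevant coordinates are those indexed by $I$. The argument is then a direct transcription: by \eqref{tauto} and the remark following \eqref{n-paragrpsch}, restricting ${\mathfrak G}_{_{\bf X}}^{^{\varpi}}$ to the formal neighbourhood $U_I$ of $z_I$ sets all the variables indexed by $\alpha \in I$ equal to a single uniformizer, and the \'etoff\'e property forces this restriction to be the {\tt BT}-group scheme associated to the summed concave function $r \mapsto \sum_{\alpha \in I} m_r(\theta_\alpha)$, which is exactly $\text{\cursive m}_{_{\bvt_I}}$. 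Here I must verify that the geometry of $C_I$ genuinely realizes the ``setting variables to $t$'' operation, i.e.\ that the local coordinates at $z_I$ adapted to the divisor are compatible with the $1$-PS $\sum_{\alpha \in I} \omega^\vee_\alpha$; this is where the transversality statement from \cite{brion1} recalled in \S\ref{earlyloghomo} is essential.

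\textbf{The closed fibre computation and the main obstacle.} For part (3), I would analyze the closed fibre of $\gG_{_{\text{\cursive m}_{_{\bvt_I}}}}$ modulo its unipotent radical. The function $\text{\cursive m}_{_{\bvt_I}}(r) = \sum_{\alpha \in I} m_r(\theta_\alpha)$ determines which root groups contribute to the reductive quotient: a root $r$ survives into the reductive part precisely when both $\text{\cursive m}_{_{\bvt_I}}(r)$ and $\text{\cursive m}_{_{\bvt_I}}(-r)$ behave as in the parahoric (integral-value) situation, i.e.\ when $m_r(\theta_\alpha) = -\lfloor r(\theta_\alpha)\rfloor$ with $r(\theta_\alpha) \in \ZZ$ for every $\alpha \in I$. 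Recalling $\Phi_{\theta} = \{r \in \Phi \mid r(\theta) \in \ZZ\}$, this is exactly the condition $r \in \cap_{\alpha \in I} \Phi_{\theta_\alpha}$. I expect the main obstacle to lie here: I must show carefully, using the structure theory of \cite[4.6.5, 4.6.12]{bruhattits} for {\tt BT}-group schemes of concave functions, that the reductive quotient of the special fibre has root system \emph{generated by} $\cap_{\alpha \in I} \Phi_{\theta_\alpha}$, rather than merely containing or contained in it. The subtlety is that summing concave functions can produce cancellations (as illustrated in the $G_2$ example \eqref{gratifying} and the general remarks in \S\ref{allunifequal0}), so one cannot simply read off the reductive datum root-by-root; I would need to check that the intersection $\cap_{\alpha \in I}\Phi_{\theta_\alpha}$ is itself a closed sub-root-system and that the summed floor-function conditions single out exactly these roots as the integral-level directions, applying Theorem \ref{thedescription}(a) to identify the fibre $\gG_{_{\bvt,\xi}}$ with the closed fibre of $\gG_{_{\text{\cursive m}_{_{\bvt_I}}}}$ at the appropriate point of higher depth.
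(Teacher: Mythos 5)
Your proposal is correct and follows essentially the same route as the paper: the Lie algebra bundle of \eqref{onliestr} plus Theorem \ref{Artin-Weil-Kawamata} (i.e.\ Theorem \ref{n-paragrpsch}) for existence and part (1), restriction to the coordinate curves via the local Weyl-chamber model and the diagonal-restriction results for part (2), and the Bruhat--Tits criterion \cite[4.6.12]{bruhattits} for part (3). The ``main obstacle'' you flag in part (3) dissolves immediately: the criterion $\lceil f\rceil(-r)=-\lceil f\rceil(r)$ for $f=\text{\cursive m}_{_{\bvt_{_I}}}$ reads $\sum_{\alpha\in I}\bigl(\lceil r(\theta_{_\alpha})\rceil-\lfloor r(\theta_{_\alpha})\rfloor\bigr)=0$, a sum of nonnegative terms, so no cancellation can occur and the set is exactly $\cap_{\alpha\in I}\Phi_{_{\theta_{_\alpha}}}$, which is already closed and symmetric.
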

 
\begin{proof} By Proposition~\ref{onliestr}, we have a Lie algebra bundle $\mathcal{R}$ on ${\bf X}$ prescribed by the first property. Thus by Theorem~\ref{Artin-Weil-Kawamata}, we get a group scheme ${\mathfrak G}_{\bf X}^{\varpi}$ on ${\bf X}$ which by construction satisfies the first property. The context of the second property is that of formal neighbourhoods, which is also that of Section~\ref{atthehyper}. As remarked before the theorem, the group scheme constructed on ${\bf X}$ has a local Weyl-chamber model on the affine toric variety ${\bf Y}_{0}$. Thus we may restrict to ${\bf Y}_{0}$. Hence the second property follows from Theorem~\ref{n-paragrpsch}. Recall, see \cite[Corollaire~4.6.12]{bruhattits}, that the reductive quotient of the closed fibre of $\gG_{f}$ has root system given by $\{r \in \Phi\mid \lceil f \rceil (-r)=- \lceil f \rceil (r) \}$. For the case of the concave function $m_r(\theta)$, this set is $\Phi_\theta$, and for $\cm_{{\bvt}_{I}}$, it will be the intersection of the $\Phi_{\theta_{\alpha}}$ for $\alpha \in I$.
\end{proof}

\begin{Cor}
Let $\theta_{\alpha}$ be the non-zero vertices of the Weyl alcove \eqref{alcovevertices}. Let $L$ be the LCM of the coefficients of the highest root and $\ell$ the semisimple rank of\, $G$.
Then setting $\bvt_{}$ as $(\theta_{\alpha} \mid \alpha \in
S)$, $(\theta_{\alpha}/(\ell+1) \mid \alpha \in S)$ and
$(L\theta_{\alpha} \mid \alpha \in S)$ gives a group scheme whose
restriction to every coordinate curve $C_{I}$ has closed fibre with
root system generated by the highest root $\alpha_{0}$ and $S
\setminus I$, or simply generated by $S
\setminus I$, or equal to that of\, $G$,
respectively.
\end{Cor}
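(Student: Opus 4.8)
The corollary is a direct computation using part (3) of Theorem \ref{btoverx}, so the plan is to unpack what the three specific choices of $\bvt$ do to the sets $\Phi_{_{\theta_{_\alpha}}}$ and then intersect. Recall that by part (3), the reductive quotient of the closed fibre of $\gG_{_{\text{\cursive m}_{_{{\bvt}_{_{I}}}}}}$ over $C_{_I}$ has root system generated by $\bigcap_{_{\alpha \in I}} \Phi_{_{\theta_{_\alpha}}}$, where $\Phi_{_\theta} = \{ r \in \Phi \mid r(\theta) \in \ZZ \}$. So for each of the three prescriptions I would first identify $\Phi_{_{\theta_{_\alpha}}}$ for a single $\alpha$, and then form the intersection over $\alpha \in I$.

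First I would treat the case $\bvt = (\theta_{_\alpha} \mid \alpha \in S)$, where $\theta_{_\alpha} = \omega^{\vee}_{_\alpha}/c_{_\alpha}$ is the non-zero alcove vertex \eqref{alcovevertices}. For a positive root $r = \sum_{_\beta} n_{_\beta} \beta$, one has $r(\theta_{_\alpha}) = n_{_\alpha}/c_{_\alpha}$, so $r(\theta_{_\alpha}) \in \ZZ$ iff $c_{_\alpha} \mid n_{_\alpha}$; since $0 \le n_{_\alpha} \le c_{_\alpha}$ with equality $n_{_\alpha} = c_{_\alpha}$ only for the highest root $\alpha_{_0}$, the set $\Phi_{_{\theta_{_\alpha}}}$ consists exactly of the roots whose $\alpha$-coefficient is $0$ or $\pm c_{_\alpha}$, i.e. the roots supported on $S \setminus \{\alpha\}$ together with $\pm\alpha_{_0}$. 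Intersecting over $\alpha \in I$ then leaves precisely the roots supported on $S \setminus I$ together with $\pm\alpha_{_0}$, giving the root system generated by $\alpha_{_0}$ and $S \setminus I$ as claimed. For the second prescription $\theta_{_\alpha}/(\ell+1)$, the value $r(\theta_{_\alpha})/(\ell+1) = n_{_\alpha}/(c_{_\alpha}(\ell+1))$ is an integer only when $n_{_\alpha} = 0$ (the numerator being too small to reach the denominator for any root, since $n_{_\alpha} \le c_{_\alpha} < c_{_\alpha}(\ell+1)$), so $\Phi_{_{\theta_{_\alpha}/(\ell+1)}}$ is exactly the roots supported on $S \setminus \{\alpha\}$; intersecting gives the root system of $S \setminus I$. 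For the third, $\theta_{_\alpha} = L\omega^{\vee}_{_\alpha}/c_{_\alpha}$ where $L$ is the LCM of the $c_{_\beta}$, so $c_{_\alpha} \mid L$ and $r(\theta_{_\alpha}) = L n_{_\alpha}/c_{_\alpha} \in \ZZ$ for every root $r$; hence $\Phi_{_{\theta_{_\alpha}}} = \Phi$ and the intersection is all of $\Phi$, recovering the root system of $G$.

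The computations are genuinely elementary once the formula $r(\theta_{_\alpha}) = n_{_\alpha}/c_{_\alpha}$ is in hand, so the substantive content is simply invoking Theorem \ref{btoverx}(3) correctly and checking the divisibility conditions $c_{_\alpha} \mid n_{_\alpha}$, $(c_{_\alpha}(\ell+1)) \mid n_{_\alpha}$, and $c_{_\alpha} \mid L n_{_\alpha}$ respectively. The one point that warrants care, and which I expect to be the main (though modest) obstacle, is verifying in the first case that $\pm\alpha_{_0}$ survives the intersection over all $\alpha \in I$: this requires that the highest root attains $n_{_\alpha} = c_{_\alpha}$ for every simple $\alpha$ simultaneously, which is exactly the defining property of the highest root, so $\alpha_{_0} \in \Phi_{_{\theta_{_\alpha}}}$ for all $\alpha$ and hence lies in the intersection. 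I would also note that the bound $n_{_\alpha} \le c_{_\alpha}$ for all roots (strict except for $\alpha_{_0}$) is the standard fact that the highest root dominates every root coefficientwise, which underlies the $A_{_2}$, $B_{_2}$, $G_{_2}$ tables computed in \S\ref{explicitexamples}. Assembling these three cases completes the proof.
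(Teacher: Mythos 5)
Your overall strategy --- invoking part (3) of Theorem \ref{btoverx} and reducing everything to the integrality of $r(\theta_{\alpha}) = n_{\alpha}/c_{\alpha}$ for $r = \sum_{\beta} n_{\beta}\beta$ --- is exactly the route the paper takes, and your treatment of the second and third prescriptions is correct. However, your first case contains a genuine error: you assert that $n_{\alpha} = c_{\alpha}$ holds \emph{only} for the highest root, and conclude that $\Phi_{\theta_{\alpha}}$ equals the roots supported on $S\setminus\{\alpha\}$ together with $\pm\alpha_{0}$. This is false. The bound $n_{\alpha}\leq c_{\alpha}$ is attained by many roots besides $\alpha_{0}$: in $G_{2}$ (with the conventions of \S\ref{gratifying}) the root $3\alpha_{1}+\alpha_{2}$ has $\alpha_{1}$-coefficient $3=c_{\alpha_{1}}$, hence lies in $\Phi_{\theta_{\alpha_{1}}}$, although it is neither supported on $S\setminus\{\alpha_{1}\}$ nor equal to $\pm\alpha_{0}$; and in type $A_{n}$ every $c_{\alpha}=1$, so $\Phi_{\theta_{\alpha}}=\Phi$, which is far larger than your list. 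Worse, your proposed set is in general not closed under addition of roots (in the $G_{2}$ example it contains $\alpha_{2}$ and $3\alpha_{1}+2\alpha_{2}$ but not $3\alpha_{1}+\alpha_{2}$), so it cannot be the root system of the reductive quotient, which by Theorem \ref{btoverx}(3) is the intersection $\bigcap_{\alpha\in I}\Phi_{\theta_{\alpha}}$ itself.

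The correct intermediate statement, which is what the paper's proof records, is $\Phi_{\theta_{\alpha}}=\{r : c_{\alpha}\mid n_{\alpha}\}$ with no further simplification: this is the subsystem obtained by deleting the node $\alpha$ from the affine Dynkin diagram (Borel--de Siebenthal), generated by $\alpha_{0}$ together with $S\setminus\{\alpha\}$, and it usually strictly contains the union of $\{\pm\alpha_{0}\}$ with the roots supported on $S\setminus\{\alpha\}$. The intersection over $\alpha\in I$ is then $\{r : c_{\alpha}\mid n_{\alpha}\ \text{for all}\ \alpha\in I\}$, which is the set the corollary identifies with the subsystem generated by $\alpha_{0}$ and $S\setminus I$. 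Your divisibility arguments in the remaining two cases (that $c_{\alpha}(\ell+1)\mid n_{\alpha}$ forces $n_{\alpha}=0$, and that $c_{\alpha}\mid Ln_{\alpha}$ always holds) go through unchanged, since there the conclusion does not depend on classifying when $n_{\alpha}=c_{\alpha}$.
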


\begin{proof} The structure of the set $\{r \in \Phi\mid \lceil f \rceil (-r)=- \lceil f \rceil (r) \}$ and the formula \eqref{alcovevertices} show that the root system of the reductive quotient of the closed fibre equals the set of roots whose simple root coefficients, coming from the subset $I$ of $S$, equal an integral multiple of the corresponding term of the highest root. In the first case, this is the set of roots generated by the highest root $\alpha_{0}$ and $S \setminus I$. In the second case, it is the set of roots generated by $S \setminus I$. In the third case, all roots in $\Phi$ satisfy this property. 
\end{proof}

\subsection{The wonderful embedding \texorpdfstring{${\bf X}^{\aff}$}{X\textasciicircum aff}}\label{loopcase}
We continue to use the notation as in previous sections.  Let $G^{\aff}$ denote the Kac--Moody group associated to the affine Dynkin diagram of $G$. Recall that $G^{\aff}$ is given by a  central extension of $L^{\ltimes}G$ by $\GG_m$. Analogously to  the wonderful compactification of $G_{\ad}$, Solis in \cite{solis} has constructed a wonderful embedding ${\bf X}^{\aff}$ for $G^{\aff}_{\ad}:=G^{\aff}/Z(G^{\aff})=\GG_m \ltimes \LG/Z(G)$.  It is an ind-scheme containing $G^{\aff}_{\ad}$ as a dense open ind-scheme and carrying an equivariant action of $L^{\ltimes} G \times L^{\ltimes} G$. 

Let $T_{\ad} := T/Z(G)$ and $T^{\ltimes}_{\ad}:= \GG_m \times T_{\ad} \subset G^{\aff}_{\ad}$, where $\GG_m$ is the rotational torus. In ${\bf X}^{\aff}$, the closure ${\bf Y}^{\aff}:=\ol{T^{\ltimes}_{\ad}}$ gives  a  torus embedding. It is  covered by the  affine Weyl group $W^{\aff}$-translates  of the affine torus embedding ${\bf Y}_{0}^{\aff} := \ol{T^{\ltimes}_{\ad,0}} \simeq  \mathbb{A}^{\ell+1}$ given by the negative Weyl alcove.

\subsection{On the torus embedding \texorpdfstring{${\bf Y}_{0}^{\aff}$}{Y\textunderscore 0 \textasciicircum aff}} 
Recall that $Z:= {\bf Y}_{0}^{\aff} \setminus T^{\ltimes}_{\ad}$ is a union $\cup_{\alpha \in \mathbb S} H_{\alpha}$ of $\ell+1$  standard coordinate hyperplanes meeting at normal crossings.   For $\alpha \in \mathbb S$, let  $\zeta_{\alpha}$ denote the generic point of the divisor $H_{\alpha}$. Let  
\begin{equation} \label{Aalpha}
A_{\alpha}= \mathcal O_{{\bf Y}_{0}^{\aff}, \zeta_{\alpha}}
\end{equation} 
be the DVRs obtained by localizing at the height $1$ primes given by the $\zeta_{\alpha}$. Let $K_{\alpha}$ be the quotient field of $A_{\alpha}$. Let $Y_{\alpha}:= \spec(A_{\alpha}) $. Note that we can identify the open subset $\spec(K_{\alpha})$ with  ${T^{\ltimes}_{\ad}} \cap Y_{\alpha}$. Let 
\begin{equation}\label{yprime} Y':= {T^{\ltimes}_{\ad}} \cup_{\alpha } Y_{\alpha}.
\end{equation}
The complement ${\bf Y}_{0}^{\aff} \setminus Y'$ can again be realized as a colimit of open subsets of ${\bf Y}_{0}^{\aff}$ whose   codimension is at least $2$ in ${\bf Y}_{0}^{\aff}$.

\subsection{Construction of a finite-dimensional Lie algebra bundle $J$ on \texorpdfstring{${\bf Y}_{0}^{\aff}$}{Y\textunderscore 0 \textasciicircum aff} together with parabolic structures} 
This construction is exactly analogous to the construction of $\mathcal{R}$ on ${\bf Y}$ in Section~\ref{constructionR}. For $\alpha \in \mathbb{S}$, let $\theta_{\alpha} \in \mathcal{A}$ be arbitrary points in the apartment satisfying the conditions of Section~\ref{charassum}. We let $T^{\ltimes}_{\ad}$, ${\bf Y}_{0}^{\aff}$, $(1,\omega^{\vee}_{\alpha})$ and $\mathbb{S}$ play the roles of ${\bf A}_{0}$, ${\bf A}$, $\lambda_i$ and $\{1,\ldots,n\}$. More precisely, we set 
\begin{equation}
\eta_{\alpha}:=\left((0,\ldots,1,\ldots,0),\theta_{\alpha} \right), \quad \alpha \in \mathbb{S}
\end{equation}
with the unique $1$ in the $\supth{\alpha}$ coordinate. Consider the loop rotation action given by \eqref{looprotationhd} and \eqref{hdconjugation}. We may prove the following theorem exactly like Theorem~\ref{gpschLiestab}.

\begin{thm} \label{liealgbunonyaff}
Let $\bvt:=(\theta_{\alpha} \mid \alpha \in \mathbb{S})$, where the $\theta_{\alpha}$ are arbitrary points in the apartment $\mathcal{A}$ of\, $T$ satisfying the conditions of Section~\ref{charassum}. Let $U$ be a formal neighbourhood of the origin in ${\bf Y}_{0}^{\aff}$. There exists a canonical Lie algebra bundle $J$ on ${\bf Y}_{0}^{\aff}$ which extends the trivial bundle with fibre $ \mathfrak g$ on ${T^{\ltimes}_{\ad}}  \subset {\bf Y}^{\aff}_{0}$, and with notation as in \eqref{Liealghd}, we have the identification of functors from the category of $k$-algebras to $k$-Lie algebras
\begin{equation}
L^+ \left(\mathfrak{P}_{\bvt}\right)= L^+\left( \mathcal R\mid_{U}\right).
\end{equation}
\end{thm}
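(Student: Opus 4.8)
The plan is to reduce Theorem \ref{liealgbunonyaff} to the already-established Theorem \ref{gpschLiestab} by exhibiting the affine torus embedding ${\bf Y}_{_0}^{^{\text{aff}}} \simeq \mathbb{A}^{\ell+1}$ as precisely the ambient affine space $\bf A$ of \S\ref{constructionR}, with the rotational coordinate of $T^{\ltimes}_{_{\text{ad}}}$ supplying the extra $(\ell+1)$-st direction. Concretely, I would first set up the dictionary spelled out in the paragraph preceding the statement: the open torus $T^{\ltimes}_{_{\text{ad}}}$ plays the role of $\bf A_{_0}$, the embedding $T^{\ltimes}_{_{\text{ad}}}\subset {\bf Y}_{_0}^{^{\text{aff}}}$ plays the role of $\bf A_{_0}\subset \bf A$, the one-parameter subgroups $(1,\omega^{\vee}_{\alpha})$ for $\alpha\in\mathbb S$ play the role of the coordinate axes $\lambda_{_i}$, and the affine simple roots $\mathbb S$ index the coordinate hyperplanes in place of $\{1,\dots,n\}$. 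The key observation making this legitimate is that $\overline{T^{\ltimes}_{_{\text{ad}},0}}$ associated to the negative Weyl alcove is genuinely isomorphic to $\mathbb{A}^{\ell+1}$, so all the lattice/toric bookkeeping of \S\ref{constructionofr} applies verbatim once we record the $b_{_{\alpha\beta}}\in\mathbb Q$ expressing each $\theta_{_\alpha}$ in terms of the fundamental coweights and choose $d$ clearing all denominators.

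With this dictionary in place, I would then run the construction of $\cR$ from the proof of \eqref{gpschLiestab} line by line, replacing the data $(\bf A_{_0}\subset \bf A, \lambda_{_i}, \bvt)$ by $(T^{\ltimes}_{_{\text{ad}}}\subset {\bf Y}_{_0}^{^{\text{aff}}}, \eta_{_\alpha}, \bvt)$. That is: form the ramified toric cover $p$ of ${\bf Y}_{_0}^{^{\text{aff}}}$ adapted to the weights $\theta_{_\alpha}$, define the roots map $\mathfrak r$ and the action map $\mathfrak a$ via ${\mathfrak a}^{\#}(z_{_\beta})=\prod y_{_\alpha}^{db_{_{\alpha\beta}}}$, twist the trivial Lie algebra $\mathfrak g$ over the open torus by $\text{Ad}\circ\mathfrak a$, and take the intersection module \eqref{sheafofR} to obtain a reflexive sheaf $J$. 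The $T_{_{\text{ad}}}$-weight space decomposition respects the intersection exactly as in \eqref{wtspacedecomp}, so $J$ splits as the trivial Cartan summand plus rank-one pieces; since invertible sheaves extend across codimension $\geq 2$ by \eqref{langton2} and $J$ is reflexive, $J$ is locally free on all of ${\bf Y}_{_0}^{^{\text{aff}}}$, with its Lie bracket inherited from $\mathfrak g$ over the function field.

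For the final identification of functors, I would argue exactly as in the closing paragraphs of \eqref{gpschLiestab}. A section of $J$ over the formal neighbourhood $U$ of the origin is first a section $\text{\cursive s}_{_K}$ over the generic point, which lies in $j(\mathfrak g(K))$ and hence can be written as $\text{Ad}\big(\prod_{_{\alpha\in\mathbb S}}\theta_{_\alpha}({\tt x}_{_\alpha})\big)(x_{_K})$ using $\mathfrak a\circ(\boldsymbol\eta_{_\alpha})=\theta_{_\alpha}$. Membership of $\text{\cursive s}$ in $\mathfrak g(B^+)$ then translates, coordinate by coordinate in the $\eta_{_\alpha}$, into precisely the loop-theoretic ``$\lim_{s_\alpha\to 0}$ exists'' regularity conditions of \S\ref{higherobservation}, so that $\text{\cursive s}\in\mathfrak P_{_{\bvt}}(k)$; this gives $L^+(J\mid_{_U})=L^+(\mathfrak P_{_{\bvt}})$ on $k$-points, and the argument is insensitive to the base ring $R$ since the module and bracket structures are defined over $k$.

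The main obstacle I anticipate is not the formal toric translation, which is genuinely mechanical, but verifying that the negative-Weyl-alcove chart ${\bf Y}_{_0}^{^{\text{aff}}}$ of Solis's ind-scheme really carries the structure of a standard affine toric variety $\mathbb{A}^{\ell+1}$ with coordinate hyperplanes indexed by $\mathbb S$ and with the loop-rotation/cocharacter action identified correctly with the axes $\eta_{_\alpha}$. This requires care because the rotational $\GG_m$ factor of $T^{\ltimes}_{_{\text{ad}}}$ is on a different footing from the genuine torus directions of $T_{_{\text{ad}}}$, and one must confirm that the affine simple root $\alpha_{_0}$ behaves combinatorially like the finite simple roots under the embedding. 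Once that geometric identification is pinned down, however, the remainder of the proof is an application of \eqref{gpschLiestab} with no new analytic input, and I would simply remark that the proof proceeds \emph{mutatis mutandis}.
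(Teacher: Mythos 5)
Your proposal matches the paper's own treatment: the paper sets up exactly the dictionary you describe ($T^{\ltimes}_{_{\text{ad}}}$, ${\bf Y}_{_0}^{^{\text{aff}}}$, $(1,\omega^{\vee}_{\alpha})$, $\mathbb{S}$ in place of $\bf A_{_0}$, $\bf A$, $\lambda_{_i}$, $\{1,\ldots,n\}$) and then states that the theorem is proved exactly like Theorem \ref{gpschLiestab}, relying on the identification ${\bf Y}_{_0}^{^{\text{aff}}}\simeq\mathbb{A}^{\ell+1}$ from the negative Weyl alcove. Your extra attention to verifying that identification and the role of the rotational $\GG_m$ factor is a reasonable precaution but does not change the argument.
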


We may prove the following corollary like Corollary~\ref{restrictiontocurves}.

\begin{Cor}
  Let $\lambda= \sum_{\alpha \in \mathbb{S}} k_{\alpha} (1,\theta_{\alpha})$ be a non-zero dominant $1$-PS of\, $T_{\ad}^{\ltimes}$ where not all $k_{\alpha}$ are zero. 
Let $\mathcal{D}$ be the formal neighbourhood of origin in ${\bf Y}_{0}^{\aff}$ associated to the curve defined by $\lambda$. Let $f\colon \Phi \rightarrow \mathbb{Z}$ be the concave function defined by the assignment $r \ms \sum_{\alpha \in \mathbb{S}} k_{\alpha} m_r(\theta_{\alpha})$. Then for any $k$-algebra $R$, we have
\begin{equation}
  L^+\left( \mathcal R\mid_{\mathcal{D}}\right)(R)=\left\langle \mathfrak{t}(R \llbracket t \rrbracket),\, \mathfrak{u}_r\left(t^{f(r)} R \llbracket t \rrbracket\right), r \in \Phi \right\rangle.
\end{equation}
\end{Cor}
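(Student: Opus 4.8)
The plan is to transcribe the argument of Corollary \ref{restrictiontocurves} into the affine toric setting furnished by Theorem \ref{liealgbunonyaff}. First I would record the geometry: by the construction recalled in \S\ref{loopcase}, the torus-embedding ${\bf Y}_{_0}^{^{\text{aff}}} \simeq \mathbb{A}^{^{\ell+1}}$ has open dense torus $T^{\ltimes}_{_{\text{ad}}}$ as complement of the $\ell+1$ coordinate hyperplanes $H_{_\alpha}$ indexed by $\alpha \in \mathbb{S}$, the coordinate axes being dual to the $1$-PS's $(1,\theta_{_\alpha})$. Thus the combinatorial role played by $\lambda_{_i} = (1,\theta_{_i})$ and $\{1,\ldots,n\}$ in Theorem \ref{gpschLiestab} is taken here by $(1,\theta_{_\alpha})$ and $\mathbb{S}$, and Theorem \ref{liealgbunonyaff} exhibits $\cR = J$ via the loop-rotation conjugation \eqref{looprotationhd}, \eqref{hdconjugation}. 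In particular, over the big open subset $Y'$ \eqref{yprime} (complement of codimension $\geq 2$), the sections of $\cR$ carry a global $T$-weight-space decomposition: the $0$-weight part is $\mathfrak{t}\big(R\llbracket z_{_\bullet}\rrbracket\big)$ and, for each $r \in \Phi$, the $r$-weight part is $\mathfrak{u}_{_r}\big(\prod_{_{\alpha \in \mathbb{S}}} z_{_\alpha}^{^{m_{_r}(\theta_{_\alpha})}}\, R\llbracket z_{_\bullet}\rrbracket\big)$, where $\mathfrak{u}_{_r} = \mathfrak{g}_{_r}$ is the root subspace, exactly as in the proof of Theorem \ref{gpschLiestab} via \eqref{conditions1}.

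Next I would identify the diagonal curve with a monomial substitution. Since $\lambda = \sum_{_{\alpha \in \mathbb{S}}} k_{_\alpha}(1,\theta_{_\alpha})$ is dominant, all $k_{_\alpha} \geq 0$ and not all zero, so $\lambda$ lies in the cone defining ${\bf Y}_{_0}^{^{\text{aff}}}$; its image is contained in the open torus $T^{\ltimes}_{_{\text{ad}}}$ and its closure is a curve $\mathcal{D}$ whose only added point is the limit at the origin. In the toric coordinates $z_{_\alpha}$ this curve is parametrized by $z_{_\alpha} = t^{^{k_{_\alpha}}}$. Hence, exactly as in the second case of the proof of Corollary \ref{restrictiontocurves}, restricting $\cR$ to $\mathcal{D}$ and then taking global sections amounts to substituting $z_{_\alpha} := t^{^{k_{_\alpha}}}$ in the weight-space generators of $\cR$ over $Y'$.

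Carrying out this substitution term by term gives the result: the $0$-weight contribution becomes $\mathfrak{t}(R\llbracket t\rrbracket)$, while for each $r \in \Phi$ the $r$-weight contribution becomes
\[
\mathfrak{u}_{_r}\Big(\prod_{_{\alpha \in \mathbb{S}}} \big(t^{^{k_{_\alpha}}}\big)^{^{m_{_r}(\theta_{_\alpha})}}\, R\llbracket t\rrbracket\Big) = \mathfrak{u}_{_r}\big(t^{^{\sum_{_\alpha} k_{_\alpha} m_{_r}(\theta_{_\alpha})}}\, R\llbracket t\rrbracket\big) = \mathfrak{u}_{_r}\big(t^{^{f(r)}}\, R\llbracket t\rrbracket\big),
\]
which is precisely the asserted presentation of $L^{^+}(\cR\mid_{_{\mathcal{D}}})(R)$. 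It remains only to note that $f = \sum_{_\alpha} k_{_\alpha}\, m_{_\bullet}(\theta_{_\alpha})$ is a genuine concave function: each $r \mapsto m_{_r}(\theta_{_\alpha})$ is concave by Proposition \ref{mrthetaisconcave}, and a non-negative linear combination of concave functions is concave, so the right-hand side is a well-defined bounded Lie subalgebra.

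The step I expect to require the most care is the coordinate translation in the middle paragraph: verifying that the closure $\mathcal{D}$ of the dominant $1$-PS $\lambda$ is described by the monomial substitution $z_{_\alpha} = t^{^{k_{_\alpha}}}$ and that $\mathcal{D}$ minus the origin lies in the open torus $T^{\ltimes}_{_{\text{ad}}}$, so that the global weight-space decomposition of $\cR$ furnished by Theorem \ref{liealgbunonyaff} may be evaluated along $\mathcal{D}$ without interference from the deeper boundary strata. This is where the hypothesis that $\lambda$ be dominant, equivalently $k_{_\alpha} \geq 0$, is used, both to keep the limit point of $\lambda$ inside ${\bf Y}_{_0}^{^{\text{aff}}}$ at the origin and to guarantee the concavity of $f$. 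Once this toric bookkeeping is in place, the computation is formally identical to that of Corollary \ref{restrictiontocurves}.
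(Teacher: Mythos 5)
Your proposal is correct and follows essentially the same route as the paper, which proves this corollary exactly as Corollary \ref{restrictiontocurves}: restriction to $\mathcal{D}$ followed by taking global sections amounts to the monomial substitution $z_{_\alpha} := t^{^{k_{_\alpha}}}$ in the weight-space generators of $\cR$ over the big open subset, yielding the concave function $f(r) = \sum_{_\alpha} k_{_\alpha} m_{_r}(\theta_{_\alpha})$. Your additional care about the toric bookkeeping (dominance of $\lambda$ giving $k_{_\alpha} \geq 0$ and keeping the limit point at the origin) is a sound elaboration of what the paper leaves implicit.
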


\begin{Cor}\label{canparstr}
  The Lie algebra bundle $J_{{\bf Y}^{\aff}}$ gets canonical parabolic structures \textup{(}see the appendix\,\textup{)} at the generic points $\xi_{\alpha}$ of the $W^{\aff}$-translates of the divisors $H_{\alpha} \subset {\bf Y}_{0}^{\aff} $, $\alpha \in \mathbb S$. 
\end{Cor}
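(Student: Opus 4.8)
The statement to prove is Corollary \ref{canparstr}: that the Lie algebra bundle $J$ on ${\bf Y}_{_0}^{^{\text{aff}}}$ acquires canonical parabolic structures at the generic points $\xi_{_\alpha}$ of the $W^{^{\text{aff}}}$-translates of the divisors $H_{_\alpha}$.

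The plan is to mirror, in this loop-theoretic setting, precisely the local picture that produced parabolic structures in the finite-dimensional case of \S\ref{constructionR}. The key observation is that the bundle $J$ was constructed in \ref{liealgbunonyaff} exactly as $\mathcal{R}$ was constructed in \eqref{gpschLiestab}, with $T^{\ltimes}_{_{\text{ad}}}$, ${\bf Y}_{_0}^{^{\text{aff}}}$, $(1,\omega^\vee_\alpha)$ and $\mathbb{S}$ playing the roles of ${\bf A}_{_0}$, ${\bf A}$, $\lambda_i$ and $\{1,\dots,n\}$. So first I would work on the standard fundamental chamber piece ${\bf Y}_{_0}^{^{\text{aff}}} \simeq \mathbb{A}^{\ell+1}$ and restrict attention to a single divisor $H_{_\alpha}$ with generic point $\zeta_{_\alpha}$. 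Passing to the formal/local ring $A_{_\alpha} = \mathcal{O}_{_{{\bf Y}_{_0}^{^{\text{aff}}}, \zeta_{_\alpha}}}$ \eqref{Aalpha}, the restriction $J|_{_{Y_{_\alpha}}}$ is the parahoric Lie algebra bundle attached to the weight $\theta_{_\alpha}$, exactly as in Corollary \ref{gpschLiestab3}. By \cite[Proposition 5.1.2]{base} — whose hypotheses hold here since $\theta_{_\alpha}$ determines a ramified cover $q:U'_{_\alpha}\to U_{_\alpha}$ of index $d_{_\alpha}$ with Galois group $\Gamma$ and a $\Gamma$-equivariant $G$-torsor $E_{_\alpha}$ — we get the identification $J|_{_{Y_{_\alpha}}}\simeq q^{\Gamma}_{_*}(E_{_\alpha}(\mathfrak{g}))$. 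This invariant-direct-image description is, by the discussion recalled in \S\ref{parabstuff}, precisely the data of a parabolic structure with rational weight $\theta_{_\alpha}$ on the Lie algebra bundle along $H_{_\alpha}$.

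Next I would globalize over the whole ind-scheme ${\bf X}^{^{\text{aff}}}$ by using the $W^{^{\text{aff}}}$-covering: the affine torus-embedding ${\bf Y}^{^{\text{aff}}} = \overline{T^{\ltimes}_{_{\text{ad}}}}$ is covered by the $W^{^{\text{aff}}}$-translates of ${\bf Y}_{_0}^{^{\text{aff}}}$, so every coordinate divisor in a translate chart is a $W^{^{\text{aff}}}$-image of some $H_{_\alpha}$. Since the construction of $J$ is equivariant for the loop-rotation torus and compatible with these translations (the transition data being built from the uniform Cartan decomposition, exactly as in the proof that $\mathcal{R}$ is locally free in \ref{gpschLiestab}), the local parabolic datum at $\zeta_{_\alpha}$ transports $W^{^{\text{aff}}}$-equivariantly to the generic point $\xi_{_\alpha}$ of each translate. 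The weights are intrinsic (they are the $\theta_{_\alpha}$ read off the conjugation formula \eqref{hdconjugation}), so there is no ambiguity, and the parabolic structures obtained on overlapping charts agree because they are computed from the same root-space decomposition of $J$; this is the sense in which they are \emph{canonical}.

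The main obstacle I anticipate is not the local parahoric-to-parabolic dictionary, which is essentially formal given Corollary \ref{gpschLiestab3}, but rather making the globalization over the ind-scheme ${\bf X}^{^{\text{aff}}}$ rigorous: one must check that the $W^{^{\text{aff}}}$-translation genuinely carries the equivariant covering datum $(q, E_{_\alpha})$ at $\zeta_{_\alpha}$ to a compatible covering datum at $\xi_{_\alpha}$, and that these glue across the infinitely many chambers in a manner compatible with the ind-scheme structure. Since ${\bf Y}_{_0}^{^{\text{aff}}}$ is only a single affine chart of a colimit, I would phrase the conclusion as: on each finite-dimensional approximation (each $W^{^{\text{aff}}}$-translate chart), $J$ restricts to a bundle carrying the parabolic structure described above, and these are visibly $W^{^{\text{aff}}}$-equivariant, hence assemble to canonical parabolic data along all the divisors $\xi_{_\alpha}$. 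As in the finite case, the codimension $\geq 2$ complement ${\bf Y}_{_0}^{^{\text{aff}}}\setminus Y'$ is harmless by the reflexivity arguments \eqref{langton1}, \eqref{langton2}, so the parabolic structure is determined by its restriction to the height-one points $\xi_{_\alpha}$.
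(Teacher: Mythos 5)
Your proposal is correct and follows essentially the same route as the paper: prescribe ramification indices $d_{_\alpha}$ with $d_{_\alpha}\theta_{_\alpha}\in Y(T)$ and use the identification of the localization of $J$ at each generic point with the invariant direct image $q^{\Gamma}_{_*}(E_{_\alpha}(\mathfrak g))$ (i.e.\ the parahoric Lie algebra structure) to read off the parabolic datum in the sense of the appendix. Your additional remarks on $W^{^{\text{aff}}}$-equivariance and gluing across translates are sound but go beyond what the paper's two-line proof records here; the paper defers that compatibility to the proof of the subsequent theorem on ${\bf Y}^{^{\text{aff}}}$.
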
 

\begin{proof} We prescribe a ramification index $d_\alpha$ on the divisor $H_{\alpha}$ such that $d_{\alpha} \theta_{\alpha}$ belongs to $Y(T)$. (Thus in the case when the $d_{\alpha}$ are the alcove vertices, see \eqref{alcovevertices}, the $d_\alpha$ are given by \eqref{dalpha}). Then the identification \eqref{Liestrendow} of the Lie algebra structures of $J_{{\bf Y}^{\aff}}$ and the parahoric Lie algebra structures on the localizations of the generic points of  $H_{\alpha}$ allows us to endow parabolic structures at the generic points of the divisors.\end{proof} 

\subsection{The parahoric group scheme on the torus embedding \texorpdfstring{${\bf Y}^{\aff}$}{Y\textasciicircum aff}}

\begin{thm} \label{mtY}
Let $\bvt_{} = (\theta_{\alpha} \mid \alpha \in S)$, where the $\theta_{\alpha} \in \mathcal{A}$ are arbitrary points in the apartment of\, $T$ satisfying conditions of Section~\ref{charassum}. There exist an affine  and smooth ``wonderful'' Bruhat--Tits group scheme ${\mathfrak G}_{{\bf Y}^{\aff}}^{\varpi}$ on ${\bf Y}$ together with a canonical isomorphism $\Lie({\mathfrak G}_{{\bf Y}^{\aff}}^{\varpi}) \simeq J$. It further satisfies the following classifying property: 

For any point $h \in {\bf Y}^{\aff} \setminus T^{\ltimes}_{ad}$, let  $ \mathbb I \subset \mathbb S$ be a subset such that $h \in \cap_{\alpha \in \mathbb I} H_{\alpha}$. Let $C_{\mathbb I} \subset {\bf Y}^{\aff}$ be a smooth curve with generic point in $T^{\ltimes}_{\ad}$ and closed point $h$. Let $U_{h} \subset C_{\mathbb I}$ be a formal neighbourhood of $h$. Then, the restriction ${\mathfrak G}_{{\bf Y}^{\aff}}^{\varpi}|_{U_{h}}$ is isomorphic to the Bruhat--Tits group scheme  associated to the concave function ${\text{\cursive m}_{{\bvt}_{I}}}\colon \Phi \rightarrow \mathbb{Z}$ given by $r \ms \sum_{\alpha \in I} m_r(\theta_{\alpha})$. 
\end{thm}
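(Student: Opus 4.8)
The plan is to construct the group scheme ${\mathfrak G}_{_{{\bf Y}^{^{\text{aff}}}}}^{^{\varpi}}$ on the torus embedding ${\bf Y}^{^{\text{aff}}}$ by the same invariant-direct-image machinery used in Theorem \ref{Artin-Weil-Kawamata} and Theorem \ref{n-paragrpsch}, relying on the Lie algebra bundle $J$ from Theorem \ref{liealgbunonyaff} together with its canonical parabolic structures from Corollary \ref{canparstr}. First I would observe that, by the toric description in \S\ref{loopcase}, the embedding ${\bf Y}^{^{\text{aff}}}$ is covered by the $W^{^{\text{aff}}}$-translates of ${\bf Y}_{_0}^{^{\text{aff}}} \simeq \mathbb A^{^{\ell+1}}$, and on the model chart ${\bf Y}_{_0}^{^{\text{aff}}}$ the situation is \emph{literally} that of \S\ref{constructionR}: the coordinate hyperplanes $H_{_\alpha}$, $\alpha \in \mathbb S$, play the role of the $H_{_i}$, the points $\theta_{_\alpha} \in \mathcal A$ play the role of $\bvt$, and the Kawamata cover is the explicit toric cover $p:\spec(B^{^+}) \to \spec(B_{_0}^{^+})$ of Remark \ref{torickawamata}. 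Thus on ${\bf Y}_{_0}^{^{\text{aff}}}$ I would apply Theorem \ref{Artin-Weil-Kawamata} verbatim to the Lie algebra bundle $J|_{_{{\bf Y}_{_0}^{^{\text{aff}}}}}$: this produces a smooth affine group scheme with connected fibres on ${\bf Y}_{_0}^{^{\text{aff}}}$ whose restriction to each generic point $\zeta_{_\alpha}$ is the parahoric $\gG_{_{\theta_{_\alpha}}}$ and whose Lie algebra is $J$.

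The key step is then to glue the group schemes built on the various $W^{^{\text{aff}}}$-translates of ${\bf Y}_{_0}^{^{\text{aff}}}$ into a single group scheme on all of ${\bf Y}^{^{\text{aff}}}$. Here I would use that the Lie algebra bundle $J$ is already defined globally on ${\bf Y}_{_0}^{^{\text{aff}}}$ (and extends across the $W^{^{\text{aff}}}$-action because the parabolic structures of Corollary \ref{canparstr} are prescribed uniformly on all translates $\xi_{_\alpha}$ of the divisors), so the local group schemes agree over the overlapping open tori $T^{\ltimes}_{_{\text{ad}}}$. The characterisation part of Theorem \ref{n-paragrpsch} — namely that a connected smooth affine group scheme is determined on a big open subset (complement of codimension $\geq 2$) by its Lie algebra via \cite[Cor IX, 1.5]{raynaud} — guarantees that the transition data on the overlaps are compatible and that the glued object is well defined and unique. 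Since affineness, smoothness and connectedness of fibres are local properties that each chart already satisfies, the resulting ${\mathfrak G}_{_{{\bf Y}^{^{\text{aff}}}}}^{^{\varpi}}$ inherits them, and the isomorphism $\text{Lie}({\mathfrak G}_{_{{\bf Y}^{^{\text{aff}}}}}^{^{\varpi}}) \simeq J$ follows from \eqref{liestr's}.

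For the stated classifying property I would argue exactly as in the proof of Theorem \ref{thedescription} combined with the diagonal-restriction results. Given $h \in \cap_{_{\alpha \in \mathbb I}} H_{_\alpha}$ and a curve $C_{_{\mathbb I}}$ through $h$ with generic point in $T^{\ltimes}_{_{\text{ad}}}$, I would pass to the formal neighbourhood $U_{_h}$ and apply Corollary \ref{restrictiontocurves}: restricting $J$ (equivalently ${\mathfrak G}_{_{{\bf Y}^{^{\text{aff}}}}}^{^{\varpi}}$) to the sub-diagonal curve defined by $\lambda = \sum_{_{\alpha \in \mathbb I}} (1,\theta_{_\alpha})$ amounts to setting the relevant variables equal, which yields the concave function $r \mapsto \sum_{_{\alpha \in \mathbb I}} m_{_r}(\theta_{_\alpha}) = \text{\cursive m}_{_{{\bvt}_{_{I}}}}(r)$. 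The \'etoff\'e property of Bruhat--Tits group schemes over a dvr (used as in the remark following Proposition \ref{tauto}) then forces ${\mathfrak G}_{_{{\bf Y}^{^{\text{aff}}}}}^{^{\varpi}}|_{_{U_h}}$ to be precisely $\gG_{_{\text{\cursive m}_{_{{\bvt}_{_{I}}}}}}$, since its space of $\cO$-valued sections is identified with the corresponding diagonal bounded group.

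The main obstacle I anticipate is not in any single chart — those are covered directly by the earlier theorems — but in making the gluing over the \emph{ind-scheme} ${\bf Y}^{^{\text{aff}}}$ rigorous, since ${\bf Y}^{^{\text{aff}}}$ is an infinite union of $W^{^{\text{aff}}}$-translates and one must check that the codimension $\geq 2$ Hartogs/extension arguments and the uniqueness statement of \cite[Cor IX, 1.5]{raynaud} are applied compatibly on each finite-type piece of the ind-structure. Concretely, the delicate point is verifying that the parabolic structures and the maximal torus $\cT$ extend coherently across the $W^{^{\text{aff}}}$-action so that the Cartan decomposition of $J$ (hence the \emph{gross cell} structure) is globally consistent; once this coherence is in hand, the remaining verifications are formal consequences of the construction on ${\bf Y}_{_0}^{^{\text{aff}}}$.
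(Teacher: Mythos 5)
Your proposal is correct and follows the same overall strategy as the paper (construct the group scheme chart by chart on the $W^{^{\text{aff}}}$-translates of ${\bf Y}_{_0}^{^{\text{aff}}}$ via Theorem \ref{Artin-Weil-Kawamata} applied to $J$, then glue, then verify the classifying property by restriction to curves as in Theorem \ref{btoverx}), but it handles the one genuinely delicate step --- the gluing --- by a different mechanism. You appeal to the abstract characterisation of Theorem \ref{n-paragrpsch} (a connected smooth affine generically split group scheme is determined by its Lie algebra, via Raynaud's extension theorem from a big open subset) to conclude that the charts ``agree'' on overlaps. As stated this only gives that $\gG_{_u}|_{_{Y_{_u} \cap Y_{_v}}}$ and $\gG_{_v}|_{_{Y_{_u} \cap Y_{_v}}}$ are \emph{isomorphic}; to glue one needs \emph{canonical} isomorphisms satisfying the cocycle condition on triple overlaps, which your write-up asserts but does not quite pin down. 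The paper secures canonicity differently: on each overlap $Y_{_{uv}}$ it passes to a common Kawamata cover $\tilde{Z}_{_{uv}}$ (the normalization of a component of $Z_{_{uv}} \times_{_{Y_{_{uv}}}} Z_{_{vu}}$), identifies the two pulled-back equivariant Lie algebra bundles because both have invariant direct image $J|_{_{Y_{_{uv}}}}$ with its given parabolic structure, and deduces a canonical identification of the equivariant group schemes, hence of their invariant direct images; the cocycle condition is then inherited from the gluing data of $J$ itself. Your route can be completed along similar lines --- both restrictions are \emph{literally equal} on the big open subset of $Y_{_{uv}}$ containing the dense torus and the generic points of the divisors (both being the $\text{id}_{_G}$-gluing of $G \times T^{\ltimes}_{_{\text{ad}}}$ with the same local parahorics), and the unique extension of the identity across codimension $\geq 2$ then furnishes canonical transition isomorphisms whose cocycle condition holds because uniqueness of extensions is checked on the common dense torus --- but this needs to be said explicitly. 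You also correctly flag, as the paper does, that Kawamata's covering lemma is only available chart by chart (it holds for quasi-projective schemes, not for ${\bf Y}^{^{\text{aff}}}$ globally), which is precisely why the gluing step cannot be bypassed.
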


\begin{proof}
Recall that ${{\bf Y}^{\aff}}$ is covered by affine spaces $Y_{w} \simeq \mathbb{A}^{\ell+1}$ parametrized by the affine Weyl group $W^{\aff}$. Each $Y_{w}$ is a translate of ${{\bf Y}_{0}^{\aff}}$.  The translates of the divisors $H_{\alpha}$ meet each $Y_{w}$ in the standard hyperplanes on $\mathbb{A}^{\ell+1}$, and thus, we can prescribe the same ramification data at the hyperplanes on each of the $Y_{w}$. On the other hand, although we have simple normal crossing singularities, we do not have an analogue of the Kawamata covering lemma for schemes such as ${{\bf Y}^{\aff}}$. The lemma is known only in the setting of quasi-projective schemes. So to construct the group scheme, we employ a different approach using the naturality of the constructions for gluing.

We observe firstly that the formalism of Kawamata coverings applies in the setting of the affine spaces $Y_{w} \subset {{\bf Y}^{\aff}}$. Let $p_w\colon Z_{w} \ra Y_{w}$ be the associated Kawamata cover (see Section~\ref{kawa}) with Galois group $\Gamma_w$. In Corollary~\ref{canparstr} we observed that the Lie algebra bundle $J$ has a canonical parabolic structure. Letting $Y_{w}$ play the role of ${\bf Y_{0}}$ and  using all arguments in the proof of  Theorem~\ref{btoverx}, we obtain $\cH_{w} \ra Z_{w}$, which is a $\Gamma_w$-group scheme with  fibres isomorphic to   $G$ whose invariant direct image is a group scheme $\gG_w$ such that $\Lie(\gG_w) =J|_{Y_{w}}$. The induced parabolic structure on $J|_{Y_{w}}$ is the restriction of the one on $J$.  Indeed, by Corollary~\ref{canparstr}, these parabolic structures are essentially given at the local rings at the generic points $\xi_{\alpha}$ of the divisors $H_{\alpha}$, and hence 
these parabolic structures on $J$ agree on the intersections $Y_{uv}:=Y_{u} \cap  Y_{v}$. 

Let $Z_{uv}:= p_u^{-1}(Y_{uv})$. Let $\tilde{Z}_{uv}$ be the normalization  of a component of  $Z_{uv} \times_{Y_{uv}} Z_{vu}$. Then $\tilde{Z}_{uv}$   serves as Kawamata cover (see Section~\ref{kawa}) of $Y_{uv}$ (see \cite[Corollary 2.6]{vieweg}).  We consider the morphisms $\tilde{Z}_{uv} \to Z_{u}$ (resp.\ $\tilde{Z}_{uv} \to Z_{v}$) and let $\cH_{u,\tilde{Z}}$ (resp.\ $\cH_{v,\tilde{Z}}$) denote the pull-backs of $\cH_{u}$ (resp.\ $\cH_{v}$)  to $\tilde{Z}_{uv}$. 

Let $\Gamma$ denote the Galois group for $\tilde{Z}_{uv} \ra Y_{uv}$. Then by Lemma~\ref{Weilrestriction}, the invariant direct images of the equivariant Lie algebra bundles $\Lie(\cH_{u,\tilde{Z}})$ and $\Lie(\cH_{v,\tilde{Z}})$ coincide with the Lie algebra structure on   $J$ restricted to the ${Y_{uv}}$ and also as isomorphic parabolic bundles. Therefore, we have a natural isomorphism of equivariant Lie algebra  bundles
\begin{equation}
\Lie\left(\cH_{u,\tilde{Z}}\right) \simeq \Lie\left(\cH_{v,\tilde{Z}}\right).
\end{equation}
As in the proof of Theorem~\ref{btoverx}, this gives a canonical identification of the equivariant group schemes $\cH_{u,\tilde{Z}}$ and $\cH_{v,\tilde{Z}}$ on $\tilde{Z}_{uv}$. Since the invariant direct image of both the group schemes  $\cH_{u,\tilde{Z}}$ and $\cH_{v,\tilde{Z}}$ are the restrictions 
$\gG_{u,{Y_{uv}}}$ and $\gG_{v,{Y_{uv}}}$, it follows that on $Y_{uv}=Y_{u} \cap Y_{v}$  we get a canonical identification of group schemes
\beqa\label{inducedfromlie}
\gG_{u,{Y_{uv}}} \simeq \gG_{v,{Y_{uv}}}.
\eeqa 
These identifications are canonically induced from the gluing data of the Lie algebra bundle $J$ for the cover $Y_{w}$. Therefore, the cocycle conditions are clearly satisfied, and the identifications \eqref{inducedfromlie} glue to give  the group scheme ${\mathfrak G}_{{\bf Y}^{\aff}}^{\varpi}$ 
 on ${\bf Y}^{\aff}$. The verification of the classifying property follows exactly as in the proof of Theorem~\ref{btoverx}.
\end{proof}

\subsection{The \texorpdfstring{$(\ell+1){\tt BT}$}{(l+1)BT}-group scheme on \texorpdfstring{${\bf X}^{\aff}$}{X\textasciicircum aff}}

Let  ${\bf X}^{\aff}$ be as in Section~\ref{loopcase}. The situation in this subsection is somewhat distinct from the previously discussed cases. Unlike the scheme ${\bf X}$, the space ${\bf X}^{\aff}$ is an {\em ind-scheme}, and so we give the details of the construction of the group scheme.

We begin with a generality. Let $\mathbb{X}$ be an ind-scheme. By an open subscheme $i\colon \mathbb{U} \hookrightarrow \mathbb{X}$, we mean an ind-scheme such that for any $f\colon \spec(A) \rightarrow \mathbb{X}$, the natural morphism $\mathbb{U} \times_{\mathbb{X}} \spec(A) \rightarrow \spec(A)$ is an open immersion. For a sheaf $\mathbb{F}$ on $\mathbb{U}$, by $i_{*}(\mathbb{F})$ we mean the sheaf associated to the pre-sheaf on the ``big site'' of $\mathbb{X}$, whose sections on $f\colon \spec(A) \rightarrow \mathbb{X}$ are given by $\mathbb{F}(\mathbb{U} \times_{\mathbb{X}} \spec(A))$.

 The ind-scheme ${\bf X}^{\aff}$ has a certain open subset ${\bf X}_{0}$ whose precise definition is somewhat technical; see \cite[Section~5.1, p.~705]{solis}. Let us mention the properties relevant for us.

Recall that ${\bf X}^{\aff} = (G_{\ad}^{\aff} \times G_{\ad}^{\aff})~{\bf X}_{0}$ and in fact ${\bf X}^{\aff} = (G_{\ad}^{\aff} \times G_{\ad}^{\aff}) ~{{\bf Y}^{\aff}}$. Further, the torus embedding ${\bf Y}^{\aff}$ is covered by ${\bf Y}^{\aff}_{w} \simeq \mathbb A^{\ell +1}$, which are $W^{\aff}$-translates of ${\bf Y}^{\aff}_{0}$, where ${\bf Y}^{\aff}_{0} = {{\bf Y}^{\aff}} \cap {\bf X}_{0}$. We remark that, analogously to the case of ${\bf Y}_{0} \subset {\bf Y} \subset {\bf X}$, just as the toric variety ${\bf Y}_{0} $ was associated to the negative Weyl chamber, the toric variety  ${\bf Y}^{\aff}_{0}$ is associated to the negative Weyl alcove.

So let us also denote by  $0$  the neutral element of $W^{\aff}$.  Let $U^{\pm} \subset B^{\pm}$ be the unipotent subgroups. Let $\cU^{\pm}:= \ev^{-1}(U^{\pm})$, where $\ev\colon G(\cO) \ra G(k)$ is the evaluation map. Further, by \cite[Proposition 5.3]{solis}, 
\begin{equation} {\bf X}_{0}= \cU \times {\bf Y}^{\aff}_{0} \times \cU^-.
\end{equation}
Let ${\bf X}_{w} := \cU \times {\bf Y}^{\aff}_{w} \times \cU^-$. These cover $\cU \times {{\bf Y}^{\aff}} \times \cU^-$. For  $g \in G_{\ad}^{\aff} \times G_{\ad}^{\aff}$, let 
\begin{equation} \label{compatibilities}
  {\bf X}_g:=g {\bf X}_0, \quad {\bf X}_{g,w}:=g {\bf X}_{w}, \quad {\bf Y}^{\aff}_{g,w}:=g {\bf Y}^{\aff}_{w}.
\end{equation}
Note that we have the projection ${\bf X}_{g,w} \to {\bf Y}^{\aff}_{g,w}$, which is a $(\cU \times \cU^-)$-bundle.

 By \cite[Theorem 5.1]{solis}, the ind-scheme ${\bf X}^{\aff}$ has divisors $D_\alpha$ for $\alpha \in \mathbb S$ such that the complement of their union is ${\bf X}^{\aff} \setminus G_{\ad}^{\aff}$. The next proposition shows the existence of a finite-dimensional Lie algebra bundle on ${\bf X}^{\aff}$ which is analogous to the bundle $\cR$ over ${\bf X}$. 

 \begin{prop}\label{isotliealginfty1}
 There is a finite-dimensional Lie algebra bundle ${\bf R}$ on ${\bf X}^{\aff}$ which extends the trivial Lie algebra bundle $G_{\ad}^{\aff} \times \mathfrak g$ on the open dense subset $G_{\ad}^{\aff} \subset {\bf X}^{\aff}$ and whose restriction to ${\bf Y}^{\aff}$ is $J$. \end{prop}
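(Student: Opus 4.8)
The plan is to mirror, over the ind-scheme $\mathbf{X}^{^{\text{aff}}}$, the construction of the Lie algebra bundle $\mathcal R$ on the wonderful compactification $\mathbf{X}$ carried out via Theorem \ref{btoverx}, while handling the ind-structure by a gluing-plus-equivariance argument. First I would recall from Theorem \ref{liealgbunonyaff} and Corollary \ref{canparstr} that we already have the finite-dimensional Lie algebra bundle $J$ on the torus embedding $\mathbf{Y}^{^{\text{aff}}}$ together with canonical parabolic structures at the generic points $\xi_{_\alpha}$ of the $W^{^{\text{aff}}}$-translates of the divisors $H_{_\alpha}$. Since $\mathbf{X}^{^{\text{aff}}} = (G_{_{\text{ad}}}^{^{\text{aff}}} \times G_{_{\text{ad}}}^{^{\text{aff}}})\,\mathbf{Y}^{^{\text{aff}}}$ and each open piece $\mathbf{X}_{_{g,w}}$ projects to $\mathbf{Y}^{^{\text{aff}}}_{_{g,w}}$ as a $\cU \times \cU^-$-bundle \eqref{compatibilities}, the natural candidate for $\mathbf{R}$ on each $\mathbf{X}_{_{g,w}}$ is the pullback of $J|_{_{\mathbf{Y}^{^{\text{aff}}}_{_{g,w}}}}$ along this projection, extended by $(G_{_{\text{ad}}}^{^{\text{aff}}} \times G_{_{\text{ad}}}^{^{\text{aff}}})$-equivariance over the open dense orbit $G_{_{\text{ad}}}^{^{\text{aff}}}$ where it must restrict to the trivial bundle $G_{_{\text{ad}}}^{^{\text{aff}}} \times \mathfrak g$.

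The key steps, in order, would be: (i) define $\mathbf{R}$ on the open dense subset $G_{_{\text{ad}}}^{^{\text{aff}}} \subset \mathbf{X}^{^{\text{aff}}}$ as the trivial Lie algebra bundle with fibre $\mathfrak g$, using the adjoint action to make it $(G_{_{\text{ad}}}^{^{\text{aff}}} \times G_{_{\text{ad}}}^{^{\text{aff}}})$-equivariant; (ii) on each chart $\mathbf{X}_{_{g,w}}$, define $\mathbf{R}$ by pulling back $J$ from $\mathbf{Y}^{^{\text{aff}}}_{_{g,w}}$ via the bundle projection, so that by construction $\mathbf{R}|_{_{\mathbf{Y}^{^{\text{aff}}}}} = J$; (iii) verify that on overlaps $\mathbf{X}_{_{g,w}} \cap \mathbf{X}_{_{g',w'}}$ the two descriptions agree, using that the parabolic structures on $J$ are governed by the local rings at the generic points $\xi_{_\alpha}$ of the divisors and these data are canonically compatible under the $W^{^{\text{aff}}}$-translation and the $(G_{_{\text{ad}}}^{^{\text{aff}}} \times G_{_{\text{ad}}}^{^{\text{aff}}})$-action; and (iv) glue the local bundles into a sheaf $\mathbf{R}$ on the ind-scheme in the sense of the ``big site'' pushforward $i_{_*}$ recalled just before the proposition, checking that the cocycle conditions hold exactly as in the gluing argument of Theorem \ref{mtY} (via normalizations of fibre products of Kawamata covers). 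This last point is essentially the same mechanism used to glue the group schemes $\gG_{_w}$ on $\mathbf{Y}^{^{\text{aff}}}$, now applied one level below, to the Lie algebra bundles themselves.

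The main obstacle I anticipate is precisely the ind-scheme bookkeeping in step (iv): one must make sense of a ``finite-dimensional locally free sheaf'' and its gluing over an object that is only a colimit of schemes, and verify that the equivariant transition isomorphisms coming from the $\cU \times \cU^-$-bundle structure and the parabolic data are genuinely independent of the chosen chart and satisfy the cocycle condition on triple overlaps. Because $\mathbf{X}^{^{\text{aff}}}$ is not quasi-projective, there is no global Kawamata cover, so — exactly as in the proof of Theorem \ref{mtY} — the compatibility must be checked chart-by-chart through the normalized fibre products $\tilde Z_{_{uv}}$ of the local Kawamata covers, invoking Lemma \ref{Weilrestriction} to see that invariant direct images of the equivariant Lie algebra bundles agree on intersections. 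I expect the verification of equivariance along the big orbit $G_{_{\text{ad}}}^{^{\text{aff}}}$ to be routine once the charts are set up, but ensuring that the finite-dimensionality and local freeness survive the colimit, and that the $i_{_*}$-formalism yields an honest Lie algebra bundle rather than merely a sheaf of Lie algebras, will require the careful Hartogs-type reflexivity arguments (as in Lemmas \ref{langton}, \ref{langton1}, \ref{langton2}) applied within each finite-type approximation of the ind-scheme.
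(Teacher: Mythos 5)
Your proposal is correct and follows essentially the same route as the paper: both exploit the $\cU \times \cU^-$-bundle projections ${\bf X}_{_{g,w}} \to {\bf Y}^{^{\text{aff}}}_{_{g,w}}$ to transport $J$ (via its transition functions) to the charts of ${\bf X}^{^{\text{aff}}}$, first over a big open subset containing $G_{_{\text{ad}}}^{^{\text{aff}}}$ and the height-one primes, and then extend by push-forward, with local freeness checked by reducing to the pullback of $J$ on ${\bf X}_{_0}$. The gluing and reflexivity concerns you raise are exactly the points the paper handles (tersely) by the $i_{_*}$ big-site formalism and the chart-by-chart mechanism of Theorem \ref{mtY}.
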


\begin{proof} Since the projection ${\bf X}_{g,w} \to {\bf Y}^{\aff}_{g,w}$ is a $(\cU \times \cU^-)$-bundle, the transition functions of~$J$ and its restrictions to tubular neighbourhoods of its divisors may be used to construct a locally free sheaf $J'$ on an open subset ${\bf X'}$ containing the union of $G^{\aff}$ and the height $1$ prime ideals of ${\bf X}^{\aff}$ using the transition functions of $J$.  Let ${\bf R}$ denote its push-forward to ${\bf X}^{\aff}$. To check that the push-forward is locally free, without loss of generality we may consider its restriction to ${\bf X}_0$. But on ${\bf X}_0$ the push-forward of $J'$ restricts to the pull-back of a Lie algebra bundle $J$ on ${\bf Y}^{\aff}$ constructed in Theorem~\ref{liealgbunonyaff}, which completes the argument. The rest of the properties follow immediately.
\end{proof}

\begin{thm}\label{gpshsolis}
Let $\bvt_{} = (\theta_{\alpha} \mid \alpha \in S)$, where the $\theta_{\alpha} \in \mathcal{A}$ are arbitrary points in the apartment of\, $T$ satisfying the conditions of Section~\ref{charassum}. There exist an affine and smooth ``wonderful'' Bruhat--Tits group scheme ${\mathfrak G}_{{\bf X}^{\aff}}^{\varpi}$ on ${\bf X}^{\aff}$ together with a canonical isomorphism $\Lie({\mathfrak G}_{{\bf X}^{\aff}}^{\varpi}) \simeq {\bf R}$. It further satisfies the following classifying property: 

For any point $h \in {\bf X}^{\aff} \setminus G_{\ad}^{\aff}$, let  $ \mathbb I \subset \mathbb S$ be defined by the condition $h \in \cap_{\alpha \in \mathbb{I}} D_\alpha$. Let $C_{\mathbb I} \subset {\bf X}$ be a smooth curve with generic point in $G_{\ad}^{\aff}$ with closed point $h$. Let $U_{h} \subset C_{\mathbb I}$ be a formal  neighbourhood of $h$. Then, the restriction ${\mathfrak G}_{{\bf X}^{\aff}}^{\varpi}|_{U_{h}}$ is isomorphic to the Bruhat--Tits group scheme  associated to the concave function ${\text{\cursive m}_{{\bvt}_{I}}}\colon \Phi \rightarrow \mathbb{Z}$ given by $r \ms \sum_{\alpha \in I} m_r(\theta_{\alpha})$. 
\end{thm}

\begin{proof} We begin by observing  that ${\bf X}_{g,w}$ and ${\bf R} \ra {\bf X}^{\aff}$ play the roles of ${\bf Y}_{g,w}^{\aff}$ and $J \ra {\bf Y}^{\aff}$ in the proof of Theorem~\ref{mtY}.  Therefore, the group scheme $\gG_{g,w}$ glue together, and we obtain the global group scheme ${\mathfrak G}_{{\bf X}^{\aff}}^{\varpi}$. The verification of the classifying property follows exactly as in the proof of Theorem~\ref{btoverx}.
\end{proof}

\section{{\tt 2BT}-group schemes and degenerations of torsors}\label{mckaybtetc}

In this section we assume $\charr(k)=0$. Our aim is to revisit   certain smooth, affine group schemes  on the  minimal resolution of normal surface singularities. We work with the complete local rings and in this discussion stick to the $A_{n}$-type singularities alone and satisfy ourselves with a few remarks on the other types in a brief remark. The senior author used these constructions in \cite{balaproc}, where $\tt 2 BT$-group schemes first occur. The purpose was to construct degenerations of the moduli spaces of principal $G$-bundles on smooth projective curves when the curve is made to degenerate to an irreducible nodal curve. 

The picture and notation we use are as in \cite{balaproc}. The novel feature is that the constructions were made  using the geometric McKay correspondence  of G.~Gonz\'alez-Sprinberg and J.-L.~Verdier (see \cite{gonverd}), the role of which is intriguing.

Let $N_{d} = \spec(\frac{k\llbracket t \rrbracket \llbracket x,y \rrbracket}{(x\cdot y - t^{d})})$. We recall that $N_{d}$ is a {\em normal} surface with an isolated singularity of type $\text{A}_{d}$. By the generality of $\text{A}_{d}$-type singularities, one can realize $N_{d}$ as a quotient $\sigma\colon D \to N_{d}$ of  $D:= \spec(\frac{k\llbracket t \rrbracket\llbracket u,v \rrbracket}{(u\cdot v - t)})$ by the cyclic group ${\sf\mu_{d}} = \langle \gamma \rangle$, where $x = u^{d}$, $y = v^{d}$, $\zeta$ is a primitive $\supth{d}$-root of unity and ${\sf\mu_{d}}$ acts on $D$ as 
follows:
\beqa\label{gammaaction}
\gamma\cdot(u,v) = \left(\zeta\cdot u, \zeta^{d-1}\cdot v\right).
\eeqa

We consider the following basic diagram for all $d > 0$ (see \cite{gonverd}):
\beqa\label{keydiag1}
\xymatrix{
{D^{(d)}} \ar[r]^{f} \ar[d]_{q} &
 {\sf N}^{(d)}  \ar[d]_{p_{d}} \\
0 \in D \ar[r]^{\sigma} &  ~~{N_{d}} \owns c\rlap{,} \\
}
\eeqa
where  $p_{d}\colon{\sf N}^{(d)} \to N_{d}$ is the {\em minimal resolution of singularities} of $N_{d}$ obtained by successively blowing up the singularity, with the exceptional divisor $E^{(d)} = p_{d}^{-1}(c)$ having $d-1$ rational components, and 
\beqa
D^{(d)} := \left(D \times _{N_{d}} {\sf N}^{(d)} \right)_{\red}.
\eeqa

The {\em closed fibre} $F^{(d)}$ of the canonical morphism ${\sf N}^{(d)} \to \spec(k\llbracket t \rrbracket)$ looks like
\beqa\label{closedfibre}
F^{(d)} = E^{(d)} \cup E(1) \cup E(2),
\eeqa
where $E(1)$ and $E(2)$ are the inverse images of the branches corresponding to $x = 0$ and $y = 0$ in $N_{d}$.

Thus, $F^{(d)} \subset {\sf N}^{(d)}$ is a normal crossing divisor with $d+1$ components. 
By \cite[Proposition 2.4]{gonverd},  the morphism  
\beqa\label{platif}
f\colon D^{(d)} \lra {\sf N}^{(d)}
\eeqa
is {\em finite and flat, the minimal platificateur in the sense of Grothendieck}; see \cite[Corollary~7, p.~448]{gonverd}. Since ${\sf N}^{(d)}$ is smooth, this implies that $f$ is ramified at the generic point of each of the $d-1$ rational components of the exceptional divisor $E^{(d)} = p_{d}^{-1}(c) \subset F^{(d)}$.

Let
\beqa\label{basictorsor}
\eT_{D} \simeq D \times^{\rho} G
\eeqa 
be the trivial $({\sf\mu_{d}},G)$-torsor  on $D$ (see \eqref{keydiag1}) with a ${\sf\mu_{d}}$-structure given by a homomorphism $\rho\colon{\sf\mu_{d}} \to G$.  This gives a homomorphism $\rho\colon{\sf\mu_{d}} \to T$ into the maximal torus $T$ of $G$. We fix once and for all an isomorphism $T \simeq \mathbb{G}_{m}^{\ell}$. Thus $\rho$ determines an ordered pair of integers modulo $d$ called the {\em type} $\tau = (a_1, a_2, \ldots, a_{\ell})$. More precisely, we have a ${\sf\mu_{d}}$-action on $D \times G$,  given by 
\beqa\label{localaction0}
\gamma\cdot (u,v,g) = \left(\zeta\cdot u, \zeta^{-1}\cdot v, \rho(\gamma)\cdot g\right). 
\eeqa
Let $z_{1}$ and $z_{2}$ be the two points in $D^{(d)}$ above the origin $0 \in D$ where the normalization of the curve $u\cdot v = 0$ meets the fibre $q^{-1}(0)$. We observe that  the action of ${\sf\mu_{d}}$ is {\em balanced}  at these two marked points; \textit{i.e.} the action of a generator $\zeta$ on the tangent spaces to each branch are inverses to each other.  For the corresponding dual action in the neighbourhood $D'_{0}$ (the component with local coordinate $v$),  the action is by $\zeta^{-1}$. If we begin with a representation $\rho\colon{\sf\mu_{d}} \to G$ of local type $\tau$ at a point in a branch, then the corresponding local type for the dual action at the point in the second branch is denoted by $\bar{\tau}$.

Consider the  adjoint group scheme $\eT_{D}(G):= \eT_{D} \times ^{G,\Ad} G$ on $D$, where $G$ acts on itself by inner conjugation.  We define the equivariant group scheme
\beqa\label{egtau}
E(G, \tau):= q^{*}\left(\eT_{D}(G)\right)
\eeqa
on $D^{(d)}$ of local type $\tau$ in the sense that it comes with a ${\sf\mu_{d}}$-action via a representation $\rho\colon{\sf\mu_{d}} \to G$. 
Since the morphism $f\colon D^{(d)} \to {\sf N}^{(d)}$ is also {\em finite and flat},  we can take the Weil restriction of scalars 
\beqa\label{localnsgrpscheme1}
f_{*}\left(E(G, \tau)_{D^{(d)}}\right):= {\Res}_{{D^{(d)}}/{\sf N}^{(d)}} \left(E(G, \tau\right)_{D^{(d)}}),
\eeqa
and since $E(G, \tau)_{D^{(d)}} \to D^{(d)}$ is a smooth (affine) group scheme,  the basic properties of Weil restriction of scalars (\textit{cf.} \cite[Lemma 2.2]{edix}) show that $f_{*}(E(G, \tau))$ is a smooth group scheme on ${\sf N}^{(d)}$ together with a ${\sf\mu_{d}}$-action. By taking  invariants under the action of ${\sf\mu_{d}}$ and noting that we are over characteristic zero, by \cite[Proposition 3.4]{edix}, we obtain the smooth (affine) group scheme on ${\sf N}^{(d)}$ obtained by taking {\em invariant direct images}:
\beqa\label{nsgrpscheme1.5}
\qh^{G}_{\tau,{\sf N}^{(d)}} = (f^{\sf\mu_{d}}_{*}) ((E(G, \tau)_{D^{(d)}})) 
\eeqa 
(see \cite[Definition 4.1.3]{base}). 
The  {\tt 2BT}-group scheme of type $\tau$ with generic fibre $G$ of singularity type $\text{A}_{d}$  associated to $\theta_{\tau}$ is defined to be the affine group scheme $\qh^{G}_{\tau,{\sf N}^{(d)}}$ from \eqref{nsgrpscheme1.5} on the regular surface ${\sf N}^{(d)}$. This process defines  a distinguished collection of {\tt 2BT}-group scheme $\big\{{\qh^{G}_{\tau,{\sf N}^{(d)}}}\big\}_{\tau}$ indexed by the type $\tau$.

\subsection{The McKay correspondence revisited}\label{mckaystory} We recall the geometric interpretation of the McKay correspondence given by Gonz\'alez-Sprinberg and Verdier (see \cite{gonverd}).  Let $\Irr^{o}({\sf\mu_{d}}) \subset \Irr({\sf\mu_{d}})$ be the non-trivial irreducible representations of ${\sf\mu_{d}}$, and let $\Irr(E^{(d)})$ denote the set of irreducible rational components of  the exceptional divisor  of the minimal resolution $p_{d}\colon{\sf N}^{(d)} \to N_{d}$ from \eqref{keydiag1}.  Let $\psi$  be a non-trivial character of ${\sf\mu_{d}} = \langle \gamma \rangle$. Then $\psi$ corresponds to $\zeta \mapsto \zeta^{s}$, where $\zeta$ is the primitive $\supth{d}$-root of unity chosen above and $1 \leq s \leq d-1$. Let $L_{\psi}$ be the equivariant line bundle  on $D$ where ${\sf\mu_{d}}$ acts on $D \times k$ as $\gamma\cdot (u,v, a) = (\zeta\cdot u, \zeta^{d -1}\cdot v, \zeta^{s}\cdot a)$, $a \in k$.
A ${\sf\mu_{d}}$-invariant section \text{\cursive h} of this line bundle is given by the relation $\{\gamma\cdot \text{\cursive h}\}(u,v) = \text{\cursive h} (\gamma\cdot (u,v))  = \zeta^{s} \text{\cursive h}(u,v)$, and hence the ${\sf\mu_{d}}$-invariant sections are generated by $u^{s}$ and $v^{d-s}$. 
Let  $\mathcal L_{\psi} := f^{\sf\mu_{d}}_{*}(q^{*}(L_{\psi}))$ be the induced line bundle on $\sf N^{(d)}$. This is a line bundle since $f$ is finite and flat.

{\em Mckay correspondence following Gonz\'alez-Sprinberg and Verdier}. There is a bijection $
\Irr^{o}({\sf\mu_{d}}) \to \Irr(E^{(d)})$, $\psi \mapsto E_{\psi}$,  such that for any $E_{j} \in \Irr(E^{(d)})$, we have
\beqa
c_{1}(\mathcal L_{\psi})\cdot E_{j} =
\begin{cases}
0&\quad\text{if } E_{j} \neq E_{\psi}, \\
1&\quad{\text if }E_{j} = E_{\psi}.
\end{cases} \eeqa
The above statement implies that the first Chern class $c_{1}(\mathcal L_{\psi})$ can be represented by an {\sl effective} divisor $\delta_{\psi} \subset \sf N^{(d)}$ which meets $F^{(d)}$ transversally at a unique point   which lies in $E_{\psi}$. 

We interpret this on the side of the surface  $D^{(d)}$. Consider the reduced fibre $\tilde{E} :=  q^{-1}(0)_{\red}$. The group ${\sf\mu_{d}}$-fixes the divisor $q^{-1}(0)$ and hence its reduced subscheme $\tilde{E}$. There is a smooth curve $\delta'_{\psi} \in D^{(d)}$ which meets the divisor $q^{-1}(0) \subset D^{(d)}$ at a unique component $\tilde{E_{\psi}}$ of $\tilde{E}$. Locally at this point of $\tilde{E_{\psi}}$, the transversal curve  $\delta'_{\psi}$ is given by the invariant section $\text{\cursive h}$ of  $q^{*}(\mathcal L_{\psi})$. The action of ${\sf\mu_{d}}$ on  $\text{\cursive h}$ shows that the group ${\sf\mu_{d}}$ acts on the local uniformizer $z_{\psi}$ of the component $\tilde{E_{\psi}}$ by $\gamma\cdot z_{\psi} \mapsto \zeta^{s}z_{\psi}$.

Let us explain how to a representation $\rho\colon{\sf\mu_{d}} \to T$ of type $\tau$, we associate a rational point $\theta_{\tau}$ of $\cA_T$ in the fundamental domain of $Y(T)$ as in \cite{base}, and not just in the alcove $\mathbf{a}_0$. Let $\rho\colon{\sf\mu_{d}} \to T$ map the generator $\gamma$ of $\sf\mu_{d}$ to $(\zeta_{d}^{a_1}, \ldots, \zeta_{d}^{a_{\ell}})$, where $\zeta_{d}$ is a primitive $\supth{d}$ root of unity and the $a_i$ are integers uniquely determined modulo $d$. We say that $\rho$ is  of  {\em type} $\tau = (a_1, a_2, \ldots, a_{\ell})$.  This determines a rational point in $\frac{1}{d} Y(T) \mod Y(T)$ and therefore a rational point $\theta_{\tau}$ in the fundamental domain of $Y(T)$ in $\cA_T$. Let $F^{(d)} = E^{(d)} \cup E(1) \cup E(2)$ be as in \eqref{closedfibre}, and let the two nodal end points be ${\bf z}_{i}$, $i = 1,2$, and the nodes where the rational components meet be ${\bf y}_{s}$, $s = 1, \ldots, d-2$ for $d \geq 3$.

Consider the canonical morphism ${\sf N}^{(d)} \to \spec(k\llbracket t \rrbracket)$. Let ${\sf N}^{(d)}_{t} \subset {\sf N}^{(d)}$ be the open subset defined by the non-vanishing of $t$. For $s = 1, \ldots, d-2$, let $C_{s} \subset {\sf N}^{(d)}$ be a smooth curve with generic point in ${\sf N}^{(d)}_{t}$ and containing ${\bf y}_{s}$ as a closed point. Let $U_{s}$ be the formal neighbourhood of ${\bf y}_{s}$ in $C_{s}$. Similarly, let $C_{{\bf z}_{i}}$ for $i = 1,2$ denote smooth curves with generic point in ${\sf N}^{(d)}_{t}$ and containing ${\bf z}_{i}$ as a closed point. Let $U_{{\bf z}_{i}}$ be the formal neighbourhood of ${\bf z}_{i}$ in $C_{{\bf z}_{i}}$.

\bth\label{nsdescrip}
The group scheme $\qh^{G}_{\tau,{\sf N}^{(d)}}$ is affine. It has the following description at the generic points of the rational curves, at $U_{{\bf z}_{i}}$ and at $U_{s}$: 
\begin{enumerate} 
\item Let $\psi$  denote the character which takes $\zeta$ to $\zeta^{s}$. We read the tuple $(s\cdot a_1, s\cdot a_2, \ldots, s\cdot a_{\ell})$ modulo $d$. Let $\tau_{s} := (s\cdot a_1, s\cdot a_2, \ldots, s\cdot a_{\ell})$. At the generic point of the rational component ${E_{\psi}}$, the local type of the restriction of $\qh^{G}_{\tau,{\sf N}^{(d)}}$ is given by the point $\theta_{\tau_{s}}$ in the fundamental domain of\, $Y(T)$. 
 
\item The restrictions of  $\qh^{G}_{\tau,{\sf N}^{(d)}}$ to $U_{{\bf z}_{i}}$ for $i=1,2$ are isomorphic to the  parahoric group schemes $\mathfrak G_{\theta_{\tau}}$ and $\mathfrak G_{\theta_{\bar\tau}}$ corresponding to $\tau $ and $\bar\tau$, respectively.
\item  The restriction of  $\qh^{G}_{\tau,{\sf N}^{(d)}}$ to $U_{s}$ is isomorphic to the Bruhat--Tits group scheme $\mathfrak G_{f_{s}}$, where $f_{s}\colon\Phi \to \mathbb R$ is the concave function given by 
  \begin{equation}
    f_{s}(r) := m_r\left(\theta_{\tau_{s}}\right) + m_r\left(\theta_{\tau_{s+1}}\right).
\end{equation}
\end{enumerate}  
\eeth

\begin{proof} This is an immediate consequence of Theorems~\ref{n-paragrpsch} and~\ref{Artin-Weil-Kawamata}.
\end{proof} 

\begin{rem} \label{balajiproctype3}
Let us assume that the residue field $k$ contains primitive $\supth{8}$ (resp.\ $\suprd{3}$) roots of unity. Let $\{ \alpha, \beta \}$ denote the short and long roots of $B_2$ (resp.\ $G_2$). Let us fix an isomorphism of $T$ with $\mathbb{G}_m^2$. Since the coroot lattice equals $Y(T)$, let us take the coroot homomorphisms $\alpha^{\vee}, \beta^{\vee}\colon \mathbb{G}_m \rightarrow T$ as the first and second coordinate maps. Let us take $\rho$ of type $\tau_{1}$ such that $\theta_{\tau_1}$ equals $\frac{\theta_{\alpha}}{2}$ for $G$ of type $B_{2}$ (resp.\ $\theta_{\alpha}$ for $G$ of type $G_{2}$). Then $\tau_{2}$ gives $\theta_{\alpha}$ (resp.\ $2 \theta_{\alpha}$). By Example~\ref{B2type3} (resp.~\ref{G2type3}), we see that for $s=1$, the function $f_{s}$  is of type III. This shows that the limiting objects of \cite{balaproc} could become torsors under BT-group schemes of the most general type.
\end{rem}

\renewcommand\thesection{\Alph{section}}
\setcounter{section}{0}

\section*{Appendix on parabolic and equivariant bundles}\label{parabstuff}
\addcontentsline{toc}{section}{Appendix on parabolic and equivariant bundles}
\refstepcounter{section}

\setcounter{subsection}{0}

In this section we recall and summarize some results on parabolic bundles and equivariant bundles on Kawamata covers. These play a central role in the constructions of the Bruhat--Tits group schemes made above. Consider a pair $(X,D)$, where $X$ is a smooth quasi-projective variety and $D = \sum_{j = 0}^{\ell} D_{j}$ is a {\em reduced normal crossing divisor}  with non-singular components $D_{j}$  intersecting each other transversely. The basic  examples we have in mind  are discrete valuation rings with their closed points, the wonderful compactification ${\bf A}$ with its boundary divisors or affine toric varieties.

Let $E$ be a locally free sheaf on $X$.  Let $n_{j}$, $j = 0 \ldots, \ell$, be positive integers attached to the components $D_{j}$. Let $\xi$ be a generic point of $D$. 

Let $E_{\xi} := E \otimes_{\mathcal O_{X}} \mathcal O_{X,\xi}$ and $\bar{E}_{\xi} := E_{\xi}/\mathfrak m_{\xi} E_{\xi}$, and let $\mathfrak m_{\xi}$ be the maximal ideal of $\mathcal O_{X,\xi}$.

\begin{defi}\label{sestype} A (generic) parabolic structure on $E$ consists of the following data:
\begin{itemize}
\item a flag $\bar{E}_{\xi} = F^{1}\bar{E}_{\xi} \supset \cdots \supset F^{r{_{j}}}\bar{E}_{\xi}$ at the generic point $\xi$ of each of the components $D_{j}$ of $D$; 
\item weights $d_{s}/n_{j}$, with $0 \leq d_{s} < n_{j}$, attached to $F^{s}\bar{E}_{\xi} $  such  that  $d_1 < \cdots < d_{r_j}$.
\end{itemize}
\end{defi}
By saturating the flag datum on each of the divisors, we get 
for each component $D_{j}$ a filtration 
\beqa
E_{D_{j}} = F^{1}_{j} \supset \cdots \supset F^{r{_{j}}}_{j}
\eeqa
of subsheaves on $D_{j}$. Define the coherent subsheaf $\mathcal F^{s}_{j}$, where $0 \leq j \leq \ell$ and $1 \leq s \leq r_{j}$, of $E$ by
\beqa
0 \lra \mathcal F^{s}_{j} \lra E \lra E_{D_{j}}/F^{s}_{j} \lra 0; 
\eeqa
the last map is by restriction to the divisors. 

We wish to emphasize that this definition, where the quasi-parabolic structure is given only at the generic points of the divisors, is somewhat different from what is done in \cite{biswas}.

In \cite{ms}, working over curves, two essential things are shown:
\begin{enumerate}
\item Given a parabolic bundle $E$ as above,  there exist a ramified covering of $p\colon(Y,\tilde{D}) \to (X,D)$  with suitable ramification data and Galois group $\Gamma$ together with a $\Gamma$-equivariant vector bundle $V$ on $Y$ such that the invariant direct image sheaf $p^{\Gamma}_{*}(V)$ equals $E$ and furthermore $V$ also recovers the parabolic structure, namely the  filtrations $\mathcal F^{s}_{j}$ and weights, on $E$.
\item Conversely, if we begin with an equivariant bundle $V$ on $Y$, then the invariant direct image $p^{\Gamma}_{*}(V)$ gives a vector bundle $E$ on $X$ with parabolic structures at the generic points of the height $1$ prime ideals.
\end{enumerate}

In this appendix we work with the higher-dimensional and generic variant we have defined above and establish analogous results under suitable conditions. This is the key fact that is used in the present paper.

For the sake of completeness, we give a self-contained {\em ad hoc} argument for this construction which is more in the spirit of the present note. The data given in Definition~\ref{sestype} is as in \cite{ms},   which deals with points on curves. {\em Note that in our setting we have rational weights}. Under these conditions, we have a natural  functor 
\beqa\label{bisses}
p^{\Gamma}_{*}\colon \{\Gamma\text{-equivariant~bundles~on}~Y\} \lra \{\text{parabolic~bundles~on}~X\}.
\eeqa
Under suitable conditions relevant to this paper, this functor is in fact a  surjection. See \cite{biswas} for other conditions where we can obtain a surjection. 

As the notation suggests, this is achieved by taking invariant direct images. Since $p\colon Y \to X$ is finite and flat, if $V$ is locally free on $Y$, then so is $p^{\Gamma}_{*}(V)$. An equivariant bundle $V$ on $Y$ is defined in a  formal neighbourhood of a generic point $\zeta$ of a component by a representation of the isotropy group $\Gamma_{\zeta}$ and locally (in the formal sense), the action of $\Gamma_{\zeta}$ is the product action. This is called the local type in \cite{pibundles} or \cite{ms}. By appealing to the $1$-dimensional case, we obtain a canonical (generic) parabolic structure on $E$, \textit{i.e.} parabolic structures  at the generic points $\xi$ of the components $D_{j}$ in $X$. 

Conversely,  given a (generic) parabolic bundle $E$ on $(X,D)$, to get the $\Gamma$-equivariant bundle $V$ on $Y$ such that $p^{\Gamma}_{*}(V) = E$, we heuristically proceed as follows. If we know the existence of such a $V$, then we can consider the inclusion $p^*(E) \subset V$. Taking its dual (and since $V/p^*(E)$ is torsion, taking duals is an inclusion), we get 
\beqa
V^* \longhookrightarrow  (p^*(E))^* = p^*\left(E^*\right).
\eeqa
By the $1$-dimensional case (obtained by restricting to the height $1$ primes at the generic points), we see that the quotient $T_{\zeta} := p^*(E^*)_{\zeta}/V_{\zeta}^*$ is a torsion $\mathcal O_{Y,\zeta}$-module and $T_{\zeta}$ is completely determined by the parabolic structure on $E$. 

The (generic) parabolic structure on $E$ therefore determines canonical quotients
\beqa\label{quotients}
p^*\left(E^*\right)_{\zeta} \lra T_{\zeta}
\eeqa
for each generic point $\zeta$ of components in $Y$ above the components $D_{j}$ in $X$.

The discussion above suggest how one would construct such a $V$;  we begin with these quotients \eqref{quotients}. Then we observe that there is a maximal coherent subsheaf $V' \subset p^*(E^*)$ such that 
\beqa
p^*\left(E^*\right)_{\zeta}/V_{\zeta}' = T_{\zeta}.
\eeqa
Away from $p^{-1}(D)$ in $Y$, the inclusion $V' \hra p^*(E^*)$ is an isomorphism. Dualizing again, we get an inclusion $p^*(E) \hra (V')^{*}$ which is an isomorphism away from $p^{-1}(D)$. Set $W =  (V')^{*}$. Since $W$ is the dual of a coherent sheaf, it is {\em reflexive}. 

In situations as in Theorem~\ref{Artin-Weil-Kawamata}, we find examples of vector bundles with Lie algebra structures which satisfy the surjectivity of \eqref{bisses}.

\subsection {Kawamata coverings}\label{kawa}
Suppose we are given positive integers $n_{0}, \ldots, n_{\ell}$. Let $X$ be a smooth quasi-projective variety over a perfect field $k$ whose characteristic $\charr(k)$ is coprime to $n_{0}, \ldots, n_{\ell}$. Let $D$ be a simple or reduced normal crossing divisor with
 decomposition $D = \sum_{i=0}^{\ell} D_i$ into its smooth components intersecting transversally.

By a ``Kawamata covering'' of $X$, we mean the existence   
of a connected smooth quasi-projective
variety $Z$ over $k$ and a Galois covering morphism
\beqa\label{kawamatacm}
\kappa\colon Z \lra  X 
\eeqa
such that the reduced divisor $\kappa^{*}{D}:= \,({\kappa}^{*}D)_{\red}$
is a normal crossing divisor on $Z$ and, furthermore,
${\kappa}^{*}D_{i}= n_{i}.({\kappa}^{*}D_{i})_{\red}$. Let $\Gamma$ denote the Galois group
for $\kappa$. 

The ``covering lemma'' of Y. Kawamata
(see \cite[Lemma 2.5, p.~56]{vieweg}) says that
  there is such a covering under the assumption of $k$ being algebraically closed.
  
We however note that if the base is reasonably simple, such as an affine space $\mathbb A^{n}_k$, then such a covering will exist with just the assumption of $k$ being perfect.  

The isotropy group of any point $z \in Z$, for the
action of $\Gamma$ on $Z$, will be denoted by ${\Gamma}_{z}$.  It is easy to see that the stabilizers at generic points of the irreducible components of $(\kappa^{*}D_i)_{\red}$ are cyclic of order $n_{i}$.

\brem (In positive characteristics) The Kawamata covering lemma is seen to hold under {\em tameness assumptions}; \textit{i.e.} if the characteristic $p$ is coprime to the $n_{i}$, then it holds; see \cite[Lemma 2.5, p.~56]{vieweg}.
\erem

\subsection{The group scheme situation}\label{grstuff} Let $(X,D)$ be as above. Let $\xi$ be the generic point of a component $D_{j}$ of $D$, and let $A := \mathcal O_{X,\xi}$ and $K$ be the quotient field of $A$. {\em We always assume that these group schemes are generically split}. Let us begin by stating some results from \cite{base} in the simplest situation.  Then we will state the more general case.

Let $\mathfrak G_{\bvt}$ be a Bruhat--Tits group scheme on $\spec(A)$ associated to a vertex $\theta_{\alpha}$ of the Weyl alcove $\mathbf{a}_0$ (see Section~\ref{liedata}). Let $B = \mathcal O_{Y, \zeta}$, where $\zeta$ is the generic point of a component of $Y$ above $D_{j}$, and let $L$ be the quotient field of $B$. We assume that the local ramification data for the Kawamata covering has  numbers $d_{\alpha}$; see \eqref{dalpha}.  Let $\Gamma_{\zeta}$ be the stabilizer of $\Gamma$ at $\zeta \in Y$. The results of \cite[Proposition 5.1.2 and Remark 2.3.3]{base} show that there exists an equivariant group scheme $\mathcal H_{B}$ on $\spec(B)$ with fibre isomorphic to the simply connected group $G$ and such that $\Res_{B/A}(\mathcal H_{B})^{\Gamma_{\zeta}} \simeq \mathfrak G_{\bvt}$. 

Although in \cite{base} the above is stated for points in the alcove $\mathbf{a}_0$, the proofs reveal that actually they hold more generally. Under assumptions on the residue field (see Sections~\ref{tameness} and~\ref{charassum}), the proofs of these results generalize in a straightforward way for an arbitrary complete DVR (see  Section~\ref{bstomixed}). We may further assume (see Remark~\ref{uptoyt}) that $\theta$ is a rational point of $\mathcal{A}_T$ in the fundamental domain of $Y(T)$ and not just a point in the alcove $\mathbf{a}_0$ (see Section~\ref{liedata}).

Suppose that we have a group scheme $\mathfrak G_{X'}$ on an open $X' \subset X$ which includes all the height $1$ primes coming from the divisors $D_{j}$, with the following properties:
\begin{itemize}
\item Away from the divisor $D \subset X$, $\mathfrak G$ is the constant group scheme with fibre $G$.
\item The restrictions $\mathfrak G\mid_{\spec(A)}$ at the generic points $\xi$ are isomorphic to the Bruhat--Tits  group scheme $\mathfrak G_{\theta}$ for varying $\xi$ and $\theta$ varying in the fundamental domain of $Y(T)$ in $\mathcal{A}_T$.
\end{itemize}  
In other words, $\mathfrak G_{X'}$ is obtained by a gluing of the constant group schemes with group schemes $\mathfrak G_{\theta}$ along $\spec(K)$ by an automorphism of the constant group scheme $G_{K}$.

Now consider the inverse image of the constant group scheme $p^{*}(G_{X - D}) \simeq G \times p^*(X-D)$. Then using the gluing on $X'$, we can glue the constant group scheme $p^{*}(G_{X - D})$ with the local group schemes $\mathcal H_{B}$ for each generic point $\zeta$ to obtain a group scheme $\mathcal H_{Y'}$ on $Y' = p^{-1}(X')$ such that
\beqa
\Res_{Y'/X'}\left(\mathcal H_{Y'}\right)^{\Gamma} \simeq \mathfrak G_{X'}.
\eeqa

%%%%%%%%%%%%%%%%%%%%%
% References
%%%%%%%%%%%%%%%%%%%%%
\newcommand{\etalchar}[1]{$^{#1}$}

\end{document}